\numberwithin{equation}{section}
\newcommand{\C}{\mathbb{C}}
\newcommand{\rig}{\mathrm{rig}}
\newcommand{\red}{\mathrm{red}}
\newcommand{\val}{\mathrm{val}}
\newcommand{\End}{\mathrm{End}}
\newcommand{\Fil}{\mathrm{Fil}}
\newcommand{\GL}{\mathrm{GL}}
\newcommand{\Hom}{\mathrm{Hom}}
\newcommand{\id}{\mathrm{id}}
\newcommand{\Q}{\mathbb{Q}}
\newcommand{\Qbar}{\overline{\mathbb{Q}}}
\newcommand{\rbar}{\overline{r}}
\newcommand{\rec}{\mathrm{rec}}
\newcommand{\Res}{\mathrm{Res}}
\newcommand{\rhobar}{\overline{\rho}}
\newcommand{\Spm}{\mathrm{Sp}}
\newcommand{\Spec}{\mathrm{Spec}}
\newcommand{\diag}{\mathrm{diag}}
\newcommand{\Spf}{\mathrm{Spf}}
\newcommand{\Z}{\mathbb{Z}}
\newcommand{\Gbb}{\mathbb{G}}
\newcommand{\Abb}{\mathbb{A}}
\newcommand{\Ubb}{\mathbb{U}}
\newcommand{\boldB}{\mathbb{B}}
\newcommand{\Xfrak}{\mathfrak{X}}
\newcommand{\Dcal}{\mathcal{D}}
\newcommand{\Ocal}{\mathcal{O}}
\newcommand{\Scal}{\mathcal{S}}
\newcommand{\Tcal}{\mathcal{T}}
\newcommand{\Wcal}{\mathcal{W}}
\newcommand{\Fcal}{\mathcal{F}}
\newcommand{\Rcal}{\mathcal{R}}
\newcommand{\Gcal}{\mathcal{G}}
\newcommand{\dbl}{{\mathchoice{\mbox{\rm [\hspace{-0.15em}[}}
                              {\mbox{\rm [\hspace{-0.15em}[}}
                              {\mbox{\scriptsize\rm [\hspace{-0.15em}[}}
                              {\mbox{\tiny\rm [\hspace{-0.15em}[}}}}
\newcommand{\dbr}{{\mathchoice{\mbox{\rm ]\hspace{-0.15em}]}}
                              {\mbox{\rm ]\hspace{-0.15em}]}}
                              {\mbox{\scriptsize\rm ]\hspace{-0.15em}]}}
                              {\mbox{\tiny\rm ]\hspace{-0.15em}]}}}}
\newtheorem{theo}{Theorem}[section]
\newtheorem{conj}[theo]{Conjecture}
\newtheorem{coro}[theo]{Corollary}
\newtheorem{defi}[theo]{Definition}
\newtheorem{lemm}[theo]{Lemma}
\newtheorem{prop}[theo]{Proposition}
\newtheorem{rema}[theo]{Remark}
\newtheorem{ex}[theo]{Example}
\author{Christophe Breuil, Eugen Hellmann and Benjamin Schraen}
\address{Christophe Breuil\\
C.N.R.S.\\
Laboratoire de Math\'ematiques d'Orsay\\
Universit\'e Paris-Sud\\
Universit\'e Paris-Saclay\\
F-91405 Orsay\\
France\\
Christophe.Breuil@math.u-psud.fr}
\address{Eugen Hellmann\\
Mathematisches Institut\\
Universit\"at Bonn\\
Endenicher Allee 60\\
D-53115 Bonn \\
Germany\\
hellmann@math.uni-bonn.de}
\address{Benjamin Schraen\\
C.N.R.S.\\ 
Laboratoire de Math\'ematiques\\
Universit\'e de Versailles St.~Quen\-tin\\
45 Av. des \'Etats-Unis\\
F-78035 Versailles\\
France\\
benjamin.schraen@uvsq.fr}
\title{Smoothness and classicality on Eigenvarieties}
\thanks{We thank John Bergdall, Laurent Berger, Peter Scholze and Jack Thorne for their answers to our questions. C.~B. and B.~S. are supported by the C.N.R.S. and by the A.N.R. project Th\'eHopaD (ANR-2011-BS01-005) and B.~S. thanks I.H.\'E.S. for its hospitality. E.~H. is supported by SFB-TR 45 of the D.F.G.}
\begin{document}

\begin{abstract}
Let $p$ be a prime number and $f$ an overconvergent $p$-adic automorphic form on a definite unitary group which is split at $p$. Assume that $f$ is of ``classical weight'' and that its Galois representation is crystalline at $p$, then $f$ is conjectured to be a classical automorphic form. We prove new cases of this conjecture in arbitrary dimensions by making crucial use of the patched eigenvariety constructed in \cite{BHS}.
\end{abstract}

\maketitle
\tableofcontents

\section{Introduction}

Let $p$ be a prime number. In this paper we are concerned with classicality of $p$-adic automorphic forms on some unitary groups, i.e.~we are looking for criteria that decide whether a given $p$-adic automorphic form is classical or not. More precisely we work with $p$-adic forms of finite slope, that is, in the context of \emph{eigenvarieties}.

Let $F^+$ be a totally real number field and $F$ be an imaginary quadratic extension of $F^+$. We fix a unitary group $G$ in $n$ variables over $F^+$ which splits over $F$ and over all $p$-adic places of $F^+$, and which is compact at all infinite places of $F^+$. Associated to such a group $G$ (and the choice of a tame level, i.e.~a compact open subgroup of $G(\Abb_{F^+}^{p\infty})$) there is a nice Hecke eigenvariety which is an equidimensional rigid analytic space of dimension $n[F^+:\Q]$, see e.g. \cite{Cheeig}, \cite{BelChe} or \cite{Emerton}. One may view a  {\it $p$-adic overconvergent eigenform of finite slope}, or simply overconvergent form, as a point $x$ of such an eigenvariety and one can associate to each overconvergent form a continuous semi-simple representation $\rho_x:{\rm Gal}(\overline F/F)\rightarrow \GL_n(\overline \Q_p)$ which is unramified outside a finite set of places of $F$ and which is trianguline in the sense of \cite{Co} at all places of $F$ dividing $p$ (\cite{KPX}).

A natural expectation deduced from the Langlands and Fontaine-Mazur conjectures is that, if $\rho_x$ is de Rham (in the sense of Fontaine) at places of $F$ dividing $p$, then $x$ is a {\it classical} automorphic form (see Definition \ref{defclass} and Proposition \ref{compaclas} for the precise definition). However, the naive version of this statement fails for two reasons: (1) a classical automorphic form for $G(\Abb_{F^+})$ can only give Galois representations which have distinct Hodge-Tate weights (in each direction $F\hookrightarrow \overline\Q_p$) and (2) the phenomenon of {\it companion} forms shows that there can exist classical and non-classical forms giving the same Galois representation. However, we can resolve (1) by requiring $\rho_x$ to have distinct Hodge-Tate weights and (2) by requiring $x$ to be of ``classical'' (or dominant) weight. In fact, since the Hodge-Tate weights of $\rho_x$ are related to the weight of $x$, requiring the latter automatically implies the former, once $\rho_x$ is assumed to be de Rham. As a conclusion, it seems reasonable to expect that any overconvergent form $x$ of classical weight such that $\rho_x$ is de Rham at places of $F$ dividing $p$ is a classical automorphic form (see Conjecture $\ref{classiconj}$ and Remark \ref{extra}).

Such a classicality theorem is due to Kisin (\cite{Kisinoverconvergent}) in the context of Coleman-Mazur's eigencurve, i.e.~in the slightly different setting of ${\rm GL}_2/\Q$. Note that, at the time of \cite{Kisinoverconvergent}, the notion of a trianguline representation was not available, and in fact \cite{Kisinoverconvergent} inspired Colmez to define trianguline representations (\cite{Co}). 

In the present paper we prove new cases of this classicality conjecture (in the above unitary setting). In particular we are able to deal with cases where the overconvergent form $x$ is {\it critical}. Throughout, we assume that $\rho_x$ is crystalline at $p$-adic places. Essentially the same proof should work if $\rho_x$ is only assumed crystabelline, but the crystalline assumption significantly simplifies the notation.

To state our main results, we fix an overconvergent form $x$ of classical weight such that $\rho_x$ is crystalline at all places dividing $p$. Such an overconvergent form can be described by a pair $(\rho_x,\delta_x)$, where $\rho_x$ is as above and $\delta_x=(\delta_{x,v})_{v\in S_p}$ is a locally $\Q_p$-analytic character of the diagonal torus of $G(F^+\otimes_{\Q}\Q_p)\cong \prod_{v\in S_p}\GL_n(F^+_v).$  Here $S_p$ denotes the set of places of $F^+$ dividing $p$. There are nontrivial relations between $\rho_{x,v}:=\rho\vert_{{\rm Gal}(\overline {F^+_v}/F^+_v)}$ and $\delta_{x,v}$, in particular the character $\delta_{x,v}$ defines an ordering of the eigenvalues of the crystalline Frobenius on $D_{\rm cris}(\rho_{x,v})$. If we assume that these Frobenius eigenvalues are pairwise distinct, then this ordering defines a Frobenius stable flag in $D_{\rm cris}(\rho_{x,v})$. We can therefore associate to $x$ for each $v\in S_p$ a permutation $w_{x,v}$ that gives the relative position of this flag with respect to the Hodge filtration on $D_{\rm cris}(\rho_{x,v})$, see \S\ref{weyl} (where we rather use another equivalent definition of $w_{x,v}$ in terms of triangulations).  Following \cite[\S2.4.3]{BelChe} we say that $x$ is {\it noncritical} if, for each $v$, the permutation $w_{x,v}$ is trivial. The invariant $(w_{x,v})_{v\in S_p}$ can thus be seen as ``measuring'' the criticality of $x$. 

In the statement of our main theorem, we need to assume a certain number of Working Hypotheses (basically the combined hypotheses of all the papers we use). We denote by $\rhobar_x$ the mod $p$ semi-simplification of $\rho_x$. These Working Hypotheses are:
\begin{enumerate}
\item[(i)] The field $F$ is unramified over $F^+$ and $G$ is quasi-split at all finite places of $F^+$;
\item[(ii)]the tame level of $x$ is hyperspecial at all finite places of $F^+$ inert in $F$;
\item[(iii)]$\rhobar_x({\rm Gal}(\overline F/F(\zeta_p))$ is adequate (\cite{Thorne}) and $\zeta_p\notin \overline F^{\ker({\rm ad}\rhobar_x)}$;
\item[(iv)]the eigenvalues of $\varphi$ on $D_{\rm cris}(\rho_{x,v})$ are sufficiently generic for any $v\in S_p$ (Definition \ref{veryreg}).
\end{enumerate}

Our main theorem is:

\begin{theo}[Cor. \ref{mainclassic}]\label{mainintro}
Let $p>2$ and assume that the group $G$  and the tame level satisfy (i) and (ii).
Let $x$ be an overconvergent form of classical weight such that $\rho_x$ is crystalline and satisfies (iii) and (iv). If $w_{x,v}$ is a product of distinct simple reflections for all places $v$ of $F^+$ dividing $p$, then $x$ is classical.
\end{theo}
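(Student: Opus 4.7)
The plan is to exploit the patched eigenvariety $X_p(\rhobar_x)$ constructed in \cite{BHS} and reduce classicality to a local smoothness statement at the point $x_\infty \in X_p(\rhobar_x)$ corresponding to $x$. The scheme of proof would be: (a) identify a local model of $X_p(\rhobar_x)$ around $x_\infty$ built from trianguline deformation spaces; (b) prove that $X_p(\rhobar_x)$ is smooth at $x_\infty$ under the Weyl-theoretic hypothesis on the $w_{x,v}$; (c) deduce the existence of a dominant companion point on the same irreducible component as $x_\infty$, which is equivalent to classicality.

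For step (a), I would work with the closed immersion of $X_p(\rhobar_x)$ into $\prod_{v \in S_p} X_\mathrm{tri}(\rhobar_{x,v}) \times (\Spf R_\infty^p)^\rig$ supplied by \cite{BHS}. The image of $x_\infty$ under projection to each factor $X_\mathrm{tri}(\rhobar_{x,v})$ is a crystalline point, and the relative position invariant of the triangulation with respect to the Hodge filtration is the permutation $w_{x,v}$. The very regularity assumption (iv) ensures that the crystalline Frobenius eigenvalues are well-separated, so the only nontrivial contribution to the local geometry at $x_\infty$ comes from precisely this Hodge/trianguline interaction recorded by $w_{x,v}$.

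The central technical step is (b): proving smoothness of $X_\mathrm{tri}(\rhobar_{x,v})$ at the given crystalline point under the hypothesis that $w_{x,v}$ is a product of distinct simple reflections. One would compute the completed local ring via deformations of $(\varphi,\Gamma)$-modules along the triangulation; by (iv), the Galois cohomology groups controlling these deformations split cleanly according to the combinatorics of $w_{x,v}$. A Weyl-group element expressible as a product of distinct simple reflections has Bruhat interval $[e, w_{x,v}]$ a Boolean lattice, and one shows that this yields exactly $\ell(w_{x,v})$ independent infinitesimal deformation directions, assembling into a smooth formal disk rather than the singular Schubert-type geometry one gets for more general $w$. Combined over all $v \in S_p$ with the formally smooth factor coming from $R_\infty^p$, this produces smoothness of $X_p(\rhobar_x)$ at $x_\infty$.

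For step (c), smoothness of $X_p(\rhobar_x)$ at $x_\infty$ forces the unique irreducible component through $x_\infty$ to contain companion points corresponding to every $w' \leq w_{x,v}$ in each Bruhat interval, in particular the companion with trivial Weyl element at all $v \in S_p$. Existence of this dominant companion point on $X_p(\rhobar_x)$ is, via Emerton's Jacquet functor formalism and the local-global compatibility built into the patched eigenvariety, equivalent to classicality of $x$. The hardest part is certainly step (b): precisely describing the completed local ring of $X_\mathrm{tri}(\rhobar_{x,v})$ at a crystalline point in terms of Weyl-group combinatorics, and showing that ``product of distinct simple reflections'' is exactly the combinatorial condition under which this local ring becomes smooth rather than merely normal or Cohen--Macaulay; the combinatorial input there is that precisely such $w_{x,v}$ have Boolean lower intervals in Bruhat order.
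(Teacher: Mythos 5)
Your proposal captures the paper's strategic frame — use the patched eigenvariety $X_p(\rhobar_x)$ from \cite{BHS}, embed it into a product of trianguline varieties, and reduce classicality to a local statement governed by the Weyl group elements $w_{x,v}$ — but there are two genuine gaps in the execution that would have to be filled.

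First, in your step (b), you propose to compute the completed local ring of $X_{\rm tri}^\square(\rhobar_{x,v})$ at $x_v$ and prove it is smooth. The paper cannot and does not do this for the full trianguline variety: the tangent-space bound of Theorem \ref{upperbound} only applies to a union $X$ of irreducible components satisfying the accumulation property of Definition \ref{accu}, and it is left open whether $X_{\rm tri}^\square(\rbar)$ itself has this property. This is explicitly flagged in the introduction as the reason Theorem \ref{mainintro2} has to be sharpened into Theorem \ref{classicalitycrit}. The workaround is to compare, after intersecting with a small neighbourhood $V_v$ of $x_v$, the component $Z_{{\rm tri},V_v}(x_v)$ (through the crystalline deformation locus $\iota_{{\bf k}_v}(\widetilde Z_{\rm cris}(x_v))$) with the component $Z_v$ from the automorphic support, and then to apply Corollary \ref{Xtrismooth} to the union $Z_{{\rm tri},V_v}(x_v)\cup Y_v$, which does satisfy the accumulation property by Propositions \ref{acconZtri} and \ref{acconZaut}. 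Smoothness at $x_v$ of this union then forces $Z_{{\rm tri},V_v}(x_v)\subseteq Z_v$. Without the accumulation property and the construction of $Z_{{\rm tri},V_v}(x_v)$, your smoothness argument is not applicable to the objects you would actually have in hand. Relatedly, the combinatorial lever in the paper is the rank identity $\lg(w) = d_w$ from Lemma \ref{coxeter}, used to show the excess term $\lg(w_x)-d_x$ vanishes; the Boolean-lattice characterization of products of distinct simple reflections is equivalent but is not what enters the tangent-space estimate.

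Second, and more seriously, your step (c) is not how the paper deduces classicality, and as written it does not produce a valid argument. The point $x=(\rho,\delta)$ already has a dominant parameter and already lies on the eigenvariety; there is no separate ``dominant companion point with trivial Weyl element'' whose existence would need to be established. Classicality is the statement that the corresponding $G_p$-morphism factors through the locally algebraic quotient ${\rm LA}(\delta)$, equivalently that $x$ lies in the support of the patched module $\Pi_\infty(\lambda)'$ for the classical weight $\lambda$. In the paper this is proved by observing that the support of $\Pi_\infty(\lambda)'$ is a union of irreducible components of $\Xfrak_{\rhobar^p}\times \prod_v\Xfrak_{\rhobar_{\tilde v}}^{\square,{\bf k}_v{\rm -cr}}\times\Ubb^g$, and then locating, on $Z_{{\rm tri},V_v}(x_v)$, a nearby crystalline noncritical point $y'$ with generic Frobenius eigenvalues; for such $y'$ one shows directly (via the Jordan--H\"older analysis of $\Fcal_{\overline B_p}^{G_p}(\epsilon')$ and \cite[Lem.2.11]{BHS}) that the relevant morphism factors through the locally algebraic quotient, so $y'$ is in the support, hence so is $x$. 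The companion-point Theorem \ref{intercompanion} (producing new points on $X_p(\rhobar)$ from strong-linkage relations) is used only in the conditional Section 5 to compute the exact tangent-space dimension assuming Conjecture \ref{BHS}; it plays no role in the unconditional classicality argument. If you want to complete your proof along the lines sketched, you would need to replace step (c) with a ``nearby generic crystalline noncritical point + patched module support'' argument, and in step (b) replace the direct smoothness claim by a tangent-space bound on accumulation-property subsets.
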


Note that the assumption on the $w_{x,v}$ in Theorem \ref{mainintro} is empty when $n=2$, and already this $n=2$ case was not previously known (to the knowledge of the authors). The noncritical case of Theorem \ref{mainintro}, i.e. the special case where all the $w_{x,v}$ are trivial, is already known and due to Chenevier (\cite[Prop.4.2]{Chenevierfern}). Thus the main novelty, and difficulty, in Theorem \ref{mainintro} is that it deals with possibly critical (though not too critical) points.

In fact we give a more general classicality criterion and prove that it is satisfied under the assumptions of Theorem $\ref{mainintro}$. This criterion is formulated in terms of the rigid analytic space of trianguline representations  $X_{\rm tri}^\square(\rhobar_{x,v})$ defined in \cite{HellmannFS} and \cite[\S2.2]{BHS}. For every $v\in S_p$ there is a canonical morphism from the eigenvariety to $X_{\rm tri}^\square(\rhobar_{x,v})$.

\begin{theo}[Cor. \ref{classicalitycrit}, Rem. \ref{uniqueirredcompo}]\label{mainintro2}
Let $p>2$ and assume that the group $G$  and the tame level satisfy (i) and (ii).
Let $x$ be an overconvergent form of classical weight such that $\rho_x$ is crystalline and satisfies (iii) and (iv). If for any $v\in S_p$ the image $x_v$ of $x$ in $X_{\rm tri}^\square(\rhobar_{x,v})$ is contained in a unique irreducible component of $X_{\rm tri}^\square(\rhobar_{x,v})$, then $x$ is classical. 
\end{theo}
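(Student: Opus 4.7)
The plan is to deduce classicality from the geometry of the patched eigenvariety $X_p(\rhobar)$ constructed in \cite{BHS}. Under the working hypotheses (i)--(iv), a localization near $\rhobar_x$ of the eigenvariety for $G$ embeds (after base change to the patching universe) into $X_p(\rhobar)$, so that $x$ may be viewed as a point of $X_p(\rhobar)$ at which classicality is determined by local geometry. This space comes with a natural morphism
\[
\kappa\colon X_p(\rhobar)\longrightarrow \prod_{v\in S_p}X_{\rm tri}^\square(\rhobar_{x,v})\times \Wcal,
\]
where $\Wcal$ absorbs weight and tame data, and the structural results of \cite{BHS} show that, near a crystalline point satisfying (iii)--(iv), $\kappa$ is, up to a smooth factor, close to a closed immersion. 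In particular it should reflect local irreducible components faithfully.

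The first step is to transfer the uniqueness hypothesis through $\kappa$: since $\kappa$ respects local components up to a smooth factor, the assumption that each $x_v$ lies on a unique irreducible component of $X_{\rm tri}^\square(\rhobar_{x,v})$ forces $x$ to lie on a unique local irreducible component $Z$ of $X_p(\rhobar)$. The second step is to identify $Z$ as an \emph{automorphic} component, i.e.\ one which is the Zariski closure of classical points. For this I would exhibit a classical point $y$ near $x$ belonging to $Z$: because $\rho_x$ is crystalline and $x$ has classical weight, the refinement of $\rho_x$ defined by the trivial Weyl element provides a noncritical candidate $y$ whose image in $\prod_v X_{\rm tri}^\square(\rhobar_{x,v})$ lies, by the uniqueness assumption, on the same local component as $(x_v)_v$. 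Lifting $y$ back to $X_p(\rhobar)$ places it on $Z$, and since $y$ is noncritical it is classical by \cite[Prop.~4.2]{Chenevierfern}.

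Once $Z$ is known to be automorphic, classical points are Zariski-dense in $Z$, and since $x$ has classical weight and lies on no other component of $X_p(\rhobar)$, one concludes that $x$ itself is classical by descending through the embedding of the eigenvariety into $X_p(\rhobar)$ and interpreting the local Hecke algebra at $x$. The main obstacle I anticipate is in the first step: at possibly critical crystalline points the local geometry of $X_{\rm tri}^\square(\rhobar_{x,v})$ is subtle---irreducible components are indexed in a nontrivial way by Weyl group elements---so proving that $\kappa$ reflects components faithfully depends on the detailed study of $X_{\rm tri}^\square$ carried out in \cite{HellmannFS} and its use in \cite{BHS}, together with the genericity hypothesis (iv) to pin down the relevant deformation theory.
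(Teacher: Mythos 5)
Your high-level architecture (view $x$ on the patched eigenvariety $X_p(\rhobar)$, translate the uniqueness hypothesis into a statement about components of $X_p(\rhobar)$, find a classical point on the same component, and conclude) is close to what the paper does, and your first step is essentially Remark~\ref{uniqueirredcompo}(i). However, the two steps that do the actual work each have a genuine gap.

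First, your choice of witness point $y$ is not available. You take the \emph{noncritical companion point} of $x$, i.e.\ the same Galois representation $\rho_x$ with the refinement whose Weyl element is trivial. This point has a different character $\delta_y$ and hence a different weight $\omega(y)\neq\omega(x)$; it is not in any neighbourhood of $x$, so the uniqueness hypothesis at $x_v$ says nothing about which components $y_v$ lies on. More seriously, there is no reason at this stage of the paper that $y$ lies on $X_p(\rhobar)$ at all: proving this is precisely an \emph{existence-of-companion-points} statement. The paper's own companion point theorem (Theorem~\ref{intercompanion}) goes in the opposite direction (``small'' weight appears $\Rightarrow$ ``big'' weight appears), and the reverse direction -- which is what you need -- is Proposition~\ref{companion}, proved only under Conjecture~\ref{BHS} (modularity lifting), a much stronger hypothesis than what Theorem~\ref{mainintro2} assumes. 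The paper avoids this by picking, inside the crystalline locus $\widetilde Z_{\rm cris}(x_v)\cap V_v$ of the unique component through $x$, a point $y'$ with a \emph{different} (generic, crystalline) Galois representation but the same labelled Hodge--Tate weights; for such $y'$ the component-containment hypothesis of Theorem~\ref{classicalitycrit} (implied by your uniqueness assumption as in Remark~\ref{uniqueirredcompo}(i)) immediately gives $y'\in X_p(\rhobar)$, and its classicality is elementary via \cite[Lem.2.11]{BHS}.

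Second, and more fundamentally, your closing step -- ``classical points are Zariski-dense in $Z$, and since $x$ has classical weight and lies on no other component, $x$ itself is classical'' -- has no mechanism behind it. Zariski density of classical points on a component never by itself forces a limit point to be classical; that is exactly the phenomenon of criticality that makes this theorem nontrivial, and noncritical classicality criteria (\cite{Chenevierfern}) accumulate classical points at $x$ without saying anything about $x$ itself. The mechanism the paper actually uses is the Taylor--Wiles--Kisin structure of the patched module: classicality of $x$ is equivalent to $\mathfrak{p}_y$ lying in the support of $\Pi_\infty(\lambda)'$ (via local Langlands compatibility at $p$ and the genericity hypothesis on Frobenius eigenvalues), and by \cite[Lem.4.17]{CEGGPS} this support is a \emph{union of irreducible components} of $\Xfrak_{\rhobar^p}\times\prod_v\Xfrak_{\rhobar_{\tilde v}}^{\square,{\bf k}_v\text{-cr}}\times\Ubb^g$. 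Since the relevant component $\Xfrak^p\times\prod_v Z_{\rm cris}(\rho_{\tilde v})\times\Ubb^g$ is irreducible and contains the classical point $y'$, it lies in the support, hence so does $y$. Without invoking this support property of $\Pi_\infty(\lambda)'$, the final deduction does not go through.
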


According to this theorem we need to understand the local geometry of the space $X_{\rm tri}^\square(\rhobar_{x,v})$ at $x_v$. It turns out that much of this local geometry is controlled by the Weyl group element $w_{x,v}$ associated to $x$ which only depends on the image $x_v$ of $x$ in $X_{\rm tri}^\square(\rhobar_{x,v})$. For $v\in S_p$ denote by $\lg(w_{x,v})$ the length of the permutation $w_{x,v}$ and by $d_{x,v}$ the rank of the $\Z$-module generated by $w_{x,v}(\alpha)-\alpha$, as $\alpha$ ranges over the roots of $(\Res_{F_v^+/\Q_p}\GL_n)\times_{\Q_p} \overline \Q_p\cong \prod_{\tau:\, F_v^+\hookrightarrow \overline\Q_p}\GL_n$. Then $d_{x,v}\leq \lg(w_{x,v})$, with equality if and only if $w_{x,v}$ is a product of distinct simple reflections (Lemma \ref{coxeter}).

\begin{theo}[Th. \ref{upperbound}, Cor. \ref{Xtrismooth}]\label{mainlocal}
Let $v\in S_p$ and let $X\subseteq X_{\rm tri}^\square(\overline \rho_{x,v})$ be a union of irreducible components that contain $x_v$ and satisfy the {\it accumulation property} of Definition \ref{accu} at $x_v$. Then
$$\dim T_{X,x_v}\leq \dim X+ \lg(w_{x,v})-d_{x,v}=\dim X_{\rm tri}^\square(\overline \rho_{x,v}) + \lg(w_{x,v})-d_{x,v},$$
where $T_{X,x_v}$ is the tangent space to $X$ at $x_v$. In particular $X$ is smooth at $x_v$ when $w_{x,v}$ is a product of distinct simple reflections.
\end{theo}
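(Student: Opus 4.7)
The plan is to compute the tangent space $T_{X,x_v}$ via the deformation theory of trianguline $(\varphi,\Gamma)$-modules, exploiting the accumulation property to control which infinitesimal deformations actually survive inside $X$, and then comparing with a local model of Grothendieck--Springer type in which the Weyl group element $w_{x,v}$ plays a direct combinatorial role.

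The first step is to use the accumulation property at $x_v$: it forces any tangent vector to $X$ at $x_v$ to be approximable by tangent vectors at very generic crystalline points with regular Hodge--Tate weights lying in $X$ near $x_v$. At such generic points $X_{\rm tri}^\square(\overline\rho_{x,v})$ is smooth of known dimension, and the tangent vectors have an explicit description as first-order deformations of $\rho_{x,v}$ together with a compatible deformation of the triangulation on $D_{\rm rig}(\rho_{x,v})$ whose parameter deforms $\delta_{x,v}$. This reduces the problem to estimating the dimension of an explicit first-order trianguline deformation functor at $x_v$.

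The heart of the proof is then a local model calculation. Following the strategy of \cite{BHS}, the completed local ring of $X_{\rm tri}^\square(\overline\rho_{x,v})$ at $x_v$ is compared, embedding by embedding $\tau\colon F_v^+\hookrightarrow\overline\Q_p$, with the completed local ring at a suitable point of the Grothendieck--Springer resolution $\widetilde{\mathfrak{g}}\to\mathfrak{g}$ of $\mathfrak{g}=\mathfrak{gl}_n$. Under this comparison $x_v$ maps to a pair (triangulation flag, Hodge filtration) whose relative Bruhat position is exactly $w_{x,v}$, and the tangent space at such a pair decomposes as a piece of dimension $\dim X$ tangent to the Bruhat stratum, plus a correction term coming from infinitesimal variations that may jump to a larger Bruhat cell. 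An elementary root-theoretic computation then identifies this correction with the cokernel of the map sending a root $\alpha$ with $w_{x,v}(\alpha)\neq\alpha$ to $w_{x,v}(\alpha)-\alpha$: its image has rank $d_{x,v}$ while the source has rank $\lg(w_{x,v})$, so the correction has dimension $\lg(w_{x,v})-d_{x,v}$, giving the stated bound.

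The smoothness assertion is then immediate from Lemma \ref{coxeter}: when $w_{x,v}$ is a product of distinct simple reflections, $d_{x,v}=\lg(w_{x,v})$, so $\dim T_{X,x_v}\leq\dim X$, and combined with the general inequality $\dim T_{X,x_v}\geq\dim X$ this forces equality and smoothness at $x_v$. The main obstacle will be the local model step: one must construct a compatible isomorphism of completed local rings that correctly matches the trianguline/Hodge data with flag/Borel data, and carefully tracks the contributions from all embeddings $\tau$ as well as the flexibility in the character $\delta_{x,v}$. Once this is in place, the identification of the correction term with $\lg(w_{x,v})-d_{x,v}$ is a combinatorial exercise in the root system of $\prod_\tau\GL_n$.
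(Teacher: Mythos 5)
Your proposal identifies the correct invariants that govern the answer—the Weyl group element $w_{x,v}$, its length, the rank $d_{x,v}$, and Lemma \ref{coxeter}—and the final smoothness deduction is right. However, the central step of your argument is not available and is, in fact, a strictly stronger statement than what is proved here.

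You propose to compare the completed local ring of $X_{\rm tri}^\square(\overline\rho_{x,v})$ at $x_v$ with that of a Grothendieck--Springer type local model whose Bruhat stratification is governed by $w_{x,v}$. No such local model is constructed in this paper nor in \cite{BHS}; this was established only in subsequent work of the same authors. More to the point, such a comparison would compute the tangent space exactly and hence would prove Conjecture \ref{mainconj} (the equality $\dim T_{\widetilde X_{\rm tri}^\square(\rbar),x}=\lg(w_x)-d_x+\dim X_{\rm tri}^\square(\rbar)$), which the paper explicitly leaves open except under modularity hypotheses (\S\ref{modularity}). The accumulation hypothesis in the statement is a symptom of this: it is precisely what allows Bergdall's results to be applied, and it plays no role in a genuine local-model argument.

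The actual argument works entirely within Galois cohomology of $(\varphi,\Gamma)$-modules. After reducing to bounding the image of $T_{X,x_v}$ inside ${\rm Ext}^1_{\mathcal{G}_K}(r,r)$ (Lemma \ref{boundtx}), one uses Bergdall's Theorems \ref{bergweight} and \ref{bergsplit}, both of which crucially require the accumulation property, to show that this image lies in $V\cap V_1\cap\cdots\cap V_{n-1}$, where $V$ encodes the ``weight conditions'' $d_{\tau,i}=d_{\tau,w_{x,\tau}^{-1}(i)}$ and the $V_i$ encode compatibility with the \emph{twisted} sub-$(\varphi,\Gamma_K)$-modules $t^{\Sigma_i({\bf k},w_x)}D_{\rig}(r)^{\leq i}$. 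The explicit twists $t^{\Sigma_i({\bf k},w_x)}$ are the technical heart of the matter: at a critical point the parameter $\delta$ does not correspond to a saturated triangulation, and it is precisely the interplay between the saturated filtration and these $t$-twists that produces the excess dimension $\lg(w_x)-d_x$. Your proposal glosses over this, treating the triangulation as if it deformed unambiguously; this is the gap. The dimension count for $V\cap V_1\cap\cdots\cap V_{n-1}$ (Propositions \ref{condweight}, \ref{condsplit}, \ref{inter}) then rests on Ext-group computations (Lemma \ref{null}, Proposition \ref{cris}) requiring the very regularity hypothesis, again absent from your outline.
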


The accumulation condition in Theorem \ref{mainlocal} actually prevents us from directly applying it to $X=X_{\rm tri}^\square(\overline \rho_{x,v})$ and thus directly deducing Theorem \ref{mainintro} from Theorem \ref{mainintro2}. Hence we have to sharpen Theorem \ref{mainintro2}, see Theorem \ref{classicalitycrit}.

Assuming the classical modularity lifting conjectures for $\rhobar_x$ (in all weights with trivial inertial type), there is a certain union $\widetilde X_{\rm tri}^\square(\rhobar_{x,v})$ of irreducible components of $X_{\rm tri}^\square(\rhobar_{x,v})$ such that $\prod_{v\in S_p}\widetilde X_{\rm tri}^\square(\rhobar_{x,v})$ is (essentially) described by the patched eigenvariety $X_p(\rhobar_x)$ defined in \cite{BHS} (see Remark \ref{conjvariant}). In the last section of the paper (\S\ref{modularity}), we prove (assuming modularity lifting conjectures) that the inequality in Theorem \ref{mainlocal} for $X=\widetilde X_{\rm tri}^\square(\rhobar_{x,v})$ is an {\it equality} for all $v\in S_p$,
\begin{equation}\label{mainequality}
\dim T_{\widetilde X_{\rm tri}^\square(\rhobar_{x,v}),x_v}=\dim X_{\rm tri}^\square(\overline \rho_{x,v}) + \lg(w_{x,v})-d_{x,v}\ \ \ \ \ \ {\rm (assuming\ modularity),}
\end{equation}
see Corollary \ref{bonnedim}. The precise computation (\ref{mainequality}) of the dimension of the tangent space is intimately related to (and uses in its proof) the existence of many \emph{companion points} on the patched eigenvariety $X_p(\rhobar_x)$. These companion points are provided by the following unconditional theorem, which is of independent interest.

\begin{theo}[Th. \ref{intercompanion}]\label{companionptsintro}
Let $y=((\rho_v)_{v\in S_p},\epsilon)$ be a point on $X_p(\rhobar_x)$. Let $T$ be the diagonal torus in $\GL_n$ and let $\delta$ be a locally $\Q_p$-analytic character of $T(F^+\otimes_{\Q}\Q_p)$ such that $\epsilon\delta^{-1}$ is an algebraic character of $T(F^+\otimes_{\Q}\Q_p)$ and such that $\epsilon$ is strongly linked to $\delta$ in the sense of \cite[\S5.1]{HumBGG} (as modules over the Lie algebra of $T(F^+\otimes_{\Q}\Q_p)$). Then $((\rho_v)_{v\in S_p},\delta)$ is also a point on $X_p(\rhobar_x)$.
\end{theo}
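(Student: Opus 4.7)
The approach rests on three pillars: Emerton's adjunction for the locally analytic Jacquet functor, the strong linkage principle for Verma modules in the BGG category $\mathcal O$, and the Orlik--Strauch construction of admissible locally analytic representations from objects of $\mathcal O$. The first step is to reformulate membership in $X_p(\rhobar_x)$ in terms of Hom spaces. By the very construction of the patched eigenvariety in \cite{BHS}, a pair $((\rho_v)_v,\chi)$ lies on $X_p(\rhobar_x)$ if and only if there is a nonzero $G_p$-equivariant continuous morphism
\begin{equation*}
\mathrm{Ind}_{\overline B}^{G_p}\bigl(\chi\delta_B^{-1}\bigr)^{\rm an}\longrightarrow \Pi_\infty^{R_\infty\text{-an}}\bigl[\mathfrak m_{(\rho_v)_v}\bigr],
\end{equation*}
where $G_p:=G(F^+\otimes_\Q\Q_p)$, $\Pi_\infty$ denotes the patched Banach representation of \cite{BHS}, and $\mathfrak m_{(\rho_v)_v}\subseteq R_\infty$ is the maximal ideal of the patched deformation ring cut out by the local Galois parameters. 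This is Emerton's adjunction applied to the locally analytic vectors of $\Pi_\infty$.

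Next, write $\epsilon=\epsilon^{\rm sm}\epsilon^{\rm alg}$ and $\delta=\delta^{\rm sm}\delta^{\rm alg}$. The hypothesis that $\epsilon\delta^{-1}$ is algebraic forces $\epsilon^{\rm sm}=\delta^{\rm sm}=:\chi^{\rm sm}$, and we let $\lambda_\epsilon,\lambda_\delta$ denote the weights of $\epsilon^{\rm alg},\delta^{\rm alg}$ viewed as characters of the Lie algebra of $T(F^+\otimes_\Q\Q_p)$. Strong linkage $\epsilon\uparrow\delta$ translates into $\lambda_\epsilon\uparrow\lambda_\delta$ in the sense of \cite[\S5.1]{HumBGG}, so there is a nonzero $U(\mathfrak g)$-equivariant embedding $M(\lambda_\epsilon)\hookrightarrow M(\lambda_\delta)$ of Verma modules in the BGG category $\mathcal O$. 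Applying the Orlik--Strauch functor $\mathcal F^{G_p}_{\overline B}(-,\chi^{\rm sm})$, which is exact and contravariant in its first argument, and using the identification of $\mathcal F^{G_p}_{\overline B}(M(\lambda),\chi^{\rm sm})$ with the locally analytic principal series $\mathrm{Ind}^{G_p}_{\overline B}(\chi^{\rm sm}\lambda\delta_B^{-1})^{\rm an}$ (up to a standard sign convention for induction from the opposite Borel), one obtains a continuous $G_p$-equivariant surjection of admissible locally analytic representations
\begin{equation*}
\mathrm{Ind}_{\overline B}^{G_p}\bigl(\delta\delta_B^{-1}\bigr)^{\rm an}\twoheadrightarrow \mathrm{Ind}_{\overline B}^{G_p}\bigl(\epsilon\delta_B^{-1}\bigr)^{\rm an}.
\end{equation*}

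Composing this surjection with the morphism provided by the $\epsilon$-point hypothesis yields a continuous $G_p$-equivariant map $\mathrm{Ind}_{\overline B}^{G_p}(\delta\delta_B^{-1})^{\rm an}\to\Pi_\infty^{R_\infty\text{-an}}[\mathfrak m_{(\rho_v)_v}]$; non-vanishing is automatic since the first factor is surjective onto the source of the nonzero $\epsilon$-point map. Applying the reformulation of the first step in the reverse direction produces a nonzero $\delta$-eigenvector of $J_B(\Pi_\infty^{R_\infty\text{-an}})$ annihilated by $\mathfrak m_{(\rho_v)_v}$, proving that $((\rho_v)_v,\delta)$ is a point of $X_p(\rhobar_x)$. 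The main technical obstacle lies in the Orlik--Strauch step: one must carefully track sign and duality conventions to identify $\mathcal F^{G_p}_{\overline B}$ applied to a Verma module with locally analytic parabolic induction from the opposite Borel, and invoke the full exactness of the Orlik--Strauch functor to guarantee that the embedding of Verma modules yields an honest \emph{surjection}, not merely a nonzero morphism, between the principal series. The $R_\infty$-equivariance of all constructions ensures that the Galois data $(\rho_v)_v$ is preserved throughout the argument.
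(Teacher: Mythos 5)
Your reformulation in the first step is not correct, and the error is fatal to the rest of the argument. The adjunction used in \cite{BHS} (and stated in the paper as (\ref{adj}), citing \cite[Th.4.3]{BreuilAnalytiqueII}) identifies $\Hom_{T_p}(\chi, J_{B_p}(\Pi))$ with $\Hom_{G_p}(\Fcal_{\overline B_p}^{G_p}(\chi),\Pi)$, where $\Fcal_{\overline B_p}^{G_p}(\chi)$ is the Orlik--Strauch representation built from the \emph{dual} Verma module $(U(\mathfrak g_L)\otimes_{U(\overline{\mathfrak b}_L)}(-\lambda))^\vee$, not the locally analytic principal series $(\mathrm{Ind}_{\overline B_p}^{G_p}\chi\delta_{B_p}^{-1})^{\rm an}$. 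As the paper explicitly records, these two representations have the same Jordan--H\"older constituents \emph{but in reverse order}: the locally algebraic piece sits as a quotient of $\Fcal_{\overline B_p}^{G_p}(\chi)$ but as a subobject of the principal series. Having a nonzero map out of the principal series is therefore not the condition for $(\rho,\chi)$ to lie on the eigenvariety.

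If you replace the principal series by the correct object $\Fcal_{\overline B_p}^{G_p}(\cdot)$, the arrows reverse and the easy composition argument collapses. From $\epsilon\uparrow\delta$, the BGG theorem gives the Verma embedding $M_{\overline{\mathfrak b}}(-\lambda_\epsilon)\hookrightarrow M_{\overline{\mathfrak b}}(-\lambda_\delta)$; dualizing and then applying the contravariant exact functor $\Fcal_{\overline B_p}^{G_p}$ yields an \emph{injection} $\Fcal_{\overline B_p}^{G_p}(\epsilon)\hookrightarrow\Fcal_{\overline B_p}^{G_p}(\delta)$. Given a nonzero map $\Fcal_{\overline B_p}^{G_p}(\epsilon)\to\Pi_\infty^{R_\infty\text{-an}}[\mathfrak m]$, one must \emph{extend} it along this injection to $\Fcal_{\overline B_p}^{G_p}(\delta)$, which is precisely an injectivity problem for the target, not a triviality obtained by composing with a surjection. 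This is the heart of the matter, and nothing in your proposal addresses it.

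The paper does not go through the Orlik--Strauch functor at all. Instead it works directly at the level of Jacquet modules: the Verma embedding $\iota_{\mu,\lambda}:U(\mathfrak g_L)\otimes_{U(\mathfrak b_L)}\mu\hookrightarrow U(\mathfrak g_L)\otimes_{U(\mathfrak b_L)}\lambda$ induces a map on $\mathfrak n_L$-eigenspaces of $\Pi_{\mathfrak I}^{\rm an}$, and Lemma \ref{injective} shows this map is \emph{surjective}. That surjectivity is the injectivity statement you need, but it is proved before passing to the eigenspace $[\mathfrak m]$, and it crucially exploits the fact that $\Pi_\infty'$ is finite projective over $S_\infty\dbl K_p\dbr[1/p]$ (a structural feature of the patched module that is lost on $\Pi_\infty[\mathfrak m]$). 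The paper then descends from $\mathfrak n_L$-invariants to $N_0$-invariants (this requires the computation (\ref{mulambda}) showing $\pi_{N_0}$ commutes with $\iota^*_{\mu,\lambda}$) and uses the compactness of the Atkin--Lehner operator $z$ on $N_0$-invariants of $\Pi_{\mathfrak I,H_p}^{(h)}$ (Lemmas \ref{restriction} and \ref{compact}, plus Serre's theory of compact operators) to promote the surjection to one on $T_p^+$-generalized eigenspaces, and only then localizes at $\mathfrak m$. None of this machinery — the Hahn--Banach injectivity of Lemma \ref{injective}, the $N_0$-descent, the compact-operator argument to isolate finite-slope parts — appears in your sketch, and without it there is no argument.
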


We also prove that the equality (\ref{mainequality}) for all $v\in S_p$ (and thus the modularity lifting conjectures) imply that the initial Hecke eigenvariety is itself {\it singular} at $x$ as soon as the Weyl element $w_{x,v}$ is {\it not} a product of distinct simple reflections for some $v\in S_p$, see Corollary \ref{singhecke}.

Let us now outline the strategy of the proofs of Theorems \ref{mainintro2} and \ref{mainlocal}.

The proof of Theorem \ref{mainlocal} crucially uses results of Bergdall (\cite{Bergdall}), together with a fine analysis of the various conditions on the infinitesimal deformations of $\rho_{x,v}$ carried by vectors in $T_{X,x_v}$, see \S\ref{wedgesection} and \S\ref{endofproof}. Very recently, Bergdall proved an analogous bound for the dimension of the tangent space of the initial Hecke eigenvariety at $x$ assuming standard vanishing conjectures on certain Selmer groups (\cite{Bergdraft}).
 
The proof of Theorem \ref{mainintro2} makes use of the patched eigenvariety $X_p(\rhobar_x)$ constructed in \cite{BHS} by applying Emerton's construction of eigenvarieties \cite{Emerton} to the locally analytic vectors of the patched Banach $G(F^+\otimes_{\Q}\Q_p)$-representation $\Pi_\infty$ of \cite{CEGGPS}. As usual with the patching philosophy, the space $X_p(\rhobar_x)$ can be related to another geometric object which has a much more local flavour, namely the space $X_{\rm tri}^\square(\overline \rho_{x,p}):=\prod_{v\in S_p}X_{\rm tri}^\square(\overline \rho_{x,v})$ of trianguline representations. More precisely, by \cite[Th.3.20]{BHS} there is a Zariski closed embedding:
\begin{equation}\label{maininclusion}
X_p(\overline\rho_x)\hookrightarrow \Xfrak_{\rhobar_x^p}\times \Ubb^g \times X_{\rm tri}^\square(\overline \rho_{x,p}),
\end{equation}
identifying the source with a union of irreducible components of the target.
Here $\Ubb^g$ is an open polydisc (related to the patching variables) and $\Xfrak_{\rhobar_x^p}$ is the rigid analytic generic fiber of the framed deformation space of $\rhobar_x$ at all the ``bad'' places prime to $p$. Moreover the Hecke eigenvariety containing $x$ can be embedded into the patched eigenvariety $X_p(\overline\rho_x)$ (see \cite[Th.4.2]{BHS}). As previously, we denote by $x_v$ the image of $x$ in $X_{\rm tri}^\square(\overline \rho_{x,v})$ via (\ref{maininclusion}).

For $v\in S_p$ let us write ${\bf k}_{v}$ for the set of labelled Hodge-Tate weights of $\rho_{x,v}$, and $R_{\overline \rho_{x,v}}^{\square,{\bf k}_v{\rm -cr}}$ for the quotient defined in \cite{Kisindef} of the framed deformation ring of $\overline \rho_{x,v}$ parametrizing crystalline deformations of $\overline \rho_{x,v}$ of Hodge-Tate weight ${\bf k}_v$, and $\Xfrak_{\overline \rho_{x,v}}^{\square,{\bf k}_v{\rm -cr}}$ for the rigid space $(\Spf\, R_{\overline \rho_{x,v}}^{\square,{\bf k}_v{\rm -cr}})^{\rm rig}$. We relate $\Xfrak_{\overline \rho_{x,v}}^{\square,{\bf k}_v{\rm -cr}}$ to $X_{\rm tri}^\square(\overline \rho_{x,v})$ by introducing a third rigid analytic space $\widetilde \Xfrak_{\overline \rho_{x,v}}^{\square,{\bf k}_v{\rm -cr}}$ finite over $\Xfrak_{\overline \rho_{x,v}}^{\square,{\bf k}_v{\rm -cr}}$ parametrizing crystalline deformations $\rho_v$ of $\overline \rho_{x,v}$ of Hodge-Tate weights ${\bf k}_v$ {\it together with an ordering} of the Frobenius eigenvalues on $D_{\rm cris}(\rho_{v})$, see \S\ref{variant} for a precise definition. The space $\widetilde \Xfrak_{\overline \rho_{x,v}}^{\square,{\bf k}_v{\rm -cr}}$ naturally embeds into $X_{\rm tri}^\square(\overline \rho_{x,v})$ and contains the point $x_v$ (and is smooth at $x_v$). We prove that there is a unique irreducible component $Z_{{\rm tri}}(x_v)$ of $X_{\rm tri}^\square(\overline \rho_{x,v})$ containing the unique irreducible component of $\widetilde \Xfrak_{\overline \rho_{x,v}}^{\square,{\bf k}_v{\rm -cr}}$ passing through $x_v$ (Corollary \ref{defnZtri(x)}). Let $Z_{\rm tri}(x):=\prod_{v\in S_p}Z_{\rm tri}(x_v)$, which is thus an irreducible component of $X_{\rm tri}^\square(\overline \rho_{x,p})$ containing $x$. Then Theorem \ref{mainintro2} easily follows from the following theorem (see (i) of Remark \ref{uniqueirredcompo}):

\begin{theo}[Th. \ref{classicalitycrit}]\label{mainclasscrit}
Assume that $\Xfrak_{\rhobar_x^p}\times\Ubb^g\times Z_{\rm tri}(x)\subseteq X_p(\overline\rho_x)$ via (\ref{maininclusion}). Then the point $x$ is classical.
\end{theo}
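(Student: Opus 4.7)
The plan is to use the inclusion $\Xfrak_{\rhobar_x^p}\times\Ubb^g\times Z_{\rm tri}(x)\subseteq X_p(\overline\rho_x)$ to produce a Zariski dense accumulation of noncritical classical points at $x$ on the patched eigenvariety, and then to deduce classicality of $x$ itself from this accumulation via the Jacquet module formalism underlying the construction of $X_p(\overline\rho_x)$.

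The first step is to describe the local structure of $Z_{\rm tri}(x_v)$ at $x_v$ for each $v\in S_p$. By construction $Z_{\rm tri}(x_v)$ contains the smooth irreducible component of $\widetilde\Xfrak_{\overline\rho_{x,v}}^{\square,\mathbf{k}_v\text{-cr}}$ through $x_v$, and hence a full local neighborhood of $x_v$ along crystalline directions at weight $\mathbf{k}_v$. Using interpolation of crystalline deformations across nearby dominant weights, i.e.\ the accumulation property of $X_{\rm tri}^\square$ at its crystalline points, I would also realise inside $Z_{\rm tri}(x_v)$ the loci $\widetilde\Xfrak_{\overline\rho_{x,v}}^{\square,\mathbf{k}_v'\text{-cr}}$ for dominant weights $\mathbf{k}_v'$ ranging in a neighborhood of $\mathbf{k}_v$ in weight space. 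Pushing $\mathbf{k}_v'$ sufficiently deep into the regular dominant cone forces the relative position of the Hodge filtration and the Frobenius-stable refined flag to be trivial, so the corresponding points are noncritical crystalline points of regular dominant weight with pairwise distinct Frobenius eigenvalues, and they accumulate at $x_v$. Via the hypothesis, their products over $v\in S_p$ lie on $X_p(\overline\rho_x)$ and form a Zariski dense subset of the irreducible component of $X_p(\overline\rho_x)$ through $x$; at each of them Chenevier's noncritical classicality criterion \cite[Prop.~4.2]{Chenevierfern} applies and yields classicality.

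The main obstacle is then to pass classicality from this dense set of classical points to $x$ itself, since classicality is not Zariski closed on the whole eigenvariety in general (companion points of classical forms with nontrivial Weyl element being the standard counterexample). To overcome this I would use the explicit realisation of $X_p(\overline\rho_x)$ via Emerton's locally analytic Jacquet module $J_B(\Pi_\infty^{\rm la})$: by Emerton's adjunction formula, classicality at a point of dominant weight amounts to the existence of a nonzero $G$-equivariant morphism from the corresponding locally algebraic principal series into $\Pi_\infty$ carrying the prescribed Hecke eigensystem. Realising the locus of such points on the patched eigenvariety as the support of a coherent sheaf of $G$-equivariant morphisms shows that it is Zariski closed; since it is Zariski dense in the irreducible component of $X_p(\overline\rho_x)$ through $x$ by the accumulation above, it must contain the whole component, and in particular $x$, yielding the classicality of $x$.
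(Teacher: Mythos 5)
Your starting point is correct: use the hypothesis to place nearby points on $X_p(\overline\rho_x)$ and reduce to a statement about coherent supports. However, the proposal has a genuine gap in both the choice of auxiliary points and in the final closure argument.

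First, you produce auxiliary crystalline points by \emph{varying the weight} $\mathbf{k}_v'$ and pushing it deep into the dominant cone to force noncriticality. The paper's proof instead \emph{keeps the weight fixed} at $\mathbf{k}_v$ and perturbs along the crystalline deformation space $\iota_{\mathbf{k}_v}(\widetilde Z_{\rm cris}(x_v))$ to reach a nearby $r_v'$ that is \emph{generic} in the sense of \cite[Def.2.8]{BHS} (a condition on Frobenius eigenvalues, not on the weight). This distinction is not cosmetic: classicality of $x$ is equivalent to $y\in\mathrm{Supp}\,\Pi_\infty(\lambda)'$ for the \emph{specific} $\lambda$ attached to $x$, and the support of $\Pi_\infty(\lambda)'$ is a union of irreducible components of $\Xfrak_{\rhobar^p}\times\prod_v\Xfrak^{\square,\mathbf{k}_v\text{-cr}}_{\rhobar_{\tilde v}}\times\Ubb^g$. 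Nearby points with a different weight $\lambda'$ live in the support of a \emph{different} coherent module $\Pi_\infty(\lambda')'$ over a different crystalline deformation space, so accumulating such points at $x$ says nothing directly about whether $y$ lies in $\mathrm{Supp}\,\Pi_\infty(\lambda)'$. The paper circumvents this precisely by choosing $y'$ on the same component $\Xfrak^p\times\prod_v Z_{\rm cris}(\rho_{\tilde v})\times\Ubb^g$, so that exhibiting a single $y'$ in the support forces the entire component, hence $y$, into the support.

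Second, the concluding step of your argument claims that the ``locus of classical points'' on $X_p(\overline\rho_x)$ is Zariski closed because it is the support of a coherent sheaf of $G$-equivariant morphisms. This is not true without fixing the algebraic weight: for a fixed $\lambda$ the module $\Pi_\infty(\lambda)'$ does have closed support, but the union over all dominant $\lambda$ — which is the set you would actually be accumulating with varying weights — is not the support of a single coherent sheaf, and it is indeed not Zariski closed (companion points are the obstruction you yourself flag, and your fix does not remove it). Relatedly, Chenevier's noncritical classicality criterion \cite[Prop.4.2]{Chenevierfern} is a statement about the Hecke eigenvariety $Y(U^p,\rhobar)$, not about $X_p(\overline\rho_x)$, whose points need not come from $\widehat S(U^p,L)$; ``classicality'' at a point of $X_p(\overline\rho_x)$ has to be interpreted as membership in $\mathrm{Supp}\,\Pi_\infty(\lambda)'$, which Chenevier's criterion does not directly address. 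What replaces it in the paper is the genericness of $r_v'$, \cite[Cor.4.6]{BreuilAnalytiqueII}, \cite[Cor.3.4]{BreuilAnalytiqueI}, and \cite[Lem.2.11]{BHS}, which together rule out a nonzero map $\Fcal_{\overline B_p}^{G_p}(\epsilon')\to\Pi_\infty^{R_\infty\text{-an}}[\mathfrak{p}_{y'}]\otimes k(x')$ failing to factor through $\mathrm{LA}(\epsilon')$. This chain of arguments, and in particular the need for the ``generic'' hypothesis at the auxiliary point, is absent from your proposal and cannot be replaced by the accumulation-and-closure scheme you suggest.
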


Let us finally sketch the proof of Theorem \ref{mainclasscrit} (in fact, for the same reason as above, we have to sharpen Theorem \ref{mainclasscrit}, see Theorem \ref{classicalitycrit}). Let $R_\infty$ be the usual patched deformation ring of $\rhobar_x$, there is a canonical morphism of rigid spaces $X_p(\overline\rho_x)\longrightarrow {\mathfrak X}_\infty:=(\Spf\, R_\infty)^{\rm rig}$. Let $L(\lambda)$ be the finite dimensional algebraic representation of $G(F^+\otimes_{\Q}\Q_p)$ associated (via the usual shift) to the Hodge-Tate weights $({\bf k}_v)_{v\in S_p}$. Proving classicality of $x$ turns out to be equivalent to proving that the image of $x$ in ${\mathfrak X}_\infty$ is in the support of the $R_\infty$-module $\Pi_\infty(\lambda)'$ which is the continuous dual of:
\begin{equation*}
\Pi_\infty(\lambda):=\Hom_{{\prod}_{v\in S_p}\GL_n(\mathcal{O}_{F_{\tilde{v}}})}\big(L(\lambda),\Pi_\infty\big).
\end{equation*}
By \cite[Lem.4.17]{CEGGPS}, the $R_\infty$-module $\Pi_\infty(\lambda)'$ is essentially a Taylor-Wiles-Kisin ``usual'' patched module for the trivial inertial type and the Hodge-Tate weights $({\bf k}_v)_{v\in S_p}$. Forgetting the factors $\Xfrak_{\rhobar_x^p}$ and $\Ubb^g$ which appear in ${\mathfrak X}_\infty$, its support is a union of irreducible components of the smooth rigid space $\prod_{v\in S_p}\Xfrak_{\overline \rho_{x,v}}^{\square,{\bf k}_v{\rm -cr}}$. It is thus enough to prove that the unique irreducible component $Z_{\rm cris}(\rho_x)$ of $\prod_{v\in S_p}\Xfrak_{\overline \rho_{x,v}}^{\square,{\bf k}_v{\rm -cr}}$ passing through $(\rho_{x,v})_{v\in S_p}$ contains a point which is in the support of $\Pi_\infty(\lambda)'$. But it is easy to find a point $y$ in $Z_{\rm tri}(x)$ sufficiently close to $x$ such that $(\rho_{y,v})_{v\in S_p}\in Z_{\rm cris}(\rho_x)$ (in particular $\rho_{y,v}$ is crystalline of the same Hodge-Tate weights as $\rho_{x,v}$) {\it and} moreover $\rho_{y,v}$ is {\it generic} in the sense of \cite[Def.2.8]{BHS} for all $v\in S_p$. The assumption in Theorem \ref{mainclasscrit} implies $y\in X_p(\overline\rho_x)$ and it is now not difficult to prove that such a generic crystalline point of $X_p(\overline\rho_x)$ is always classical, i.e. is in the support of $\Pi_\infty(\lambda)'$.

We end this introduction with the main notation of the paper.

If $K$ is a finite extension of $\Q_p$ we denote by $\mathcal{G}_K$ the absolute Galois group $\mathrm{Gal}(\overline{K}/K)$ and by $\Gamma_K$ the Galois group $\mathrm{Gal}(K(\zeta_{p^n},n\geq 1)/K)$ where $(\zeta_{p^n})_{n\geq 1}$ is a compatible system of primitive $p^n$-th roots of $1$ in $\overline K$. We normalize the reciprocity map $\rec_K:\, K^\times\rightarrow \mathcal{G}_K^{\rm ab}$ of local class field theory so that the image of a uniformizer of $K$ is a geometric Frobenius element. We denote by $\varepsilon$ the $p$-adic cyclotomic character and recall that its Hodge-Tate weight is $1$.

For $a\in L^\times$ (where $L$ is any finite extension of $K$) we denote by ${\rm unr}(a)$ the unramified character of $\mathcal{G}_K$, or equivalently of $\mathcal{G}_K^{\rm ab}$ or $K^\times$, sending to $a$ (the image by $\rec_K$ of) a uniformizer of $K$. For $z\in L$, we let $\vert z\vert_K:=p^{-[K:\Q_p]{\val}(z)}$ where ${\val}(p)=1$. We let $K_0\subseteq K$ be the maximal unramified subfield (we thus have $(\vert \ \vert_K)\vert_{K^\times}={\rm unr}(p^{-[K_0:\Q_p]})={\rm unr}(q^{-1})$ where $q$ is the cardinality of the residue field of $K$). 

If $X=\Spm\,A$ is an affinoid space, we write $\mathcal{R}_{A,K}$ for the Robba ring associated to $K$ with $A$-coefficients (see \cite[Def.6.2.1]{KPX} though our notation is slightly different). Given a continuous character $\delta: K^\times \rightarrow A^\times$ we write $\Rcal_{A,K}(\delta)$ for the rank one $(\varphi,\Gamma_K)$-module on $\Spm\,A$ defined by $\delta$, see \cite[Construction 6.2.4]{KPX}. If $X$ is a rigid analytic space over $L$ (a finite extension of $\Q_p$) and $x$ is a point on $X$, we denote by $k(x)$ the residue field of $x$ (a finite extension of $L$), so that we have $x\in X(k(x))$. If $X$ and $Y$ are two rigid analytic spaces over $L$, we often write $X\times Y$ instead of $X\times_{\Spm\,L}Y$.

If $X$ is a ``geometric object over $\Q_p$'' (i.e. a rigid space, a scheme, an algebraic group, etc.), we denote by $X_K$ its base change to $K$ (for instance if $X$ is the algebraic group $\GL_n$ we write $\GL_{n,K}$). If $H$ is an abelian $p$-adic Lie group, we let $\widehat{H}$ be the rigid analytic space over $\Q_p$ which represents the functor mapping an affinoid space $X=\Spm\,A$ to the group $\Hom_{\rm cont}(H,A^\times)$ of continous group homomorphisms (or equivalently locally $\Q_p$-analytic group homomorphisms) $H\rightarrow A^\times$. Finally, if $M$ is an $R$-module and $I\subseteq R$ an ideal, we denote by $M[I]\subseteq M$ the submodule of elements killed by $I$, and if $S$ is any finite set, we denote by $\vert S\vert$ its cardinality.

\section{Crystalline points on the trianguline variety}\label{localpart1}

We give several important definitions and results, including the key local statement bounding the dimension of some tangent spaces on the trianguline variety (Theorem \ref{upperbound}).

\subsection{Recollections}\label{begin}

We review some notation and definitions related to the trianguline variety.

We fix two finite extensions $K$ and $L$ of $\Q_p$ such that:
$$\vert\Hom(K,L)\vert=[K:\Q_p]$$
and denote by $\mathcal{O}_K$, ${\mathcal O}_L$ their respective rings of integers. We fix a uniformizer $\varpi_K\in \mathcal{O}_K$ and denote by $k_L$ the residue field of ${\mathcal O}_L$. We let $\mathcal{T}:=\widehat{K^\times}$ and $\mathcal{W}:=\widehat{\mathcal{O}_K^\times}$. The restriction of characters to $\mathcal{O}_K^\times$ induces projections $\mathcal{T}\twoheadrightarrow \mathcal{W}$ and $\mathcal{T}_L\twoheadrightarrow \mathcal{W}_L$. 
If ${\bf k}:=(k_\tau)_{\tau:\, K\hookrightarrow L}\in\Z^{\Hom(K,L)}$, we denote by $z^{\bf k}\in \mathcal{T}(L)$ the character:
\begin{eqnarray}\label{charc}
z\longmapsto\prod_{\tau\in\Hom(K,L)}\tau(z)^{k_\tau}
\end{eqnarray}
where $z \in K^\times$. For ${\bf k}=(k_{\tau,i})_{1\leq i\leq n,\tau:\, K\hookrightarrow L}\in(\mathbb{Z}^n)^{\Hom(K,L)}$, we denote by $\delta_{\bf k}\in \mathcal{T}^n(L)$ the character:
$$(z_1,\dots,z_n)\longmapsto\prod_{\stackrel{1\leq i\leq n}{\tau:\, K\hookrightarrow L}}\tau(z_i)^{k_{\tau,i}}$$
where $(z_1,\dots,z_n)\in (K^\times)^n$. We also denote by $\delta_{\bf k}$ its image in $\mathcal{W}^n(L)$ (i.e. its restriction to $({\mathcal O}_K^\times)^n$). We say that a point $\delta\in \mathcal{W}^n_L$ is {\it algebraic} if $\delta=\delta_{\bf k}$ for some ${\bf k}=(k_{\tau,i})_{1\leq i\leq n,\tau:\, K\hookrightarrow L}\in(\mathbb{Z}^n)^{\Hom(K,L)}$. We say that an algebraic $\delta=\delta_{\bf k}$ is {\it dominant} (resp. {\it strictly dominant}) if moreover $k_{\tau,i}\geq k_{\tau,i+1}$ (resp. $k_{\tau,i}> k_{\tau,i+1}$) for $i\in \{1,\dots,n-1\}$ and $\tau\in \Hom(K,L)$.

We write $\Tcal_{\rm reg}\subset \Tcal_L$ for the Zariski-open complement of the $L$-valued points $z^{-\bf k}, \vert z\vert_K z^{{\bf k}+{\bf 1}}$, with ${\bf k}=(k_\tau)_{\tau:K\hookrightarrow L}\in \Z_{\geq 0}^{\Hom(K,L)}$. We write $\Tcal_{\rm reg}^n$ for the Zariski-open subset of characters $(\delta_1,\dots,\delta_n)$ such that $\delta_i/\delta_j\in \Tcal_{\rm reg}$ for $i\neq j$. 

We fix a continuous representation $\rbar:\mathcal{G}_K\rightarrow \GL_n(k_L)$ and let $R_{\rbar}^{\square}$ be the  framed local deformation ring of $\rbar$ (a local complete noetherian $\mathcal{O}_L$-algebra of residue field $k_L$). We write $\mathfrak{X}_{\rbar}^\square:=(\Spf\, R_{\rbar}^\square)^{\rig}$ for the rigid analytic space over $L$ associated to the formal scheme $\Spf\, R_{\rbar}^\square$. Recall that a representation $r$ of $\Gcal_K$ on a finite dimension $L$-vector space is called \emph{trianguline of parameter} $\delta=(\delta_1,\dots,\delta_n)$ if the $(\varphi,\Gamma_K)$-module $D_{\rig}(r)$ over $\Rcal_{L,K}$ associated to $r$ admits an increasing filtration $\Fil_\bullet$ by sub-$(\varphi,\Gamma_K)$-modules over $\Rcal_{L,K}$ such that the graded piece $\Fil_i/\Fil_{i-1}$ is isomorphic to $\Rcal_{L,K}(\delta_i)$. 
We let $X_{\rm tri}^\square(\rbar)$ be the associated framed trianguline variety, see \cite[\S2.2]{BHS} and \cite{HellmannFS}. Recall that $X_{\rm tri}^\square(\rbar)$ is the reduced rigid analytic space over $L$ which is the Zariski closure in $\mathfrak{X}_{\rbar}^\square\times \mathcal{T}^n_L$ of:
\begin{equation}\label{ucris}
U_{\rm tri}^\square(\rbar):=\{{\rm points}\ (r,\delta)\ {\rm in}\ \Xfrak^\square_{\rbar}\times\Tcal^n_{\rm reg}\ {\rm such\ that}\ r\ \text{is trianguline of parameter}\ \delta\}
\end{equation}
(the space $U_{\rm tri}^\square(\rbar)$ is denoted $U_{\rm tri}^\square(\rbar)^{\rm reg}$ in \cite[\S2.2]{BHS}). The rigid space $X_{\rm tri}^\square(\rbar)$ is reduced equidimensional of dimension $n^2+[K:\Q_p]\frac{n(n+1)}{2}$ and its subset $U_{\rm tri}^\square(\rbar)\subset X_{\rm tri}^\square(\rbar)$ turns out to be Zariski-open, see \cite[Th.2.6]{BHS}. Moreover by {\it loc.~cit.}~the rigid variety $U_{\rm tri}^\square(\rbar)$ is smooth over $L$ and equidimensional, hence there is a bijection between the set of connected components of $U_{\rm tri}^\square(\rbar)$ and the set of irreducible components of $X_{\rm tri}^\square(\rbar)$.

We denote by $\omega:X_{\rm tri}^\square(\rbar)\rightarrow \mathcal{W}^n_L$ the composition $X_{\rm tri}^\square(\rbar)\hookrightarrow \mathfrak{X}_{\rbar}^\square\times \mathcal{T}^n_L\twoheadrightarrow \mathcal{T}^n_L \twoheadrightarrow \mathcal{W}^n_L$. If $x$ is a point of $X_{\rm tri}^\square(\rbar)$, we write $x=(r,\delta)$ where $r\in \mathfrak{X}_{\rbar}^\square$ and $\delta=(\delta_1,\dots,\delta_n)\in \mathcal{T}_L^n$. We say that a point $x=(r,\delta)\in X_{\rm tri}^\square(\rbar)$ is \emph{crystalline} if $r$ is a crystalline representation of $\mathcal{G}_K$. 

\begin{lemm}\label{paramofcrystpt}
Let $x=(r,\delta)\in X_{\rm tri}^\square(\rbar)$ be a crystalline point. Then for $i\in \{1,\dots,n\}$ there exist ${\bf k}_i=(k_{\tau,i})_{\tau:\, K\hookrightarrow L}\in\mathbb{Z}^{\Hom(K,L)}$ and $\varphi_i\in k(x)^\times$ such that:
$$\delta_i=z^{{\bf k}_i}{\rm unr}(\varphi_i).$$
Moreover the $(k_{\tau,i})_{i,\tau}$ are the labelled Hodge-Tate weights of $r$ and the $\varphi_i$ are the eigenvalues of the geometric Frobenius on the (unramified) Weil-Deligne representation ${\rm WD}(r)$ associated to $r$ (cf. \cite{Fo}). 
\end{lemm}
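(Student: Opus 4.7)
The plan is to use the triangulation together with crystallinity to pin down each $\delta_i$ to the claimed form, and then read off the Hodge-Tate weights and Frobenius eigenvalues from the graded pieces of the filtration. The key inputs are a theorem of Berger asserting that any saturated sub-$(\varphi,\Gamma_K)$-module of a crystalline $(\varphi,\Gamma_K)$-module over $\Rcal_{k(x),K}$ is again crystalline (hence so is the quotient), together with the standard classification of crystalline rank-one $(\varphi,\Gamma_K)$-modules.

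First, since $r$ is trianguline of parameter $\delta$, by definition $D_{\rig}(r)$ carries an increasing filtration $\Fil_\bullet$ by sub-$(\varphi,\Gamma_K)$-modules with graded pieces $\Rcal_{k(x),K}(\delta_i)$. Because $r$ is crystalline, $D_{\rig}(r)$ is crystalline; applying Berger's theorem inductively along $\Fil_\bullet$ shows that each rank-one graded piece $\Rcal_{k(x),K}(\delta_i)$ is crystalline. I now invoke the classification: $\Rcal_{k(x),K}(\delta)$ is crystalline if and only if $\delta=z^{\bf k}\,{\rm unr}(\varphi)$ for some ${\bf k}\in\Z^{\Hom(K,L)}$ and some $\varphi\in k(x)^\times$. (De Rham-ness forces the generalized weights of $\delta$ to be integers, and crystallinity further forces the finite-order part of $\delta|_{\Ocal_K^\times}$ to be trivial, leaving only the unramified part at a uniformizer.) Applied to each $\delta_i$ this yields the required ${\bf k}_i$ and $\varphi_i$.

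It remains to identify these data. With the normalisation that the cyclotomic character has Hodge-Tate weight $1$, the rank-one module $\Rcal_{k(x),K}(z^{{\bf k}_i}\,{\rm unr}(\varphi_i))$ has labelled Hodge-Tate weight $k_{\tau,i}$ at each embedding $\tau$. Since the labelled Hodge-Tate weights of $D_{\rig}(r)$ at $\tau$ are the multiset union of the labelled Hodge-Tate weights at $\tau$ of the successive graded pieces of any such filtration, the collection $(k_{\tau,i})_{i,\tau}$ recovers the labelled Hodge-Tate weights of $r$. Finally, applying the exact functor $D_{\rm cris}$ to $\Fil_\bullet$ produces a $\varphi$-stable filtration of the free $K_0\otimes_{\Q_p}k(x)$-module $D_{\rm cris}(r)$ with rank-one graded pieces $D_{\rm cris}\big(\Rcal_{k(x),K}(z^{{\bf k}_i}\,{\rm unr}(\varphi_i))\big)$; since ${\rm WD}(r)$ is unramified (the monodromy operator of a crystalline representation vanishes), its geometric Frobenius eigenvalues are read off from this filtration. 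The main point of care is to check, using the normalisations of $\rec_K$ and of ${\rm unr}(\cdot)$, that the geometric Frobenius eigenvalue produced by the $i$-th piece is exactly $\varphi_i$; this is a direct (but bookkeeping-heavy) computation, and it is the only step where the conventions really have to be tracked. The rest is a formal consequence of Berger's theorem and the classification of crystalline rank-one characters.
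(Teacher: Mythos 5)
Your first sentence assumes something that does not hold in general: being a point $(r,\delta)$ of $X_{\rm tri}^\square(\rbar)$ does not imply that $r$ is trianguline of parameter $\delta$. Recall that $X_{\rm tri}^\square(\rbar)$ is the Zariski \emph{closure} of $U_{\rm tri}^\square(\rbar)$ inside $\Xfrak^\square_{\rbar}\times\Tcal^n_L$; only on the open subset $U_{\rm tri}^\square(\rbar)$ is $r$ trianguline of parameter $\delta$. Crystalline points with nontrivial Weyl-group element $w_x$ (the critical points of \S\ref{weyl}) are boundary points, and for them there is no triangulation of $D_{\rig}(r)$ with graded pieces $\Rcal_{k(x),K}(\delta_i)$ --- the graded pieces that actually occur have Hodge--Tate weights permuted by $w_x$. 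So the filtration you start from need not exist, and the rest of the argument collapses.

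The missing ingredient is the first step of the paper's proof: by \cite[Th.6.3.13]{KPX}, $r$ \emph{is} trianguline of a possibly different parameter $\delta'=(\delta'_1,\dots,\delta'_n)$ with $\delta_i/\delta'_i$ an algebraic character $z^{{\bf m}_i}$. Your chain of deductions (saturated sub-$(\varphi,\Gamma_K)$-modules of a crystalline module are crystalline, crystalline rank-one modules are exactly those of the form $z^{\bf k}{\rm unr}(\varphi)$, and $D_{\rm cris}$ of the filtration yields a $\varphi$-stable flag from which one reads off the Frobenius eigenvalues) then applies to $\delta'$ and gives $\delta'_i = z^{{\bf k}'_i}{\rm unr}(\varphi_i)$. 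Since $\delta_i/\delta'_i$ is algebraic, $\delta_i = z^{{\bf m}_i + {\bf k}'_i}{\rm unr}(\varphi_i)$ still has the required shape. Beware also that for the Hodge--Tate weight identification, the graded pieces now have weights $k'_{\tau,i}$ rather than $k_{\tau,i}$; that the two collections coincide as multisets (equivalently, that $\omega(x)$ records the Hodge--Tate weights of $r$ even at boundary points) is a separate input, which the paper takes directly from \cite[Prop.2.9]{BHS} rather than attempting to extract it from the triangulation.
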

\begin{proof}
The fact that the $(k_{\tau,i})_{i,\tau}$ are the Hodge-Tate weights of $r$ follows for instance from \cite[Prop.2.9]{BHS}.
By \cite[Th.6.3.13]{KPX} there exist for each $i$ a continuous character $\delta'_i:K^\times \rightarrow k(x)^\times$ such that $r$ is trianguline of parameter $\delta':=(\delta'_1,\dots,\delta'_n)$ and such that $\delta_i/\delta'_i$ is an algebraic character of $K^\times$ (i.e. of the form $z^{{\bf k}}$ for some ${\bf k}\in\Z^{\Hom(K,L)}$). It thus suffices to prove that each $\delta'_i$ is of the form $z^{{\bf k}'_i}{\rm unr}(\varphi_i)$ for some ${\bf k}'_i\in\Z^{\Hom(K,L)}$ where the $\varphi_i\in k(x)^\times$ are the eigenvalues of the geometric Frobenius on ${\rm WD}(r)$, or equivalently (using the definition of ${\rm WD}(r)$) are the eigenvalues of the linearized Frobenius $\varphi^{[K_0:\Q_p]}$ on the $K_0\otimes_{\Q_p}k(x)$-module $D_{\rm cris}(r):=(B_{\rm cris}\otimes_{\Q_p}r)^{\mathcal{G}_K}$. By \cite[Th.3.6]{Berger} there is an isomorphism (recall $t$ is ``Fontaine's $2i\pi$''):
\begin{eqnarray}\label{berger}
D_{\rm cris}(r)\cong D_{\rig}(r)[\tfrac{1}{t}]^{\Gamma_K},
\end{eqnarray}
and a triangulation  $\Fil_\bullet$ of $D_{\rig}(r)$ with graded pieces giving the parameter $\delta'$ induces a complete $\varphi$-stable filtration $\Fcal_\bullet$ on $D_{\rm cris}(r)$ such that $\Fcal_i/\Fcal_{i-1}$ is the filtered $\varphi$-module associated to $\Rcal_{L,K}(\delta'_i)=\Fil_i/\Fil_{i-1}$ by the same recipee as (\ref{berger}). It follows from this and from \cite[Example 6.2.6(3)]{KPX} that $\delta'_i$ is of the form $z^{{\bf k}'_i}{\rm unr}(a)$ where $a\in k(x)^\times$ is the unique element such that $\varphi^{[K_0:\Q_p]}$ acts on the underlying $\varphi$-module of $\Fcal_i/\Fcal_{i-1}$ by multiplication by $1\otimes a\in K_0\otimes_{\Q_p} k(x)$. This finishes the proof.
\end{proof}

Note that Lemma \ref{paramofcrystpt} implies that if $x=(r,\delta)\in X_{\rm tri}^\square(\rbar)$ is a crystalline point, then $\omega(x)$ is algebraic ($=\delta_{\bf k}$ for ${\bf k}:=(k_{\tau,i})_{1\leq i\leq n,\tau:\, K\hookrightarrow L}$ where the $k_{\tau,i}$ are as in Lemma \ref{paramofcrystpt}). We say that a point $x=(r,\delta)\in X_{\rm tri}^\square(\rbar)$ such that $\omega(x)$ is algebraic is {\it dominant} (resp. {\it strictly dominant}) if $\omega(x)$ is dominant (resp. strictly dominant).

\subsection{A variant of the crystalline deformation space}\label{variant}

We define a certain irreducible component $Z_{{\rm tri},U}(x)$ of a sufficiently small open neighbouhood $U\subseteq X_{\rm tri}^\square(\rbar)$ containing a crystalline strictly dominant point $x$ (Corollary \ref{defnZtri(x)}).

We fix ${\bf k}=(k_{\tau,i})_{1\leq i\leq n,\tau:\, K\hookrightarrow L}\in (\Z^n)^{\Hom(K,L)}$ such that $k_{\tau,i}> k_{\tau,i+1}$ for all $i,\tau$ and write $R_{\overline r}^{\square,{\bf k}{\rm -cr}}$ for the crystalline deformation ring of $\overline r$ with Hodge-Tate weights ${\bf k}$, i.e.~the reduced and $\Z_p$-flat quotient of $R^\square_{\overline r}$ such that, for any finite extension $L'$ of $L$, a morphism $x:\Spec\, L'\rightarrow \Spec\, R_{\rbar}^\square$ factors through $\Spec\, R_{\overline r}^{\square, {\bf k}{\rm -cr}}$ if and only if the representation $\Gcal_K\rightarrow \GL_n(L')$ defined by $x$ is crystalline with labelled Hodge-Tate weights $(k_{\tau,i})_{1\leq i\leq n,\tau:\, K\hookrightarrow L}$. That this ring exists is the main result of \cite{Kisindef}.
We write $\Xfrak_{\overline r}^{\square,{\bf k}{\rm -cr}}$ for the rigid analytic space associated to $\Spf\, R_{\overline r}^{\square,{\bf k}{\rm -cr}}$. By \cite{Kisindef}, it is smooth over $L$.

Let $\tilde r:{\mathcal G}_K\rightarrow \GL_n(R_{\overline r}^{\square,{\bf k}{\rm -cr}})$ be the corresponding universal deformation. By \cite[Th.2.5.5]{Kisindef} or \cite[Cor.6.3.3]{BergerColmez} there is a coherent $K_0\otimes_{\Q_p}\Ocal_{\!\Xfrak_{\overline r}^{\square,{\bf k}{\rm -cr}}}$-module $\Dcal$ that is locally on $\Xfrak_{\overline r}^{\square,{\bf k}{\rm -cr}}$ free over $K_0\otimes_{\Q_p}\Ocal_{\!\Xfrak_{\overline r}^{\square,{\bf k}{\rm -cr}}}$ together with a $\varphi\otimes \id$-linear automorphism $\Phi_{\rm cris}$ such that:
$$(\Dcal,\Phi_{\rm cris})\otimes_{\Ocal_{\!\Xfrak_{\overline r}^{\square,{\bf k}{\rm -cr}}}} k(x)\cong D_{\rm cris}\big(\tilde r\otimes_{R_{\overline r}^{\square,{\bf k}{\rm -cr}}} k(x)\big)$$
for all $x\in \Xfrak_{\overline r}^{\square,{\bf k}{\rm -cr}}$. Fixing an embedding $\tau_0:K_0\hookrightarrow L$ we can define the associated family of Weil-Deligne representations:
$$({\rm WD}(\tilde r),\Phi):=\big(\Dcal\otimes_{K_0\otimes_{\Q_p}\Ocal_{\!\Xfrak_{\overline r}^{\square,{\bf k}{\rm -cr}}},\tau_0\otimes\id}\Ocal_{\!\Xfrak_{\overline r}^{\square,{\bf k}{\rm -cr}}},\Phi_{\rm cris}^{[K_0:\Q_p]}\otimes\id\big)$$
on $\Xfrak_{\overline r}^{\square,{\bf k}{\rm -cr}}$ whose isomorphism class does not depend on the choice of the embedding $\tau_0$. 

Let $T^\rig\cong ({\mathbb G}_{{\rm m}}^\rig)^{n}$ be the rigid analytic space over $\Q_p$ associated to the diagonal torus $T\subset \GL_{n}$ and let $\Scal_n$ be the Weyl group of $(\GL_{n},T)$ acting on $T$, and thus on $T^\rig$, in the usual way. Recall that the map:
$$\diag(\varphi_1,\varphi_2,\dots,\varphi_n)\mapsto {\rm coefficients\ of}\ (X-\varphi_1)(X-\varphi_2)\dots (X-\varphi_n)$$
induces an isomorphism of schemes over $\Q_p$:
$$T/\Scal_n\buildrel\sim\over\longrightarrow {\mathbb G}_{{\rm a}}^{n-1}\times_{\Spec\,\Q_p} {\mathbb G}_{{\rm m}}$$
and also of the associated rigid analytic spaces.
We deduce that the coefficients of the characteristic polynomial of the Frobenius $\Phi$ on ${\rm WD}(\tilde r)$ determine a morphism of rigid analytic spaces over $L$:
$$\Xfrak_{\overline r}^{\square,{\bf k}{\rm -cr}}\longrightarrow T_L^{\rig}/\Scal_n.$$
Let us define:
$$\widetilde\Xfrak_{\overline r}^{\square,{\bf k}\rm -cr}:=\Xfrak_{\overline r}^{\square,{\bf k}\rm -cr}\times_{T_L^\rig/\Scal_n}T_L^\rig.$$
Concretely $\widetilde\Xfrak_{\rbar}^{\square,{\bf k}\rm-cr}$ parametrizes crystalline framed $\Gcal_K$-deformations $r$ of $\overline r$ of labelled Hodge-Tate weights ${\bf k}$ together with an ordering $(\varphi_1,\dots,\varphi_n)$ of the eigenvalues of the geometric Frobenius on ${\rm WD}(r)$.

\begin{lemm}\label{reduced}
The rigid analytic space $\widetilde\Xfrak_{\overline r}^{\square,{\bf k}\rm -cr}$ is reduced.
\end{lemm}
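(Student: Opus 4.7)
The plan is to exploit the fiber-product presentation $\widetilde\Xfrak_{\overline r}^{\square,{\bf k}\rm -cr}=\Xfrak_{\overline r}^{\square,{\bf k}\rm -cr}\times_{T_L^\rig/\Scal_n}T_L^\rig$ together with Kisin's smoothness of $\Xfrak_{\overline r}^{\square,{\bf k}\rm -cr}$ and classical invariant theory for the symmetric group action, and then conclude via Serre's reducedness criterion $R_0+S_1$.

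First I would verify that $T_L^\rig\to T_L^\rig/\Scal_n$ is finite and flat. Concretely, the coordinate ring $L[z_1^{\pm 1},\dots,z_n^{\pm 1}]$ is a free module of rank $n!$ over its ring of invariants $L[e_1,\dots,e_{n-1},e_n^{\pm 1}]$ under $\Scal_n$ (this is a standard fact from invariant theory, and passes to the rigid setting). By base change, the projection $\pi\colon\widetilde\Xfrak_{\overline r}^{\square,{\bf k}\rm-cr}\to\Xfrak_{\overline r}^{\square,{\bf k}\rm -cr}$ is also finite and flat of degree $n!$. Since $\Xfrak_{\overline r}^{\square,{\bf k}\rm -cr}$ is smooth over $L$ (hence Cohen--Macaulay), and a finite flat algebra over a Cohen--Macaulay base is Cohen--Macaulay, $\widetilde\Xfrak_{\overline r}^{\square,{\bf k}\rm -cr}$ satisfies Serre's condition $S_1$. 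It therefore suffices to establish the $R_0$ condition, i.e. generic reducedness.

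Next I would produce a large smooth locus. Let $U\subseteq T_L^\rig/\Scal_n$ be the Zariski-open complement of the discriminant hypersurface, so that $T_L^\rig\to T_L^\rig/\Scal_n$ becomes finite \emph{étale} over $U$. Writing $W\subseteq\Xfrak_{\overline r}^{\square,{\bf k}\rm -cr}$ for the preimage of $U$ under the morphism defined by the characteristic polynomial of $\Phi$ on ${\rm WD}(\tilde r)$, the base change $\pi^{-1}(W)\to W$ is étale; combined with the smoothness of $W$ over $L$, this gives that $\pi^{-1}(W)$ is smooth over $L$, hence reduced.

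The main obstacle is to show that $W$ is Zariski-dense in $\Xfrak_{\overline r}^{\square,{\bf k}\rm-cr}$, equivalently that the discriminant of $\Phi$ is not identically zero on any irreducible component. By the strict dominance hypothesis $k_{\tau,i}>k_{\tau,i+1}$ the Hodge--Tate weights are pairwise distinct, and crystalline representations with such weights generically have pairwise distinct Frobenius eigenvalues: concretely, given any point of a component we may vary the underlying filtered $\varphi$-module by perturbing the linearized Frobenius (weak admissibility being an open condition), and the discriminant is a nonconstant analytic function in such a perturbation; alternatively one appeals to the standard fact, used repeatedly in this setting, that the ``generic'' crystalline locus is dense in $\Xfrak_{\overline r}^{\square,{\bf k}\rm -cr}$. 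Once density of $W$ is in hand, flatness of $\pi$ forces $\pi^{-1}(W)$ to meet every irreducible component of $\widetilde\Xfrak_{\overline r}^{\square,{\bf k}\rm -cr}$, so the latter is generically reduced. Combined with $S_1$, this yields reducedness.
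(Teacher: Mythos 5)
Your approach is essentially the same as the paper's, just packaged globally instead of locally. The paper reduces to an affinoid situation and applies a lemma (Lemma~\ref{red}) asserting that a finite flat, generically \'etale algebra over a noetherian domain is reduced; that lemma is exactly a local version of ``$S_1$ (no embedded primes, from finite flatness) plus $R_0$ (generic reducedness, from generic \'etaleness) implies reduced,'' so your invocation of Serre's criterion is a legitimate reformulation. Your identification of the crux --- density of the locus $W$ over which the cover becomes \'etale --- is also correct, and this is precisely what the paper must work for as well.

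Where the proposal falls short is the justification of that density. The ``perturb the linearized Frobenius, weak admissibility being open'' heuristic is not a proof as stated: points of $\Xfrak_{\overline r}^{\square,{\bf k}\rm -cr}$ are framed deformations of $\overline r$, and one cannot simply choose a nearby filtered $\varphi$-module and declare that it is still a deformation of $\overline r$; what is needed is that the period map from $\Xfrak_{\overline r}^{\square,{\bf k}\rm -cr}$ to the moduli of Frobenius data (and flags) is \emph{open}, which is what makes the perturbation land back in the deformation space. The paper gets this from the fact (Kisin, Hartl--Hellmann, used here in the proof of Lemma~\ref{genericopensinXcris}) that the map
$\Xfrak_{\overline r}^{\square,{\bf k}\rm -cr}\longrightarrow \big(\Res_{K_0/\Q_p}\GL_{n,K_0}\times{\rm Flag}\big)^\rig_L$
is \emph{smooth}, hence flat, hence open, and then composes with the map to $T_L^\rig/\Scal_n$. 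The ``standard fact'' you appeal to in the alternative is not something the paper can cite off the shelf; the paper proves it via exactly this smoothness, following Chenevier's Lem.~4.4. So the proposal is correct in outline and in its reduction to $R_0+S_1$, but the density step needs to be replaced by the explicit openness argument via the smooth period map rather than the perturbation heuristic.
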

\begin{proof}
It is sufficient to prove this result locally. Let $\mathrm{Sp}\,C$ be an admissible irreducible affinoid open subspace of $\mathfrak{X}_{\overline{r}}^{\square,{\rm {\bf k}-cr}}$ whose image in $T_L^{\rm rig}/\mathcal{S}_n$ is contained in an admissible affinoid open irreducible subspace $\mathrm{Sp}\,A$ of $T_L^{\rm rig}/\mathcal{S}_n$. As both $\mathfrak{X}_{\overline{r}}^{\square,{\rm {\bf k}-cr}}$ and $T_L^{\rm rig}/\mathcal{S}_n$ are smooth over $L$ we can find an admissible open affinoid covering of $\mathfrak{X}_{\overline{r}}^{\square,{\rm {\bf k}-cr}}$ by such $\mathrm{Sp}\,C$. The map $T_L^{\rm rig}\rightarrow T_L^{\rm rig}/\mathcal{S}_n$ is finite flat being the rigidification of a map of affine schemes $T_L\rightarrow T_L/\mathcal{S}_n$ which is finite flat. Consequently the inverse image of $\mathrm{Sp}\,A$ in $T_L^{\rm rig}$ is an admissible affinoid open subspace $\mathrm{Sp}\,B$ with $B$ an affinoid algebra which is finite flat over $A$. As $B$ is a finite $A$-algebra, we have an isomorphism $C\widehat{\otimes}_AB\simeq C\otimes_AB$. It follows, by definition of the fiber product of rigid analytic spaces, that the rigid analytic spaces of the form $\mathrm{Sp}(C\otimes_AB)$ form an admissible open covering of $\widetilde\Xfrak_{\overline r}^{\square,{\bf k}\rm -cr}=\mathfrak{X}_{\overline{r}}^{\square,{\rm {\bf k}-cr}}\times_{T_L^{\rm rig}/\mathcal{S}_n}T_L^{\rm rig}$. It is sufficient to prove that rings $C\otimes_AB$ as above are reduced. From Lemma \ref{red} below it is sufficient to prove that $C\otimes_AB$ is a finite flat generically \'etale $C$-algebra. As $B$ is finite flat over $A$, the $C$-algebra $C\otimes_AB$ is clearly finite flat. It is sufficient to prove that it is a generically \'etale $C$-algebra. As $B$ is generically \'etale over $A$, it is sufficient to prove that the map $\mathrm{Spec}\,C\rightarrow\mathrm{Spec}\,A$ is dominant. It is thus sufficient to prove that the map of rigid analytic spaces $\mathfrak{X}_{\overline{r}}^{\square,{\rm {\bf k}-cr}}\rightarrow T_L^{\rm rig}/\mathcal{S}_n$ is open. This follows from the fact that it has, locally on $\mathfrak{X}_{\overline{r}}^{\square,{\rm {\bf k}-cr}}$, a factorization:
$$\mathfrak{X}_{\overline{r}}^{\square,{\rm {\bf k}-cr}}\longrightarrow (\Res_{K_0/\Q_p}\GL_{n,K_0} \times_{\Spm\,\Q_p} {\rm Flag})^\rig\times_{\Spm\,\Q_p}\Spm\,L\longrightarrow T_L^{\rm rig}/\mathcal{S}_n$$
where the first map is the smooth map in the proof of Lemma \ref{genericopensinXcris} below, and the second is the projection on $(\mathrm{Res}_{K_0/\mathbb{Q}_p}\mathrm{GL}_{n,K_0})_L^{\rm rig}$ followed by the base change to $L$ of the rigidification of the morphism $\mathrm{Res}_{K_0/\mathbb{Q}_p}\mathrm{GL}_{n,K_0}\rightarrow T/\mathcal{S}_n$ defined in \cite[(9.1)]{HartlHellmann}. The first map being smooth is flat and thus open by \cite[Cor.9.4.2]{Bo}, and the last two are easily seen to be open.
\end{proof}

The following (well-known) lemma was used in the proof of Lemma \ref{reduced}.

\begin{lemm}\label{red}
Let $A$ a commutative noetherian domain and $B$ a finite flat $A$-algebra. Then the ring $B$ has no embedded component, i.e. all its associated ideals are minimal prime ideals. Moreover if $B$ is generically \'etale over $A$, i.e. $\mathrm{Frac}(A)\otimes_A B$ is a finite \'etale $\mathrm{Frac}(A)$-algebra, then the ring $B$ is reduced.
\end{lemm}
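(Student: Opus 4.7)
The plan is to reduce both claims to two standard facts from commutative algebra: (a) a flat module over a domain is torsion-free, and (b) incomparability of primes in an integral extension (the Cohen--Seidenberg theorem).

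For the first claim, I would argue as follows. Let $\mathfrak{q}\in\mathrm{Ass}(B)$, so $\mathfrak{q}=\mathrm{Ann}_B(b)$ for some nonzero $b\in B$. Contracting to $A$ gives $\mathfrak{q}\cap A=\mathrm{Ann}_A(b)$. Since $B$ is $A$-flat and $A$ is a domain, $B$ is $A$-torsion-free, so $\mathrm{Ann}_A(b)=(0)$. Hence every associated prime of $B$ lies over the zero ideal of $A$. Now $B$ is finite, hence integral, over $A$, so by incomparability any two distinct primes of $B$ lying over the same prime of $A$ are incomparable; in particular a prime of $B$ contracting to $(0)$ must be minimal in $B$. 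This gives that $\mathfrak{q}$ is a minimal prime, finishing the first part.

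For the second claim, set $K:=\mathrm{Frac}(A)$. As $B$ is $A$-torsion-free (again by flatness over the domain $A$), the natural map $B\to B\otimes_A K$ is injective. By the generic étaleness assumption, $B\otimes_A K$ is a finite étale $K$-algebra, hence a finite product of finite separable field extensions of $K$; in particular $B\otimes_A K$ is reduced. A subring of a reduced ring is reduced, so $B$ is reduced.

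There is essentially no obstacle here — both assertions are routine once one identifies the right textbook inputs. The only small point to verify is the convention: the ``associated ideals'' of $B$ in the statement are understood as the associated primes of $B$ viewed as a module over itself, which matches the formulation via annihilators of elements used above. Note also that the second part could alternatively be deduced from the first by combining the embedding $B\hookrightarrow\prod_{\mathfrak{q}\in\mathrm{Ass}(B)}B_\mathfrak{q}$ with the observation that for each minimal prime $\mathfrak{q}$ of $B$ the localization $B_\mathfrak{q}$ coincides with a localization of $B\otimes_A K$, hence is a field; but the direct argument above is shorter.
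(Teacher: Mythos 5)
Your proof is correct. For the first assertion the strategy is the same as the paper's, just unpacked into concrete steps rather than citations: the paper quotes Bourbaki for the facts that flatness over a domain forces $\mathrm{Ass}(B)$ to contract to $\{(0)\}$, and that integrality plus incomparability then forces these primes to be minimal; you derive both facts directly from $\mathfrak{q}=\mathrm{Ann}_B(b)$, torsion-freeness, and the Cohen--Seidenberg incomparability theorem. One tiny point you gloss over: minimality of a prime $\mathfrak{q}$ with $\mathfrak{q}\cap A=(0)$ requires noting that any prime $\mathfrak{q}'\subseteq\mathfrak{q}$ also contracts to $(0)$ (since $(0)$ is prime in $A$), after which incomparability gives $\mathfrak{q}'=\mathfrak{q}$; this is implicit but worth stating.

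For the second assertion your route is genuinely different from, and cleaner than, the paper's. The paper introduces the discriminant ideal $\mathfrak{d}_{B/A}$, picks $f\in\mathfrak{d}_{B/A}$ so that $B_f$ is \'etale over $A_f$ and hence reduced, then argues by contradiction: if the nilradical of $B$ were nonzero, a minimal prime over its annihilator would be an associated prime of $B$ containing $f$, contradicting the first part (since associated primes contract to $(0)$, whereas $f\neq 0$ in $A$). You instead observe that torsion-freeness makes $B\to B\otimes_A\mathrm{Frac}(A)$ injective, that the target is a finite \'etale $\mathrm{Frac}(A)$-algebra hence a product of fields hence reduced, and that a subring of a reduced ring is reduced. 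Your argument is shorter, avoids the discriminant entirely, and does not rely on the first assertion, whereas the paper's second step is logically downstream of its first. Both are valid; yours is the more economical.
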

\begin{proof}
As $B$ is flat over $A$, the map $\mathrm{Spec}\,B\rightarrow\mathrm{Spec}\,A$ has an open image, and $A$ being a domain it contains the unique generic point of $\mathrm{Spec}\,A$, which implies that the natural map $A\rightarrow B$ is injective. Moreover $B$ being finite over $A$, the image of $\mathrm{Spec}\,B\rightarrow\mathrm{Spec}\,A$ is closed, hence it is $\mathrm{Spec}\,A$ since $\mathrm{Spec}\,A$ is connected. In particular $B$ is a faithfully flat $A$-algebra. As $B$ is a flat $A$-module, it follows from \cite[\S IV.2.6 Lem.1]{BourbakiAC} applied with $E=A$ and $F=B$ that $\mathfrak{p}\in\mathrm{Ass}(B)$ implies $\mathfrak{p}\cap A=0$ ($A$ is a domain, so $\mathrm{Ass}(A)=\{0\}$). It then follows from \cite[\S V.2.1 Cor.1]{BourbakiAC} that if $\mathfrak{p}\in\mathrm{Ass}(B)$, then $\mathfrak{p}$ is a minimal prime of $B$. Indeed, $A$ being noetherian and $B$ a finite $A$-module, $B$ is an integral extension of $A$. We can apply {\it loc. cit.} to the inclusion $\mathfrak{q}\subseteq\mathfrak{p}$ where $\mathfrak{q}$ is a minimal prime ideal of $B$ (both ideals $\mathfrak{q}$ and $\mathfrak{p}$ being above the prime ideal $(0)$ of $A$ since $\mathfrak{p}\cap A=\mathfrak{q}\cap A=0$).

Let $\mathfrak{d}_{B/A}$ be the discriminant of $B/A$ (its existence comes from the fact that $B$ is a finite faithfully flat $A$-algebra, hence a finite projective $A$-module). As the extension is generically \'etale, we can find $f\in\mathfrak{d}_{B/A}$ such that $B_f$ is \'etale over $A_f$. As $A_f$ is a domain, $B_f$ is then reduced. Thus the nilradical $\mathfrak{n}$ of $B_f$ is killed by some power of $f$. Replacing $f$ by this power, we can assume that the vanishing ideal of $\mathfrak{n}$ contains $f$. Assume that $\mathfrak{n}$ is nonzero and let $\mathfrak{p}$ be a prime ideal of $B$ minimal among prime ideals containing $\mathrm{Ann}_B(\mathfrak{n})$. It follows from \cite[\S IV.1.3 Cor.1]{BourbakiAC} that $\mathfrak{p}$ is an associated prime of the $B$-module $\mathfrak{n}$ and consequently of $B$. But we have $f\in\mathfrak{p}$ which contradicts the fact that $\mathfrak{p}\cap A=0$.
\end{proof}

We now embed this ``refined'' crystalline deformation space $\widetilde\Xfrak_{\overline r}^{\square,{\bf k}\rm -cr}$ into the space $X_{\rm tri}^\square(\rbar)$ as follows. We define a morphism of rigid spaces over $L$:
\begin{eqnarray}\label{cristotriang}
\Xfrak_{\overline r}^{\square,{\bf k}\rm -cr}\times_{\Spm\,L} T_L^\rig&\longrightarrow &\Xfrak_{\overline r}^\square\times_{\Spm\,L}\Tcal^n_L\\
\nonumber (r,\varphi_1,\dots,\varphi_n)&\longmapsto & (r, z^{{\bf k}_1}{\rm unr}(\varphi_1),\dots, z^{{\bf k}_n}{\rm unr}(\varphi_n)).
\end{eqnarray}
This morphism is a closed embedding of reduced rigid spaces as both maps $r\mapsto r$ and $(\varphi_1,\dots,\varphi_n)\mapsto (z^{{\bf k}_1}{\rm unr}(\varphi_1),\dots, z^{{\bf k}_n}{\rm unr}(\varphi_n))$ respectively define closed embeddings $\Xfrak_{\overline r}^{\square,{\bf k}\rm-cr}\!\!\hookrightarrow \Xfrak_{\overline r}^\square$ and $T_L^\rig\hookrightarrow \Tcal^n_L$. We claim that the restriction of the morphism $(\ref{cristotriang})$ to:
\begin{equation}\label{cristotriangres}
\widetilde\Xfrak_{\overline r}^{\square,{\bf k}\rm -cr}\hookrightarrow\Xfrak_{\overline r}^{\square,{\bf k}\rm -cr}\times_{\Spm\,L}T_L^\rig
\end{equation}
factors through $X_{\rm tri}^\square(\overline r)\subset \Xfrak_{\overline r}^\square\times_{\Spm\,L}\Tcal^n_L$. 
As the source of this restriction is reduced by Lemma \ref{reduced}, it is enough to check it on a Zariski-dense set of points of $\widetilde\Xfrak_{\overline r}^{\square,{\bf k}\rm -cr}$. 

Let $r$ be an $n$-dimensional crystalline representation of ${\mathcal G}_K$ over a finite extension $L'$ of $L$ of Hodge-Tate weights ${\bf k}$ and let $\varphi_1,\dots,\varphi_n$ be an ordering of the eigenvalues of a geometric Frobenius on ${\rm WD}(r)$, equivalently of the eigenvalues of $\varphi^{[K_0:\Q_p]}$ on $D_{\rm cris}(r)$ (that are assumed to be in $L'^\times$). Assuming moreover that the $\varphi_i$ are pairwise distinct, this datum gives rise to a unique complete $\varphi$-stable flag of free $K_0\otimes_{\Q_p}L'$-modules:
$$0=\Fcal_0\subset \Fcal_1\subset \dots\subset \Fcal_n=D_{\rm cris}(r)$$
on $D_{\rm cris}(r)$ such that $\varphi^{[K_0:\Q_p]}$ acts on $\Fcal_i/\Fcal_{i-1}$ by multiplication by $\varphi_i$ (this is a \emph{refinement} in the sense of \cite[Def.2.4.1]{BelChe}). By the same argument as in the proof of Lemma \ref{paramofcrystpt} using Berger's dictionary between crystalline $(\varphi,\Gamma_K)$-modules and filtered $\varphi$-modules (see e.g. (\ref{berger})), the filtration $\Fcal_\bullet$ induces a triangulation $\Fil_{\bullet}$ on $D_{\rig}(r)$. If we assume that $\Fcal_\bullet$ is {\it noncritical} in the sense of \cite[Def.2.4.5]{BelChe}, i.e. the filtration $\Fcal_\bullet$ is in general position with respect to the Hodge filtration $\Fil^\bullet D_{\rm dR}(r)$ on $D_{\rm dR}(r)$, that is, for all embeddings $\tau:K\hookrightarrow L$ and all $i=1,\dots,n-1$ we have:
\begin{multline}\label{generalpos}
\big(\Fcal_{i}\otimes_{K_0\otimes_{\Q_p}L',\tau\otimes \id}L'\big)\oplus \big(\Fil^{-k_{\tau,i+1}}D_{\rm dR}(r)\otimes_{K\otimes_{\Q_p}L',\tau\otimes \id}L'\big)=D_{\rm cris}(r)\otimes_{K_0\otimes_{\Q_p}L,\tau\otimes \id}L'\\
=D_{\rm dR}(r)\otimes_{K\otimes_{\Q_p}L',\tau\otimes \id}L',
\end{multline}
then $\Fcal_{i}/\Fcal_{i-1}$ is a filtered $\varphi$-module of Hodge-Tate weights ${\bf k}_i$, or equivalently $\Fil_i/\Fil_{i-1}\cong \Rcal_{L',K}(\delta_i)$ with $\delta_i=z^{{\bf k}_i}{\rm unr}(\varphi_i)$. 

\begin{lemm}\label{genericopensinXcris}
There are smooth (over $L$) Zariski-open and Zariski-dense subsets in $\widetilde\Xfrak_{\overline r}^{\square,{\bf k}\rm -cr}$:
$$\widetilde V_{\overline r}^{\square,{\bf k}\rm -cr} \subset \widetilde U_{\overline r}^{\square,{\bf k}\rm -cr}\subset\widetilde\Xfrak_{\overline r}^{\square,{\bf k}\rm -cr}$$
such that:
\begin{enumerate}
\item[(i)] a point $(r,\varphi_1,\dots,\varphi_n)\in \widetilde \Xfrak_{\overline r}^{\square,{\bf k}\rm -cr}$ lies in $\widetilde U_{\overline r}^{\square,{\bf k}\rm -cr}$ if and only if the $\varphi_i$ are pairwise distinct;\\
\item[(ii)] a point  $(r,\varphi_1,\dots,\varphi_n)\in \widetilde U_{\overline r}^{\square,{\bf k}\rm -cr}$ lies in $\widetilde V_{\overline r}^{\square,{\bf k}\rm -cr}$ if and only if it satisfies assumption $(\ref{generalpos})$ above and $z^{{\bf k}_i-{\bf k}_j}{\rm unr}(\varphi_i\varphi_j^{-1})\in\Tcal_{\rm reg}$ for $i\ne j$. 
\end{enumerate}
Moreover the image of $\widetilde V_{\overline r}^{\square,{\bf k}\rm -cr}$ via (\ref{cristotriang}) composed with (\ref{cristotriangres}) lies in $U_{\rm tri}^\square(\overline r)$.
\end{lemm}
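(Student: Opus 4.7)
The plan is to define $\widetilde U$ as the preimage of the distinct-eigenvalue locus in $T^\rig_L$, to cut out $\widetilde V$ by the two further open conditions, then derive smoothness and openness formally while treating density and the trianguline identification as the substantive steps. Let $T^\rig_{L,\mathrm{reg}}\subset T^\rig_L$ denote the complement of the discriminant divisor $\prod_{i<j}(\varphi_i-\varphi_j)=0$, and set
$$\widetilde U_{\overline r}^{\square,{\bf k}{\rm -cr}}:=\widetilde\Xfrak_{\overline r}^{\square,{\bf k}{\rm -cr}}\times_{T^\rig_L}T^\rig_{L,\mathrm{reg}}.$$
The quotient $T^\rig_L\to T^\rig_L/\Scal_n$ is finite flat and étale over the regular locus (the $\Scal_n$-action being free there), so by base change $\widetilde U_{\overline r}^{\square,{\bf k}{\rm -cr}}\to\Xfrak_{\overline r}^{\square,{\bf k}{\rm -cr}}$ is finite étale, and smoothness of $\widetilde U$ follows from Kisin's smoothness of $\Xfrak_{\overline r}^{\square,{\bf k}{\rm -cr}}$. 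Over $\widetilde U$ the distinctness of the $\varphi_i$ allows a Lagrange-type spectral-projector construction on the coherent sheaf $\Dcal$ yielding a canonical complete $\varphi$-stable flag $\Fcal_\bullet$ of locally free $K_0\otimes_{\Q_p}\Ocal$-submodules such that $\varphi^{[K_0:\Q_p]}$ acts on $\Fcal_i/\Fcal_{i-1}$ by $\varphi_i$. Condition (\ref{generalpos}) then reads as the non-vanishing of a certain determinant pairing $\Fcal_i$ against the universal Hodge filtration on $D_\mathrm{dR}(\tilde r)$, hence cuts out an open subset; the parameter-regularity condition $z^{{\bf k}_i-{\bf k}_j}\mathrm{unr}(\varphi_i\varphi_j^{-1})\in\Tcal_\mathrm{reg}$ is tautologically open. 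Define $\widetilde V_{\overline r}^{\square,{\bf k}{\rm -cr}}$ as the intersection of these two opens with $\widetilde U$.

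For density I would argue as follows. The map $\widetilde\Xfrak_{\overline r}^{\square,{\bf k}{\rm -cr}}\to\Xfrak_{\overline r}^{\square,{\bf k}{\rm -cr}}$ is finite flat surjective, so it preserves dimension and sends irreducible components onto irreducible components; it therefore suffices to check that the analogous three open conditions cut out dense opens on $\Xfrak_{\overline r}^{\square,{\bf k}{\rm -cr}}$. The distinct-eigenvalue and regularity conditions are dense because the characteristic polynomial of $\Phi$ is non-constant on each irreducible component; this is implicit in the proof of Lemma \ref{reduced}, where the induced map to $T^\rig_L/\Scal_n$ was shown to be smooth, hence open, on each component. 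For (\ref{generalpos}), I would use the factorization through $(\Res_{K_0/\Q_p}\GL_{n,K_0}\times_{\Spm\,\Q_p}\mathrm{Flag})^\rig$ appearing in Lemma \ref{reduced}'s proof: the universal Hodge filtration gives a dominant map from any component to the flag variety, and the locus where this filtration fails to be in general position with respect to the specified $\varphi$-flag $\Fcal_\bullet$ is cut out by proper Schubert-type closed subvarieties, hence its preimage is nowhere dense.

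Finally take $x=(r,\varphi_1,\dots,\varphi_n)\in\widetilde V_{\overline r}^{\square,{\bf k}{\rm -cr}}$. By Berger's equivalence $D_\mathrm{cris}(r)\cong D_\rig(r)[\tfrac{1}{t}]^{\Gamma_K}$ used in Lemma \ref{paramofcrystpt} and recalled just before the lemma, the $\varphi$-stable flag $\Fcal_\bullet$ lifts uniquely to a triangulation $\mathrm{Fil}_\bullet$ of $D_\rig(r)$ whose graded pieces satisfy $\mathrm{Fil}_i/\mathrm{Fil}_{i-1}\cong\Rcal_{k(x),K}(z^{{\bf k}_i}\mathrm{unr}(\varphi_i))$, the identification of Hodge--Tate weights being guaranteed by (\ref{generalpos}). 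Hence the image of $x$ in $\Xfrak_{\overline r}^\square\times_{\Spm\,L}\Tcal_L^n$ is a trianguline point of parameter $\delta=(z^{{\bf k}_i}\mathrm{unr}(\varphi_i))_i\in\Tcal^n_\mathrm{reg}$, and therefore lands in $U_\mathrm{tri}^\square(\overline r)$. I expect the main obstacle to be the density step for (\ref{generalpos}): one must genuinely know that the Hodge filtration varies non-trivially on each irreducible component of $\Xfrak_{\overline r}^{\square,{\bf k}{\rm -cr}}$, enough to avoid the Schubert-type bad loci, rather than being constrained by some closed subcondition imposed by $\overline r$; this is precisely what the factorization through the flag variety in Lemma \ref{reduced}'s proof is designed to provide.
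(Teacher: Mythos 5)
Your proposal is in the same spirit as the paper's: both rely on the factorization (which you correctly locate in the proof of Lemma \ref{reduced}) of $\Xfrak_{\overline r}^{\square,{\bf k}\rm-cr}$ through the period domain $(\Res_{K_0/\Q_p}\GL_{n,K_0}\times \mathrm{Flag})^\rig$, together with the Hartl--Hellmann smoothness of that map, and both read the density statement off from it. The organization differs: the paper never constructs a global $\varphi$-stable flag $\Fcal_\bullet$ on $\widetilde U_{\overline r}^{\square,{\bf k}\rm-cr}$, but instead works entirely with the smooth morphism
$$\widetilde\Xfrak_{\overline r}^{\square,{\bf k}\rm-cr}\longrightarrow \big((\Res_{K_0/\Q_p}\GL_{n,K_0})^\rig_L\times_{T_L^\rig/\Scal_n}T_L^\rig\big)\times_{\Spm\,L}\mathrm{Flag}_L^\rig,$$
defines $\widetilde U$ and $\widetilde V$ as preimages of smooth Zariski-open Zariski-dense subsets of the target, and gets density by the simple observation that a smooth map is flat, hence open, so the image of every affinoid open of $\widetilde\Xfrak$ meets the dense opens downstairs. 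Your spectral-projector construction of $\Fcal_\bullet$ and the determinant description of the noncriticality locus are consistent with this but not needed.

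There is, however, a genuine gap in your density argument. You write that the finite flat surjection $\widetilde\Xfrak_{\overline r}^{\square,{\bf k}\rm-cr}\to\Xfrak_{\overline r}^{\square,{\bf k}\rm-cr}$ lets you reduce to verifying density of ``the analogous three open conditions'' on $\Xfrak_{\overline r}^{\square,{\bf k}\rm-cr}$. This reduction does not work for conditions (ii): both the noncriticality (\ref{generalpos}) of the refinement $\Fcal_\bullet$ and the regularity $z^{{\bf k}_i-{\bf k}_j}\mathrm{unr}(\varphi_i\varphi_j^{-1})\in\Tcal_\mathrm{reg}$ depend on the chosen \emph{ordering} $(\varphi_1,\dots,\varphi_n)$, not merely on the unordered Frobenius spectrum. (For instance transposing $\varphi_1,\varphi_2$ replaces $z^{{\bf k}_1-{\bf k}_2}\mathrm{unr}(\varphi_1\varphi_2^{-1})$ by $z^{{\bf k}_1-{\bf k}_2}\mathrm{unr}(\varphi_2\varphi_1^{-1})$, and the $\varphi$-flag $\Fcal_\bullet$ itself changes.) These conditions therefore live on $\widetilde\Xfrak_{\overline r}^{\square,{\bf k}\rm-cr}$ and have no ``analogue'' on $\Xfrak_{\overline r}^{\square,{\bf k}\rm-cr}$ to which you could descend. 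The fix, which is what the paper actually does, is to run the openness argument directly on $\widetilde\Xfrak_{\overline r}^{\square,{\bf k}\rm-cr}$: base-change the smooth period map along $T_L^\rig\to T_L^\rig/\Scal_n$ to get a smooth morphism from $\widetilde\Xfrak_{\overline r}^{\square,{\bf k}\rm-cr}$ to $((\Res_{K_0/\Q_p}\GL_{n,K_0})^\rig_L\times_{T_L^\rig/\Scal_n}T_L^\rig)\times\mathrm{Flag}_L^\rig$, where the \emph{ordered}-eigenvalue conditions are honest conditions, check that they cut out Zariski-dense opens there, and pull back using openness. You do identify the right engine at the end (``this is precisely what the factorization through the flag variety... is designed to provide''), but the intermediate reduction to $\Xfrak$ is incorrect and has to be removed. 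A smaller quibble: $\widetilde U\to\Xfrak_{\overline r}^{\square,{\bf k}\rm-cr}$ is étale onto its (open) image, not finite étale over all of $\Xfrak_{\overline r}^{\square,{\bf k}\rm-cr}$, though this does not affect the smoothness conclusion.
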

\begin{proof}
The idea of the proof is the same as that of \cite[Lem.4.4]{Chenevier}. It is enough to show that all the statements are true locally on $\widetilde\Xfrak_{\overline r}^{\square,{\bf k}\rm -cr}$. Let us (locally) fix a basis of the coherent locally free $K_0\otimes_{\Q_p}\Ocal_{\!\Xfrak_{\overline r}^{\square,{\bf k}{\rm -cr}}}$-module $\Dcal$ on $\Xfrak_{\rbar}^{\square,{\bf k}\rm -cr}$. By the choice of such a basis, the matrix of the crystalline Frobenius $\Phi_{\rm cris}$ and the Hodge filtration define (locally) a morphism:
$$\Xfrak_{\rbar}^{\square,{\bf k}\rm -cr}\longrightarrow (\Res_{K_0/\Q_p}\GL_{n,K_0} \times_{\Spm\,\Q_p} {\rm Flag})^\rig\times_{\Spm\,\Q_p}\Spm\,L$$
where ${\rm Flag}:=(\Res_{K/\Q_p}\GL_{n,K})/(\Res_{K/\Q_p} B)$ (compare \cite[\S8]{HartlHellmann}). By \cite[Prop.8.12]{HartlHellmann} and the discussion preceding it, it follows that this morphism is smooth, hence so is the morphism:
\begin{equation}\label{morphismbasechange}
\widetilde \Xfrak_{\rbar}^{\square,{\bf k}\rm -cr}\longrightarrow \big((\Res_{K_0/\Q_p}\GL_{n,K_0})^\rig_L\times_{T_L^\rig/\Scal_n}T_L^\rig\big)\times_{\Spm\,L} {\rm Flag}_L^\rig
\end{equation}
where $\Res_{K_0/\Q_p}\GL_{n,K_0}\rightarrow T/\Scal_n$ is the morphism defined in \cite[(9.1)]{HartlHellmann}. On the other hand, using that the morphism $T\rightarrow T/\Scal_n$ is obviously smooth in the neighbourhood of a point $(\varphi_1,\dots,\varphi_n)\in T$ where the $\varphi_i$ are pairwise distinct, we see that the conditions of (i), resp.~(ii), in the statement cut out smooth (over $L$) Zariski-open and Zariski-dense subspaces of:
\begin{equation}\label{RHS}
\big((\Res_{K_0/\Q_p}\GL_{n,K_0})^\rig_L\times_{T_L^\rig/\Scal_n}T_L^\rig\big)\times_{\Spm\,L} {\rm Flag}_L^\rig.
\end{equation}
Their inverse images in $\widetilde \Xfrak_{\overline r}^{\square,{\bf k}\rm -cr}$ via (\ref{morphismbasechange}) are thus smooth over $L$ and Zariski-open in $\widetilde \Xfrak_{\overline r}^{\square,{\bf k}\rm -cr}$. Let us prove that these inverse images are also Zariski-dense in $\widetilde\Xfrak_{\overline r}^{\square,{\bf k}\rm -cr}$. It is enough to prove that they intersect nontrivially every irreducible component of $\widetilde \Xfrak_{\overline r}^{\square,{\bf k}\rm -cr}$. Let $\Spm\,A$ be any affinoid open subset of $\widetilde \Xfrak_{\overline r}^{\square,{\bf k}\rm -cr}$, it follows from \cite[Cor.9.4.2]{Bo} that the image of $\Spm\,A$ by the smooth, hence flat, morphism (\ref{morphismbasechange}) is admissible open in (\ref{RHS}). In particular its intersection with one of the above Zariski-open and Zariski-dense subspaces of (\ref{RHS}) can't be empty, which proves the statement. The final claim of the lemma follows from (ii), the discussion preceding Lemma \ref{genericopensinXcris} and the definition (\ref{ucris}) of $U_{\rm tri}^\square(\overline r)$ . 
\end{proof}

Note that $\widetilde\Xfrak_{\overline r}^{\square,{\bf k}\rm -cr}$ is equidimensional of the same dimension as $\Xfrak_{\overline r}^{\square,{\bf k}\rm -cr}$. Indeed, by Lemma \ref{genericopensinXcris} it is enough to prove the same statement for $\widetilde U_{\overline r}^{\square,{\bf k}\rm -cr}$. But this is clear since the map $\widetilde U_{\overline r}^{\square,{\bf k}\rm -cr}\rightarrow \Xfrak_{\overline r}^{\square,{\bf k}\rm -cr}$ is smooth of relative dimension $0$, hence \'etale, and since $\Xfrak_{\overline r}^{\square,{\bf k}\rm -cr}$ is equidimensional (\cite{Kisindef}). Lemma \ref{genericopensinXcris} also implies that $(\ref{cristotriang})$ induces (as claimed above) a morphism:
\begin{equation}\label{embeddingcristri}
\iota_{\bf k}:\widetilde\Xfrak_{\overline r}^{\square,{\bf k}\rm -cr}\longrightarrow X_{\rm tri}^\square(\overline r)
\end{equation}
which is obviously a closed immersion as $(\ref{cristotriang})$ is. 

\begin{coro}\label{defnZtri(x)}
Let $x=(r,\delta)\in X_{\rm tri}^\square(\overline r)$ be a crystalline strictly dominant point such that $\omega(x)=\delta_{\bf k}$ and the Frobenius eigenvalues $(\varphi_1,\dots,\varphi_n)$ (cf. Lemma \ref{paramofcrystpt}) are pairwise distinct and let $U$ be an open subset of $X_{\rm tri}^\square(\overline r)$ containing $x$.\\
(i) The point $x$ belongs to $\iota_{\bf k}(\widetilde U_{\overline r}^{\square,{\bf k}{\rm -cr}})$ and there is a unique irreducible component $\widetilde Z_{\rm cris}(x)$ of $\widetilde\Xfrak_{\overline r}^{\square,{\bf k}{\rm -cr}}$ containing $\iota_{\bf k}^{-1}(x)$.\\
(ii) If $U$ is small enough there is a unique irreducible component $Z_{{\rm tri},U}(x)$ of $U$ containing $\iota_{\bf k}(\widetilde Z_{\rm cris}(x))\cap U$, and it is such that $Z_{{\rm tri},U}(x)\cap U'=Z_{{\rm tri},U'}(x)$ for any open $U'\subseteq U$ containing $x$.
\end{coro}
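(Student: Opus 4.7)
The plan is to reduce (i) to the smoothness of the locus $\widetilde U_{\overline r}^{\square,{\bf k}{\rm -cr}}$ established in Lemma \ref{genericopensinXcris}, and (ii) to the existence of a Zariski-dense open subset of $\widetilde Z_{\rm cris}(x)$ whose image under $\iota_{\bf k}$ lies in the smooth locus $U_{\rm tri}^\square(\overline r) \subset X_{\rm tri}^\square(\overline r)$.

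For (i), by Lemma \ref{paramofcrystpt} the parameter $\delta$ of $x=(r,\delta)$ determines, via $\delta_i = z^{{\bf k}_i}{\rm unr}(\varphi_i)$, a well-defined ordered tuple $(\varphi_1,\ldots,\varphi_n)$, so that $\iota_{\bf k}^{-1}(x)$ consists of the single point $(r,\varphi_1,\ldots,\varphi_n)$. The hypothesis that the $\varphi_i$ are pairwise distinct places this point in $\widetilde U_{\overline r}^{\square,{\bf k}{\rm -cr}}$, which is smooth over $L$ by Lemma \ref{genericopensinXcris}. Since smoothness implies local irreducibility, there is a unique irreducible component $\widetilde Z_{\rm cris}(x)$ of $\widetilde\Xfrak_{\overline r}^{\square,{\bf k}{\rm -cr}}$ passing through $\iota_{\bf k}^{-1}(x)$.

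For (ii), since $\iota_{\bf k}$ is a closed immersion and $\widetilde Z_{\rm cris}(x)$ is irreducible, the image $Y := \iota_{\bf k}(\widetilde Z_{\rm cris}(x))$ is a closed irreducible subvariety of $X_{\rm tri}^\square(\overline r)$ containing $x$. Lemma \ref{genericopensinXcris} ensures that $\widetilde V_{\overline r}^{\square,{\bf k}{\rm -cr}} \cap \widetilde Z_{\rm cris}(x)$ is Zariski-open and Zariski-dense in $\widetilde Z_{\rm cris}(x)$ and that $\iota_{\bf k}$ sends it into $U_{\rm tri}^\square(\overline r)$; consequently $Y$ contains a Zariski-open dense subset $Y^\circ$ lying in the smooth locus of $X_{\rm tri}^\square(\overline r)$, and since $Y$ is reduced and irreducible this $Y^\circ$ is also admissibly dense in $Y$. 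I would then shrink $U$ around $x$ so that its irreducible components correspond bijectively to the local branches of $X_{\rm tri}^\square(\overline r)$ at $x$, and so that this bijection is preserved under further shrinking to any open $U' \subseteq U$ containing $x$. For such a $U$, pick $y \in Y^\circ \cap U$, which is nonempty by admissible density. Then $y$ is a smooth point of $X_{\rm tri}^\square(\overline r)$, hence lies in a unique irreducible component $Z_{{\rm tri},U}(x)$ of $U$. Since $Y \cap U$ is irreducible (an open subset of the irreducible $Y$) and contains $y$, it is forced into $Z_{{\rm tri},U}(x)$; any other component of $U$ containing $Y \cap U$ would also contain the smooth point $y$, and hence equal $Z_{{\rm tri},U}(x)$. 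The compatibility $Z_{{\rm tri},U}(x)\cap U' = Z_{{\rm tri},U'}(x)$ is then immediate from the matching of local branches under restriction.

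The main obstacle is the shrinking step: one must choose $U$ small enough that its irreducible components correctly match the local branches of $X_{\rm tri}^\square(\overline r)$ at $x$ and that this property persists on further shrinking. This is a standard fact for reduced rigid analytic spaces, but some care is required because $X_{\rm tri}^\square(\overline r)$ may well be singular at $x$ (and in fact this paper is ultimately concerned with understanding the nature of such singularities), so the identification of local branches cannot be replaced by a naive smoothness argument at $x$ itself.
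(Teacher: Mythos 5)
Your part (i) matches the paper's argument. Part (ii) also follows the same overall strategy — produce a point of $Y := \iota_{\bf k}(\widetilde Z_{\rm cris}(x))$ lying in the smooth open locus $U_{\rm tri}^\square(\overline r)$ of $X_{\rm tri}^\square(\overline r)$ to conclude that $Y\cap U$ is contained in a unique component of $U$. However, there is a genuine gap at the step where you assert that ``$Y\cap U$ is irreducible (an open subset of the irreducible $Y$)''. That general principle is false: in rigid analytic (or complex analytic) geometry, an admissible open subset of an irreducible space need not be connected, let alone irreducible. For instance, a closed disc is irreducible, but the admissible open given by the disjoint union of a strictly smaller closed sub-disc and the boundary circle is disconnected. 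Without the irreducibility of $Y\cap U$, you cannot conclude that $Y\cap U$ is contained in a \emph{single} component of $U$: the argument via the smooth point $y$ only rules out \emph{two} components through $Y\cap U$, but does not preclude $Y\cap U$ from being spread across several components if it were reducible.

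The correct justification — and the one the paper uses — is that $Y$ is \emph{smooth at} $x$: by part (i), $\iota_{\bf k}^{-1}(x)$ lies in the smooth locus $\widetilde U_{\overline r}^{\square,{\bf k}{\rm -cr}}$, and $\iota_{\bf k}$ is a closed immersion, so $Y$ is smooth (hence locally irreducible) at $x$. Shrinking $U$ around $x$ so that $Y\cap U$ is connected therefore forces $Y\cap U$ to be irreducible, and this is compatible with the further shrinking needed to stabilize the bijection of local branches. Note that this is the very smoothness consideration you flagged in your closing remark as unavailable for the ambient $X_{\rm tri}^\square(\overline r)$ — the point is that it \emph{is} available for the smaller locus $Y$, and that is exactly what makes the argument go through.
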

\begin{proof}
(i) The assumptions and Lemma \ref{paramofcrystpt} imply that $x$ is in the image of the map $\iota_{\bf k}$ in (\ref{embeddingcristri}) and the fact that the $\varphi_i$ are pairwise distinct implies that $x\in \iota_{\bf k}(\widetilde U_{\overline r}^{\square,{\bf k}{\rm -cr}})$. In particular $\widetilde \Xfrak_{\overline r}^{\square,{\bf k}{\rm -cr}}$ is smooth at $\iota_{\bf k}^{-1}(x)$ by Lemma \ref{genericopensinXcris} and thus $\iota_{\bf k}^{-1}(x)$ belongs to a unique irreducible component $\widetilde Z_{\rm cris}(x)$ of $\widetilde\Xfrak_{\overline r}^{\square,{\bf k}{\rm -cr}}$.

(ii) We have that $\iota_{\bf k}(\widetilde Z_{\rm cris}(x))\cap U$ is a Zariski-closed subset of $U$, and it is easy to see that it is still irreducible if $U$ is small enough since $\iota_{\bf k}(\widetilde Z_{\rm cris}(x))$ is smooth at $x$. Hence there exists at least one irreducible component of $U$ containing the irreducible Zariski-closed subset $\iota_{\bf k}(\widetilde Z_{\rm cris}(x))\cap U$. If there are two such irreducible components, then in particular {\it any} point of $\iota_{\bf k}(\widetilde Z_{\rm cris}(x))\cap U$ is singular in $U$, hence in $X_{\rm tri}^\square(\overline r)$. But Lemma \ref{genericopensinXcris} implies $\widetilde Z_{\rm cris}(x)\cap \widetilde V_{\overline r}^{\square,{\bf k}\rm -cr}\ne \emptyset$ is Zariski-open and Zariski-dense in $\widetilde Z_{\rm cris}(x)$, hence:
$$\big(\widetilde Z_{\rm cris}(x)\cap \widetilde V_{\overline r}^{\square,{\bf k}\rm -cr}\big)\cap \big(\widetilde Z_{\rm cris}(x)\cap \iota_{\bf k}^{-1}(U)\big) \ne \emptyset$$
from which we get $\iota_{\bf k}(\widetilde Z_{\rm cris}(x)\cap \widetilde V_{\overline r}^{\square,{\bf k}\rm -cr})\cap U\ne \emptyset$. The last statement of Lemma \ref{genericopensinXcris} also implies $\iota_{\bf k}(\widetilde Z_{\rm cris}(x)\cap \widetilde V_{\overline r}^{\square,{\bf k}\rm -cr})\cap U\subseteq U_{\rm tri}^\square(\overline r)$, which is then a contradiction since $U_{\rm tri}^\square(\overline r)$ is smooth over $L$.

Finally, shrinking $U$ again if necessary, we can assume that, for any open subset $U'\subseteq U$ containing $x$, the map $Z\mapsto Z\cap U'$ induces a bijection between the irreducible components of $U$ containing $x$ and the irreducible components of $U'$ containing $x$. It then follows from the definition of $Z_{{\rm tri},U}(x)$ that $Z_{{\rm tri},U}(x)\cap U'=Z_{{\rm tri},U'}(x)$.
\end{proof}

\begin{rema}\label{down}
{\rm (i) Since the map $\widetilde \Xfrak_{\overline r}^{\square,{\bf k}{\rm -cr}}\rightarrow \Xfrak_{\overline r}^{\square,{\bf k}{\rm -cr}}$ is finite, hence closed, and since $\widetilde \Xfrak_{\overline r}^{\square,{\bf k}{\rm -cr}}$, $\Xfrak_{\overline r}^{\square,{\bf k}{\rm -cr}}$ are both equidimensional (of the same dimension), the image of any irreducible component of $\widetilde \Xfrak_{\overline r}^{\square,{\bf k}{\rm -cr}}$ is an irreducible component of $\Xfrak_{\overline r}^{\square,{\bf k}{\rm -cr}}$. In particular the image of $\widetilde Z_{\rm cris}(x)$ in (i) of Corollary \ref{defnZtri(x)} is the unique irreducible component of $\Xfrak_{\overline r}^{\square,{\bf k}{\rm -cr}}$ containing $r$.\\
(ii) Either by the same proof as that for (ii) of Corollary \ref{defnZtri(x)} or as a consequence of (ii) of Corollary \ref{defnZtri(x)}, we see that there is also a unique irreducible component $Z_{{\rm tri}}(x)$ of the whole $X_{\rm tri}^\square(\overline r)$ which contains the irreducible closed subset $\iota_{\bf k}(\widetilde Z_{\rm cris}(x))$.}
\end{rema}

\subsection{The Weyl group element associated to a crystalline point}\label{weyl}

We review the definition of the Weyl group element associated to certain crystalline points on $X_{\rm tri}^\square(\overline r)$ (measuring their ``criticality'') and state our main local results (Theorem \ref{upperbound}, Corollary \ref{Xtrismooth}). 

We keep the notation of \S\ref{variant}. We let $W\cong \prod_{\tau:\, K\hookrightarrow L}{\mathcal S}_n$ be the Weyl group of the algebraic group:
$$(\Res_{K/\Q_p}\GL_{n,K})\times_{\Spec\,\Q_p}\Spec\,L\cong \prod_{\tau:\, K\hookrightarrow L}{\GL_{n,L}}$$
and $X^\ast((\Res_{K/\Q_p}T_K)\times_{\Spec\,\Q_p}\Spec\,L)\cong \prod_{\tau:\, K\hookrightarrow L}X^\ast(T_L)$ be the $\Z$-module of algebraic characters of $(\Res_{K/\Q_p}T_K)\times_{\Spec\,\Q_p}\Spec\,L$ (recall $T$ is the diagonal torus in $\GL_n$ and $T_K$, $T_L$ its base change to $K$, $L$). We write ${\rm lg}(w)$ for the length of $w$ in the Coxeter group $W$ (for the set of simple reflections associated to the simple roots of the upper triangular matrices).

Let $x=(r,\delta)=(r,\delta_1,\dots,\delta_n)$ be a crystalline strictly dominant point on $X_{\rm tri}^\square(\rbar)$. Then by Lemma $\ref{paramofcrystpt}$ the characters $\delta_i$ are of the form $\delta_i=z^{{\bf k}_i}{\rm unr} (\varphi_i)$ where ${\bf k}_i=(k_{\tau,i})_{\tau:\, K\hookrightarrow L}$ and the $\varphi_i\in k(x)^\times$ are the eigenvalues of the geometric Frobenius on ${\rm WD}(r)$. Assume that the $\varphi_i$ are pairwise distinct, then as in \S\ref{variant} the ordering $(\varphi_1,\dots,\varphi_n)$ defines a complete $\varphi$-stable flag of free $K_0\otimes_{\Q_p}k(x)$-modules $0=\Fcal_0\subset \Fcal_1\subset \dots\subset \Fcal_n=D_{\rm cris}(r)$ on $D_{\rm cris}(r)$ such that $\varphi^{[K_0:\Q_p]}$ acts on $\Fcal_i/\Fcal_{i-1}$ by multiplication by $\varphi_i$. We view $\Fcal_i$ as a filtered $\varphi$-module with the induced Hodge filtration. If we write $(k'_{\tau,i})_{\tau:K\hookrightarrow L}$ for the Hodge-Tate weights of $\Fcal_i/\Fcal_{i-1}$, we find that there is a unique $w_x=(w_{x,\tau})_{\tau:\, K\hookrightarrow L}\in W=\prod_{\tau: K\hookrightarrow L}\Scal_n$  such that:
\begin{equation}\label{wxdef}
k'_{\tau, i}=k_{\tau, w_{x,\tau}^{-1}(i)}
\end{equation}
for all $i\in \{1,\dots, n\}$ and each $\tau: K\hookrightarrow L$. We call $w_x$ the \emph{Weyl group element associated to} $x$. Note that $\Fcal_\bullet$ is noncritical (see \S\ref{variant}) if and only if $w_{x,\tau}=1$ for all $\tau:K\hookrightarrow L$, in which case we say that the crystalline strictly dominant point $x=(r,\delta)$ is {\it noncritical}. 

For $w\in W$ we denote by $d_w\in \Z_{\geq 0}$ the rank of the $\Z$-submodule of $X^\ast((\Res_{K/\Q_p}T_K)\times_{\Spec\,\Q_p}\Spec\,L)$ generated by the $w(\alpha)-\alpha$ where $\alpha$ runs among the roots of $(\Res_{K/\Q_p}\GL_{n,K})\times_{\Spec\,\Q_p}\Spec\,L$. 

\begin{lemm}\label{coxeter}
With the above notations we have:
$$d_w\leq \lg(w)\leq [K:\Q_p]\frac{n(n-1)}{2}$$
and $\lg(w)= d_w$ if and only if $w$ is a product of distinct simple reflections.
\end{lemm}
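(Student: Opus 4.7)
The plan is to decompose everything over the factors indexed by $\tau:K\hookrightarrow L$ and reduce the statement in each $\Scal_n$ factor to classical combinatorics of the symmetric group. Since the root system of $(\Res_{K/\Q_p}\GL_{n,K})\times_{\Spec\Q_p}\Spec L\cong \prod_\tau \GL_{n,L}$ is a disjoint union of copies of $A_{n-1}$ indexed by $\tau$, and $W=\prod_\tau \Scal_n$ acts componentwise on the character lattice, the $\Z$-module spanned by the $w(\alpha)-\alpha$ decomposes as a direct sum over $\tau$. Writing $w=(w_\tau)_\tau$, this yields $d_w=\sum_\tau d_{w_\tau}$, $\lg(w)=\sum_\tau\lg(w_\tau)$, and $w$ is a product of distinct simple reflections in $W$ if and only if each $w_\tau$ is in $\Scal_n$. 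Hence I may assume $W=\Scal_n$.

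The first key identification is $d_w=n-r(w)$, where $r(w)$ denotes the number of cycles of $w\in\Scal_n$ (counting fixed points). This holds because the vectors $(w-1)(e_i-e_j)=(e_{w(i)}-e_i)-(e_{w(j)}-e_j)$ generate the image of $w-1$ acting on the root lattice $\{x\in\Z^n:\sum x_i=0\}$, and a direct computation using the cycle decomposition gives the rank $n-r(w)$. The inequality $d_w\leq\lg(w)$ then follows from the classical fact that the reflection length of $w$ (the minimal number of transpositions in an expression of $w$) equals $n-r(w)$ and is bounded above by $\lg(w)$, since every simple reflection is a transposition. The upper bound $\lg(w)\leq n(n-1)/2$ is the length of the longest element of $\Scal_n$, which gives the claimed global bound after summing over $\tau$.

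For the ``if'' direction of the equality criterion, suppose $w=s_{i_1}\cdots s_{i_k}$ with distinct $i_j\in\{1,\dots,n-1\}$. I consider the subgraph of the Dynkin diagram formed by the edges $\{i_j,i_j+1\}$; its connected components are intervals $I_1,\dots,I_m$ in $\{1,\dots,n\}$, the remaining vertices being fixed by $w$. Because simple reflections of different intervals commute, $w$ decomposes as a product of Coxeter elements of $\Scal_{|I_\ell|}$, each of which is an $|I_\ell|$-cycle of length $|I_\ell|-1$ in its factor. A cycle count then yields $n-r(w)=\sum_\ell(|I_\ell|-1)=k=\lg(w)$.

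The ``only if'' direction is the main obstacle and proceeds by reduction to the full-support case. For general $w\in\Scal_n$, let $J=S(w)\subseteq\{s_1,\dots,s_{n-1}\}$ be the support of $w$, i.e.\ the set of simple reflections occurring in any reduced expression of $w$. Then $w$ lies in the Young subgroup $W_J\cong\prod_\ell\Scal_{n_\ell}$ attached to the connected components of $J$, and decomposes as $w=\prod_\ell w_\ell$ with each $w_\ell\in\Scal_{n_\ell}$ of full support. Using additivity of $\lg$ and of $n-r$ over this Young decomposition, together with the already-proven inequality in each summand, the equality $\lg(w)=n-r(w)$ forces $\lg(w_\ell)=n_\ell-r_{n_\ell}(w_\ell)$ for all $\ell$. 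This reduces to the case of full support, in which $\lg(w)\geq|S(w)|=n-1$ and $\lg(w)=n-r(w)\leq n-1$; hence $\lg(w)=n-1=|S(w)|$, meaning any reduced expression of $w$ contains each of the $n-1$ simple reflections exactly once, so $w$ is a product of distinct simple reflections.
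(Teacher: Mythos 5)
Your proof is correct, but it takes a genuinely different route from the paper's. The paper stays abstract: it writes $d_w=\dim_{\Q}X_{\Q}-\dim_{\Q}\ker(w-\id)$, observes that $\ker(w-\id)$ contains the $W_I$-fixed subspace where $I$ is the support of $w$, computes $\dim_{\Q}X_{\Q}^{W_I}=\dim_{\Q}X_{\Q}-|I|$ by intersecting hyperplanes, and then invokes \cite[Th.1.12(c)]{Hum} --- a statement about Coxeter elements valid for arbitrary finite reflection groups --- to upgrade the containment $X_{\Q}^{W_I}\subseteq\ker(w-\id)$ to an equality when $w$ is a product of distinct simple reflections; the chain $d_w\leq |I|\leq\lg(w)$ then yields both inequality and the equality criterion in one stroke. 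You instead reduce factor-by-factor to $\Scal_n$ and exploit type-$A$ combinatorics directly: you identify $d_w=n-r(w)$ via the cycle decomposition (which is the same fixed-space dimension count, packaged concretely), recognize this as the \emph{reflection length} (Cayley's minimal-transposition count) so that $d_w\leq\lg(w)$ is immediate since every simple reflection is a transposition, and then handle the equality criterion by explicit bookkeeping --- the ``if'' direction by decomposing a product of distinct simple reflections into commuting Coxeter elements on disjoint intervals and counting cycles, the ``only if'' direction by additivity over the Young subgroup attached to the support, reducing to the full-support case and sandwiching $\lg(w)$ between $|S(w)|=n-1$ and $n-r(w)\leq n-1$. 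What your approach buys is self-containedness and elementary explicitness --- no appeal to the general theory of Coxeter elements --- at the (harmless here) cost of being specific to type $A$; the paper's argument is shorter because it outsources the hard step to Humphreys and works uniformly.
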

\begin{proof}
Note first that the right hand side inequality is obvious. Let us write in this proof $X:=X^\ast(({\rm Res}_{K/\Q_p}T_K)\times_{\Spec\,\Q_p}\Spec\,L)$, $X_{\Q}:=X\otimes_{\Z}\Q$, and let us denote by $S$ the subset of simple reflections in $W$ (thus $\dim_{\Q}(X_{\Q})=[K:\Q_p]n$ and $\vert S\vert=[K:\Q_p](n-1)$). The rank of the subgroup of $X$ generated by the $w(\alpha)-\alpha$ for $\alpha$ as above, or equivalently by the $w(\alpha)-\alpha$ for $\alpha\in X$, is equal to the dimension of the $\Q$-vector space $(w-\id)X_{\Q}$ which, by the rank formula, is equal to $\dim_{\Q}(X_{\Q})-\dim_{\Q}(\ker(w-\id))$. Let $I$ be the set of simple reflections appearing in $w$, we have $|I|\leq \lg(w)$ and $|I|= \lg(w)$ if and only if $w$ is a product of distinct simple reflections. It is thus enough to prove $\dim_{\Q}(\ker(w-\id))\geq \dim_{\Q}(X_{\Q})-|I|$ with equality when $w$ is a product of distinct simple reflections. Note that $\ker(w-\id)$ obviously contains the $\Q$-subvector space of $X_\Q$ of fixed points by the subgroup $W_I$ of $W$ generated by the elements of $I$, and it follows from \cite[Th.1.12(c)]{Hum} that, when $w$ is a product of distinct simple reflections, then $\ker(w-\id)$ is exactly this $\Q$-subvector space. It is thus enough to prove that this $\Q$-subvector space of $X_\Q$, which is just the intersection of the hyperplanes $\ker(s-\id)$ for $s\in I$, has dimension $\dim_{\Q}(X_\Q)-|I|$. However we know that for any $\Q$-subvector space $V\subset X_{\Q}$ and any reflection $s$ of $X_{\Q}$, we have $\dim_{\Q}(V\cap\ker(s-\id))\geq \dim_{\Q}(V)-1$ and thus by induction:
$$\dim_{\Q}\big(V\bigcap\big(\bigcap_{s\in S}\ker(s-\id)\big)\big)\geq \dim_{\Q}(V)-|S|$$
with equality if and only if $\dim_{\Q}\big(V\bigcap\big(\bigcap_{s\in J}\ker(s-\id)\big)\big)= \dim_{\Q}(V)-|J|$ for all $J\subseteq S$. As the $\Q$-subvector space $X_{\Q}^W$ of fixed points by $W$ has dimension $[K:\Q_p]$ (it is generated by the characters $\tau\circ\det$ for $\tau:K\hookrightarrow L$), we have:
$$ \dim_{\Q}(X_{\Q}^W)=\bigcap_{s\in S}\ker(s-\id)=[K:\Q_p]=\dim_{\Q}(X_{\Q})-|S|.$$
Consequently we deduce (taking $V=X_\Q$):
$$\dim_{\Q}\big(\bigcap_{s\in I}\ker(s-\id)\big)= \dim_{\Q}(X_\Q)-|I|$$
which is the desired formula.
\end{proof}

Recall that, if $X$ is a rigid analytic variety over $L$ and $x\in X$, the tangent space to $X$ at $x$ is the $k(x)$-vector space:
\begin{eqnarray}\label{tangent}
T_{X,x}:=\Hom_{k(x)}\big({\mathfrak m}_{X,x}/{\mathfrak m}^2_{X,x},k(x)\big)=\Hom_{k(x)-{\rm alg}}\big({\mathcal O}_{X,x},k(x)[\varepsilon]/(\varepsilon^2)\big)
\end{eqnarray}
where ${\mathfrak m}_{X,x}$ is the maximal ideal of the local ring ${\mathcal O}_{X,x}$ at $x$ to $X$. If $X$ is equidimensional, recall also that $\dim_{k(x)}T_{X,x}\geq \dim X$ and that $X$ is smooth at $x$ if and only if $\dim_{k(x)}T_{X,x}= \dim X$.

We let $\widetilde X_{\rm tri}^\square(\rbar)\subseteq X_{\rm tri}^\square(\rbar)$ be the union of the irreducible components $C$ of $X_{\rm tri}^\square(\rbar)$ such that $C\cap U_{\rm tri}^\square(\rbar)$ contains a crystalline point. For instance it follows from Lemma \ref{genericopensinXcris} that all the closed embeddings (\ref{embeddingcristri}) factor as closed embeddings $\iota_{\bf k}:\widetilde\Xfrak_{\overline r}^{\square,{\bf k}\rm -cr}\hookrightarrow \widetilde X_{\rm tri}^\square(\overline r)\subseteq X_{\rm tri}^\square(\overline r)$. In particular any point $x\in X_{\rm tri}^\square(\rbar)$ which is crystalline strictly dominant is in $\widetilde X_{\rm tri}^\square(\rbar)$.

The following statement is our main local conjecture.

\begin{conj}\label{mainconj}
Let $x\in X_{\rm tri}^\square(\rbar)$ be a crystalline strictly dominant point such that the Frobenius eigenvalues $(\varphi_1,\dots,\varphi_n)$ (cf. Lemma \ref{paramofcrystpt}) are pairwise distinct, let $w_x$ be the Weyl group element associated to $x$ (cf. (\ref{wxdef})) and let $d_x:=d_{w_x}$. Then:
$$\dim_{k(x)}T_{\widetilde X_{\rm tri}^\square(\rbar),x}=\lg(w_x)-d_{x}+\dim X_{\rm tri}^\square(\rbar)=\lg(w_x)-d_{x}+n^2+[K:\Q_p]\frac{n(n+1)}{2}.$$
\end{conj}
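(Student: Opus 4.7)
The plan is to split Conjecture \ref{mainconj} into two inequalities. The upper bound
$\dim_{k(x)}T_{\widetilde X_{\rm tri}^\square(\rbar),x}\leq \lg(w_x)-d_x+\dim X_{\rm tri}^\square(\rbar)$
should follow essentially formally from Theorem \ref{upperbound} applied with $X$ equal to the (finite) union of those irreducible components of $\widetilde X_{\rm tri}^\square(\rbar)$ passing through $x$, so that $T_{\widetilde X_{\rm tri}^\square(\rbar),x}=T_{X,x}$. To invoke Theorem \ref{upperbound} one must verify the accumulation property of Definition \ref{accu} at $x$ for this $X$. By the very definition of $\widetilde X_{\rm tri}^\square(\rbar)$, each such irreducible component meets $U_{\rm tri}^\square(\rbar)$ in a crystalline point; combined with the standard density statements for refined crystalline points of varying weight on the trianguline variety (of the sort underlying Lemma \ref{genericopensinXcris}), this should produce an accumulation set at the crystalline strictly dominant point $x$.

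For the reverse inequality, which is the substantial part, I would work on the patched eigenvariety $X_p(\rhobar_x)$ of \cite{BHS}. Under the modularity lifting conjectures the Zariski closed embedding $X_p(\rhobar_x)\hookrightarrow \Xfrak_{\rhobar_x^p}\times \Ubb^g \times X_{\rm tri}^\square(\rhobar_{x,p})$ identifies $X_p(\rhobar_x)$, locally at $x$, with $\Xfrak_{\rhobar_x^p}\times \Ubb^g \times \widetilde X_{\rm tri}^\square(\rhobar_{x,p})$, so it suffices to exhibit enough tangent vectors at $x$ on $X_p(\rhobar_x)$. The key input is Theorem \ref{companionptsintro}: every locally analytic character $\delta$ such that $\epsilon\delta^{-1}$ is algebraic and $\epsilon$ is strongly linked to $\delta$ produces a companion point of $X_p(\rhobar_x)$ sharing the same Galois representation as $x$. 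Via Emerton's Jacquet-module construction of $X_p(\rhobar_x)$ from $\Pi_\infty$, the presence of these companion points in arbitrarily small neighbourhoods of $x$ translates into explicit infinitesimal deformations of the eigensystem at $x$, hence tangent vectors at $x$ on $X_p(\rhobar_x)$.

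The main obstacle is to extract from this supply of companion points \emph{exactly} $\lg(w_x)-d_x$ independent tangent directions beyond those accounting for the generic dimension $\dim X_{\rm tri}^\square(\rbar)$. This amounts to matching the strong-linkage combinatorics of the admissible characters $\delta$, controlled by the Bruhat interval $[1,w_x]$ in $W$, with the root-theoretic defect $\lg(w_x)-d_x$ appearing in Lemma \ref{coxeter}. Concretely I expect this matching to proceed through a BGG-type computation of the locally analytic Jacquet module of $\Pi_\infty$ along a formal neighbourhood of $x$, producing tangent vectors indexed by strongly linked weights in a way precisely dual to the infinitesimal obstructions that give the upper bound in Theorem \ref{upperbound}. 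Achieving this matching—reconciling the local geometric upper bound with a globally constructed lower bound, and showing that no cancellation occurs—is where all of the ingredients of the paper (patching, the geometry of $X_{\rm tri}^\square$, companion forms, and the Coxeter combinatorics of $W$) must be brought to bear simultaneously.
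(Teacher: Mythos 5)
The statement you are attacking is a conjecture, and the paper itself does not prove it unconditionally: what is actually proved is Corollary \ref{bonnedim}, which says Conjecture \ref{BHS} (hence the modularity lifting conjectures) implies Conjecture \ref{mainconj}, and only under the extra hypotheses that $\rbar$ globalizes and $x$ is very regular. So any ``proof'' has to be read as a proof of that conditional statement, and it is worth being explicit about exactly where the conjecture enters.

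Your upper bound argument has a genuine gap. You propose to apply Theorem \ref{upperbound} directly to the union $X$ of irreducible components of $\widetilde X_{\rm tri}^\square(\rbar)$ through $x$, and to verify the accumulation property of Definition \ref{accu} from the fact that each such component meets $U_{\rm tri}^\square(\rbar)$ in a crystalline point. This does not work. The definition of $\widetilde X_{\rm tri}^\square(\rbar)$ only gives you \emph{some} crystalline point on each component, with no control on whether it is close to $x$, noncritical, or has large weight gaps, and there is no density argument that transports this into an accumulation set \emph{at $x$}. Proposition \ref{acconZtri} establishes accumulation only for the single component $Z_{{\rm tri},U}(x)$ coming from the crystalline deformation space, not for other components of $\widetilde X_{\rm tri}^\square(\rbar)$ through $x$. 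The paper is explicit that this is an obstacle: the remark following Theorem \ref{upperbound} states precisely that Theorem \ref{upperbound} does not bound $\dim_{k(x)}T_{\widetilde X_{\rm tri}^\square(\rbar),x}$, and the remark after Conjecture \ref{mainconj} says they cannot even control $X_{\rm tri}^\square(\rbar)$. The paper's actual upper bound (Corollary \ref{easybound}) is itself \emph{conditional} on Conjecture \ref{BHS}: it first identifies $\widetilde X_{\rm tri}^\square(\rhobar_p)$ with $X_{\rm tri}^{\Xfrak^p\rm-aut}(\rhobar_p)$, and then the accumulation property comes from the global patched eigenvariety via Proposition \ref{acconZaut} and \cite[Th.3.18]{BHS}, not from a purely local density statement.

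Your lower bound discussion is in the right spirit — companion points (Theorem \ref{intercompanion}) do supply the extra tangent directions — but the mechanism in the paper is not a BGG computation of the Jacquet module. The paper instead constructs, assuming Conjecture \ref{BHS}, a closed embedding $\jmath_{w,{\bf k}}$ of the Zariski closure $\overline{\widetilde U_{\rm tri}^\square(\rbar)\times_{\mathcal{W}^n_L}\mathcal{W}^n_{w,{\bf k},L}}$ into $\widetilde X_{\rm tri}^\square(\rbar)$ (Proposition \ref{companion}), whose image passes through $x$. The lower bound is then obtained as a dimension count (Proposition \ref{dimsum}): one shows that the sum of the tangent space of this image and the tangent space $T_{\iota_{\bf k}(\widetilde\Xfrak_{\overline r}^{\square,{\bf k}{\rm -cr}}),x}$ of the crystalline locus has dimension exactly $\lg(w_x)-d_x+\dim X_{\rm tri}^\square(\rbar)$, using the identification of their intersection with $W_{{\rm cris},1}\cap\cdots\cap W_{{\rm cris},n-1}$ (Remark \ref{intercris}, Corollary \ref{condcris}) plus the crystalline and Sen-weight computations on $\mathrm{Ext}^1$. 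The combinatorial matching you worry about (getting exactly $\lg(w_x)-d_x$ new directions with no cancellation) is thereby resolved by explicit Galois cohomology, not by a representation-theoretic BGG argument on $\Pi_\infty$.
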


In particular, since $\dim \widetilde X_{\rm tri}^\square(\rbar)=\dim X_{\rm tri}^\square(\rbar)=n^2+[K:\Q_p]\frac{n(n+1)}{2}$, we see by Lemma \ref{coxeter} that $\widetilde X_{\rm tri}^\square(\rbar)$ should be smooth at $x$ if and only if $w_x$ is a product of distinct simple reflections.

\begin{rema}
{\rm The reader can wonder why we don't state Conjecture \ref{mainconj} with $X_{\rm tri}^\square(\rbar)$ instead of $\widetilde X_{\rm tri}^\square(\rbar)$. The reason is that Conjecture \ref{mainconj} with $\widetilde X_{\rm tri}^\square(\rbar)$ is actually implied by other conjectures, see \S\ref{modularity}, and we don't know if this is the case with $X_{\rm tri}^\square(\rbar)$.}
\end{rema}

In order to state our main result in the direction of (a weaker version of) Conjecture \ref{mainconj}, we need the following two definitions. 

\begin{defi}\label{veryreg}
A crystalline strictly dominant point $x=(r,\delta)\in X_{\rm tri}^\square(\rbar)$ is very regular if it satisfies the following conditions (where the $(\varphi_i)_{1\leq i\leq n}$ are the geometric Frobenius eigenvalues on ${\rm WD}(r)$):
\begin{enumerate}
\item[(i)]$\varphi_i\varphi_j^{-1}\notin \{1,q\}$ for $1\leq i\ne j\leq n$;\\
\item[(ii)]$\varphi_1\varphi_2\dots\varphi_i$ is a simple eigenvalue of the geometric Frobenius acting on $\bigwedge_{k(x)}^i\!\!{\rm WD}(r)$ for $1\leq i\leq n$.
\end{enumerate}
\end{defi}

\begin{rema}\label{varphi}
{\rm If $x=(r,\delta)$ is crystalline strictly dominant, it easily follows from the dominance property that (i) of Definition \ref{veryreg} is equivalent to $\delta_i\delta_j^{-1}\notin \{z^{-\bf h},\vert z\vert_K z^{\bf h},\ \vert z\vert_K^{-1} z^{\bf h},\ {\bf h}\in \Z_{\geq 0}^{\Hom(K,L)}\}$ for $1\leq i\ne j\leq n$. In particular it implies $\delta\in \Tcal_{\rm reg}^n$, whence the terminology (compare also \cite[\S6.1]{Bergdall}).}
\end{rema}

\begin{defi}\label{accu}
Let $X$ be a union of irreducible components of an open subset of $X_{\rm tri}^\square(\rbar)$ (over $L$) and let $x\in X_{\rm tri}^\square(\rbar)$ such that $\omega(x)$ is algebraic. Then $X$ satisfies the {\rm accumulation property at} $x$ if $x\in X$ and if, for any positive real number $C>0$, the set of crystalline strictly dominant points $x'=(r',\delta')$ such that:
\begin{enumerate}
\item[(i)]the eigenvalues of $\varphi^{[K_0:\Q_p]}$ on $D_{\rm cris}(r')$ are pairwise distinct;\\
\item[(ii)]$x'$ is noncritical;\\
\item[(iii)]$\omega(x')=\delta'\vert_{({\mathcal O}_K^\times)^n}=\delta_{\bf k'}$ with $k'_{\tau,i}-k'_{\tau,i+1}>C$ for $i\in \{1,\dots,n-1\}$, $\tau\in \Hom(K,L);$
\end{enumerate}
accumulate at $x$ in $X$ in the sense of \cite[\S3.3.1]{BelChe}.
\end{defi}

It easily follows from Definition \ref{accu} that $X$ satisfies the accumulation property at $x$ if and only if each irreducible component of $X$ containing $x$ satisfies the accumulation property at $x$. In particular, if $x$ belongs to each irreducible component of $X$, we see that for every $C>0$ the set of points $x'$ in the statement of Definition \ref{accu} is also Zariski-dense in $X$. Since $U_{\rm tri}^\square(\rbar)\cap X$ is Zariski-open and Zariski-dense in $X$, we also see that each irreducible component of $X$ containing $x$ also contains points $x'$ as in Definition \ref{accu} which are in $U_{\rm tri}^\square(\rbar)$, hence each irreducible component of $X$ containing $x$ is in $\widetilde X_{\rm tri}^\square(\rbar)$.

In \S\ref{phiGammacohomology} below we will prove the theorem that follows. 

\begin{theo}\label{upperbound}
Let $x\in X_{\rm tri}^\square(\rbar)$ be a crystalline strictly dominant very regular point and let $X\subseteq X_{\rm tri}^\square(\rbar)$ be a union of irreducible components of an open subset of $X_{\rm tri}^\square(\rbar)$ such that $X$ satisfies the accumulation property at $x$. Then we have:
$$\dim_{k(x)}T_{X,x}\leq \lg(w_x)-d_x+\dim X_{\rm tri}^\square(\rbar)=\lg(w_x)-d_x+n^2+[K:\Q_p]\frac{n(n+1)}{2}.$$
\end{theo}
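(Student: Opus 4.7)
The plan is to study $T_{X,x}$ via $(\varphi,\Gamma_K)$-module deformation theory of $D_{\rig}(r)$ equipped with its trianguline structure, then use the accumulation hypothesis to impose constraints coming from the Hodge filtration on $D_{\rm cris}(r)$, and finally match the resulting cohomological count with the combinatorial invariant $\lg(w_x)-d_x$.

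First, I would describe $T_{X_{\rm tri}^\square(\rbar),x}$ explicitly. A tangent vector at $x=(r,\delta)$ amounts to an infinitesimal framed deformation $\tilde r$ of $r$ over $A:=k(x)[\varepsilon]/(\varepsilon^2)$, a deformation $\tilde \delta$ of $\delta$, and a compatible deformation $\tilde{\Fil}_\bullet$ of the triangulation of $D_{\rig}(r)$ with graded pieces isomorphic to $\Rcal_{A,K}(\tilde\delta_i)$. Using Berger's equivalence recalled in $(\ref{berger})$, the canonical triangulation on $D_{\rig}(r)$ corresponds to the $\varphi$-stable flag $\Fcal_\bullet$ on $D_{\rm cris}(r)$ determined by the Frobenius ordering $(\varphi_1,\dots,\varphi_n)$, and the Hodge filtration on $D_{\rm cris}(r)$ sits in relative position $w_x$ with respect to $\Fcal_\bullet$. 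The very regular hypothesis of Definition \ref{veryreg} forces the various $(\varphi,\Gamma_K)$-cohomology groups attached to the rank-one subquotients $\Rcal_{k(x),K}(\delta_i/\delta_j)$ to take their minimal expected dimensions, which is the starting point for any explicit dimension count.

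Next, I would use the accumulation property. By Definition \ref{accu}, arbitrarily close to $x$ in $X$ lie crystalline noncritical strictly dominant points $x'$, at which $X$ is smooth of dimension $\dim X_{\rm tri}^\square(\rbar)$ and the trianguline parameter has weights in general position relative to the Hodge filtration. Standard rigid-analytic limit arguments then show that every tangent vector $v\in T_{X,x}$ is a specialization of tangent vectors at such $x'$, and therefore must be compatible in an infinitesimal sense with the relative position that the Hodge filtration takes at $x$. This compatibility can be phrased as the vanishing of certain \emph{Hodge obstructions} obtained by pairing $v$ with the shifted Hodge pieces of each graded quotient $\Fcal_i/\Fcal_{i-1}$ in every embedding $\tau:K\hookrightarrow L$.

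The core of the argument is then the combinatorial identification of the space of Hodge obstructions. For each pair $(\tau,i)$, the relative position of $\Fcal_\bullet$ and the Hodge filtration decomposes into pieces indexed by the inversions of $w_{x,\tau}$, and each inversion produces at most one extra tangent direction not seen in the noncritical stratum. This yields the upper bound $\dim X_{\rm tri}^\square(\rbar)+\lg(w_x)$. To get down to $\dim X_{\rm tri}^\square(\rbar)+\lg(w_x)-d_x$, one observes that simultaneously twisting all the weights of $\tilde\delta$ by a common character of $K^\times$ corresponds to deformations that are already captured by variation inside the torus $\Tcal^n_L$; the resulting subspace of ``diagonal'' directions has dimension equal to the corank of the sublattice of $X^\ast(T)$ spanned by the differences $w_x(\alpha)-\alpha$, whose complement has rank $d_x$ by Lemma \ref{coxeter}. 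The main obstacle will be the explicit cohomological bookkeeping in the third paragraph above: unwinding the spectral sequence computing $(\varphi,\Gamma_K)$-cohomology of the successive extensions $\Rcal_{A,K}(\tilde\delta_i)$, matching each jump in dimension with a specific Bruhat-cell direction controlled by $w_x$, and verifying that the very regular and accumulation hypotheses together prevent any extraneous tangent directions outside the predicted cell from contributing.
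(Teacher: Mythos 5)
Your overall architecture is right: reduce $T_{X,x}$ to infinitesimal $(\varphi,\Gamma_K)$-module data, use the accumulation property to import constraints at $x$ from nearby noncritical points, exploit very regularity to compute cohomology dimensions precisely, and connect the bookkeeping to the combinatorics of $w_x$. That much matches the paper's strategy. However, the two middle steps — how the accumulation hypothesis is actually used, and what the combinatorial decomposition really is — are where your sketch has genuine gaps.

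On the accumulation step: the phrase ``standard rigid-analytic limit arguments show that every tangent vector $v\in T_{X,x}$ is a specialization of tangent vectors at $x'$'' is not an argument. Tangent vectors at $x$ do not literally specialize from tangent vectors at distinct nearby points, and the desired constraints are \emph{not} automatic from proximity. What the paper actually does is invoke two precise results of Bergdall, whose hypotheses are exactly the accumulation and very regular conditions: (a) Theorem~\ref{bergweight}, giving the relation $d_{\tau,i,\vec v}=d_{\tau,w_{x,\tau}^{-1}(i),\vec v}$ on Sen weight derivatives; and (b) Theorem~\ref{bergsplit}, giving an injection of the rank-one $(\varphi,\Gamma_K)$-module $\Rcal_{A,K}(\delta_{1,\vec v}\cdots\delta_{i,\vec v})$ into $\wedge^i D_{\rig}(r_{\vec v})$ over $A=k(x)[\varepsilon]/(\varepsilon^2)$, for each $i$. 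These are the only places the accumulation property enters, via an extension of a global triangulation of the $i$-th exterior power to first-order neighbourhoods; your ``Hodge obstruction'' formulation is too vague to substitute for them, and in particular does not explain how to control wedge powers.

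On the combinatorics: your decomposition ``upper bound $\dim X_{\rm tri}^\square(\rbar)+\lg(w_x)$, then subtract $d_x$ for diagonal directions spanned by the complement of $\langle w_x(\alpha)-\alpha\rangle$'' is not what the count is, and the description of $d_x$ as a corank is incorrect ($d_x$ is the \emph{rank} of the sublattice generated by the $w_x(\alpha)-\alpha$). The paper's actual decomposition is the following. Inside $\mathrm{Ext}^1_{(\varphi,\Gamma_K)}(D_{\rig}(r),D_{\rig}(r))$, the constraint (a) cuts out a subspace $V$ of codimension exactly $d_x$ (Proposition~\ref{condweight}), while the constraint (b) — unwound via the $t^{\Sigma_i({\bf k},w_x)}$-twisted submodules $t^{\Sigma_i({\bf k},w_x)}D_{\rig}(r)^{\leq i}$ — cuts out $V_1\cap\cdots\cap V_{n-1}$ of codimension exactly $[K:\Q_p]\frac{n(n-1)}{2}-\lg(w_x)$ (Proposition~\ref{condsplit}). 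One then proves these two conditions are transverse (Proposition~\ref{inter}), so the codimensions add. The quantity $\lg(w_x)$ does not arise by counting ``extra directions from inversions''; it is what remains after counting, over each $\tau$ and each $i<j$, the pairs with $w_{x,\tau}^{-1}(i)<w_{x,\tau}^{-1}(j)$. You are also missing the technical linchpin that makes Proposition~\ref{condsplit} work: the identification of $\mathrm{Ext}^1_{(\varphi,\Gamma_K)}$ of the $t$-torsion quotients with $\mathrm{Ext}^1_{\rm cris}$ (Proposition~\ref{cris}), which is what lets one compute the graded pieces of the filtration $V_1\supseteq V_1\cap V_2\supseteq\cdots$ via crystalline cohomology. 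Without this, the ``explicit cohomological bookkeeping'' you defer to the end does not resolve.
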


By Lemma \ref{coxeter} we thus deduce the following important corollary.

\begin{coro}\label{Xtrismooth}
Let $x\in X_{\rm tri}^\square(\rbar)$ be a crystalline strictly dominant very regular point and let $X\subseteq X_{\rm tri}^\square(\rbar)$ be a union of irreducible components of an open subset of $X_{\rm tri}^\square(\rbar)$ such that $X$ satisfies the accumulation property at $x$. Assume that $w_x$ is a product of distinct simple reflections. Then $X$ is smooth at $x$.
\end{coro}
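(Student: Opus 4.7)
The plan is to combine Theorem \ref{upperbound} with Lemma \ref{coxeter} and the equidimensionality of $X_{\rm tri}^\square(\rbar)$. Concretely, the inequality in Theorem \ref{upperbound} gives an upper bound on the tangent space dimension in terms of the combinatorial quantity $\lg(w_x)-d_x$, while Lemma \ref{coxeter} tells us that this quantity vanishes exactly when $w_x$ is a product of distinct simple reflections; matching this against the lower bound on the tangent space coming from equidimensionality forces smoothness.

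First I would apply Theorem \ref{upperbound} directly to the point $x$ and the space $X$, which are assumed to satisfy its hypotheses (crystalline strictly dominant, very regular, and $X$ a union of irreducible components of an open subset of $X_{\rm tri}^\square(\rbar)$ satisfying the accumulation property at $x$). This yields
\[
\dim_{k(x)} T_{X,x}\ \leq\ \lg(w_x)-d_x+\dim X_{\rm tri}^\square(\rbar).
\]
Next I would invoke Lemma \ref{coxeter}: under the assumption that $w_x$ is a product of distinct simple reflections, one has $\lg(w_x)=d_x$, so the bound collapses to $\dim_{k(x)} T_{X,x} \leq \dim X_{\rm tri}^\square(\rbar)$.

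Then I would use that $X_{\rm tri}^\square(\rbar)$ is (reduced and) equidimensional of dimension $n^2+[K:\Q_p]\tfrac{n(n+1)}{2}$, as recalled in \S\ref{begin} following \cite[Th.2.6]{BHS}. Since $X$ is by hypothesis a union of irreducible components of an open subset of $X_{\rm tri}^\square(\rbar)$, it inherits equidimensionality, so every irreducible component of $X$ through $x$ has dimension $\dim X_{\rm tri}^\square(\rbar)$, and consequently $\dim_{k(x)} T_{X,x}\geq \dim X_{\rm tri}^\square(\rbar)$ (the standard lower bound (\ref{tangent}) on the tangent space of a rigid analytic space at a point of an equidimensional locus). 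Combining the two inequalities yields equality $\dim_{k(x)} T_{X,x}=\dim X_{\rm tri}^\square(\rbar)=\dim X$, which is precisely the smoothness criterion recalled just before the definition of $\widetilde X_{\rm tri}^\square(\rbar)$.

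There is no real obstacle here: the statement is a formal consequence of Theorem \ref{upperbound} together with the combinatorial identity in Lemma \ref{coxeter} and the equidimensionality of $X_{\rm tri}^\square(\rbar)$. The only point to be mildly careful about is that the equidimensionality bound $\dim_{k(x)}T_{X,x}\geq \dim X$ is applied componentwise, but since all irreducible components of $X$ through $x$ have the same dimension $\dim X_{\rm tri}^\square(\rbar)$, this causes no difficulty. All the genuine work has been packaged into Theorem \ref{upperbound}, whose proof (to be carried out in \S\ref{phiGammacohomology}) is where the hard $(\varphi,\Gamma_K)$-cohomological input lies.
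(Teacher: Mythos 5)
Your proof is correct and follows exactly the route the paper intends: the paper states the corollary immediately after Theorem \ref{upperbound} with the one-line justification ``By Lemma \ref{coxeter} we thus deduce the following important corollary,'' which is precisely the chain of reasoning you spell out (upper bound from Theorem \ref{upperbound}, collapse of $\lg(w_x)-d_x$ via Lemma \ref{coxeter}, and the equidimensionality-based smoothness criterion recalled before Conjecture \ref{mainconj}). No discrepancy.
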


\begin{rema}
{\rm Note that for $X$, $x$ as above we only have $\dim_{k(x)}T_{X,x}\leq \dim_{k(x)}T_{\widetilde X_{\rm tri}^\square(\rbar),x}$, thus Theorem \ref{upperbound} doesn't give an upper bound on $\dim_{k(x)}T_{\widetilde X_{\rm tri}^\square(\rbar),x}$ (but Conjecture \ref{mainconj} implies Theorem \ref{upperbound}). However Theorem \ref{upperbound} and Corollary \ref{Xtrismooth} will be enough for our purpose.}
\end{rema}

\section{Crystalline points on the patched eigenvariety}\label{globalpart}

We state the classicality conjecture (Conjecture \ref{classiconj}) and prove new cases of it (Corollary \ref{mainclassic}).

\subsection{The classicality conjecture}\label{classic}

We review the definition of classicality (Definition \ref{class}, Proposition \ref{compaclas}) and state the classicality conjecture (Conjecture \ref{classiconj}).

We first recall the global setting, basically the same as \cite[\S2.4]{BHS}. We fix a totally real field $F^+$, we write $q_v$ for the cardinality of the residue field of $F^+$ at a finite place $v$ and we denote by $S_p$ the set of places of $F^+$ dividing $p$ . We fix a totally imaginary quadratic extension $F$ of $F^+$ that splits at all places of $S_p$ and let $\Gcal_F:={\rm Gal}(\overline F/F)$. We fix a unitary group $G$ in $n$ variables over $F^+$ (with $n\geq 2$) such that $G\times_{F^+}F\cong \GL_{n,F}$ and $G(F^+\otimes_\Q\mathbb{R})$ is compact. We fix an isomorphism $i:G\times_{F^+}F\buildrel \sim\over\rightarrow \GL_{n,F}$ and, for each $v\in S_p$, a place $\tilde v$ of $F$ dividing $v$. The isomorphisms $F_v^+\buildrel\sim\over\rightarrow F_{\tilde v}$ and $i$ induce an isomorphism $i_{\tilde v}:\,G(F_v^+)\xrightarrow{\sim}\GL_n(F_{\tilde v})$ for $v\in S_p$. We let $G_v:=G(F^+_v)\cong \GL_n(F_{\tilde v})$ and $G_p:=\prod_{v\in S_p}G(F^+_v)\cong \prod_{v\in S_p}\GL_n(F_{\tilde v})$. We denote by $K_v$ (resp. $B_v$, resp. $\overline B_v$, resp. $T_v$) the inverse image of $\GL_n(\mathcal{O}_{F_{\tilde{v}}})$ (resp. of the subgroup of upper triangular matrices of $\GL_n(F_{\tilde v})$, resp. of the subgroup of lower triangular matrices of $\GL_n(F_{\tilde v})$, resp. of the subgroup of diagonal matrices of $\GL_n(F_{\tilde v})$) in $G_v$ under $i_{\tilde v}$ and we let $K_p:=\prod_{v\in S_p}K_v$ (resp. $B_p:=\prod_{v\in S_p}B_v$, resp. $\overline B_p:=\prod_{v\in S_p}\overline B_v$, resp. $T_p:=\prod_{v\in S_p}T_v$). We let $T_p^0:=T_p\cap K_p=\prod_{v\in S_p}(T_v\cap K_v)$. 

We fix a finite extension $L$ of $\Q_p$ that is assumed to be large enough so that $|\Hom(F^+_v,L)|=[F^+_v:\Q_p]$ for $v\in S_p$. We let $\widehat{T}_{p,\rm reg}\subset \widehat{T}_{p,L}$ the open subspace of characters $\delta=(\delta_v)_{v\in S_p}=(\delta_{v,1},\dots,\delta_{v,n})_{v\in S_p}$ such that $\delta_{v,i}/\delta_{v,j}\in {\mathcal T}_{v,\rm reg}$ for all $v\in S_p$ and all $i\ne j$, where ${\mathcal T}_{v,\rm reg}$ is defined as ${\mathcal T}_{\rm reg}$ of \S\ref{begin} but with $F^+_v=F_{\tilde v}$ instead of $K$.

We fix a tame level $U^p=\prod_{v}U_v\subset G(\Abb_{F^+}^{p\infty})$ where $U_v$ is a compact open subgroup of $G(F_v^+)$ and we denote by $\widehat S(U^p,L)$ the associated space of $p$-adic automorphic forms on $G(\Abb_{F^+})$ of tame level $U^p$ with coefficients in $L$, that is, the $L$-vector space of continuous functions $f:G(F^+)\backslash G(\Abb_{F^+}^{\infty})/U^p\longrightarrow L$. Since $G(F^+)\backslash G(\Abb_{F^+}^{\infty})/U^p$ is compact, it is a $p$-adic Banach space (for the sup norm) endowed with the linear continuous unitary action of $G_p$ by right translation on functions. In particular a unit ball is given by the $\Ocal_{L}$-submodule $\widehat S(U^p,\Ocal_{L})$ of continuous functions $f:G(F^+)\backslash G(\Abb_{F^+}^{\infty})/U^p\longrightarrow \Ocal_{L}$ and the corresponding residual representation is the $k_{L}$-vector space $S(U^p,k_{L})$ of locally constant functions $f:G(F^+)\backslash G(\Abb_{F^+}^{\infty})/U^p\longrightarrow k_{L}$ (a smooth admissible representation of $G_p$). Note that $S(U^p,k_L)=\varinjlim_{U_p}S(U^pU_p,k_L)$ where the inductive limit is taken over compact open subgroups $U_p$ of $G_p$ and where $S(U^pU_p,k_L)$ is the finite dimensional $k_L$-vector space of functions $f:G(F^+)\backslash G(\Abb_{F^+}^{\infty})/U^pU_p\longrightarrow k_L$. We also denote by $\widehat S(U^p,L)^{\rm an}\subset \widehat S(U^p,L)$ the $L$-subvector space of locally $\Q_p$-analytic vectors for the action of $G_p$ (\cite[\S7]{STdist}). This is a strongly admissible locally $\Q_p$-analytic representation of $G_p$.

We fix $S$ a finite set of finite places of $F^+$ that split in $F$ containing $S_p$ and the set of finite places $v\nmid p$ (that split in $F$) such that $U_v$ is not maximal. We consider the commutative spherical Hecke algebra:
$$\mathbb{T}^S:=\varinjlim_I\big(\bigotimes_{v\in I}\Ocal_L[U_v\backslash G(F^+_v)/U_v]\big),$$
the inductive limit being taken over finite sets $I$ of finite places of $F^+$ that split in $F$ such that $I\cap S=\emptyset$. This Hecke algebra naturally acts on the spaces $\widehat S(U^p,L)$, $\widehat S(U^p,L)^{\rm an}$, $\widehat S(U^p,\Ocal_L)$, $S(U^p,k_L)$ and $S(U^pU_p,k_L)$ (for any compact open subgroup $U_p$). If $C$ is a field, $\theta:\mathbb{T}^S\rightarrow C$ a ring homomorphism and $\rho: \Gcal_F\rightarrow \GL_n(C)$ a group homomorphism which is unramified at each finite place of $F$ above a place of $F^+$ which splits in $F$ and is not in $S$, we refer to \cite[\S2.4]{BHS} for what it means for $\rho$ to be {\it associated to} $\theta$.

Though we could state a more general classicality conjecture, it is convenient for us to assume right now the following two extra hypothesis: $p>2$ and $G$ quasi-split at each finite place of $F^+$ (these assumptions will be needed anyway for our partial results, note however that they imply that $4$ divides $n[F^+:\Q]$ which rules out the case $n=2$, $F^+=\Q$). We fix $\mathfrak{m}^S$ a maximal ideal of $\mathbb{T}^S$ of residue field $k_L$ (increasing $L$ if necessary) such that $\widehat S(U^p,L)_{\mathfrak{m}^S}\neq 0$, or equivalently $\widehat S(U^p,\Ocal_L)_{\mathfrak{m}^S}\neq 0$, or $S(U^p,k_L)_{\mathfrak{m}^S}=\varinjlim_{U_p}S(U^pU_p,k_L)_{\mathfrak{m}^S}\neq 0$, or $S(U^pU_p,k_L)_{\mathfrak{m}^S}\ne 0$ for some $U_p$ (note that $\widehat S(U^p,L)_{\mathfrak{m}^S}$ is then a closed subspace of $\widehat S(U^p,L)$ preserved by $G_p$). We denote by $\rhobar=\rhobar_{\mathfrak{m}^S}:\Gcal_F\rightarrow \GL_n(k_L)$ the unique absolutely semi-simple Galois representation associated to $\mathfrak{m}^S$ (see \cite[Prop.6.6]{Thorne} and note that the running assumption $F/F^+$ unramified in {\it loc.~cit.} is useless at this point). We assume $\mathfrak{m}^S$ {\it non-Eisenstein}, that is, $\rhobar$ absolutely irreducible. Then it follows from \cite[Prop.6.7]{Thorne} (with the same remark as above) that the spaces $\widehat S(U^p,L)_{\mathfrak{m}^S}$, $\widehat S(U^p,\Ocal_L)_{\mathfrak{m}^S}$ and $S(U^p,k_L)_{\mathfrak{m}^S}$ become modules over $R_{\rhobar,S}$, the complete local noetherian $\Ocal_L$-algebra of residue field $k_L$ pro-representing the functor of deformations $\rho$ of $\rhobar$ that are unramified outside $S$ and such that $\rho'\circ c\cong \rho\otimes \varepsilon^{n-1}$ (where $\rho'$ is the dual of $\rho$ and $c\in {\rm Gal}(F/F^+)$ is the complex conjugation). 

The continuous dual $(\widehat S(U^p,L)_{\mathfrak{m}^S}^{\rm an})'$ of $\widehat S(U^p,L)_{\mathfrak{m}^S}^{\rm an}:=(\widehat S(U^p,L)_{\mathfrak{m}^S})^{\rm an}=(\widehat S(U^p,L)^{\rm an})_{\mathfrak{m}^S}$ becomes a module over the global sections $\Gamma(\Xfrak_{\rhobar,S},\Ocal_{\Xfrak_{\rhobar,S}})$ where $\Xfrak_{\rhobar,S}:=(\Spf\, R_{\rhobar,S})^{\rig}$ (see for instance \cite[\S3.1]{BHS}). We denote by $Y(U^p,\rhobar)$ the \emph{eigenvariety of tame level $U^p$} (over $L$) defined in \cite{Emerton} (see also \cite[\S4.1]{BHS}) associated to $\widehat S(U^p,L)_{\mathfrak{m}^S}^{\rm an}$, that is, the support of the coherent $\Ocal_{\Xfrak_{\rhobar,S}\times \widehat T_{p,L}}$-module $(J_{B_p}(\widehat S(U^p,L)^{\rm an}_{\mathfrak{m}^S}))'$ on $\Xfrak_{\rhobar,S}\times \widehat T_{p,L}$ where $J_{B_p}$ is Emerton's locally $\Q_p$-analytic Jacquet functor with respect to the Borel $B_p$ and $(\cdot)'$ means the continuous dual. This is a reduced closed analytic subset of $\Xfrak_{\rhobar,S}\times \widehat T_{p,L}$ of dimension $n[F^+:\Q]$ whose points are:
\begin{equation}\label{points}
\left\{x=(\rho,\delta)\in \Xfrak_{\rhobar,S}\times \widehat T_{p,L}\ {\rm such\ that}\ \Hom_{T_p}\big(\delta,J_{B_p}(\widehat S(U^p,L)^{\rm an}_{\mathfrak{m}^S}[\mathfrak{p}_\rho]\otimes_{k(\mathfrak{p}_\rho)}k(x))\big)\ne 0\right\}
\end{equation}
where $\mathfrak{p}_\rho\subset R_{\rhobar,S}$ denotes the prime ideal corresponding to the point $\rho\in \Xfrak_{\rhobar,S}$ under the identification of the sets underlying $\Xfrak_{\rhobar,S}=(\Spf\, R_{\rhobar,S})^{\rig}$ and ${\rm Spm}\,R_{\rhobar,S}[1/p]$ (\cite[Lem.7.1.9]{deJong}) and where $k(\mathfrak{p}_\rho)$ is its residue field. We denote by $\omega:Y(U^p,\rhobar)\rightarrow \widehat T^0_{p,L}$ the composition $Y(U^p,\rhobar)\hookrightarrow \Xfrak_{\rhobar,S}\times \widehat T_{p,L}\twoheadrightarrow \widehat T_{p,L} \twoheadrightarrow \widehat T^0_{p,L}$.

\begin{rema}\label{changeu}
{\rm If ${U'}^p\subseteq {U}^p$ (and $S$ contains $S_p$ and the set of finite places $v\nmid p$ that split in $F$ such that $U'_v$ is not maximal), then a point $x=(\rho,\delta)$ of $Y(U^p,\rhobar)$ is also in $Y({U'}^p,\rhobar)$ since $\widehat S(U^p,L)^{\rm an}_{\mathfrak{m}^S}[\mathfrak{p}_\rho]\subseteq \widehat S({U'}^p,L)^{\rm an}_{\mathfrak{m}^S}[\mathfrak{p}_\rho]$ and $J_{B_p}$ is left exact.}
\end{rema}

We let $X_{\rm tri}^\square(\rhobar_p)$ be the product rigid analytic variety $\prod_{v\in S_p}X_{\rm tri}^\square(\rhobar_{\tilde{v}})$ (over $L$) where $\rhobar_{\tilde v}$ is the restriction of $\rho$ to the decomposition subgroup of ${\mathcal G}_F$ at $\tilde v$ (that we identify with $\Gcal_{F_{\tilde v}}={\rm Gal}(\overline F_{\tilde v}/F_{\tilde v})$) and where $X_{\rm tri}^\square(\rhobar_{\tilde{v}})$ is as in \S\ref{begin}. This is a reduced closed analytic subvariety of $(\Spf\, R^\square_{\rhobar_p})^{\rig}\times\widehat{T}_{p,L}$ where $R^\square_{\rhobar_p}:=\widehat{\bigotimes}_{v\in S_p}R^\square_{\rhobar_{\tilde{v}}}$. Identifying $B_v$ (resp. $T_v$) with the upper triangular (resp. diagonal) matrices of $\GL_n(F_{\tilde v})$ via $i_{\tilde v}$, we let $\delta_{B_v}:=|\cdot|_{F_{\tilde v}}^{n-1}\otimes |\cdot|_{F_{\tilde v}}^{n-3}\otimes \cdots \otimes |\cdot|_{F_{\tilde v}}^{1-n}$ be the modulus character of $B_v$ and define as in \cite[\S2.3]{BHS} an automorphism $\imath_v:\widehat T_v\buildrel\sim\over\rightarrow \widehat T_v$ by:
\begin{equation*}
\imath_v(\delta_1,\dots,\delta_n):=\delta_{B_v}\cdot(\delta_1,\dots,\delta_i\cdot(\varepsilon\circ\rec_{F_{\tilde v}})^{i-1},\dots,\delta_n\cdot(\varepsilon\circ\rec_{F_{\tilde v}})^{n-1})
\end{equation*}
(the twist by $\delta_{B_v}$ here ultimately comes from the same twist appearing in the definition of $J_{B_v}$). It then follows from \cite[Th.4.2]{BHS} that the morphism of rigid spaces:
\begin{eqnarray}
(\Spf\,R_{\rhobar,S})^{\rig}\times \widehat T_{p,L}&\longrightarrow &(\Spf\, R^\square_{\rhobar_p})^{\rig}\times\widehat{T}_{p,L}\\
\nonumber \big(\rho,(\delta_v)_{v\in S_p}\big)=\big(\rho,(\delta_{v,1},\dots,\delta_{v,n})_{v\in S_p}\big)&\longmapsto & \big((\rho|_{\Gcal_{F_{\tilde v}}})_{v\in S_p},(\imath_v^{-1}(\delta_{v,1},\dots,\delta_{v,n}))_{v\in S_p}\big)
\end{eqnarray}
induces a morphism of reduced rigid spaces over $L$:
\begin{equation}\label{eigenvartotrianguline}
Y(U^p,\rhobar)\longrightarrow X_{\rm tri}^\square(\rhobar_p)=\prod_{v\in S_p}X_{\rm tri}^\square(\rhobar_{\tilde v})
\end{equation}
(note that (\ref{eigenvartotrianguline}) is thus {\it not} compatible with the weight maps $\omega$ on both sides). We say that a point $x=(\rho,\delta)=(\rho,(\delta_v)_{v\in S_p})\in Y(U^p,\rhobar)$ is crystalline (resp. dominant, resp. strictly dominant, resp.~crystalline strictly dominant very regular etc.) if for each $v\in S_p$ its image in $X_{\rm tri}^\square(\rhobar_{\tilde v})$ via (\ref{eigenvartotrianguline}) is (see \S\ref{begin} and Definition \ref{veryreg}). Due to the twist $\imath_v$ beware that $x=(\rho,\delta)\in Y(U^p,\rhobar)$ is strictly dominant if and only if $\delta\vert_{T_v\cap K_v}$ is (algebraic) dominant for each $v\in S_p$.

Let $\delta\in \widehat T_{p,L}$ be any locally algebraic character. Then we can write $\delta=\delta_{\lambda}\delta_{\rm sm}$ in $\widehat T_{p,L}$ where $\lambda=(\lambda_v)_{v\in S_p}\in \prod_{v\in S_p}(\Z^n)^{\Hom(F_{\tilde v},L)}$, $\delta_\lambda:=\prod_{v\in S_p}\delta_{\lambda_v}$ (see \S\ref{begin} for $\delta_{\lambda_v}\in \widehat T_{v,L}$) and $\delta_{\rm sm}$ is a smooth character of $T_{p}$ with values in $k(\delta)$ (the residue field of the point $\delta\in \widehat T_{p,L}$). Using the theory of Orlik and Strauch (\cite{OrlikStrauch}), we define as in \cite[(3.7)]{BHS} the following strongly admissible locally $\Q_p$-analytic representation of $G_p$ over $k(\delta)$:
$$\Fcal_{\overline B_p}^{G_p}(\delta):=\Fcal_{\overline B_p}^{G_p}\big((U(\mathfrak{g}_L)\otimes_{U({\overline {\mathfrak b}}_L)}(-\lambda))^\vee, \delta_{\rm sm}\delta_{B_p}^{-1}\big)$$
where $\delta_{B_p}:=\prod_{v\in S_p}\delta_{B_v}$ and where we refer to \cite[\S3.5]{BHS} for the details and notation. Recall that $\Fcal_{\overline B_p}^{G_p}(\delta)$ has the same constituents as the locally $\Q_p$-analytic principal series $({\rm Ind}_{\overline B_p}^{G_p}\delta_{\lambda}\delta_{\rm sm}\delta_{B_p}^{-1})^{\rm an}=({\rm Ind}_{\overline B_p}^{G_p}\delta\delta_{B_p}^{-1})^{\rm an}$ but in the ``reverse order'' (at least generically). If $\lambda$ is dominant (that is $\lambda_v$ is dominant for each $v$ in the sense of \S\ref{begin}), we denote by ${\rm LA}(\delta)$ the locally algebraic representation:
\begin{multline}\label{ladelta}
{\rm LA}(\delta):=\Fcal_{\overline B_p}^{G_p}(L(\lambda)',\delta_{\rm sm}\delta_{B_p}^{-1})=\Fcal_{G_p}^{G_p}\big(L(\lambda)',({\rm Ind}_{\overline B_p}^{G_p}\delta_{\rm sm}\delta_{B_p}^{-1})^{\rm sm}\big)\\
=L(\lambda)\otimes_L \big({\rm Ind}_{\overline B_p}^{G_p}\delta_{\rm sm}\delta_{B_p}^{-1}\big)^{\rm sm}
\end{multline}
where $L(\lambda)$ is the simple $U(\mathfrak{g}_L)$-module over $L$ of highest weight $\lambda$ relative to the Lie algebra of $B_p$ (which is finite dimensional over $L$ since $\lambda$ is dominant) that we see as an irreducible algebraic representation of $G_p$ over $L$, where $L(\lambda)'$ is its dual, and where $(-)^{\rm sm}$ denotes the smooth Borel induction over $k(\delta)$ (the second equality in (\ref{ladelta}) following from \cite[Prop.4.9(b)]{OrlikStrauch}). Arguing as in \cite[\S6]{OrlikStrauch} (note that $L(\lambda)'$ is the unique irreducible subobject of $(U(\mathfrak{g}_L)\otimes_{U({\overline {\mathfrak b}}_L)}(-\lambda))^\vee$), it easily follows from \cite[Th.5.8]{OrlikStrauch} (see also \cite[Th.2.3(iii)]{BreuilAnalytiqueI}) and \cite[\S5.1]{HumBGG} that ${\rm LA}(\delta)$ is identified with the maximal locally $\Q_p$-algebraic quotient of $\Fcal_{\overline B_p}^{G_p}(\delta)$ (or the maximal locally algebraic subobject of $({\rm Ind}_{\overline B_p}^{G_p}\delta\delta_{B_p}^{-1})^{\rm an}$). 

It follows from (\ref{points}) together with \cite[Th.4.3]{BreuilAnalytiqueII} that a point $x=(\rho,\delta)\in  \Xfrak_{\rhobar,S}\times\widehat T_{p,L}$ lies in $Y(U^p,\rhobar)$ if and only if:
\begin{equation}\label{adj}
\Hom_{T_p}\big(\delta,J_{B_p}(\widehat S(U^p,L)^{\rm an}_{\mathfrak{m}^S}[\mathfrak{p}_\rho]\otimes_{k(\mathfrak{p}_\rho)}k(x))\big)\cong\Hom_{G_p}\big(\Fcal_{\overline B_p}^{G_p}(\delta),\widehat S(U^p,L)_{\mathfrak{m}^S}^{\rm an}[\mathfrak{p}_{\rho}]\otimes_{k(\mathfrak{p}_\rho)}k(x)\big)\ne 0.
\end{equation}

\begin{defi}\label{defclass}
A point $x=(\rho,\delta)\in Y(U^p,\rhobar)$ is called \emph{classical} if there exists a nonzero continuous $G_p$-equivariant morphism:
$$\Fcal_{\overline B_p}^{G_p}(\delta)\longrightarrow \widehat S(U^p,L)_{\mathfrak{m}^S}^{\rm an}[\mathfrak{p}_{\rho}]\otimes_{k(\mathfrak{p}_\rho)}k(x)$$
that factors through the locally $\Q_p$-algebraic quotient ${\rm LA}(\delta)$ of $\Fcal_{\overline B_p}^{G_p}(\delta)$ (equivalently $(\rho,\delta)$ is classical if $\Hom_{G_p}({\rm LA}(\delta),\widehat S(U^p,L)_{\mathfrak{m}^S}[\mathfrak{p}_{\rho}]\otimes_{k(\mathfrak{p}_\rho)}k(x))\ne 0$).
\end{defi}

\begin{rema}
{\rm (i) This definition is \cite[Def.3.14]{BHS}.\\
(ii) It seems reasonnable to expect that if $x=(\rho,\delta)\in Y(U^p,\rhobar)$ is classical, then in fact {\it any} continuous $G_p$-equivariant morphism $\Fcal_{\overline B_p}^{G_p}(\delta)\longrightarrow \widehat S(U^p,L)_{\mathfrak{m}^S}^{\rm an}[\mathfrak{p}_{\rho}]\otimes_{k(\mathfrak{p}_\rho)}k(x)$ factors through ${\rm LA}(\delta)$. See the last statement of Corollary \ref{mainclassic} below for a partial result in that direction.}
\end{rema}

We fix an algebraic closure $\overline\Q_p$ of $L$ and embeddings $j_\infty:\overline\Q\hookrightarrow \C$, $j_p:\overline\Q\hookrightarrow \overline\Q_p$. Recall that, if $\pi=\pi_{\infty}\otimes_{\C}\pi_f$ is an automorphic representation of $G(\Abb_{F^+})$ over $\C$ where $\pi_{\infty}$ (resp. $\pi_f$) is a representation of $G(F^+\otimes_\Q\mathbb{R})$ (resp. of $G(\Abb_{F^+}^\infty)$), then due to the compactness of $G(F^+\otimes_\Q\mathbb{R})$, we have that $\pi_\infty$ is a finite dimensional irreducible representation that comes from an algebraic representation of $\Res_{F^+/\Q}G$ over $\C$ (argue as in \cite[\S\S6.2.3,6.7]{BelChe}). Moreover, arguing again as in {\it loc. cit.}, $\pi_\infty$ (resp. $\pi_f$) has a $\Qbar$-structure given by $j_\infty$ which is stable under the action of $(\Res_{F^+/\Q}G)(\Qbar)$ (resp. of $G(\Abb_{F^+}^\infty)$). Hence the scalar extension of the $\Qbar$-structure of $\pi_\infty$ to $\overline \Q_p$ via $j_p$ is endowed with an action of $(\Res_{F^+/\Q}G)(\overline \Q_p)$, thus in particular of $(\Res_{F^+/\Q}G)(\Q_p)=G(F^+\otimes_{\Q}\Q_p)=G_p$. This latter representation of $G_p$ is easily seen to be defined over $L$ and of the form $L(\lambda)$ for a dominant $\lambda$ as above. We say that $\pi_\infty$ {\it is of weight $\lambda$} if the resulting representation of $G_p$ is $L(\lambda)$. 

For the sake of completeness, we recall the following proposition showing that Definition \ref{defclass} coincides with the usual classicality definition.

\begin{prop}\label{compaclas}
A strictly dominant point $x=(\rho,\delta)\in Y(U^p,\rhobar)$, that is such that $\omega(x)=\delta_{\lambda}$ for some dominant $\lambda\in \prod_{v\in S_p}(\Z^n)^{\Hom(F_{\tilde v},L)}$, is classical if and only if there exists an automorphic representation $\pi=\pi_{\infty}\otimes_{\C}\pi_f^p\otimes_{\C}\pi_p$ of $G(\Abb_{F^+})$ over $\C$ such that the following conditions hold:
\begin{enumerate}
\item[(i)]the $G(F^+\otimes_\Q\mathbb{R})$-representation $\pi_\infty$ is of weight $\lambda$ in the above sense;\\
\item[(ii)]the $\Gcal_F$-representation $\rho$ is the Galois representation associated to $\pi$ (see proof below);\\
\item[(iii)]the invariant subspace $(\pi_f^p)^{U^p}$ is nonzero;\\
\item[(iv)] the $G_p$-representation $\pi_p$ is a quotient of $\big({\rm Ind}_{\overline B_p}^{G_p} \delta\delta_{\lambda}^{-1}\delta_{B_p}^{-1}\big)^{\rm sm}\otimes_{k(\delta)}\overline \Q_p$.
\end{enumerate}
If moreover $F$ is unramified over $F^+$ and $U_v$ is hyperspecial when $v$ is inert in $F$, then such a $\pi$ is unique and appears with multiplicity $1$ in $L^2(G(F^+)\backslash G(\Abb_{F^+}),\C)$.
\end{prop}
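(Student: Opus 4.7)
The plan is to unfold Definition \ref{defclass} and translate the classicality Hom-space into a decomposition of the locally algebraic vectors of $\widehat S(U^p,L)_{\mathfrak{m}^S}$, then match the four conditions. Since $\omega(x)=\delta_\lambda$, writing $\delta_{\rm sm}:=\delta\delta_\lambda^{-1}$ (a smooth character of $T_p$ valued in $k(x)$), the recipe (\ref{ladelta}) identifies
\[
{\rm LA}(\delta)=L(\lambda)\otimes_L\big({\rm Ind}_{\overline B_p}^{G_p}\delta_{\rm sm}\delta_{B_p}^{-1}\big)^{\rm sm},
\]
so $x$ is classical iff $\Hom_{G_p}({\rm LA}(\delta),\widehat S(U^p,L)_{\mathfrak{m}^S}[\mathfrak{p}_\rho]\otimes_{k(\mathfrak{p}_\rho)}k(x))\ne 0$. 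Since ${\rm LA}(\delta)$ is locally algebraic, any such morphism factors through the maximal locally algebraic subrepresentation $\widehat S(U^p,L)_{\mathfrak{m}^S}^{\rm lalg}[\mathfrak{p}_\rho]$, reducing the problem to smooth and algebraic representation theory.

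Next I would invoke the classical decomposition of the locally algebraic part of the $p$-adic automorphic forms on the compact-at-infinity group $G$: via $j_\infty,j_p$ one has a $G_p\times G(\Abb_{F^+}^{p\infty})\times\mathbb{T}^S$-equivariant isomorphism
\[
\widehat S(U^p,\overline{\Q}_p)^{\rm lalg}\cong\bigoplus_\pi m(\pi)\,(\pi_f^p)^{U^p}\otimes_{\C}\pi_p\otimes L(\lambda_\pi),
\]
with $\pi=\pi_\infty\otimes\pi_f^p\otimes\pi_p$ running over automorphic representations of $G(\Abb_{F^+})$ and $L(\lambda_\pi)$ the algebraic $G_p$-representation attached to $\pi_\infty$ via $j_\infty,j_p$. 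Localising at $\mathfrak{m}^S$ restricts the sum to those $\pi$ whose Hecke system modulo $p$ is that of $\rhobar$, and killing $\mathfrak{p}_\rho$ restricts it further to the $\pi$ whose associated Galois representation (by the Galois-Hecke correspondence recalled in \cite[Prop.~6.6]{Thorne}, i.e.~by Harris-Taylor, Shin, Chenevier-Harris) is isomorphic to $\rho$. After this the classicality Hom-space becomes
\[
\bigoplus_{\pi\,\leftrightarrow\,\rho} m(\pi)\,\Hom_{G_p}\!\big(L(\lambda)\otimes({\rm Ind}_{\overline B_p}^{G_p}\delta_{\rm sm}\delta_{B_p}^{-1})^{\rm sm},\,(\pi_f^p)^{U^p}\otimes\pi_p\otimes L(\lambda_\pi)\big).
\]
Irreducibility of $L(\lambda)$ forces $\lambda_\pi=\lambda$, giving (i); membership of $\pi$ in the sum gives (ii); nonvanishing of $(\pi_f^p)^{U^p}$ gives (iii); and the remaining smooth factor is a nonzero $G_p$-equivariant map from $({\rm Ind}_{\overline B_p}^{G_p}\delta\delta_\lambda^{-1}\delta_{B_p}^{-1})^{\rm sm}$ onto the irreducible $\pi_p$, i.e.~(iv) (using $\delta_{\rm sm}=\delta\delta_\lambda^{-1}$). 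Reading these matchings in either direction yields the claimed equivalence.

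For the uniqueness and multiplicity-one clause under the stronger hypotheses, the strategy is to argue that $\rho$ alone pins down $\pi$. The base change of such a $\pi$ to $\GL_n/F$ is a conjugate self-dual cohomological automorphic representation whose Satake parameters at almost every place are read off from $\rho$, so this base change is unique by strong multiplicity one on $\GL_n$. Under the hypotheses that $F/F^+$ is unramified and $U_v$ is hyperspecial at every inert $v$, descent from $\GL_n/F$ back to the (now everywhere quasi-split) definite unitary group via the work of Labesse and Mok yields both uniqueness of $\pi$ and $m(\pi)=1$ in $L^2(G(F^+)\backslash G(\Abb_{F^+}),\C)$. The main difficulty here is not the formal matching of (i)-(iv), which is bookkeeping once the locally algebraic decomposition is in hand, but rather setting up this decomposition with the correct $\mathbb{T}^S$-equivariance and invoking the multiplicity-one result; both rest on a substantial body of established theory (the existence of $\rho_\pi$, cyclic base change, and endoscopy for quasi-split unitary groups).
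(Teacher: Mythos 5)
Your proposal follows the same route as the paper's proof: unfold Definition \ref{defclass} into the locally $L(\lambda)$-algebraic piece, identify $\widehat S(U^p,L)^{\lambda-\rm la}\cong L(\lambda)\otimes_L S(U^p,L(\lambda)')$ (the paper makes this precise via \cite[Prop.3.2.4]{Emerton} and \cite[\S7.1.4]{EGH}), decompose $S(U^p,L(\lambda)')$ as a sum over automorphic $\pi$ of weight $\lambda$ with nonzero $U^p$-invariants, and then read off (i)--(iv) by matching factors. For the uniqueness and multiplicity-one clause you also follow the paper (base change to $\GL_n/F$ plus descent via Labesse); the one step the paper makes explicit that you leave implicit is that the isobaric base change $\Pi=\Pi_1\boxplus\cdots\boxplus\Pi_r$ furnished by \cite[Cor.5.3]{Labesse} must have $r=1$ (hence be cuspidal) because $\rhobar$, and so $\rho$, is absolutely irreducible --- this cuspidality is what the multiplicity-one theorem \cite[Th.5.4]{Labesse} and the strong base change \cite[Th.5.9]{Labesse} require.
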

\begin{proof}
Let $W$ be any linear representation of $G_p$ over an $L$-vector space and $U$ any compact open subgroup of $G(\Abb_{F^+}^{\infty})$, we define $S(U,W)$ to be the $L$-vector space of functions $f:G(F^+)\backslash G(\Abb_{F^+}^{\infty})\longrightarrow W$ such that $f(gu)=u_p^{-1}(f(g))$ for $g\in G(\Abb_{F^+}^{\infty})$ and $u\in U$, where $u_p$ is the projection of $u$ in $G_p$. Fixing $U^p$ as previously, we define $S(U^p,W):=\varinjlim_{U_p}S(U^pU_p,W)$ (inductive limit taken over compact open subgroups $U_p$ of $G_p$) endowed with the linear left action of $G_p$ given by $(h_p\cdot f)(g):=h_p(f(gh_p))$ ($h_p\in G_p$, $g\in G(\Abb_{F^+}^{\infty})$) where the second $h_p$ is seen in $G(\Abb_{F^+}^{\infty})$ in the obvious way. Note that $\mathbb{T}^S$ also naturally acts on $S(U^p,W)$ (the representation $W$ here playing no role since this action is ``outside $p$''). Then it follows from \cite[\S7.1.4]{EGH} that there is an isomorphism of smooth representations of $G_p$ over $\overline\Q_p$:
\begin{equation}\label{class}
S(U^p,L(\lambda)')\otimes_{L}\overline \Q_p\cong \bigoplus_{\pi}\big[\big((\pi_f^p)^{U^p}\otimes_{\overline \Q}\pi_p\big)\otimes_{\overline\Q,j_p}\overline \Q_p\big]^{\oplus m(\pi)}
\end{equation}
where the direct sum is over the automorphic representations $\pi=\pi_\infty\otimes_{\C}\pi_p$ of $G(\Abb_{F^+})$ such that $\pi_\infty$ is of weight $\lambda$ and $(\pi_f^p)^{U^p}\ne 0$ (we take the $\Qbar$-structures) and where $m(\pi)$ is the multiplicity of $\pi$ in $L^2(G(F^+)\backslash G(\Abb_{F^+}),\C)$. We then say that a point $\rho\in \Xfrak_{\rhobar,S}$ is the Galois representation associated to $\pi$ (with $\pi_\infty$ of weight $\lambda$) if we have:
$$\big[\big((\pi_f^p)^{U^p}\otimes_{\overline \Q}\pi_p\big)\otimes_{\overline\Q,j_p}\overline \Q_p\big]^{\oplus m(\pi)}\subseteq S(U^p,L(\lambda)')_{\mathfrak{m}^S}[\mathfrak{p}_\rho]\otimes_{k(\mathfrak{p}_\rho)}\overline \Q_p$$
where $\mathfrak{p}_\rho$ is as in (\ref{points}) (and $R_{\rhobar,S}$ acts on $S(U^p,L(\lambda)')_{\mathfrak{m}^S}$ again using \cite[Prop.6.7]{Thorne}). Note that $S(U^p,L(\lambda)')_{\mathfrak{m}^S}[\mathfrak{p}_\rho]\otimes_{k(\mathfrak{p}_\rho)}\overline \Q_p\ne 0$ (equivalently $S(U^p,L(\lambda)')_{\mathfrak{m}^S}[\mathfrak{p}_\rho]\ne 0$) if and only if there exists an automorphic representation $\pi$ such that $\pi_\infty$ is of weight $\lambda$, $(\pi_f^p)^{U^p}\ne 0$ and $\rho$ is the Galois representation associated to $\pi$.

Let $\widehat S(U^p,L)^{\lambda-\rm la}\subset \widehat S(U^p,L)^{\rm an}$ be the closed $G_p$-subrepresentation of locally $L(\lambda)$-algebraic vectors, that is the $L$-subvector space of $\widehat S(U^p,L)^{\rm an}$ (or equivalently of $\widehat S(U^p,L)$) of vectors $v$ such that there exists a compact open subgroup $U_p$ of $G_p$ such that the $U_p$-subrepresentation generated by $v$ in $\widehat S(U^p,L)\vert_{U_p}$ is isomorphic to $(L(\lambda)\vert_{U_p})^{\oplus d}$ for some positive integer $d$. Note that the subspace $\widehat S(U^p,L)^{\lambda-\rm la}$ is preserved under the action of $\mathbb{T}^S$ (since the latter commutes with $G_p$). Then it follows from \cite[Prop.3.2.4]{Emerton} and its proof that there is an isomorphism of locally $\Q_p$-algebraic representations of $G_p$ over $L$ which is $\mathbb{T}^S$-equivariant (with the action of $\mathbb{T}^S$ on the right hand side given by its action on $S(U^p,L(\lambda)')$):
\begin{equation*}
\widehat S(U^p,L)^{\lambda-\rm la}\cong L(\lambda)\otimes_L S(U^p,L(\lambda)').
\end{equation*}
We then deduce a $G_p$-equivariant isomorphism of $R_{\rhobar,S}$-modules:
\begin{equation}\label{lambdaalgm}
\widehat S(U^p,L)^{\lambda-\rm la}_{\mathfrak{m}^S}\cong L(\lambda)\otimes_L S(U^p,L(\lambda)')_{\mathfrak{m}^S}
\end{equation}
where $\widehat S(U^p,L)^{\lambda-\rm la}_{\mathfrak{m}^S}:=(\widehat S(U^p,L)^{\lambda-\rm la})_{\mathfrak{m}^S}=(\widehat S(U^p,L)_{\mathfrak{m}^S})^{\lambda-\rm la}$.

Now let $x=(\rho,\delta)\in Y(U^p,\rhobar)$ with $\omega(x)=\delta_{\lambda}$ for $\lambda$ dominant and define $\mathfrak{p}_\rho$ as in (\ref{points}). From Definition \ref{defclass} and the definition of $\widehat S(U^p,L)^{\lambda-\rm la}$, we get that the point $x$ is classical if and only if there exists a nonzero $G_p$-equivariant morphism:
$$L(\lambda)\otimes_L \big({\rm Ind}_{\overline B_p}^{G_p} \delta\delta_{\lambda}^{-1}\delta_{B_p}^{-1}\big)^{\rm sm} \longrightarrow \widehat S(U^p,L)_{\mathfrak{m}^S}^{\lambda-\rm la}[\mathfrak{p}_\rho]\otimes_{k(\mathfrak{p}_\rho)}k(x)$$
if and only if by (\ref{lambdaalgm}) there exists a nonzero $G_p$-equivariant morphism:
$$\big({\rm Ind}_{\overline B_p}^{G_p} \delta\delta_{\lambda}^{-1}\delta_{B_p}^{-1}\big)^{\rm sm}\longrightarrow S(U^p,L(\lambda)')_{\mathfrak{m}^S}[\mathfrak{p}_\rho]\otimes_{k(\mathfrak{p}_\rho)}k(x)$$
if and only if by (\ref{class}) there exists an automorphic representation $\pi=\pi_\infty\otimes_{\C}\pi_f^p\otimes_{\C}\pi_p$ of $G(\Abb_{F^+})$ such that $\pi_\infty$ is of weight $\lambda$, $(\pi_f^p)^{U^p}\ne 0$, $\rho$ is the Galois representation associated to $\pi$ and $\pi_p$ is a quotient of $({\rm Ind}_{\overline B_p}^{G_p} \delta\delta_{\lambda}^{-1}\delta_{B_p}^{-1})^{\rm sm}\otimes_{k(\delta)}\overline \Q_p$.

Now let us prove the last assertion. According to \cite[Cor.5.3]{Labesse}, there exists an isobaric representation $\Pi=\Pi_1\boxplus\cdots\boxplus\Pi_r$ where $n=m_1+\cdots+m_r$ and $\Pi_i$ nonzero automorphic representations of $\mathrm{GL}_{m_i}(\mathbb{A}_F)$ occuring in the discrete spectrum such that $\Pi$ is a weak base change of $\pi$ in the sense of \cite[\S4.10]{Labesse}. Since $\overline{\rho}$, hence $\rho$, are absolutely irreducible, we have $r=1$ and $\Pi=\Pi_1$ cuspidal. The equality $m(\pi)=1$ then follows from \cite[Th.5.4]{Labesse} (which uses the extra assumption $F/F^+$ unramified). The uniqueness of $\pi$ is a consequence of the strong base change theorem \cite[Th.5.9]{Labesse} together with the fact that $\pi_v$ is unramified at finite places $v$ of $F^+$ which are inert in $F$ (which uses the extra assumption $U_v$ hyperspecial for $v$ inert) and the fact that the $L$-packets at finite places of $F^+$ which are split in $F$ are singletons.
\end{proof}

\begin{rema}
{\rm With the notation of Proposition \ref{compaclas}, write $\delta\delta_{\lambda}^{-1}=(\delta_{{\rm sm},v,1},\dots,\delta_{{\rm sm},v,n})_{v\in S_p}$, if moreover $\delta_{{\rm sm},v,i}/\delta_{{\rm sm},v,j}\notin \{1,\vert\cdot\vert_{F_{\tilde v}}^{-2}\}$ for $1\leq i\ne j\leq n$ and $v\in S_p$, then we see from (iv) of Proposition \ref{compaclas} that $\pi_p\cong ({\rm Ind}_{B_p}^{G_p} \delta\delta_{\lambda}^{-1})^{\rm sm}\otimes_{k(\delta)}\overline \Q_p$.}
\end{rema}

We then have the following conjecture, which by Proposition \ref{compaclas} is essentially a consequence of the Fontaine-Mazur conjecture and the Langlands philosophy, and which is the natural generalization in the context of definite unitary groups of the main result of \cite{Kisinoverconvergent} for $\GL_2/\Q$ (in the crystalline case).
  
\begin{conj}\label{classiconj}
Let $x=(\rho,\delta)\in Y(U^p,\rhobar)$ be a crystalline strictly dominant point. Then $x$ is classical.
\end{conj}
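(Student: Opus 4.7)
The plan is to reduce the global classicality question to a local geometric question on the trianguline variety $X_{\rm tri}^\square(\rhobar_{x,v})$, by routing through the patched eigenvariety $X_p(\rhobar_x)$ of \cite{BHS}. First I would invoke the embedding $Y(U^p,\rhobar)\hookrightarrow X_p(\rhobar_x)$ together with the Zariski-closed immersion
\begin{equation*}
X_p(\rhobar_x)\hookrightarrow \Xfrak_{\rhobar_x^p}\times \Ubb^g\times X_{\rm tri}^\square(\rhobar_{x,p})
\end{equation*}
which identifies the source with a union of irreducible components of the target. Classicality of $x$ should then be equivalent to showing that the image of $x$ in $\Xfrak_\infty:=(\Spf\,R_\infty)^{\rm rig}$ lies in the support of the patched $R_\infty$-module $\Pi_\infty(\lambda)'$, where $\lambda$ is determined by the algebraic part of $\omega(x)$. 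Using \cite[Lem.~4.17]{CEGGPS} one identifies this support with a union of irreducible components of the smooth rigid space $\prod_{v\in S_p}\Xfrak_{\rhobar_{x,v}}^{\square,\mathbf{k}_v\text{-cr}}$.

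The core of the argument is then to show that the unique irreducible component $Z_{\rm cris}(\rho_x)$ of that product passing through $(\rho_{x,v})_{v\in S_p}$ meets the support of $\Pi_\infty(\lambda)'$. To produce a test point, I would use Corollary \ref{defnZtri(x)} and Lemma \ref{genericopensinXcris} to find $y\in Z_{\rm tri}(x):=\prod_{v\in S_p}Z_{\rm tri}(x_v)$ arbitrarily close to $x$ such that $\rho_{y,v}$ is crystalline of the same labelled Hodge--Tate weights as $\rho_{x,v}$, with pairwise distinct and sufficiently generic Frobenius eigenvalues, and such that the refinement carried by $y$ is noncritical. If such a $y$ automatically lands in $X_p(\rhobar_x)$, then Chenevier's noncritical classicality theorem (\cite[Prop.~4.2]{Chenevierfern}) puts $y$ in the support of $\Pi_\infty(\lambda)'$, forcing the full component $Z_{\rm cris}(\rho_x)$ to be in the support and giving the classicality of $x$.

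The bridge between these two steps is exactly the containment
\begin{equation*}
\Xfrak_{\rhobar_x^p}\times \Ubb^g\times Z_{\rm tri}(x)\subseteq X_p(\rhobar_x),
\end{equation*}
which holds as soon as each $x_v$ lies on a \emph{unique} irreducible component of $X_{\rm tri}^\square(\rhobar_{x,v})$, since $X_p(\rhobar_x)$ is cut out as a union of irreducible components of the ambient product. This purely local uniqueness question is exactly what Theorem \ref{upperbound} addresses: the accumulation property holds on every irreducible component through $x_v$ (by density of classical noncritical points of arbitrarily regular weight in the eigenvariety), and the theorem bounds the tangent space by $\dim X_{\rm tri}^\square(\rhobar_{x,v})+\lg(w_{x,v})-d_{x,v}$. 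By Lemma \ref{coxeter}, this forces smoothness, and hence the desired uniqueness, exactly when $w_{x,v}$ is a product of distinct simple reflections.

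The main obstacle is the case in which some $w_{x,v}$ fails to be a product of distinct simple reflections. In that regime $\lg(w_{x,v})>d_{x,v}$, Theorem \ref{upperbound} leaves genuine slack in the tangent-space bound, several irreducible components of $X_{\rm tri}^\square(\rhobar_{x,v})$ may pass through $x_v$, and one can no longer guarantee the inclusion above. Closing this gap would require either a refined cycle-theoretic analysis of the local components of $X_{\rm tri}^\square(\rhobar_{x,v})$ at $x_v$ beyond what the tangent space sees, or a patched-module argument that bypasses the need for a single component through $x_v$ (for instance by proving that every component of $X_{\rm tri}^\square(\rhobar_{x,v})$ meeting $x_v$ is automatically modular, which is essentially the content of the modularity lifting variants assumed in \S\ref{modularity}). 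This is precisely the step the present paper leaves open in general, which is why the unconditional result only covers the ``product of distinct simple reflections'' case.
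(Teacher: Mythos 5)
The statement you are addressing is stated as a conjecture in the paper, not a theorem, and the paper only proves partial cases of it (Theorem~\ref{classicalitycrit} and Corollary~\ref{mainclassic}). Your proposal correctly recognizes this, faithfully reconstructs the strategy (route through the patched eigenvariety, reduce classicality to membership in the support of $\Pi_\infty(\lambda)'$, produce a generic crystalline test point $y$ close to $x$, and reduce the required inclusion $\Xfrak_{\rhobar_x^p}\times\Ubb^g\times Z_{\rm tri}(x)\subseteq X_p(\rhobar_x)$ to a local smoothness question on the trianguline variety), and correctly identifies the residual gap in the case where some $w_{x,v}$ is not a product of distinct simple reflections.

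There is, however, one imprecision worth flagging. You assert that the accumulation property (Definition~\ref{accu}) holds on \emph{every} irreducible component of $X_{\rm tri}^\square(\rhobar_{x,v})$ through $x_v$, and then apply Theorem~\ref{upperbound} to the union of all such components to conclude smoothness, hence uniqueness. The paper cannot run this argument: the accumulation property is established only for $Z_{{\rm tri},V_v}(x_v)$ (Proposition~\ref{acconZtri}, a purely local statement) and for the automorphic locus $X_{\rm tri}^{\Xfrak^p\rm-aut}(\rhobar_p)$ (Proposition~\ref{acconZaut}, which uses density of classical points on the patched eigenvariety), and it is explicitly pointed out in the introduction that this obstruction prevents applying Theorem~\ref{upperbound} to the full $X_{\rm tri}^\square(\rhobar_{x,v})$; this is exactly why Theorem~\ref{mainintro2} had to be sharpened to Theorem~\ref{classicalitycrit}. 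In the actual proof of Corollary~\ref{mainclassic}, the tangent-space bound is applied to $X_v:=Y_v\cup Z_{{\rm tri},V_v}(x_v)$ with $Y_v$ a piece of an \emph{automorphic} component $Z_v$; smoothness of $X_v$ then forces $Z_{{\rm tri},V_v}(x_v)\subseteq Z_v$, which is what one needs for Theorem~\ref{classicalitycrit}. The paper concludes that $X_{\rm tri}^{\Xfrak^p\rm-aut}(\rhobar_p)$ is smooth at the image of $x$, not that $x_v$ lies on a unique irreducible component of the full $X_{\rm tri}^\square(\rhobar_{x,v})$ (that the latter would suffice is noted in Remark~\ref{uniqueirredcompo}(i), but it is a sufficient condition rather than a step the paper establishes). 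So your route, as written, quietly assumes more than is proved; the paper's actual argument is more careful precisely at this point.
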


\begin{rema}\label{extra}
{\rm We didn't seek to state the most general classicality conjecture. Obviously, the assumptions that $p>2$ and $G$ is quasi-split at each finite place of $F^+$ shouldn't be crucial, and one could replace crystalline by de Rham.}
\end{rema}

\subsection{Proof of the main classicality result}\label{firstclassical}

We prove a criterion for classicality (Theorem \ref{classicalitycrit}) in terms of the patched eigenvariety of \cite{BHS}, which itself builds on the construction in \cite{CEGGPS} of a ``big'' patching module $M_\infty$. We use it to prove our main classicality result (Corollary \ref{mainclassic}).

We keep the notation of \S\ref{classic} and make the following extra assumptions (which are required for the construction of $M_\infty$): $F$ is unramified over $F^+$, $U_v$ is hyperspecial if $v$ is inert in $F$, $\rhobar(\mathcal{G}_{F(\zeta_p)})$ is adequate in the sense of \cite[Def.2.3]{Thorne} and $\zeta_p\notin \overline F^{\ker(\rhobar\otimes \rhobar')}$. For instance if $p>2n+1$ and $\rhobar\vert_{\mathcal{G}_{F(\zeta_p)}}$ is (still) absolutely irreducible, then $\rhobar(\mathcal{G}_{F(\zeta_p)})$ is automatically adequate (\cite[Th.9]{GHTT}). We first briefly recall some notation, definitions and statements and refer to \cite[\S3.2]{BHS} for more details on what follows. 

We let $R_{\rhobar_{\tilde v}}^{\overline\square}$ be the maximal reduced and $\Z_p$-flat quotient of the framed local deformation ring $R_{\rhobar_{\tilde v}}^\square$ and set:
$$R^{\rm loc}:=\widehat{\bigotimes}_{v\in S} R_{\rhobar_{\tilde v}}^{\overline\square},\ \ R_{\rhobar^p}:=\widehat{\bigotimes}_{v\in S\backslash S_p}R_{\rhobar_{\tilde v}}^{\overline\square},\ \ R_{\rhobar_p}:=\widehat{\bigotimes}_{v\in S_p}R_{\rhobar_{\tilde v}}^{\overline\square},\ \ R_\infty:=R^{\rm loc}\dbl x_1\dots,x_g\dbr$$
where $g\geq 1$ is some integer which will be fixed below. We let $\Xfrak_{\rhobar^p}:=(\Spf\, R_{\rhobar^p})^{\rig}$, $\Xfrak_{\rhobar_p}:=(\Spf R_{\rhobar_p})^{\rig}$ and $\Xfrak_\infty:=(\Spf\, R_\infty)^{\rig}$ so that:
\begin{equation}\label{product}
\Xfrak_\infty=\Xfrak_{\rhobar^p}\times \Xfrak_{\rhobar_p}\times \Ubb^g
\end{equation}
where $\Ubb:=(\Spf\,\Ocal_L\dbl y \dbr)^{\rig}$ is the open unit disc over $L$. We also define $S_\infty:=\Ocal_L\dbl y_1,\dots,y_t\dbr$ where $t:=g+[F^+:\Q]\tfrac{n(n-1)}{2}+|S|n^2$ and $\mathfrak{a}:=(y_1,\dots,y_t)$ (an ideal of $S_\infty$).

Thanks to Remark \ref{changeu} and Proposition \ref{compaclas} we can (and do) assume that the tame level $U^p$ is small enough so that we have:
\begin{equation}\label{net}
G(F)\cap (hU^pK_ph^{-1})=\{1\}\ \ {\rm for\ all\ }h\in G(\Abb^\infty_{F^+})
\end{equation}
(indeed, let $w\nmid p$ be a finite place of $F^+$ that splits in $F$ such that $U_w$ is maximal, replace $U^p$ by $U'^p:=U'_w\prod_{v\ne w}U_v$ where $U'_w$ is small enough so that $U'^p$ satisfies (\ref{net}), and use Proposition \ref{compaclas} and local-global compatibility at $w$ to deduce classicality in level $U^p$ from classicality in level $U'^p$). Then there is a quotient $R_{\rhobar,S}\twoheadrightarrow R_{\rhobar, \Scal}$ such that the action of $R_{\rhobar,S}$ on $\widehat S(U^p,L)_{\mathfrak{m}^S}$ factors through $R_{\rhobar,\Scal}$, an integer $g\geq 1$ and:
\begin{enumerate}
\item[(i)]a continuous $R_\infty$-admissible (see \cite[Def.3.1]{BHS}) unitary representation $\Pi_\infty$ of $G_p$ over $L$ together with a $G_p$-stable and $R_\infty$-stable unit ball $\Pi_\infty^\circ\subset \Pi_\infty$;\\
\item[(ii)]a morphism of local $\Ocal_L$-algebras $S_\infty\rightarrow R_\infty$ such that $M_\infty:=\Hom_{\Ocal_L}(\Pi_\infty^\circ,\Ocal_L)$ is finite projective as an $S_\infty\dbl K_p\dbr$-module;\\
\item[(iii)]compatible isomorphisms $R_\infty/\mathfrak{a}R_\infty\cong R_{\rhobar,\Scal}$ and $\Pi_\infty[\mathfrak{a}]\cong \widehat S(U^p,L)_{\mathfrak{m}^S}$ where the latter is $G_p$-equivariant.
\end{enumerate}
We then define the \emph{patched eigenvariety} $X_p(\rhobar)$ as the support of the coherent $\Ocal_{\Xfrak_\infty\times \widehat T_{p,L}}$-module $\mathcal{M}_\infty=(J_{B_p}(\Pi_\infty^{R_\infty-{\rm an}}))'$ on $\Xfrak_\infty\times \widehat T_{p,L}$ (see \cite[Def.3.2]{BHS} for $\Pi_\infty^{R_\infty-{\rm an}}$; strictly speaking $(J_{B_p}(\Pi_\infty^{R_\infty-{\rm an}}))'$ is the global sections of the sheaf $\mathcal{M}_\infty$). This is a reduced closed analytic subset of $\Xfrak_\infty\times \widehat T_{p,L}$ (\cite[Cor.3.19]{BHS}) whose points are (\cite[Prop3.7]{BHS}):
\begin{equation}\label{pointpatching}
\left\{x=(y,\delta)\in \Xfrak_\infty\times \widehat T_{p,L}\ {\rm such\ that}\ \Hom_{T_p}\big(\delta,J_{B_p}(\Pi_\infty^{R_\infty-\rm an}[\mathfrak{p}_y]\otimes_{k(\mathfrak{p}_y)}k(x))\big)\ne 0\right\}
\end{equation}
where $\mathfrak{p}_y\subset R_\infty$ denotes the prime ideal corresponding to the point $y\in \Xfrak_\infty$ (under the identification of the sets underlying $\Xfrak_\infty$ and ${\rm Spm}\,R_\infty[1/p]$) and ${k(\mathfrak{p}_y)}$ is the residue field of $\mathfrak{p}_y$. It follows from the proof of \cite[Th.4.2]{BHS} that we can recover the eigenvariety $Y(U^p,\rhobar)$ as the reduced Zariski-closed subspace of $X_p(\rhobar)$ underlying the vanishing locus of the ideal $\mathfrak{a}\Gamma(\Xfrak_\infty,\Ocal_{\Xfrak_\infty})$.

\begin{lemm}\label{CM}
The coherent sheaf $\mathcal{M}_\infty$ is Cohen-Macaulay over $X_p(\overline{\rho})$.
\end{lemm}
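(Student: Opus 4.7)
The plan is to verify the Cohen-Macaulay property locally on $X_p(\overline{\rho})$ by transporting the projectivity of $M_\infty$ over $S_\infty\dbl K_p\dbr$ through the Jacquet module / dualisation construction defining $\mathcal{M}_\infty$. First, I would cover $\Xfrak_\infty\times\widehat{T}_{p,L}$ by admissible affinoid opens of the form $U\times V$, where $U=\Spm A$ is an admissible affinoid open of $\Xfrak_\infty$ and $V=\Spm B$ is an admissible affinoid open of $\widehat{T}_{p,L}$ contained in the preimage of a sufficiently small finite-level piece of the weight space. Following the recipe of \cite[\S3.1-3.2]{BHS}, on each such $U\times V$ the sections $\mathcal{M}_\infty(U\times V)$ form a finite $A\widehat{\otimes}_L B$-module $N_{U,V}$, obtained from $M_\infty\widehat{\otimes}_{S_\infty\dbl K_p\dbr}A\dbl K_p\dbr$ by dualising, extracting locally $\Q_p$-analytic vectors for a suitable compact open subgroup of $K_p$, applying $J_{B_p}$, and restricting to $V$.

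Next, I would invoke the fact that $M_\infty$ is finite projective over $S_\infty\dbl K_p\dbr$. Base-changing along $S_\infty\to A$, the module $M_\infty\widehat{\otimes}_{S_\infty\dbl K_p\dbr}A\dbl K_p\dbr$ remains finite projective over $A\dbl K_p\dbr$. The theory of coadmissible modules of Schneider-Teitelbaum then exhibits the locally analytic vectors of its $A$-linear dual as a coadmissible module over $A\widehat{\otimes}_L D(K_p,L)$ which, at each admissible analytic level $H\subseteq K_p$, is finite projective over the corresponding analytic distribution algebra over $A$. Applying $J_{B_p}$ and invoking the finite slope decomposition of Emerton then presents $N_{U,V}$, possibly after enlarging $V$, as a direct summand of a finite free module over a regular affinoid algebra of dimension $\dim A+\dim B$.

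From this projective description, the Cohen-Macaulay property follows by a standard argument: a direct summand of a free module over a regular ring is itself Cohen-Macaulay over that ring, of depth equal to the ambient dimension, and the Auslander-Buchsbaum formula identifies the codimension of $\mathrm{supp}(N_{U,V})=X_p(\overline{\rho})\cap(U\times V)$ with the projective dimension of $N_{U,V}$; consequently $N_{U,V}$ is Cohen-Macaulay over the coordinate ring of its support. Since Cohen-Macaulay-ness is a local property and the opens $U\times V$ cover $X_p(\overline{\rho})$, this will yield the lemma.

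The main obstacle is the second step: controlling the preservation of projectivity through the combined operations of pullback along $S_\infty\to A$, passage to locally $\Q_p$-analytic vectors, and the Jacquet functor restricted to an affinoid of the weight space. Concretely, one has to exhibit the finite slope part of $J_{B_p}$ applied to an $A$-family of coadmissible $K_p$-representations arising from a finite projective $A\dbl K_p\dbr$-module as being itself a direct summand of a finite free module over a regular affinoid algebra. This is essentially the analytic content of the construction carried out in \cite[\S3.2]{BHS}, where the projectivity of $M_\infty$ is used precisely to guarantee that $(J_{B_p}(\Pi_\infty^{R_\infty-\mathrm{an}}))'$ gives rise to a coherent sheaf on $\Xfrak_\infty\times\widehat{T}_{p,L}$ with the required local structure; bookkeeping this structure carefully on each affinoid $U\times V$ is what requires real work.
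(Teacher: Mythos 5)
Your overall plan—exhibit $\mathcal{M}_\infty$ locally as a finite projective module over a regular ring and deduce Cohen--Macaulay--ness—is the right one and is indeed what the paper does, by directly invoking (the proof of) \cite[Prop.3.10]{BHS} for the local projective structure and then transporting CM--ness to $\mathcal{O}_{X_p(\overline{\rho})}$ via the base--change--of--depth result \cite[Prop.16.5.3]{EGAIV1}. However, your proposal has two genuine defects in the crucial final step.

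First, the regular ring you work over is misidentified. The module $\Gamma(U_i,\mathcal{M}_\infty)$ is \emph{not} finite projective over (a direct summand of a finite free module over) an affinoid algebra of $\Xfrak_\infty\times\widehat T_{p,L}$ of dimension $\dim A+\dim B$; its support $X_p(\overline{\rho})\cap(U\times V)$ has \emph{positive} codimension in $U\times V$, which is incompatible with projectivity over the full affinoid. What \cite[Prop.3.10]{BHS} actually produces is finite projectivity over an affinoid $\mathcal{O}_{\mathcal{W}_\infty}(W_i)$ of the (smaller) patched weight space $\mathcal{W}_\infty$, whose dimension equals $\dim X_p(\overline{\rho})$ and \emph{not} $\dim(\Xfrak_\infty\times\widehat T_{p,L})$; the extra dimensions of $\Xfrak_\infty\times\widehat T_{p,L}$ account precisely for the codimension of $X_p(\overline{\rho})$. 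Your confusion about which ring one is projective over is also what trips up the next point.

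Second, the Auslander--Buchsbaum argument you give is internally inconsistent and cannot yield the conclusion. If $N_{U,V}$ were a direct summand of a finite free module over a regular affinoid algebra $R$ of dimension $\dim A+\dim B$, then $\mathrm{pd}_R N_{U,V}=0$, so Auslander--Buchsbaum would force the codimension of $\mathrm{supp}(N_{U,V})$ in $\Spec R$ to be $0$, contradicting the fact that $X_p(\overline{\rho})$ is a proper closed subvariety. More to the point, projectivity gives CM--ness of $N_{U,V}$ \emph{over $R$}; to conclude that $N_{U,V}$ is CM \emph{over $\mathcal{O}_{X_p(\overline{\rho})}(U_i)$}, which is what the lemma asserts, one needs the fact that depth (hence the CM property) of a module finite over two noetherian rings, with the action of one factoring through the other, is the same over both---this is exactly \cite[Prop.16.5.3]{EGAIV1}, and there is no way to bypass it with Auslander--Buchsbaum alone. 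Once you replace the wrong ring with $\mathcal{O}_{\mathcal{W}_\infty}(W_i)$, cite \cite[Prop.3.10]{BHS} for the projectivity and the factorisation of the action through $\mathcal{O}_{X_p(\overline{\rho})}(U_i)$, and invoke \cite[Prop.16.5.3]{EGAIV1} for the base change of depth, the proof becomes exactly that of the paper.
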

\begin{proof}
From the proof of \cite[Prop.3.10]{BHS} (to which we refer the reader for more details) we deduce that there exists an admissible affinoid covering $(U_i)_i$ of $X_p(\overline{\rho})$ such that $\Gamma(U_i,\mathcal{M}_\infty)$ is a finite projective module over a ring $\mathcal{O}_{\mathcal{W}_\infty}(W_i)$ whose action on $\Gamma(U_i,\mathcal{M}_\infty)$ factors through a ring homomorphism $\mathcal{O}_{\mathcal{W}_\infty}(W_i)\rightarrow\mathcal{O}_{X_p(\overline{\rho})}(U_i)$. Consequently we can deduce from \cite[Prop.16.5.3]{EGAIV1} that $\Gamma(U_i,\mathcal{M}_\infty)$ is a Cohen-Macaulay $\mathcal{O}_{X_p(\overline{\rho})}(U_i)$-module.
\end{proof}

It follows from \cite[Th.3.20]{BHS} that the isomorphism of rigid spaces:
\begin{eqnarray*}
\Xfrak_\infty\times \widehat T_{p,L}&\buildrel\sim\over\longrightarrow &\Xfrak_\infty\times\widehat{T}_{p,L}\\
\nonumber \big(x,(\delta_v)_{v\in S_p}\big)=\big(x,(\delta_{v,1},\dots,\delta_{v,n})_{v\in S_p}\big)&\longmapsto & \big(x,(\imath_v^{-1}(\delta_{v,1},\dots,\delta_{v,n}))_{v\in S_p}\big)
\end{eqnarray*}
induces via (\ref{product}) a morphism of reduced rigid spaces over $L$:
\begin{equation}\label{patchedeigenvartoXtri}
X_p(\rhobar)\longrightarrow \Xfrak_{\rhobar^p}\times X_{\rm tri}^\square(\rhobar_p)\times \Ubb^g
\end{equation}
which identifies the source with a union of irreducible components of the target. Note that the composition:
$$Y(U^p,\rhobar)\hookrightarrow X_p(\rhobar)\buildrel{(\ref{patchedeigenvartoXtri})}\over\longrightarrow\Xfrak_{\rhobar^p}\times X_{\rm tri}^\square(\rhobar_p)\times \Ubb^g\twoheadrightarrow X_{\rm tri}^\square(\rhobar_p)$$
is the map (\ref{eigenvartotrianguline}). An irreducible component of the right hand side of (\ref{patchedeigenvartoXtri}) is of the form $\Xfrak^p\times Z\times \Ubb^g$ where $\Xfrak^p$ (resp. $Z$) is an irreducible component of $\Xfrak^p$ (resp. $X_{\rm tri}^\square(\rhobar_p)$). Given an irreducible component $\Xfrak^p\subseteq \Xfrak_{\rhobar^p}$, we denote by $X_{\rm tri}^{\Xfrak^p \rm-aut}(\rhobar_p)\subseteq X_{\rm tri}^\square(\rhobar_p)$ the union (possibly empty) of those irreducible components $Z\subseteq X_{\rm tri}^\square(\rhobar_p)$ such that $\Xfrak^p\times Z\times \Ubb^g$ is an irreducible component of $X_p(\rhobar)$ via (\ref{patchedeigenvartoXtri}). The morphism (\ref{patchedeigenvartoXtri}) thus induces an isomorphism:
\begin{equation}\label{union}
X_p(\rhobar)\buildrel\sim\over\longrightarrow \bigcup_{\Xfrak^p} \big(\Xfrak^p\times X_{\rm tri}^{\Xfrak^p \rm-aut}(\rhobar_p)\times \Ubb^g\big)
\end{equation}
the union (inside $\Xfrak_{\rhobar^p}\times X_{\rm tri}^\square(\rhobar_p)\times \Ubb^g$) being over the irreducible components $\Xfrak^p$ of $\Xfrak_{\rhobar^p}$.

We now state and prove the main result of this section, which gives a criterion for classicality on $Y(U^p,\rhobar)$. Recall that, given a crystalline strictly dominant point $x_v=(r_v,\delta_v)\in X_{\rm tri}^\square(\rhobar_{\tilde v})$ such that the geometric Frobenius eigenvalues on ${\rm WD}(r_v)$ are pairwise distinct and $V_v\subseteq X_{\rm tri}^\square(\rhobar_{\tilde v})$ a sufficiently small open neighbourhood of $x_v$, we have constructed in Corollary $\ref{defnZtri(x)}$ an irreducible component $Z_{{\rm tri},V_v}(x_v)$ of $V_v$ containing $x_v$. 

\begin{theo}\label{classicalitycrit}
Let $x=(\rho,\delta)\in Y(U^p,\rhobar)$ be a crystalline strictly dominant point such that the eigenvalues $\varphi_{\tilde v,1},\dots,\varphi_{\tilde v,n}$ of the geometric Frobenius on the (unramified) Weil-Deligne representation ${\rm WD}(\rho\vert_{\Gcal_{F_{\tilde v}}})$ satisfy $\varphi_{\tilde v,i}\varphi_{\tilde v,j}^{-1}\notin \{1,q_v\}$ for all $i\neq j$ and all $v\in S_p$. Let $\Xfrak^p\subset \Xfrak_{\rhobar^p}$ be an irreducible component such that $x\in \Xfrak^p\times X_{\rm tri}^{\Xfrak^p \rm-aut}(\rhobar_p)\times \Ubb^g\subseteq X_p(\rhobar)$ via (\ref{union}), let $x_v\in X_{\rm tri}^\square(\rhobar_{\tilde v})$ (for $v\in S_p$) be the image of $x$ via:
$$\Xfrak^p\times X_{\rm tri}^{\Xfrak^p \rm-aut}(\rhobar_p)\times \Ubb^g\twoheadrightarrow X_{\rm tri}^{\Xfrak^p \rm-aut}(\rhobar_p)\hookrightarrow X_{\rm tri}^\square(\rhobar_p)\twoheadrightarrow X_{\rm tri}^\square(\rhobar_{\tilde v})$$
and let $V_v\subseteq X_{\rm tri}^\square(\rhobar_{\tilde v})$ (for $v\in S_p$) be a sufficiently small open neighbourhood of $x_v$ so that $Z_{{\rm tri},V_v}(x_v)\subseteq V_v$ is defined.
If we have:
$$\prod_{v\in S_p} Z_{{\rm tri},V_v}(x_v)\subseteq X_{\rm tri}^{\Xfrak^p\rm-aut}(\rhobar_p)$$
then the point $x$ is classical.
\end{theo}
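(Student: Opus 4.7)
The plan is to translate classicality into a support statement on the patched side, produce a nearby \emph{generic} crystalline point $y$ of $X_p(\rhobar)$ inside the prescribed union of components of $X_{\rm tri}^\square(\rhobar_p)$, verify classicality of $y$ directly, and then transport the conclusion back to $x$ through the smoothness of the crystalline deformation space.

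\textbf{Step 1 (reduction to support of a classical patching module).} Writing $\omega(x)=\delta_\lambda$ with $\lambda=(\lambda_v)_{v\in S_p}$ strictly dominant, set
\[
\Pi_\infty(\lambda):=\Hom_{K_p}\bigl(L(\lambda),\Pi_\infty\bigr),\qquad M_\infty(\lambda):=\Pi_\infty(\lambda)'.
\]
Using the adjunction between Emerton's Jacquet functor and locally analytic parabolic induction (as in \eqref{adj}) together with the identification of ${\rm LA}(\delta)$ as the maximal locally algebraic quotient of $\Fcal_{\overline B_p}^{G_p}(\delta)$, the point $x$ is classical if and only if the image of $x$ in $\Xfrak_\infty$ lies in the support of $M_\infty(\lambda)$. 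By \cite[Lem.4.17]{CEGGPS} the module $M_\infty(\lambda)$ is a Taylor--Wiles--Kisin patched module for the trivial inertial type and Hodge--Tate weights ${\bf k}=(\mathbf{k}_v)_{v\in S_p}$; in particular its support in $\Xfrak_\infty=\Xfrak_{\rhobar^p}\times\Xfrak_{\rhobar_p}\times\Ubb^g$ has the form $\Xfrak_{\rhobar^p}\times W\times\Ubb^g$ for some union $W$ of irreducible (equivalently, connected) components of the smooth rigid space $\prod_{v\in S_p}\Xfrak_{\rhobar_{\tilde v}}^{\square,\mathbf{k}_v{\rm -cr}}$.

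\textbf{Step 2 (producing a nearby generic crystalline companion $y$).} For each $v\in S_p$, the component $Z_{{\rm tri},V_v}(x_v)$ contains by construction the image of the unique irreducible component $\widetilde Z_{\rm cris}(x_v)$ of $\widetilde\Xfrak_{\rhobar_{\tilde v}}^{\square,\mathbf{k}_v{\rm -cr}}$ through $\iota_{\mathbf{k}_v}^{-1}(x_v)$. By Lemma \ref{genericopensinXcris} the open subspace $\widetilde V_{\rhobar_{\tilde v}}^{\square,\mathbf{k}_v{\rm -cr}}\cap\widetilde Z_{\rm cris}(x_v)$ is Zariski-open and Zariski-dense in $\widetilde Z_{\rm cris}(x_v)$, hence accumulates at $\iota_{\mathbf{k}_v}^{-1}(x_v)$. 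We may therefore choose a point $y_v\in V_v\cap Z_{{\rm tri},V_v}(x_v)$, arbitrarily close to $x_v$, of the form $y_v=\iota_{\mathbf{k}_v}(\tilde y_v)$ with $\tilde y_v\in\widetilde V_{\rhobar_{\tilde v}}^{\square,\mathbf{k}_v{\rm -cr}}\cap\widetilde Z_{\rm cris}(x_v)$. Remark \ref{down}(i) ensures that the underlying Galois representation $\rho_{y,\tilde v}$ lies in the same irreducible (= connected) component of $\Xfrak_{\rhobar_{\tilde v}}^{\square,\mathbf{k}_v{\rm -cr}}$ as $\rho_{\tilde v}=\rho|_{\Gcal_{F_{\tilde v}}}$. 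Moreover, the complement of $\Tcal_{\rm reg}^n$ being closed, by a further (local) genericity shrinking we may arrange that $\rho_{y,\tilde v}$ is generic in the sense of \cite[Def.2.8]{BHS} for every $v\in S_p$; in particular $y$ is noncritical and strictly dominant. Taking $y:=(y_v)_{v\in S_p}\in\prod_{v\in S_p}Z_{{\rm tri},V_v}(x_v)$ and lifting to $X_p(\rhobar)$ via \eqref{union} (using the hypothesis $\prod_v Z_{{\rm tri},V_v}(x_v)\subseteq X_{\rm tri}^{\Xfrak^p{\rm -aut}}(\rhobar_p)$ and the fact that $x$ and $y$ share the same projections to $\Xfrak^p$ and $\Ubb^g$ up to arbitrarily small perturbation), we obtain a point still denoted $y\in X_p(\rhobar)$.

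\textbf{Step 3 (classicality of $y$).} This is the main technical point. The genericity of $\rho_{y,\tilde v}$ (combined with strict dominance of the weight) ensures that the Orlik--Strauch representation $\Fcal_{\overline B_p}^{G_p}(\delta_y)$ is irreducible, hence equal to its own locally algebraic quotient ${\rm LA}(\delta_y)$. The nonvanishing of $\Hom_{G_p}(\Fcal_{\overline B_p}^{G_p}(\delta_y),\Pi_\infty^{R_\infty-{\rm an}}[\mathfrak{p}_y]\otimes k(y))$ coming from $y\in X_p(\rhobar)$ then factors automatically through ${\rm LA}(\delta_y)$, so $y$ is classical on the patched eigenvariety; equivalently the image of $y$ in $\Xfrak_\infty$ lies in the support of $M_\infty(\lambda)$.

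\textbf{Step 4 (transport back to $x$).} By Step 2, $\rho_{y,\tilde v}$ and $\rho_{\tilde v}$ lie in the same connected component of the smooth space $\Xfrak_{\rhobar_{\tilde v}}^{\square,\mathbf{k}_v{\rm -cr}}$ for every $v\in S_p$. Hence the connected component of $\prod_v\Xfrak_{\rhobar_{\tilde v}}^{\square,\mathbf{k}_v{\rm -cr}}$ containing $(\rho_{\tilde v})_v$ agrees with the one containing $(\rho_{y,\tilde v})_v$, which by Step 3 belongs to $W$. By Step 1 this exactly says that $x$ is classical, completing the proof.

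\textbf{Anticipated main obstacle.} The delicate point is Step 3: one must argue that a \emph{generic} crystalline strictly dominant point of the patched eigenvariety is automatically in the support of $M_\infty(\lambda)$. In the abstract patched setting there is no automorphic input to rely on, so the argument must proceed purely representation-theoretically, exploiting the irreducibility of $\Fcal_{\overline B_p}^{G_p}(\delta_y)$ under \cite[Def.2.8]{BHS} so that any contribution to $J_{B_p}(\Pi_\infty^{R_\infty-{\rm an}})$ at $y$ forces a locally algebraic contribution. Setting up this irreducibility carefully (in particular controlling both the $B_p$-genericity coming from $\varphi_i\varphi_j^{-1}\notin\{1,q_v\}$ and the strict dominance of $\lambda$) is where the argument has to be most careful.
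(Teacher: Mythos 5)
Your Steps~1, 2 and~4 follow the paper's strategy closely: reduce classicality to membership in the support of the patched module $M_\infty(\lambda)=\Pi_\infty(\lambda)'$, produce a generic crystalline strictly dominant companion point $y$ nearby, and then use that $\rho_{y,\tilde v}$ and $\rho_{\tilde v}$ lie in the same (smooth, hence irreducible) component $Z_{\rm cris}(\rho_{\tilde v})$ of $\Xfrak_{\rhobar_{\tilde v}}^{\square,\mathbf{k}_v\mathrm{-cr}}$ to transport the support condition back to $x$. These are all in accord with the paper's argument.

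However, there is a genuine gap in your Step~3. You assert that, under genericity of $\rho_{y,\tilde v}$ and strict dominance of the weight, the Orlik--Strauch representation $\Fcal_{\overline B_p}^{G_p}(\delta_y)$ is irreducible and therefore coincides with its locally algebraic quotient ${\rm LA}(\delta_y)$. This is false for $n\ge 2$: by definition $\Fcal_{\overline B_p}^{G_p}(\delta_y)$ is built from the dual Verma module $(U(\mathfrak g_L)\otimes_{U(\overline{\mathfrak b}_L)}(-\lambda))^\vee$, which is never simple for dominant $\lambda$ — it has $L(\lambda)'$ as its unique irreducible subobject but carries additional constituents $L(w\cdot\lambda)'$ for $w\neq 1$ in the Weyl group. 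Genericity of the Galois representation only controls the smooth part $\delta_{\rm sm}$ of the inducing data, not the Lie-algebra part, so it does not collapse these Jordan--Hölder factors. Consequently, a nonzero morphism $\Fcal_{\overline B_p}^{G_p}(\delta_y)\to\Pi_\infty^{R_\infty\text{-an}}[\mathfrak p_y]\otimes k(y)$ is \emph{not} automatically forced to factor through ${\rm LA}(\delta_y)$.

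The paper handles this by a different (indirect) argument: if the morphism did \emph{not} factor through the locally algebraic quotient, then the computation of Jordan--Hölder factors of $\Fcal_{\overline B_p}^{G_p}(\delta_y)$ (\cite[Cor.4.6]{BreuilAnalytiqueII}) together with \cite[Cor.3.4]{BreuilAnalytiqueI} would produce a companion point $x''=(y',\epsilon'')\in X_p(\rhobar)$ with $\epsilon''$ locally algebraic of \emph{nondominant} weight; but for a \emph{generic} crystalline point this is excluded by \cite[Lem.2.11]{BHS}. So the genericity hypothesis is still what drives Step~3, but the mechanism is the exclusion of nondominant algebraic companion points on the trianguline variety, not irreducibility of the Orlik--Strauch representation. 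You should replace your irreducibility claim with this contradiction argument; as written, the step does not go through.
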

\begin{proof}
Let us write $\mathfrak{p}_y\subset R_\infty$ for the prime ideal corresponding to the image $y$ of $x$ in $\Xfrak_\infty$ via $Y(U^p,\rhobar)\hookrightarrow X_p(\rhobar)\hookrightarrow \Xfrak_\infty\times \widehat{T}_{p,L}\twoheadrightarrow \Xfrak_\infty$ and $\mathfrak{p}_\rho\subset R_{\rhobar,S}$ for the prime ideal corresponding to the global representation $\rho$. Then it follows from property (iii) above that we have $\mathfrak{a}R_\infty\subseteq \mathfrak{p}_{\rho}$ and $\widehat S(U^p,L)_{\mathfrak{m}^S}[\mathfrak{p}_{\rho}]=\Pi_{\infty}[\mathfrak{p}_y]$. From Definition \ref{defclass} we thus need to show that $\Hom_{G_p}({\rm LA}(\delta),\Pi_{\infty}[\mathfrak{p}_y]\otimes_{k(\mathfrak{p}_y)}k(x))\ne 0$.

As in \S\ref{classic} let us write $\delta=\delta_{\lambda}\delta_{\rm sm}$ with $\lambda=(\lambda_v)_{v\in S_p}$ and:
$$\lambda_v:=(\lambda_{v,\tau,i})_{1\leq i\leq n,\tau\in\Hom(F_{\tilde v},L)}\in \Z^{\Hom(F_{\tilde v},L)}$$
(recall that each $\lambda_v$ is dominant with respect to $B_v$). Consider the usual induction with compact support ${\rm ind}_{K_p}^{G_p}(L(\lambda)\vert_{K_p})$ (resp. ${\rm ind}_{K_v}^{G_v}(L(\lambda_v)\vert_{K_v})$) where $L(\lambda_p)$ (resp. $L(\lambda_v)$) is the irreducible algebraic representation of $G_p$ (resp. $G_v$) over $L$ of highest weight $\lambda$ (resp. $\lambda_v$) with respect to $B_p$ (resp. $B_v$). Let $\mathcal{H}(\lambda):=\End_{G_p}({\rm ind}_{K_p}^{G_p}L(\lambda))$ and $\mathcal{H}(\lambda_v):=\End_{G_v}({\rm ind}_{K_v}^{G_v}L(\lambda_v))$ be the respective convolution algebras (which are commutative $L$-algebras), we have $\mathcal{H}(\lambda)\cong \prod_{v\in S_p}\mathcal{H}(\lambda_v)$. Moreover by Frobenius reciprocity:
$$\Pi_\infty(\lambda):=\Hom_{K_p}(L(\lambda),\Pi_\infty)\cong \Hom_{G_p}\big({\rm ind}_{K_p}^{G_p}L(\lambda),\Pi_\infty\big)$$
carries an action of $\mathcal{H}(\lambda)$. By a slight extension of \cite[Lem.4.16(1)]{CEGGPS} (see the proof of \cite[Prop.3.15]{BHS}), the action of $R_{\rhobar_{\tilde v}}^\square$ on $\Pi_\infty(\lambda)$ via $R_{\rhobar_{\tilde v}}^\square\rightarrow R^{\rm loc}\hookrightarrow R_\infty$ factors through its quotient $R_{\rhobar_{\tilde v}}^{\square,{\bf k}_v\rm -cr}$ where, for $v\in S_p$, ${\bf k}_v:=(k_{v,\tau,i})_{1\leq i\leq n,\tau\in\Hom(F_{\tilde v},L)}$ with $k_{v,\tau,i}:=\lambda_{v,\tau,i}-(i-1)$ (note that $\omega(x_v)=\delta_{{\bf k}_v}$ and that $R_{\rhobar_{\tilde v}}^{\square,{\bf k}_v\rm -cr}$ is also a quotient of $R_{\rhobar_{\tilde v}}^{\overline\square}$).

These two actions of $\mathcal{H}(\lambda_v)$ and $R_{\rhobar_{\tilde v}}^{\square,{\bf k}_v\rm -cr}$ on the $L$-vector space $\Pi_\infty(\lambda)$ are related. By \cite[Th.4.1]{CEGGPS} and a slight extension of \cite[Lem.4.16(2)]{CEGGPS} (see the proof of \cite[Prop.3.15]{BHS}), there is a unique $L$-algebra homomorphism $\eta_v:\mathcal{H}(\lambda_v)\rightarrow R_{\rhobar_{\tilde v}}^{\square,{\bf k}_v\rm -cr}[1/p]$ which interpolates the local Langlands correspondence (in a sense given in \cite[Th.4.1]{CEGGPS}) and such that the above action of $\mathcal{H}(\lambda_v)$ on $\Pi_\infty(\lambda)$ agrees with the action induced by that of $R_{\rhobar_{\tilde v}}^{\square,{\bf k}_v\rm -cr}[1/p]$ composed with the morphism $\eta_v$.

In order to show that ${\rm LA}(\delta)$ admits a nonzero $G_p$-equivariant morphism to $\Pi_{\infty}[\mathfrak{p}_y]\otimes_{k(\mathfrak{p}_y)}k(x)$, we claim it is enough to show that $\Pi_\infty(\lambda)[\mathfrak{p}_y]\cong \Hom_{G_p}({\rm ind}_{K_p}^{G_p}L(\lambda),\Pi_\infty[\mathfrak{p}_y])$ is nonzero. Indeed, by what we just saw, any nonzero $G_p$-equivariant morphism ${\rm ind}_{K_p}^{G_p}L(\lambda)\rightarrow \Pi_\infty[\mathfrak{p}_y]$ induces a nonzero $G_p$-equivariant morphism:
$${\rm ind}_{K_p}^{G_p}L(\lambda)\otimes_Lk(x)\longrightarrow \Pi_\infty[\mathfrak{p}_y]\otimes_{k(\mathfrak{p}_y)}k(x)$$
which factors through ${\rm ind}_{K_p}^{G_p}L(\lambda)\otimes_{\mathcal{H}(\lambda)}\theta_{\mathfrak{p}_y}$
where $\theta_{\mathfrak{p}_y}$ is the character:
$$\xymatrix{
\theta_{\mathfrak{p}_y}:\mathcal{H}(\lambda)\ar[r]^<<<<{\otimes_{v\in S_p}\eta_v}&\widehat\bigotimes_{v\in S_p} R_{\rhobar_{\tilde v}}^{\square,{\bf k}_v{\rm -cr}}[1/p]\ar[r] & k(\mathfrak{p}_y)\subseteq k(x),
}$$
the last morphism being the canonical projection to the residue field $k(\mathfrak{p}_y)$ at $\mathfrak{p}_y$ (the map $R_{\rhobar_p}\hookrightarrow R_\infty\twoheadrightarrow R_\infty/\mathfrak{p}_y$ factoring through $\widehat\bigotimes_{v\in S_p} R_{\rhobar_{\tilde v}}^{\square,{\bf k}_v{\rm -cr}}$ by the assumption on $\rho$). But by the compatibility with the local Langlands correspondence in \cite[Th.4.1]{CEGGPS} together with the assumption $\varphi_{\tilde v,i}/\varphi_{\tilde v,j}\neq q_v$ for $1\leq i,j\leq n$ and $v\in S_p$, we have ${\rm ind}_{K_p}^{G_p}L(\lambda)\otimes_{\mathcal{H}(\lambda)}\theta_{\mathfrak{p}_y}\cong {\rm LA}(\delta)\otimes_{k(\delta)}k(x)$.

By the same proof as that of \cite[Lem.4.17(2)]{CEGGPS}, the $R_\infty \otimes_{R_{\rhobar_p}}\widehat\bigotimes_{v\in S_p} R_{\rhobar_{\tilde v}}^{\square,{\bf k}_v{\rm -cr}}$\!-module $\Pi_{\infty}(\lambda)'$ is supported on a union of irreducible components of:
$$\Xfrak_{\rhobar^p}\times \prod_{v\in S_p} \Xfrak_{\rhobar_{\tilde v}}^{\square,{\bf k}_v\rm -cr}\times\Ubb^g$$
and we have to prove that $y$ is a point on one of these irreducible components. Since $y\in \Xfrak^p\times \prod_{v\in S_p} Z_{\rm cris}(\rho_{\tilde v})\times\Ubb^g$ where $Z_{\rm cris}(\rho_{\tilde v})$ is the unique irreducible component of $\Xfrak_{\rhobar_{\tilde v}}^{\square,{\bf k}_v\rm -cr}$ containing $\rho_{\tilde v}:=\rho\vert_{\Gcal_{F_{\tilde v}}}$ (recall $\Xfrak_{\rhobar_{\tilde v}}^{\square,{\bf k}_v\rm -cr}$ is smooth over $L$ by \cite{Kisindef}), it is enough to prove that $\Xfrak^p\times \prod_{v\in S_p} Z_{\rm cris}(\rho_{\tilde v})\times\Ubb^g$ is one of the irreducible components in the support of $\Pi_{\infty}(\lambda)'$, or equivalently that $\Xfrak^p\times \prod_{v\in S_p} Z_{\rm cris}(\rho_{\tilde v})\times\Ubb^g$ contains at least one point which is in the support of $\Pi_{\infty}(\lambda)'$.

For each $v\in S_p$ let $x'_v=(r'_v,\delta'_v)$ be any point in $\iota_{{\bf k}_v}(\widetilde Z_{\rm cris}(x_v))\cap V_v\subseteq V_v\subseteq X_{\rm tri}^\square(\rhobar_{\tilde v})$ where $\widetilde Z_{\rm cris}(x_v)$ is as in (i) of Corollary \ref{defnZtri(x)} (so in particular $x'_v$ is crystalline strictly dominant of Hodge-Tate weights ${\bf k}_v$ and $r'_v$ lies on $Z_{\rm cris}(\rho_{\tilde v})$ by (i) of Remark \ref{down}). Then we have $x'_v\in Z_{{\rm tri},V_v}(x_v)$ for $v\in S_p$ by (ii) of Corollary \ref{defnZtri(x)}. From the assumption:
$$\prod_{v\in S_p}Z_{{\rm tri},V_v}(x_v)\subset X_{\rm tri}^{\Xfrak^p\rm -aut}(\rhobar_p)$$
it then follows that there exists:
$$x'=(y',\epsilon')\in \Xfrak^p\times\prod_{v\in S_p}Z_{{\rm tri},V_v}(x_v)\times \Ubb^g\subseteq \Xfrak^p\times X_{\rm tri}^{\Xfrak^p \rm-aut}(\rhobar_p)\times \Ubb^g\buildrel (\ref{union})\over \subseteq X_p(\rhobar)\subset \Xfrak_\infty\times \widehat T_{p,L}$$ 
(with $y'\in \Xfrak_\infty$, $\epsilon'\in \widehat T_{p,L}$) mapping to $x'_v$ via $\Xfrak^p\times X_{\rm tri}^{\Xfrak^p \rm-aut}(\rhobar_p)\times \Ubb^g\twoheadrightarrow X_{\rm tri}^{\Xfrak^p \rm-aut}(\rhobar_p)\hookrightarrow X_{\rm tri}^\square(\rhobar_p)\twoheadrightarrow X_{\rm tri}^\square(\rhobar_{\tilde v})$ (so $\epsilon'_v=\imath_v^{-1}(\delta'_v)$) and where $y'$ still belongs to $\Xfrak^p\times \prod_{v\in S_p} Z_{\rm cris}(\rho_{\tilde v})\times\Ubb^g$. It is thus enough to prove that $y'$ is in the support of $\Pi_{\infty}(\lambda)'$, i.e. that $\Pi_\infty(\lambda)[\mathfrak{p}_{y'}]\cong \Hom_{K_p}(L(\lambda),\Pi_\infty[\mathfrak{p}_{y'}])$ is nonzero.

We conclude by a similar argument as in the proof of \cite[Prop.3.27]{BHS}. By (the proof of) \cite[Lem.4.4]{Chenevier} and the same argument as at the end of the proof of Lemma \ref{genericopensinXcris} (using the smoothness, hence flatness, of $\widetilde U_{\rhobar_{\tilde v}}^{\square,{\bf k}_v\rm -cr}\rightarrow \Xfrak_{\rhobar_{\tilde v}}^{\square,{\bf k}_v\rm -cr}$), we may choose $x_v'\in \iota_{{\bf k}_v}(\widetilde Z_{\rm cris}(x_v))\cap V_v$ such that the crystalline Galois representation $r'_v$ is generic in the sense of \cite[Def.2.8]{BHS}. Then we claim that the nonzero $G_p$-equivariant morphism $\Fcal_{\overline B_p}^{G_p}(\epsilon')\rightarrow \Pi_\infty^{R_\infty-\rm an}[\mathfrak{p}_{y'}]\otimes_{k(\mathfrak{p}_{y'})}k(x')$ corresponding by \cite[Th.4.3]{BreuilAnalytiqueII} to the nonzero $T_p$-equivariant morphism $\epsilon'\rightarrow J_{B_p}(\Pi_\infty^{R_\infty-\rm an}[\mathfrak{p}_{y'}]\otimes_{k(\mathfrak{p}_{y'})}k(x'))$ given by the point $x'$ factors through its locally $\Q_p$-algebraic quotient ${\rm LA}(\epsilon')$ (which provides a nonzero $K_p$-equivariant morphism $L(\lambda)\rightarrow \Pi_\infty[\mathfrak{p}_{y'}]$). Indeed, if it doesn't, then the computation of the Jordan-H\"older factors of $\Fcal_{\overline B_p}^{G_p}(\epsilon')$ (\cite[Cor.4.6]{BreuilAnalytiqueII}) together with \cite[Cor.3.4]{BreuilAnalytiqueI} show that there exits a point $x''=(y',\epsilon'')\in X_{p}(\rhobar)$ such that $\epsilon''$ is locally algebraic of {\it nondominant} weight. In particular there is some $v\in S_p$ such that the image of $x''$ in $X_{\rm tri}^\square(\rhobar_{\tilde v})$ is of the form $(r'_v, \imath_v^{-1}(\epsilon''_v))$ with $\imath_v^{-1}(\epsilon''_v)$ locally algebraic {\it not} strictly dominant. This contradicts \cite[Lem.2.11]{BHS}.
\end{proof}

Let $x=(\rho,\delta)\in Y(U^p,\rhobar)$ be a crystalline strictly dominant point such that for all $v\in S_p$ the eigenvalues of the geometric Frobenius on ${\rm WD}(\rho|_{\Gcal_{F_{\tilde v}}})$ are pairwise distinct. Recall that we have associated in \S\ref{weyl} a Weyl group element $w_{x_v}$ to the image $x_v$ of $x$ in $X_{\rm tri}^\square(\rhobar_{\tilde v})$ via (\ref{eigenvartotrianguline}). We write:
\begin{equation}\label{wx}
w_x:=(w_{x_v})_{v\in S_p}\in \prod_{v\in S_p}\Big(\prod_{F_{\tilde v}\hookrightarrow L}\Scal_n\Big)
\end{equation}
for the corresponding element of the Weyl group of $(\Res_{F^+/\Q}G)_L\buildrel\sim\over\rightarrow \prod_{v\in S_p} (\Res_{F_{\tilde v}/\Q_p}\GL_{n,F_{\tilde v}})_L$. We then obtain the following corollary, which is our main result.

\begin{coro}\label{mainclassic}
Let $x=(\rho,\delta)\in Y(U^p,\rhobar)$ be a crystalline strictly dominant very regular point. Assume that the Weyl group element $w_{x}$ in (\ref{wx}) is a product of pairwise distinct simple reflections. Then $x$ is classical. Moreover all eigenvectors associated to $x$ are classical, that is we have (see the proof of Proposition \ref{compaclas} for $\widehat S(U^p,L)_{\mathfrak{m}^S}^{\lambda-{\rm la}}$):
$$\mathrm{Hom}_{T_p}\big(\delta,J_{B_p}(S(U^p,L)^{\lambda-{\rm la}}_{\mathfrak{m}^S}[\mathfrak{p}_\rho]\otimes_{k(\mathfrak{p}_\rho)}k(x))\big)\buildrel\sim\over\longrightarrow\mathrm{Hom}_{T_p}\big(\delta,J_{B_p}(\widehat{S}(U^p,L)^{\rm an}_{\mathfrak{m}^S}[\mathfrak{p}_\rho]\otimes_{k(\mathfrak{p}_\rho)}k(x))\big).$$
\end{coro}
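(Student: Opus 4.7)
The plan is to apply Theorem \ref{classicalitycrit}. Fix an irreducible component $\Xfrak^p\subseteq \Xfrak_{\rhobar^p}$ with $x\in \Xfrak^p\times X_{\rm tri}^{\Xfrak^p\rm-aut}(\rhobar_p)\times \Ubb^g$ via (\ref{union}) and write $x_v$ for the image of $x$ in $X_{\rm tri}^\square(\rhobar_{\tilde v})$. The Frobenius condition $\varphi_{\tilde v,i}\varphi_{\tilde v,j}^{-1}\notin\{1,q_v\}$ of Theorem \ref{classicalitycrit} is immediate from (i) of Definition \ref{veryreg}. Choose a $v$-factor $Z_v^{(0)}$ of some irreducible component of $X_{\rm tri}^{\Xfrak^p\rm-aut}(\rhobar_p)$ through $(x_v)_v$, so $Z_v^{(0)}$ is an irreducible component of $X_{\rm tri}^{\Xfrak^p\rm-aut}(\rhobar_{\tilde v})$ through $x_v$ and $\prod_v Z_v^{(0)}\subseteq X_{\rm tri}^{\Xfrak^p\rm-aut}(\rhobar_p)$. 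Let $X_v$ denote the union of the (finitely many) irreducible components of $X_{\rm tri}^{\Xfrak^p\rm-aut}(\rhobar_{\tilde v})$ passing through $x_v$, so $Z_v^{(0)}\subseteq X_v$ and $X_v$ is a union of irreducible components of the open subset of $X_{\rm tri}^\square(\rhobar_{\tilde v})$ obtained by discarding the components avoiding $x_v$. To apply Theorem \ref{classicalitycrit} it suffices to prove $Z_{{\rm tri},V_v}(x_v)\subseteq Z_v^{(0)}\cap V_v$ for each $v\in S_p$ and some sufficiently small $V_v$.

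The first step is to verify the accumulation property of $X_v$ at $x_v$ in the sense of Definition \ref{accu}. By \cite[Prop.4.2]{Chenevierfern}, noncritical crystalline strictly dominant points of $Y(U^p,\rhobar)$ are classical, and standard density results on finite-slope eigenvarieties ensure that such points of arbitrarily regular weight are Zariski-dense on every irreducible component of $Y(U^p,\rhobar)$ through $x$. Via (\ref{eigenvartotrianguline}) they provide noncritical crystalline strictly dominant points of arbitrarily regular weight in $X_{\rm tri}^{\Xfrak^p\rm-aut}(\rhobar_{\tilde v})$ accumulating at $x_v$, and hence lying in $X_v$ for points close enough to $x_v$. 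Since $x_v$ is very regular and $w_{x_v}$ is a product of distinct simple reflections, Corollary \ref{Xtrismooth} applies and $X_v$ is smooth at $x_v$; in particular $Z_v^{(0)}$ is the unique irreducible component of $X_v$ through $x_v$.

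Next, I would identify $Z_{{\rm tri},V_v}(x_v)$ with $Z_v^{(0)}\cap V_v$. By Lemma \ref{genericopensinXcris}, $\iota_{{\bf k}_v}(\widetilde Z_{\rm cris}(x_v)\cap\widetilde V_{\rhobar_{\tilde v}}^{\square,{\bf k}_v\rm -cr})$ is a Zariski-dense subset of $\iota_{{\bf k}_v}(\widetilde Z_{\rm cris}(x_v))\subseteq Z_{{\rm tri},V_v}(x_v)$ consisting of noncritical crystalline strictly dominant points at which $X_{\rm tri}^\square(\rhobar_{\tilde v})$ is smooth. Combining this with the noncritical classical points on $Z_v^{(0)}$ near $x_v$ produced in the previous step, one finds smooth points of $X_{\rm tri}^\square(\rhobar_{\tilde v})$ that belong simultaneously to $Z_{{\rm tri},V_v}(x_v)$ and to $Z_v^{(0)}$ arbitrarily close to $x_v$; since $X_{\rm tri}^\square(\rhobar_{\tilde v})$ has a unique irreducible component through each such smooth point, $Z_{{\rm tri},V_v}(x_v)=Z_v^{(0)}\cap V_v$. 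Taking products over $v\in S_p$ and applying Theorem \ref{classicalitycrit} yields classicality of $x$.

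For the moreover assertion, I would run the Jordan--H\"older argument at the end of the proof of Theorem \ref{classicalitycrit} directly at $x$. A nonzero element of the right-hand Hom corresponds via \cite[Th.4.3]{BreuilAnalytiqueII} to a nonzero $G_p$-equivariant map $\Fcal_{\overline B_p}^{G_p}(\delta)\to \Pi_\infty[\mathfrak{p}_y]\otimes_{k(\mathfrak{p}_y)}k(x)$; if this map did not factor through the locally algebraic quotient ${\rm LA}(\delta)$, then \cite[Cor.4.6]{BreuilAnalytiqueII} and \cite[Cor.3.4]{BreuilAnalytiqueI} would produce a companion point $(y,\epsilon'')\in X_p(\rhobar)$ with $\imath_v^{-1}(\epsilon''_v)$ locally algebraic of non-strictly-dominant weight for some $v$, contradicting \cite[Lem.2.11]{BHS}. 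Hence every such map factors through ${\rm LA}(\delta)$ and the inclusion $\widehat S(U^p,L)^{\lambda-\rm la}\subseteq \widehat S(U^p,L)^{\rm an}$ induces the asserted isomorphism of Hom spaces. The principal obstacle I expect is the rigorous identification $Z_{{\rm tri},V_v}(x_v)=Z_v^{(0)}\cap V_v$, which requires combining the accumulation property for $X_v$ with the density statement of Lemma \ref{genericopensinXcris} to locate a common smooth point of $X_{\rm tri}^\square(\rhobar_{\tilde v})$ lying on both irreducible components.
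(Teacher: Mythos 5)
There are three genuine gaps in your argument, and they change the proof in an essential way.

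\textbf{First}, your verification of the accumulation property of $X_v$ at $x_v$ via $Y(U^p,\rhobar)$ cannot work for dimension reasons. The eigenvariety $Y(U^p,\rhobar)$ has dimension $n[F^+:\Q]$, whereas $X_{\rm tri}^\square(\rhobar_p)$ has dimension $\sum_{v\in S_p}\big(n^2+[F^+_v:\Q_p]\tfrac{n(n+1)}{2}\big)$, which is much larger. The image of a component of $Y(U^p,\rhobar)$ under $(\ref{eigenvartotrianguline})$ is therefore a proper lower-dimensional subset, and points that are Zariski-dense in $Y(U^p,\rhobar)$ near $x$ cannot be Zariski-dense in $X_v$ near $x_v$. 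This is exactly why the paper introduces the patched eigenvariety: $X_p(\rhobar)$ maps \emph{onto} $\Xfrak^p\times X_{\rm tri}^{\Xfrak^p\rm-aut}(\rhobar_p)\times\Ubb^g$ via (\ref{union}), so density of nice points in $X_p(\rhobar)$ (from \cite[Prop.3.10, Th.3.18]{BHS}) yields Proposition \ref{acconZaut}, which is the statement you actually need here.

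\textbf{Second}, granting the accumulation property, your identification $Z_{{\rm tri},V_v}(x_v)=Z_v^{(0)}\cap V_v$ is circular. You propose to find a point lying simultaneously on $Z_{{\rm tri},V_v}(x_v)$ and on $Z_v^{(0)}$ at which $X_{\rm tri}^\square(\rhobar_{\tilde v})$ is smooth; but two distinct irreducible components do not share any smooth point of the ambient space, so the existence of such a point is \emph{equivalent} to the conclusion you are trying to prove. The paper's route is different and not circular: it forms the union $X_v:=Y_v\cup Z_{{\rm tri},V_v}(x_v)$ of two pieces \emph{each} satisfying the accumulation property, applies Corollary \ref{Xtrismooth} to deduce smoothness of $X_v$ at $x_v$, and then uses smoothness (unicity of the local component) to force $Z_{{\rm tri},V_v}(x_v)\subseteq Y_v\subseteq Z_v$.

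\textbf{Third}, the moreover assertion cannot be obtained by running the Jordan--H\"older argument from the end of the proof of Theorem \ref{classicalitycrit} \emph{at the point $x$ itself}. That argument crucially relies on $r'_v$ being \emph{generic} in the sense of \cite[Def.2.8]{BHS}, a hypothesis which is arranged by moving to a nearby auxiliary point $x'_v$ in the proof of Theorem \ref{classicalitycrit}; it fails for $x$ whenever $w_x\neq 1$, since then a companion point with non-strictly-dominant locally algebraic parameter may well exist. The actual proof of the moreover part goes via very different considerations: Lemma \ref{CM} (Cohen--Macaulayness of $\mathcal{M}_\infty$), smoothness of $X_p(\rhobar)$ at $x$ (obtained from the first part together with \cite[Th.1.2]{Caraiani} for temperedness and \cite[Lem.1.3.2(1)]{BLGGT} for smoothness at the places away from $p$), local freeness of $\mathcal{M}_\infty$ near $x$, and comparison with nearby very classical points where both sides of the displayed isomorphism agree and have locally constant dimension by \cite[Lem.4.5(ii)]{Chenevierfern}.
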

\begin{proof}
Keep the notation of Theorem \ref{classicalitycrit}. By Proposition \ref{acconZtri} below, for each $v\in S_p$ there is a sufficiently small open neighbourhood $V_v$ of $x_v$ in $X_{\rm tri}^\square(\rhobar_{\tilde v})$ such that the irreducible component $Z_{{\rm tri},V_v}(x_v)$ of $V_v$ in (ii) of Corollary \ref{defnZtri(x)} is defined and satisfies the accumulation property at $x_v$ (Definition \ref{accu}).

Seeing $x$ in $X_p(\rhobar)$ via the closed embedding $Y(U^p,\rhobar)\hookrightarrow X_p(\rhobar)$, by (\ref{union}) there exist irreducible components $\Xfrak^p$ of $\Xfrak_{\rhobar^p}$ and $Z=\prod_{v\in S_p}Z_v$ of $X_{\rm tri}^\square(\rhobar_p)=\prod_{v\in S_p}X_{\rm tri}^\square(\rhobar_{\tilde v})$ such that:
$$x\in \Xfrak^p\times Z\times \Ubb^g\subseteq \Xfrak^p\times X_{\rm tri}^{\Xfrak^p \rm-aut}(\rhobar_p)\times\Ubb^g\buildrel (\ref{union}) \over\subseteq X_p(\rhobar).$$
Then it follows from Proposition \ref{acconZaut} and Remark \ref{obvious} below that $Z_v$ satisfies the accumulation property at $x_v$ for all $v\in S_p$. Let $Y_v\subseteq Z_v\cap V_v$ be a nonempty union of irreducible components of $V_v$, then $X_v:=Y_v\cup Z_{{\rm tri},V_v}(x_v)$ satisfies the accumulation property at $x_v$ since both $Y_v$ and $Z_{{\rm tri},V_v}(x_v)$ do. But $X_v$ is smooth at $x_v$ by the assumption on $w_{x_v}$ and Corollary \ref{Xtrismooth} applied with $(X,x)=(X_v,x_v)$, hence there can only be one irreducible component of $X_v$ passing through $x_v$. We deduce in particular $Z_{{\rm tri},V_v}(x_v)\subseteq Y_v\subseteq Z_v$, hence $\prod_{v\in S_p}Z_{{\rm tri},V_v}(x_v)\subseteq \prod_{v\in S_p}Z_v\subseteq X_{\rm tri}^{\Xfrak^p \rm-aut}(\rhobar_p)$ and $x$ is classical by Theorem \ref{classicalitycrit}. We also deduce that the only possible $Z=\prod_{v\in S_p}Z_v$ passing through $(x_v)_{v\in S_p}$ is smooth at $(x_v)_{v\in S_p}$, hence that $X_{\rm tri}^{\Xfrak^p \rm-aut}(\rhobar_p)$ is smooth at $(x_v)_{v\in S_p}$.

Let us now prove the last statement. From the injection:
$$\mathrm{Hom}_{T_p}\big(\delta,J_{B_p}(S(U^p,L)^{\lambda-{\rm la}}_{\mathfrak{m}^S}[\mathfrak{p}_\rho]\otimes_{k(\mathfrak{p}_\rho)}k(x))\big)\hookrightarrow\mathrm{Hom}_{T_p}\big(\delta,J_{B_p}(\widehat{S}(U^p,L)^{\rm an}_{\mathfrak{m}^S}[\mathfrak{p}_\rho]\otimes_{k(\mathfrak{p}_\rho)}k(x))\big)$$ 
it is enough to prove that these two $k(x)$-vector spaces have the same (finite) dimension. Recall from \cite[\S3.2]{BHS} that for any $x'=(y',\delta')\in X_p(\rhobar)$ we have an isomorphism of $k(x')$-vector spaces:
\begin{equation}\label{iso1}
\mathrm{Hom}_{T_p}\big(\delta',J_{B_p}(\Pi_\infty^{R_\infty-{\rm an}}[\mathfrak{p}_{y'}]\otimes_{k(\mathfrak{p}_{y'})}k(x'))\big)\cong \mathcal{M}_\infty\otimes_{\mathcal{O}_{X_p(\overline{\rho})}} k(x').
\end{equation}
If moreover $x'=(\rho',\delta')\in Y(U^p,\rhobar)\hookrightarrow X_p(\rhobar)$ we have $\widehat S(U^p,L)_{\mathfrak{m}^S}[\mathfrak{p}_{\rho'}]=\Pi_{\infty}[\mathfrak{p}_{y'}]$, hence an isomorphism of $k(x')$-vector spaces:
\begin{equation}\label{iso2}
\mathrm{Hom}_{T_p}\big(\delta',J_{B_p}(\widehat{S}(U^p,L)^{\rm an}_{\mathfrak{m}^S}[\mathfrak{p}_{\rho'}]\otimes_{k(\mathfrak{p}_{\rho'})}k(x'))\big)\simeq\mathrm{Hom}_{T_p}\big(\delta',J_{B_p}(\Pi_\infty^{R_\infty-{\rm an}}[\mathfrak{p}_{y'}]\otimes_{k(\mathfrak{p}_{y'})}k(x'))\big).
\end{equation}
We first claim that $x$ is a smooth point of $X_p(\overline{\rho})$. Indeed, by what we proved above, it is enough to show that its component $y^p=(y_v)_{v\in S\backslash S_p}$ in $\mathfrak{X}_{\overline{\rho}^p}$ is a smooth point. As $x$ is classical, by Proposition \ref{compaclas} (in particular the end of the proof) it corresponds to an automorphic representation $\pi$ of $G(\mathbb{A}_{F^+})$ with cuspidal strong base change $\Pi$ to $\mathrm{GL}_n(\mathbb{A}_F)$. It then follows from \cite[Th.1.2]{Caraiani} that $\Pi$ is tempered, in particular generic, at all finite places of $F$. Then \cite[Lem.1.3.2(1)]{BLGGT} implies that $y_v$ for $v\in S\backslash S_p$ is a smooth point of $(\Spf\, R_{\rhobar_{\tilde v}}^{\overline\square})^{\rig}$. As $\mathcal{M}_\infty$ is Cohen-Macaulay (Lemma \ref{CM}) and $x$ is smooth on $X_p(\overline{\rho})$, we conclude from \cite[Cor.17.3.5(i)]{EGAIV1} that $\mathcal{M}_\infty$ is actually locally free at $x$. Consequently there exists an open affinoid neighbourhood of $x$ in $X_p(\overline{\rho})$ on which the dimension of the fibers of $\mathcal{M}_\infty$ is constant. Intersecting this neighbourhood with $Y(U^p,\overline{\rho})$ and using (\ref{iso1}) and (\ref{iso2}), we obtain an open affinoid neighbourhood $V_x$ of $x$ in $Y(U^p,\overline{\rho})$ on which $\dim_{k(x')}\mathrm{Hom}_{T_p}(\delta',J_{B_p}(\widehat{S}(U^p,L)^{\rm an}_{\mathfrak{m}^S}[\mathfrak{p}_{\rho'}]\otimes_{k(\mathfrak{p}_{\rho'})}k(x')))$ is constant for $x'=(\rho',\delta')\in V_x$.

Now let $x'\in V_x$ be a very classical point in the sense of \cite[Def.3.16]{BHS} and write $\omega(x')=\delta_{\lambda'}$ with dominant $\lambda'\in \prod_{v\in S_p}(\Z^n)^{\Hom(F_{\tilde v},L)}$. It follows from {\it loc. cit.} and \cite[Th.4.3]{BreuilAnalytiqueII} that we have:
\begin{eqnarray*}\label{adjla}
{\scriptstyle \Hom_{T_p}\big(\delta',\ J_{B_p}(\widehat S(U^p,L)^{\rm an}_{\mathfrak{m}^S}[\mathfrak{p}_{\rho'}]\otimes_{k(\mathfrak{p}_{\rho'})}k(x'))\big)}&\cong&{\scriptstyle \Hom_{G_p}\big({\rm LA}(\delta'),\ \widehat S(U^p,L)_{\mathfrak{m}^S}^{\rm an}[\mathfrak{p}_{\rho'}]\otimes_{k(\mathfrak{p}_{\rho'})}k(x')\big)}\\
&\cong& {\scriptstyle \Hom_{G_p}\big({\rm LA}(\delta'),\ \widehat S(U^p,L)_{\mathfrak{m}^S}^{\lambda'-{\rm la}}[\mathfrak{p}_{\rho'}]\otimes_{k(\mathfrak{p}_{\rho'})}k(x')\big)}\\
&\cong &{\scriptstyle \Hom_{T_p}\big(\delta',\ J_{B_p}(\widehat S(U^p,L)^{\lambda'-{\rm la}}_{\mathfrak{m}^S}[\mathfrak{p}_{\rho'}]\otimes_{k(\mathfrak{p}_{\rho'})}k(x'))\big)}.
\end{eqnarray*}
From what is proved above, it is thus enough to find a very classical point $x'$ in $V_x$ such that:
\begin{multline}\label{dimequal}
\dim_{k(x')}\Hom_{T_p}\big(\delta',J_{B_p}(\widehat S(U^p,L)^{\lambda'-{\rm la}}_{\mathfrak{m}^S}[\mathfrak{p}_{\rho'}]\otimes_{k(\mathfrak{p}_{\rho'})}k(x'))\big)=\\
\dim_{k(x)}\Hom_{T_p}\big(\delta,J_{B_p}(\widehat S(U^p,L)^{\lambda-{\rm la}}_{\mathfrak{m}^S}[\mathfrak{p}_{\rho}]\otimes_{k(\mathfrak{p}_{\rho})}k(x))\big).
\end{multline}
Let $x''=(\rho'',\delta'')\in Y(U^p,\rhobar)$ be any classical crystalline strictly dominant point and let $\omega(x'')=\delta_{\lambda''}$. By Proposition \ref{compaclas} it corresponds to a unique automorphic representation $\pi''$ which moreover has multiplicity $1$, hence we have (with the notation of the proof of Proposition \ref{compaclas}):
$$J_{B_p}\big(S(U^p,L)_{\mathfrak{m}^S}^{\lambda''-{\rm la}}[\mathfrak{p_{\rho''}}]\otimes_{k(\mathfrak{p_{\rho''}})}\overline\Q_p\big)\simeq J_{B_p}\big(L(\lambda'')\otimes_L\bigotimes_{v\in S_p}\pi''_{v}\big)\otimes_{\Qbar}({\pi_f''}^{p})^{U^p}\otimes_{\Qbar,j_p}\overline\Q_p.$$
From the definition of $S$ together with \cite[Prop.4.3.6]{EmertonJacquetI} and property (iv) in Proposition \ref{compaclas}, it then easily follows that:
\begin{equation}\label{calcdim}
\dim_{k(x'')}\Hom_{T_p}\big(\delta'',J_{B_p}(\widehat S(U^p,L)^{\lambda''-{\rm la}}_{\mathfrak{m}^S}[\mathfrak{p}_{\rho''}]\otimes_{k(\mathfrak{p}_{\rho''})}k(x''))\big)=\dim_{\Qbar}\Big(\bigotimes_{v\in S\backslash S_p} {\pi_v''}^{U_v}\Big).
\end{equation}
Let $Z$ be the union of $x$ and of the very classical points in $V_x$, by \cite[Thm.3.18]{BHS} this set $Z$ accumulates at $x$. By \cite[Th.1.2]{Caraiani}, we can apply \cite[Lem.4.5(ii)]{Chenevierfern} to the intersection of $Z$ with one irreducible component of $V_x$, and obtain that, for $v\nmid p$, the value $\dim_{\Qbar}{\pi''_{v}}^{U_v}$ is constant on this intersection. In particular $\dim_{\Qbar}(\bigotimes_{v\in S\backslash S_p} {\pi_v''}^{U_v})$ is also constant on this intersection, which finishes the proof by (\ref{calcdim}) and (\ref{dimequal}).
\end{proof}

\begin{rema}\label{uniqueirredcompo}
{\rm (i) Keeping the notation of Theorem \ref{classicalitycrit}, if there is a {\it unique} irreducible component $Z$ of $X_{\rm tri}^\square(\rhobar_p)$ passing through the image of $x$ in $X_{\rm tri}^\square(\rhobar_p)$, or equivalently if for each $v\in S_p$ there is a unique irreducible component of $X_{\rm tri}^\square(\rhobar_{\tilde v})$ passing through $x_v$, then $x$ is classical. Indeed, in that case there is an irreducible component $\Xfrak^p$ of $\Xfrak_{\rhobar^p}$ such that $x\in \Xfrak^p\times Z\times \Ubb^g=\Xfrak^p\times X_{\rm tri}^{\Xfrak^p \rm-aut}(\rhobar_p)\times\Ubb^g$. In particular, for a sufficiently small open neighbourhood $V_v$ of $x_v$ in $X_{\rm tri}^\square(\rhobar_{\tilde v})$, we have $\prod_{v\in S_p} V_v \subseteq Z=X_{\rm tri}^{\Xfrak^p\rm-aut}(\rhobar_p)$ and we see that the assumption in Theorem \ref{classicalitycrit} is {\it a fortiori} satisfied.\\
(ii) Let us recall the various global hypothesis underlying the statements of Theorem \ref{classicalitycrit} and Corollary \ref{mainclassic}: $p>2$, $G$ is quasi-split at all finite places of $F^+$, $F/F^+$ is unramified, $U_v$ is hyperspecial if $v$ is inert in $F$, $\rhobar(\mathcal{G}_{F(\zeta_p)})$ is adequate and $\zeta_p\notin \overline F^{\ker(\rhobar\otimes \rhobar')}$.}
\end{rema}

\subsection{Accumulation properties}

We prove some accumulation properties (as in Definition \ref{accu}) that are used in the proof of Corollary \ref{mainclassic} in order to apply Corollary \ref{Xtrismooth}.

We first go back to the purely local set up of \S\ref{localpart1}. We call a point $x=(r,\delta_1,\dots,\delta_n)\in X_{\rm tri}^\square(\overline r)$ \emph{saturated} if there exists a triangulation of the $(\varphi,\Gamma_K)$-module $D_{\rig}(r)$ with parameter $(\delta_1,\dots,\delta_n)$ (cf. \S\ref{begin}). Note that, if $x$ is crystalline strictly dominant with pairwise distinct Frobenius eigenvalues, then $x$ is saturated if and only if $x$ is noncritical (cf. \S\S\ref{variant}, \ref{weyl}). Recall also from \S\ref{begin} that if $x$ is saturated and if $(\delta_1,\dots,\delta_n)\in \Tcal_{\rm reg}^n$ then $x\in U_{\rm tri}^\square(\overline r)$.

\begin{lemm}\label{numnoncrit}
Let $x=(r,\delta_1,...,\delta_n)\in X_{\rm tri}^\square(\overline r)$ with $\omega(x)=\delta_{\bf k}$ for some ${\bf k}\!=\!(k_{\tau,i})_{1\leq i\leq n,\tau:\, K\hookrightarrow L}\!\in(\mathbb{Z}^n)^{\Hom(K,L)}$. Assume that:
\begin{equation}\label{eqnnumnoncrit}
k_{\tau,i}-k_{\tau,i+1}> [K:K_0] \val\big(\delta_1(\varpi_K)\cdots \delta_i(\varpi_K)\big)
\end{equation}
for $i\in \{1,\dots,n-1\}$, $\tau\in \Hom(K,L)$. Then $x$ is saturated and $r$ is semi-stable. If moreover $(\delta_1,\dots,\delta_n)\in \Tcal^n_{\rm reg}$, then $r$ is crystalline strictly dominant noncritical. 
\end{lemm}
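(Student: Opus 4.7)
The plan is to use the density of $U_{\rm tri}^\square(\overline r)$ in $X_{\rm tri}^\square(\overline r)$ to extract \emph{some} triangulation of $D_{\rig}(r)$, then to use the noncritical slope condition~(\ref{eqnnumnoncrit}) to force its parameter to coincide with $\delta$ itself, and finally to read off the $p$-adic Hodge-theoretic structure of $r$ from the explicit shape of $\delta$. Exactly as in the proof of Lemma~\ref{paramofcrystpt}, the global triangulation theorem \cite[Th.6.3.13]{KPX} applied at the point $x$ produces a triangulation $\Fil_\bullet$ of $D_{\rig}(r)$ over $\Rcal_{k(x),K}$ with some parameter $(\delta'_1,\dots,\delta'_n)$ satisfying $\delta_i/\delta'_i=z^{{\bf m}_i}$ for some ${\bf m}_i=(m_{\tau,i})_\tau\in\Z^{\Hom(K,L)}$. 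The hypothesis $\omega(x)=\delta_{\bf k}$ lets one write $\delta_i=z^{{\bf k}_i}\mathrm{unr}(\varphi_i)$ for a unique $\varphi_i\in k(x)^\times$, and hence $\delta'_i=z^{{\bf k}_i-{\bf m}_i}\mathrm{unr}(\varphi_i)$.

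The main step is to show ${\bf m}_i=0$ for every $i$. For each $i$ the saturated rank-$i$ sub-$(\varphi,\Gamma_K)$-module $\Fil_i\subset D_{\rig}(r)$ has determinant $\Rcal_{k(x),K}(\delta'_1\cdots\delta'_i)$. The standard ``slope-dominates-Hodge'' inequality for saturated sub-$(\varphi,\Gamma_K)$-modules over the Robba ring (Kedlaya's slope filtration theorem combined with Berger's dictionary (\ref{berger})) yields, for each embedding $\tau:K\hookrightarrow L$, an upper bound for the partial Hodge sum $\sum_{j\leq i}(k_{\tau,j}-m_{\tau,j})$ in terms of $[K:K_0]\val(\delta'_1(\varpi_K)\cdots\delta'_i(\varpi_K))$ and a ``permutation defect'' relating the $\tau$-Hodge--Tate weights of $\Fil_i$ to those of $D_{\rig}(r)$. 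Using the identity $[K:K_0]\val(\delta'_j(\varpi_K))=[K:K_0]\val(\delta_j(\varpi_K))-\sum_\tau m_{\tau,j}$ and the strict inequality~(\ref{eqnnumnoncrit}), I would show by induction on $i=1,\dots,n-1$ that every partial sum $\sum_{j\leq i} m_{\tau,j}$ vanishes (the total sum $\sum_{j\leq n}m_{\tau,j}=0$ being forced by the determinant), and hence $m_{\tau,i}=0$ for all $\tau,i$. Thus $\delta'=\delta$, the triangulation $\Fil_\bullet$ already has parameter $\delta$, and $x$ is saturated.

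Once the triangulation has parameter $\delta$, each graded piece $\Rcal_{L,K}(\delta_i)=\Rcal_{L,K}(z^{{\bf k}_i}\mathrm{unr}(\varphi_i))$ is the $(\varphi,\Gamma_K)$-module of a crystalline character. By Berger's theorem \cite[Th.3.6]{Berger} (cf.~(\ref{berger})), $D_{\rig}(r)$ is then de Rham, i.e.\ $r$ is de Rham; and the induced $\varphi$-stable flag $\Fcal_\bullet:=\Fil_\bullet[1/t]^{\Gamma_K}$ on $D_{\rm pst}(r)=D_{\rm st}(r)$ has crystalline graded pieces, so the monodromy $N$ is strictly upper-triangular with respect to $\Fcal_\bullet$ and $r$ is semi-stable.

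For the \emph{moreover} clause, the extra hypothesis $(\delta_1,\dots,\delta_n)\in\Tcal_{\rm reg}^n$ gives via Remark~\ref{varphi} that $\varphi_i/\varphi_j\neq q$ for all $i\neq j$, which rules out any nonzero monodromy between two consecutive graded pieces of $\Fcal_\bullet$; hence $N=0$ on $D_{\rm st}(r)$ and $r$ is crystalline. Strict dominance $k_{\tau,i}>k_{\tau,i+1}$ then follows from~(\ref{eqnnumnoncrit}) once one notes that $\val(\delta_1(\varpi_K)\cdots\delta_i(\varpi_K))\geq0$ by the weak admissibility of the sub-filtered $\varphi$-module $\Fcal_i\subset D_{\rm cris}(r)$, while noncriticality is automatic since $\delta$ literally triangulates $D_{\rig}(r)$. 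The main obstacle I foresee is the second paragraph: one must formulate the slope/Hodge inequality cleanly over the Robba ring, with the $[K:K_0]$-normalization matching exactly the right-hand side of~(\ref{eqnnumnoncrit}), and then carry out the induction promoting vanishing of partial sums to vanishing of each ${\bf m}_i$---this is precisely where the \emph{strict} inequality in~(\ref{eqnnumnoncrit}) is essential.
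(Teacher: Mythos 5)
Your strategy is the same as the paper's: produce a triangulation of $D_{\rig}(r)$ via \cite[Th.6.3.13]{KPX}, force its parameter to equal $\delta$ by combining a Kedlaya slope constraint from the \'etaleness of $D_{\rig}(r)$ with the hypothesis~(\ref{eqnnumnoncrit}), and then read off the $p$-adic Hodge-theoretic properties of $r$. Two of your intermediate steps, however, do not hold up.

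The more serious gap is in the deduction of semi-stability. You write that since each graded piece $\Rcal_{L,K}(\delta_i)=\Rcal_{L,K}(z^{{\bf k}_i}{\rm unr}(\varphi_i))$ is the $(\varphi,\Gamma_K)$-module of a crystalline character, ``by Berger's theorem \cite[Th.3.6]{Berger} \ldots $D_{\rig}(r)$ is then de Rham.'' This is not what Berger's theorem asserts, and the statement is false in general: a $(\varphi,\Gamma_K)$-module admitting a triangulation with crystalline rank-one graded pieces is typically \emph{not} de Rham, since an extension of de Rham $(\varphi,\Gamma_K)$-modules need not be de Rham. What forces semi-stability here is the \emph{strict dominance} of the weight sequence $({\bf k}_i)_i$---which must itself first be extracted from~(\ref{eqnnumnoncrit}) together with the slope bound---and the nontrivial result \cite[Th.3.14]{Chenevier} (see also \cite[Cor.2.7(i)]{HellmSchrDensity}) that a trianguline $(\varphi,\Gamma_K)$-module whose parameter is strictly dominant algebraic times unramified is automatically semi-stable. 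The paper cites exactly this at the corresponding point; your argument attributes the hard step to a theorem that does not prove it, and your subsequent sentence already writes $D_{\rm pst}(r)=D_{\rm st}(r)$ before semi-stability has been established, which is circular.

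The second gap is in the inductive step forcing ${\bf m}_i=0$. The constraint Kedlaya's theory gives for the saturated sub $\Fil_i\subseteq D_{\rig}(r)$ is a \emph{single} scalar inequality on $\val(\delta'_1(\varpi_K)\cdots\delta'_i(\varpi_K))$; there is no embedding-by-embedding ``slope-dominates-Hodge'' inequality for $(\varphi,\Gamma_K)$-modules available before one knows $r$ is de Rham, so the mechanism you describe for controlling each partial sum $\sum_{j\leq i}m_{\tau,j}$ separately is not at hand. The paper supplements \cite[Th.6.3.13]{KPX} with \cite[Prop.2.9]{BHS}, which says that the parameter of any triangulation satisfies $\delta'_i=\delta_i z^{{\bf k}_{w^{-1}(i)}-{\bf k}_i}$ for a \emph{permutation} $w\in W$, not an arbitrary integral twist $z^{-{\bf m}_i}$. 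With this rigidity, the single slope inequality and the per-$\tau$ hypothesis~(\ref{eqnnumnoncrit}) combine to show $w_\tau^{-1}(i)=i$ by induction on $i$. Without it, the integers $m_{\tau,i}$ are only constrained by the determinant identity $\sum_i m_{\tau,i}=0$ and the aggregated slope bound, which does not force them to vanish individually. Your ``permutation defect'' appears to gesture at \cite[Prop.2.9]{BHS}, but it needs to be invoked explicitly for the induction to close.
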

\begin{proof}
By \cite[Th.6.3.13]{KPX} and \cite[Prop.2.9]{BHS} the representation $r$ is trianguline with parameter $(\delta'_1,\dots, \delta'_n)$ where $\delta'_i=\delta_i z^{{\bf k}_{w^{-1}(i)}-{\bf k}_i}$ for some $w=(w_\tau)_{\tau:\, K\hookrightarrow L}\in W=\prod_{\tau:K\hookrightarrow L} \Scal_n$.
As $D_{\rig}(r)$, and hence $\wedge_{\mathcal{R}_{k(x),K}}^i D_{\rig}(r)$, are $\varphi$-modules over $\mathcal{R}_{k(x),K}$ which are pure of slope zero (being \'etale $(\varphi,\Gamma_K)$-modules), it follows that for all $i$:
$$1\leq \big|\delta'_1(\varpi_K)\cdots\delta'_i(\varpi_K)\big|_K.$$
Since $\delta'_1(\varpi_K)\cdots\delta'_i(\varpi_K)=\delta_1(\varpi_K)\cdots\delta_i(\varpi_K)\cdot \prod_{j=1}^i\prod_\tau (\tau(\varpi_K)^{k_{\tau,w_{\tau}^{-1}(j)}-k_{\tau,j}})$ we obtain:
\begin{equation}\label{slope}
\val\big(\delta_1(\varpi_K)\cdots \delta_i(\varpi_K)\big) \geq \tfrac{1}{[K:K_0]}\sum_{j=1}^i\sum_{\tau}(k_{\tau,j}-k_{\tau,w_\tau^{-1}(j)}).
\end{equation}
We now prove by induction on $i$ that $w_\tau^{-1}(i)=i$ for all $\tau$. The inequality (\ref{slope}) for $i=1$ gives 
$\val(\delta_1(\varpi_K))\geq \tfrac{1}{[K:K_0]}\sum_\tau (k_{\tau,1}-k_{\tau,w_\tau^{-1}(1)})$. But assumption $(\ref{eqnnumnoncrit})$ with $i=1$ implies $\val(\delta_1(\varpi_K))< \tfrac{1}{[K:K_0]}\sum_\tau (k_{\tau,1}-k_{\tau,j})$ for $j\in \{2,\dots,n\}$ which forces $w_\tau^{-1}(1)=1$ for all $\tau$. 
Assume by induction that $w_\tau^{-1}(j)=j$ for all $j\leq i-1$ and all $\tau$. Then (\ref{slope}) gives:
$$\val(\delta_1(\varpi_K)\cdots\delta_i(\varpi_K))\geq \tfrac{1}{[K:K_0]}\sum_\tau (k_{\tau,i}-k_{\tau,w_\tau^{-1}(i)})$$
and again $(\ref{eqnnumnoncrit})$ implies $\val(\delta_1(\varpi_K)\cdots\delta_i(\varpi_K))< \tfrac{1}{[K:K_0]}\sum_\tau (k_{i,\tau}-k_{j,\tau})$ for $j\in \{i,\dots,n\}$ which forces $w_\tau^{-1}(i)=i$ for all $\tau$. We thus have $(\delta_1,\dots, \delta_n)=(\delta'_1,\dots, \delta'_n)$ which implies that the point $x=(r,\delta_1,\dots, \delta_n)$ is saturated. Since $\delta$ is strictly dominant, we obtain that $r$ is semi-stable by the argument in the proof of \cite[Th.3.14]{Chenevier} (see also the proof of \cite[Cor.2.7(i)]{HellmSchrDensity}). By a slight generalisation of the proof of Lemma \ref{paramofcrystpt} (that we leave to the reader), we have $\delta_i=z^{{\bf k}_i}{\rm unr}(\varphi_i)$ where the $\varphi_i$ are the eigenvalues of the linearized Frobenius $\varphi^{[K_0:\Q_p]}$ on the $K_0\otimes_{\Q_p}k(x)$-module $D_{\rm st}(r):=(B_{\rm st}\otimes_{\Q_p}r)^{\mathcal{G}_K}$. If in addition $(\delta_1,\dots,\delta_n)\in \Tcal^n_{\rm reg}$, then it follows from Remark \ref{varphi} that $\varphi_i\varphi_j^{-1}\ne p^{-[K_0:\Q_p]}$ for $1\leq i\leq j\leq n$ and the argument of \cite[Th.3.14]{Chenevier}, \cite[Cor.2.7(i)]{HellmSchrDensity} then shows that the monodromy operator $N$ on $D_{\rm st}(r)$ must be zero, i.e. that $r$ is crystalline. This finishes the proof.
\end{proof}

\begin{prop}\label{acconZtri}
Let $x=(r,\delta)\in X_{\rm tri}^\square(\overline r)$ be a crystalline strictly dominant point such that the eigenvalues of the geometric Frobenius on ${\rm WD}(r)$ are pairwise distinct. Then there exists a sufficiently small open neighbourhood $U$ of $x$ in $X_{\rm tri}^\square(\overline r)$ such that the irreducible component $Z_{{\rm tri},U}(x)$ of $U$ in (ii) of Corollary \ref{defnZtri(x)} is defined and satisfies the accumulation property at $x$. 
\end{prop}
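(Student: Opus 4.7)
The plan is to produce, for every open neighbourhood $V \subseteq Z_{{\rm tri},U}(x)$ of $x$, a Zariski-dense subset of $V$ consisting of points satisfying the three conditions in Definition \ref{accu}. The noncriticality and crystallinity of these points will come from Lemma \ref{numnoncrit} once the weight gaps are forced to be sufficiently large, and density will be propagated through the weight map $\omega$.

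First I would shrink $U$ so that several open conditions hold simultaneously on $U$: that $Z_{{\rm tri},U}(x)$ is defined as in Corollary \ref{defnZtri(x)}(ii), that $Z_{{\rm tri},U}(x) \cap U_{\rm tri}^\square(\overline r)$ is nonempty and hence Zariski-open and Zariski-dense in $Z_{{\rm tri},U}(x)$ (nonemptyness comes from Lemma \ref{genericopensinXcris} applied to $\widetilde V_{\overline r}^{\square,{\bf k}\rm -cr} \cap \widetilde Z_{\rm cris}(x)$, and density then follows from irreducibility); that the geometric Frobenius eigenvalues on ${\rm WD}(r')$ remain pairwise distinct for all $(r',\delta') \in U$; and that the continuous functions $\val(\delta'_1(\varpi_K)\cdots\delta'_i(\varpi_K))$ are bounded above on $U$ by some constant $M > 0$ for each $i \in \{1,\dots,n-1\}$.

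Next I would invoke the smoothness of the weight map $\omega$ restricted to $U_{\rm tri}^\square(\overline r)$, which follows from \cite[Th.2.6]{BHS} and the construction of the trianguline variety via the moduli of triangulated $(\varphi,\Gamma_K)$-modules. For an arbitrary open $V \subseteq Z_{{\rm tri},U}(x)$ containing $x$, pick some $y \in V \cap U_{\rm tri}^\square(\overline r)$; the smoothness of $\omega$ at $y$ ensures that $\omega(V \cap U_{\rm tri}^\square(\overline r))$ contains an open neighbourhood of $\omega(y)$ in $\mathcal{W}^n_L$. Given any $C > 0$, the strictly dominant algebraic characters $\delta_{\bf k'}$ with $k'_{\tau,i} - k'_{\tau,i+1} > \max(C, [K:K_0]M)$ for all $\tau, i$ are Zariski-dense in any open subset of $\mathcal{W}^n_L$, so there is a Zariski-dense collection of such $\delta_{\bf k'}$ in this image. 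For any preimage $x' = (r',\delta') \in V \cap U_{\rm tri}^\square(\overline r)$ of such a $\delta_{\bf k'}$, we have automatically $\delta' \in \Tcal_{\rm reg}^n$, and by the bound $M$ the weight gap strictly exceeds $[K:K_0]\val(\delta'_1(\varpi_K)\cdots\delta'_i(\varpi_K))$. Lemma \ref{numnoncrit} then yields that $r'$ is crystalline strictly dominant and that $x'$ is noncritical; the Frobenius eigenvalues on $D_{\rm cris}(r')$ remain pairwise distinct by the open condition arranged above. By flatness of $\omega$ on $U_{\rm tri}^\square(\overline r)$, the union of the preimage fibers is Zariski-dense in $V \cap U_{\rm tri}^\square(\overline r)$, hence Zariski-dense in $V$.

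The main obstacle will be the careful handling of the density step, compounded by the fact that $x$ itself may fail to lie in $U_{\rm tri}^\square(\overline r)$: the parameter regularity $\delta \in \Tcal_{\rm reg}^n$ requires $\varphi_i/\varphi_j \neq q^{-1}$ for $i < j$, a condition not guaranteed by our hypotheses. One must therefore propagate the density from the Zariski-open dense locus $U_{\rm tri}^\square(\overline r)$, where the weight map is smooth, the parameter is regular, and Lemma \ref{numnoncrit} can be applied, all the way to arbitrary neighbourhoods of $x$. The technical core is the openness of $\omega|_{U_{\rm tri}^\square(\overline r)}$ enabling the lift from weight-space density back to density in $V$; this should be extracted from \cite{HellmannFS} and \cite{BHS} or verified directly via the dimension formulas at crystalline points underlying Lemma \ref{genericopensinXcris}.
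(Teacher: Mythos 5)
Your overall plan follows the same outline as the paper's proof: shrink $U$ to control the slope functions $\delta_i'(\varpi_K)$ uniformly; use the smooth, Zariski-dense locus $\widetilde V_{\overline r}^{\square,{\bf k}\rm -cr}\cap\widetilde Z_{\rm cris}(x)$ (through $\iota_{\bf k}$) to pass from $x$ to a nearby point in $U_{\rm tri}^\square(\overline r)$; invoke Lemma \ref{numnoncrit} together with $\Tcal^n_{\rm reg}$ to force crystallinity and noncriticality of the approximating points; and reduce everything to the evident Zariski-density of strictly dominant algebraic weights with large gaps in $\mathcal{W}^n_L$. A minor economy you miss: once points are constrained to lie in $U_{\rm tri}^\square(\overline r)$, the pairwise-distinctness of the Frobenius eigenvalues is automatic by Remark \ref{varphi}, so there is no need to cut $U$ down for that condition.

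However there is a genuine gap in the final step, which you half-acknowledge but do not close. You assert that because $\omega\vert_{U_{\rm tri}^\square(\overline r)}$ is smooth (hence flat, hence open), the preimage of a Zariski-dense set of closed points in $\omega(V\cap U_{\rm tri}^\square(\overline r))$ is Zariski-dense in $V\cap U_{\rm tri}^\square(\overline r)$. Flatness and openness do not yield this by themselves: openness tells you images of opens are open, not that preimages of dense sets are dense, and the fiber-dimension argument that works for schemes over a field (via Chevalley constructibility) is not available out of the box in rigid geometry. The paper instead works through the auxiliary spaces $\Scal^\square(\overline r)$ and $\Scal_n$ of \cite[Th.2.6]{BHS}: it first descends the density question along the $\Gbb_m^n$-torsor $\pi_{\overline r}$ and the $\GL_n$-torsor inside $g$, then exploits the fact that $\Scal_n\rightarrow\Tcal^n_L$ is a composition of open embeddings and geometric vector bundle projections, so that there is a basis of neighbourhoods $U_i\cong V_i\times\boldB$ with $V_i=\omega(U_i)$ and $\boldB$ a closed polydisc. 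For a literal product over $\Wcal^n_L$, transferring Zariski-density from the weight space to the total space is a routine verification (with references \cite[Cor.3.5]{Chenevier}, \cite[Lem.2.18]{HellmSchrDensity}). Your argument would go through only after establishing this local product description (or an equivalent substitute); invoking ``flatness'' is not enough, and your own closing paragraph flags this as ``the main obstacle'' without actually resolving it. You should either reproduce the $\Scal_n$-based local product decomposition or find and justify a rigid-geometric density-transfer statement for smooth maps with irreducible base.
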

\begin{proof}
Recall that we have to prove that, for any positive real number $C$, the set of points $x'=(r',\delta')\in Z_{{\rm tri},U}(x)$ such that $r'$ is crystalline with pairwise distinct geometric Frobenius eigenvalues on ${\rm WD}(r')$ and $x'$ is noncritical with $\omega(x')=\delta_{\bf k'}$ strictly dominant satisfying:
\begin{equation}\label{accumulation}
k'_{\tau,i}-k'_{\tau,i+1}>C
\end{equation}
for all $i=1,\dots,n-1$, $\tau:K\hookrightarrow L$ accumulates at $x$. 

Let $U$ be an open subset of $x$ in $X_{\rm tri}^\square(\overline r)$ as in (iii) of Corollary \ref{defnZtri(x)}, i.e. such that for any open $U'\subseteq U$ containing $x$ we have $Z_{{\rm tri},U}(x)\cap U'=Z_{{\rm tri},U'}(x)$. Let $\widetilde Z_{\rm cris}(x)$ as in (i) of Corollary \ref{defnZtri(x)}, by Lemma $\ref{genericopensinXcris}$, the space $V:=\iota_{\bf k}(\widetilde Z_{\rm cris}(x)\cap \widetilde V_{\overline r}^{\square,{\bf k}{\rm -cr}})$ is Zariski-open and Zariski-dense in $\iota_{\bf k}(\widetilde Z_{\rm cris}(x))$, hence accumulates in $\iota_{\bf k}(\widetilde Z_{\rm cris}(x))$ at any point of $\iota_{\bf k}(\widetilde Z_{\rm cris}(x))$, in particular at $x$. We claim that it is enough to prove that the points $x'\in U$ as above accumulate in $U$ at every point of $V\cap U$. Indeed, if $U'\subseteq U$ is an open neighbourhood containing $x$, then $U'$ also contains a point $v\in V$. By the accumulation statement at $v\in V\cap U$, the Zariski closure in $U'$ of the points $x'$ contains a small neighbourhood around $v$, hence contains an irreducible component of $U'$ containing $v$. But since $v$ is a smooth point of $U'$ (over $L$) as $v\in U_{\rm tri}^\square(\overline r)$ by the last statement of Lemma \ref{genericopensinXcris}, there is only one such irreducible component, and since $v\in \iota_{\bf k}(\widetilde Z_{\rm cris}(x))\cap U'\subseteq Z_{{\rm tri},U'}(x)$, we see that this irreducible component must be $Z_{{\rm tri},U'}(x)$. Thus the Zariski closure in $U'$ of the points $x'$ always contains $Z_{{\rm tri},U'}(x)$. This easily implies the proposition since $Z_{{\rm tri},U'}(x)=Z_{{\rm tri},U}(x)\cap U'$. 

Since $U_{\rm tri}^\square(\overline r)$ is open in $X_{\rm tri}^\square(\overline r)$, it is enough to prove that the crystalline points $x'$ in $U\cap U_{\rm tri}^\square(\overline r)$ satisfying the conditions in the first paragraph of this proof accumulate at any crystalline strictly dominant point $x$ of $U\cap U_{\rm tri}^\square(\overline r)$. The condition on their Frobenius eigenvalues is then in fact automatic by Remark \ref{varphi}. Shrinking $U$ further if necessary, we can take $U$ to be contained in some quasi-compact open neighbourhood of $x$ in $X_{\rm tri}^\square(\overline r)$, and thus we may assume that for $i\in \{1,\dots,n\}$ the functions $y=(r_y,(\delta_{y,1},\dots,\delta_{y,n}))\mapsto \delta_{y,i}(\varpi_K)$ are uniformly bounded on $U$. Hence by Lemma $\ref{numnoncrit}$ we may assume that $C$ is sufficiently large so that the points $x'\in U\cap U_{\rm tri}^\square(\overline r)$ with $\omega(x)=\delta_{\bf k'}$ algebraic satisfying $(\ref{accumulation})$ are in fact also automatically crystalline noncritical. Changing notation, we see that it is finally enough to prove that the points $x'\in U_{\rm tri}^\square(\overline r)$ satisfying (\ref{accumulation}) for $C$ big enough accumulate at any crystalline strictly dominant point $x$ of $U_{\rm tri}^\square(\overline r)$.

We now consider the rigid analytic spaces $\Scal_n$, $\Scal^\square(\overline r)$ appearing in the proof of \cite[Th.2.6]{BHS} (to which we refer the reader for more details; do not confuse here $\Scal_n$ with the permutation group!). In {\it loc. cit.} there is a diagram of rigid spaces over $\Tcal_L^n$:
$$\begin{xy}\xymatrix{&\Scal^\square(\overline r)\ar[dl]_{\pi_{\overline r}}\ar[dr]^g&\\ U_{\rm tri}^\square(\overline r) & & \Scal_n}
\end{xy}$$
where $\pi_{\overline r}$ is a $\Gbb_m^n$-torsor and $g$ is a composition $\Scal^\square(\overline r)\hookrightarrow \Scal_n^{\square,{\rm adm}}\rightarrow \Scal_n^{{\rm adm}}\hookrightarrow \Scal_n$ where the first and last maps are open embeddings and the middle one is a $\GL_n$-torsor.

Let us choose a point $\tilde x\in \pi_{\overline r}^{-1}(x)$. As $\pi_{\overline r}$ is a $\Gbb_m^n$-torsor, it is enough to prove that the points in $\Scal^\square(\overline r)$ satisfying $(\ref{accumulation})$ accumulate at $\tilde x$. 
The same argument shows that it is enough to prove that the points of $\Scal_n$ satisfying $(\ref{accumulation})$ accumulate at $g(\tilde x)$. But the morphism $\Scal_n\rightarrow \Tcal^n_L$ is a composition of open embeddings and structure morphisms of geometric vector bundles (compare the proof of \cite[Th.2.4]{HellmSchrDensity}). It follows that $g(\tilde x)$ has a basis of neighbourhoods $(U_i)_{i\in I}$ in $\Scal_n$ such that $V_i:=\omega(U_i)$ is a basis of neighbourhoods of $\omega( x)$ in $\Wcal_L^n$ and such that the rigid space $U_i$ is isomorphic to a product $V_i\times\boldB$ of rigid spaces over $L$ where $\boldB$ is some closed polydisc (compare \cite[Cor.3.5]{Chenevier} and \cite[Lem.2.18]{HellmSchrDensity}). Write $\omega( x)=\delta_{\bf k}$, it is thus enough to prove that the algebraic weights $\delta_{\bf k'}\in \Wcal_L^n$ satisfying $(\ref{accumulation})$ accumulate at $\delta_{\bf k}$ in $\Wcal_L^n$, which is obvious. 
\end{proof}

We are now back to the global setting of \S\ref{firstclassical}. Similarly to Definition \ref{accu}, we say that a union $X$ of irreducible components of an open subset of $X_{\rm tri}^\square(\rhobar_p)=\prod_{v\in S_p}X_{\rm tri}^\square(\rhobar_{\tilde v})$ satisfies the accumulation property at a point $x\in X$ if, for any positive real number $C>0$, $X$ contains crystalline strictly dominant points $x'=(x'_v)_{v\in S_p}$ with pairwise distinct Frobenius eigenvalues, which are noncritical, such that $\omega(x'_v)=\delta_{{\bf k}'_v}$ with $k'_{v,\tau,i}-k'_{v,\tau,i+1}>C$ for $v\in S_p$, $i\in \{1,\dots,n-1\}$, $\tau\in \Hom(F_{\tilde v},L)$ and that accumulate at $x$ in $X$. 

\begin{prop}\label{acconZaut}
Let $\Xfrak^p\subset \Xfrak_{\rhobar^p}$ be an irreducible component and $x\in X_{\rm tri}^{\Xfrak^p\rm- aut}(\rhobar_p)$ be a crystalline strictly dominant point. Then $X_{\rm tri}^{\Xfrak^p\rm-aut}(\rhobar_p)$ satisfies the accumulation property at $x$.
\end{prop}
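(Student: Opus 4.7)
The plan is to deduce the statement from the accumulation of \emph{very classical points} on the patched eigenvariety $X_p(\rhobar)$, exploiting the fact that $X_{\rm tri}^{\Xfrak^p\rm-aut}(\rhobar_p)$ is, by construction, the ``trianguline factor'' of the $\Xfrak^p$-component of $X_p(\rhobar)$ in the decomposition (\ref{union}). This is parallel in spirit to the local statement Proposition \ref{acconZtri}, but since we have no direct handle on which irreducible components of $X_{\rm tri}^\square(\rhobar_p)$ are automorphic, the production of accumulating points has to happen upstairs on $X_p(\rhobar)$.

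Concretely I would first fix an irreducible component $Z$ of $X_{\rm tri}^\square(\rhobar_p)$ through $x$ such that $\Xfrak^p\times Z\times \Ubb^g$ is an irreducible component of $X_p(\rhobar)$, and lift $x$ to a point $\tilde x=(y^p,x,u)$ in this component by choosing any $y^p\in \Xfrak^p$ and $u\in \Ubb^g$. Then I would invoke \cite[Thm.3.18]{BHS}: the set of very classical points of $X_p(\rhobar)$ (in the sense of \cite[Def.3.16]{BHS}) whose algebraic weights have gaps exceeding any prescribed threshold $C>0$ accumulates at $\tilde x$ in $X_p(\rhobar)$. For any sufficiently small admissible open neighbourhood $U$ of $\tilde x$ in $X_p(\rhobar)$, the irreducible components of $U$ correspond to the irreducible components of $X_p(\rhobar)$ through $\tilde x$; in particular $U\cap (\Xfrak^p\times Z\times \Ubb^g)$ is one such component, and Zariski-density of the very classical points in $U$ forces Zariski-density in this intersection. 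Thus these very classical points accumulate at $\tilde x$ already inside $\Xfrak^p\times Z\times \Ubb^g$.

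Next I would project along $\Xfrak^p\times Z\times \Ubb^g\twoheadrightarrow Z$. Shrinking to neighbourhoods of $\tilde x$ of the form $U^p\times V\times B$ with $V$ an admissible open of $x$ in $Z$, the image in $V$ of any Zariski-dense subset of $U^p\times V\times B$ is again Zariski-dense (otherwise a proper Zariski-closed $V'\subsetneq V$ would contain this image, and the original subset would lie in $U^p\times V'\times B$, contradicting Zariski-density). Consequently the images of the very classical points accumulate at $x$ inside $Z\subseteq X_{\rm tri}^{\Xfrak^p\rm-aut}(\rhobar_p)$.

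It remains only to verify that these image points meet the conditions of the global analogue of Definition \ref{accu}: crystalline strictly dominant, noncritical, with pairwise distinct geometric Frobenius eigenvalues at every $v\in S_p$, and with weight gaps $k'_{v,\tau,i}-k'_{v,\tau,i+1}>C$. Under the morphism $X_p(\rhobar)\rightarrow X_{\rm tri}^\square(\rhobar_p)$ coming from (\ref{patchedeigenvartoXtri}), each of these properties for very classical points transfers to the image; the gap condition is already imposed in the choice of accumulating set supplied by \cite[Thm.3.18]{BHS}. The main obstacle is thus really the input from \cite{BHS}; everything else amounts to routine bookkeeping around the product structure of the relevant component of $X_p(\rhobar)$.
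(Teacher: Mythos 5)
Your approach is essentially the same as the paper's: reduce to an accumulation statement upstairs on $X_p(\rhobar)$ via the product decomposition (\ref{union}), invoke the density/accumulation of very classical points from \cite[Th.3.18]{BHS}, and project down. The paper compresses this into two sentences, whereas you make the projection/Zariski-density bookkeeping explicit, which is fine. One small remark: the paper carefully writes that the needed claim ``is contained in the \emph{proof} of \cite[Th.3.18]{BHS}'' rather than being the bare statement; indeed, to get the full package (crystalline, strictly dominant, noncritical, pairwise distinct Frobenius eigenvalues, weight gaps $>C$) one has to read off those properties from the very classical points constructed in that proof (and its source \cite[Prop.3.10]{BHS}), rather than treating them as automatic from the label ``very classical.'' Your last paragraph asserts these transfers without detail, so be aware that this is where the load-bearing verification lives.
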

\begin{proof}
It is enough to show that, for $C$ large enough, the points of $\Xfrak^p\times X_{\rm tri}^{\Xfrak^p \rm-aut}(\rhobar_p)\times \Ubb^g$ such that their projection to $X_{\rm tri}^{\Xfrak^p \rm-aut}(\rhobar_p)$ is a point $x'=(x'_v)_{v\in S_p}$ of the same form as above accumulate at any point of $\Xfrak^p\times X_{\rm tri}^{\Xfrak^p \rm-aut}(\rhobar_p)\times \Ubb^g$ mapping to $x$ in $X_{\rm tri}^{\Xfrak^p \rm-aut}(\rhobar_p)$. Using (\ref{union})  this claim is contained in the proof of \cite[Th.3.18]{BHS} (which itself is a consequence of \cite[Prop.3.10]{BHS}).
\end{proof}

\begin{rema}\label{obvious}
{\rm It is obvious from the definition that if a union $X$ of irreducible components of $X_{\rm tri}^\square(\rhobar_p)=\prod_{v\in S_p}X_{\rm tri}^\square(\rhobar_{\tilde v})$ satisfies the accumulation property at some point $x\in X$, then for each $v\in S_p$ the image of $X$ in $X_{\rm tri}^\square(\rhobar_{\tilde v})$ (which is a union of irreducible components of $X_{\rm tri}^\square(\rhobar_{\tilde v})$) satisfies the accumulation property at the image of $x$ in $X_{\rm tri}^\square(\rhobar_{\tilde v})$.}
\end{rema}

\section{On the local geometry of the trianguline variety}\label{phiGammacohomology}

This section is entirely local and devoted to the proof of Theorem \ref{upperbound} above giving an upper bound on some local tangent spaces. We use the notation of \S\ref{localpart1}.

\subsection{Tangent spaces}\label{start}

We start with easy preliminary lemmas on some tangent spaces. 

If $x=(r,\delta)\in X_{\rm tri}^\square(\rbar)$, we denote by ${\rm Ext}^1_{\mathcal{G}_K}(r,r)$ the usual $k(x)$-vector space of $\mathcal{G}_K$-extensions $0\rightarrow r\rightarrow *\rightarrow r\rightarrow  0$.

\begin{lemm}\label{ext1}
Let $x=(r,\delta)\in X_{\rm tri}^\square(\rbar)$ be any point, then there is an exact sequence of $k(x)$-vector spaces $0\rightarrow K(r)\rightarrow T_{\mathfrak{X}_{\rbar}^\square,r}\rightarrow {\rm Ext}^1_{\mathcal{G}_K}(r,r)\rightarrow 0$ where $K(r)$ is a $k(x)$-subvector space of $T_{\mathfrak{X}_{\rbar}^\square,r}$ of dimension $\dim_{k(x)}\End_{k(x)}(r)-\dim_{k(x)}\End_{\mathcal{G}_K}(r)=n^2-\dim_{k(x)}\End_{\mathcal{G}_K}(r)$.
\end{lemm}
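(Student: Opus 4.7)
The plan is to reinterpret every object in the sequence in terms of continuous Galois cohomology of the adjoint representation $\End(r) = M_n(k(x))$, equipped with the natural $\Gcal_K$-action induced by $r$. Then the exact sequence becomes the tautological short exact sequence
$$0 \longrightarrow B^1(\Gcal_K, \End(r)) \longrightarrow Z^1(\Gcal_K, \End(r)) \longrightarrow H^1(\Gcal_K, \End(r)) \longrightarrow 0,$$
and the dimension count will come from the definition of $B^1$ as the image of the coboundary map.

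First I would identify $T_{\Xfrak_{\rbar}^\square, r}$ with $Z^1(\Gcal_K, \End(r))$. By $(\ref{tangent})$ a tangent vector at $r$ is a continuous $k(x)$-algebra homomorphism from the completed local ring $\widehat{\Ocal}_{\Xfrak_{\rbar}^\square, r}$ to $k(x)[\varepsilon]/(\varepsilon^2)$ lifting $r$. Since $\Xfrak_{\rbar}^\square = (\Spf R_{\rbar}^\square)^{\rig}$ and $R_\rbar^\square$ pro-represents the framed deformation functor, this completed local ring pro-represents the framed deformation functor of the $k(x)$-representation $r$. Hence $T_{\Xfrak_{\rbar}^\square, r}$ is identified with the set of continuous group homomorphisms $\tilde r: \Gcal_K \to \GL_n(k(x)[\varepsilon]/(\varepsilon^2))$ reducing to $r$. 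Writing such a lift uniquely as $\tilde r(g) = (1+\varepsilon c(g))r(g)$, the cocycle condition $\tilde r(gh) = \tilde r(g)\tilde r(h)$ is equivalent to $c$ being a continuous $1$-cocycle with values in $\End(r)$ (with the adjoint action), giving a $k(x)$-linear bijection $T_{\Xfrak_{\rbar}^\square, r} \xrightarrow{\sim} Z^1(\Gcal_K, \End(r))$.

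Next I would identify the map to ${\rm Ext}^1_{\Gcal_K}(r,r)$. Forgetting the framing sends a framed lift $\tilde r$ to its class in ${\rm Ext}^1_{\Gcal_K}(r,r) = H^1(\Gcal_K, \End(r))$; under the identification above this is exactly the projection $Z^1 \twoheadrightarrow H^1$, which is surjective by the very definition of $H^1$. Its kernel is $B^1(\Gcal_K, \End(r))$, the image of the coboundary
$$\partial: \End(r) \longrightarrow Z^1(\Gcal_K, \End(r)), \qquad M \longmapsto \bigl(g \mapsto M - r(g)\,M\,r(g)^{-1}\bigr).$$
Setting $K(r) := B^1(\Gcal_K, \End(r))$ then yields the desired exact sequence.

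Finally, the dimension of $K(r)$ follows by rank-nullity applied to $\partial$: its kernel is exactly the $\Gcal_K$-invariants $\End(r)^{\Gcal_K} = \End_{\Gcal_K}(r)$, so
$$\dim_{k(x)} K(r) = \dim_{k(x)}\End(r) - \dim_{k(x)}\End_{\Gcal_K}(r) = n^2 - \dim_{k(x)}\End_{\Gcal_K}(r).$$
There is no real obstacle here; the only point requiring some care is the first step, namely confirming that the tangent space to the rigid space $\Xfrak_\rbar^\square$ at the non-smooth point $r$ coincides with the tangent space of the (framed) deformation functor of the $k(x)$-representation $r$, which is standard once one passes to completed local rings.
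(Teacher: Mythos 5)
Your proof is correct and takes the same route as the paper: the paper invokes \cite[Lem.2.3.3, Prop.2.3.5, \S 2.3.4]{KisinModularity} to identify $\widehat{\mathcal{O}}_{\mathfrak{X}^\square_{\rbar},r}$ with the equal-characteristic framed deformation ring of $r$ and then to compare framed deformations (cocycles) with ${\rm Ext}^1$ (cohomology classes), which is exactly the $0\to B^1\to Z^1\to H^1\to 0$ argument you spell out. The only step you wave at --- that the completed local ring of the rigid generic fibre at $r$ pro-represents the framed deformation functor of the residual $k(x)$-representation $r$ --- is precisely the content of the Kisin references, so nothing is missing.
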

\begin{proof}
It easily follows from \cite[Lem.2.3.3 \& Prop.2.3.5]{KisinModularity} that there is a topological isomorphism $\widehat{\mathcal{O}}_{\mathfrak{X}^\square_{\rbar},r}\cong R^\square_{r}$ where the former is the completed local ring at $r$ to the rigid analytic variety $\mathfrak{X}^\square_{\rbar}$ and the latter is the framed local deformation ring of $r$ in equal characteristic $0$. In particular from (\ref{tangent}) we have $T_{\mathfrak{X}_{\rbar}^\square,r}\cong \Hom_{k(x)}\big(R^\square_{r},k(x)[\varepsilon]/(\varepsilon^2)\big)$. Then the result follows by the same argument as in \cite[\S2.3.4]{KisinModularity}, seeing an element of ${\rm Ext}^1_{\mathcal{G}_K}(r,r)$ as a deformation of $r$ with values in $k(x)[\varepsilon]/(\varepsilon^2)$.
\end{proof}

\begin{lemm}\label{dim}
Let $x=(r,\delta)\in X_{\rm tri}^\square(\rbar)$ be a point such that $H^2(\mathcal{G}_K,r\otimes r')=0$ ($r'$ being the dual of $r$), then $\dim_{k(x)}{\rm Ext}^1_{\mathcal{G}_K}(r,r)=\dim_{k(x)}\End_{\mathcal{G}_K}(r)+n^2[K:\Q_p]$.
\end{lemm}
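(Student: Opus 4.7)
The plan is to recognize this as a direct application of Tate's local Euler characteristic formula combined with the standard identifications of $\mathrm{Ext}$-groups with Galois cohomology. Set $V := r \otimes_{k(x)} r'$, a continuous $k(x)$-linear $\mathcal{G}_K$-representation of dimension $n^2$. Then for $i \geq 0$ one has canonical isomorphisms
\begin{equation*}
H^i(\mathcal{G}_K, V) \cong \mathrm{Ext}^i_{\mathcal{G}_K}(r,r),
\end{equation*}
so in particular $H^0(\mathcal{G}_K, V) = \mathrm{End}_{\mathcal{G}_K}(r)$ and $H^1(\mathcal{G}_K, V) = \mathrm{Ext}^1_{\mathcal{G}_K}(r,r)$.

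Next, I would invoke Tate's local Euler-Poincar\'e characteristic formula: for any continuous finite-dimensional representation $V$ of $\mathcal{G}_K$ over a finite extension of $\Q_p$,
\begin{equation*}
\sum_{i=0}^{2} (-1)^i \dim_{k(x)} H^i(\mathcal{G}_K, V) = -[K:\Q_p]\cdot \dim_{k(x)} V.
\end{equation*}
Applied to $V = r\otimes r'$ of dimension $n^2$ this gives
\begin{equation*}
\dim_{k(x)} H^1(\mathcal{G}_K, V) = \dim_{k(x)} H^0(\mathcal{G}_K, V) + \dim_{k(x)} H^2(\mathcal{G}_K, V) + n^2 [K:\Q_p].
\end{equation*}

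Finally, using the hypothesis $H^2(\mathcal{G}_K, r\otimes r')=0$, the second term on the right vanishes, yielding the desired equality
\begin{equation*}
\dim_{k(x)}\mathrm{Ext}^1_{\mathcal{G}_K}(r,r) = \dim_{k(x)} \mathrm{End}_{\mathcal{G}_K}(r) + n^2[K:\Q_p].
\end{equation*}

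There is essentially no obstacle here: the statement is really just Tate's Euler characteristic formula for $\mathrm{End}(r)$, and the only input beyond that is the standard fact that Galois cohomology of $\mathrm{End}(r)$ computes the $\mathrm{Ext}$-groups of $r$ with itself. I would just make sure to cite the standard reference for Tate's formula (e.g.\ Serre's \emph{Cohomologie Galoisienne}) so that the reader sees precisely where the input $[K:\Q_p]\cdot\dim V$ comes from.
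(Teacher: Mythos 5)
Your proof is correct and is essentially identical to the paper's own argument: both compute $\dim_{k(x)}H^1(\mathcal{G}_K,r\otimes r')$ from Tate's local Euler characteristic formula, using $H^0=\End_{\mathcal{G}_K}(r)$ and the hypothesis $H^2=0$.
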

\begin{proof}
This follows by the usual argument computing $\dim_{k(x)}H^1(\mathcal{G}_K,r\otimes r')$ from the Euler characteristic formula of Galois cohomology using $\dim_{k(x)}H^0(\mathcal{G}_K,r\otimes r')=\dim_{k(x)}\End_{\mathcal{G}_K}(r)$ and $\dim_{k(x)}H^2(\mathcal{G}_K,r\otimes r')=0$.
\end{proof}

\begin{rema}\label{cascr}
{\rm Lemma \ref{dim} in particular holds if $x$ is crystalline and the Frobenius eigenvalues $(\varphi_i)_{1\leq i\leq n}$ (see Lemma \ref{paramofcrystpt}) satisfy $\varphi_i\varphi_j^{-1}\ne q$ for $1\leq i,j\leq n$. In particular it holds if $x$ is crystalline strictly dominant very regular (cf. Definition \ref{veryreg}).}
\end{rema}

We now fix a point $x=(r,\delta)\in X_{\rm tri}^\square(\rbar)$ which is crystalline strictly dominant very regular and a union $X$ of irreducible components of an open subset of $X_{\rm tri}^\square(\rbar)$ such that $X$ satisfies the accumulation property at $x$ (Definition \ref{accu}). It obviously doesn't change the tangent space $T_{X,x}$ of $X$ at $x$ if we replace $X$ by the union of its irreducible components that contain $x$, hence we may (and do) assume that $x$ belongs to each irreducible component of $X$.

\begin{lemm}\label{inj}
There is an injection of $k(x)$-vector spaces $T_{X,x}\hookrightarrow T_{\mathfrak{X}_{\rbar}^\square,r}$.
\end{lemm}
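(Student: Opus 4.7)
The injection is the tangent map at $x$ of the composition
$$p: X \hookrightarrow X_{\rm tri}^\square(\rbar) \hookrightarrow \mathfrak{X}_{\rbar}^\square \times \mathcal{T}_L^n \twoheadrightarrow \mathfrak{X}_{\rbar}^\square.$$
Its kernel is the tangent space at $x$ of the scheme-theoretic fiber $F := p^{-1}(r) \subseteq \{r\}\times \mathcal{T}_L^n \cong \mathcal{T}_L^n$, so it suffices to show $T_{F,x}=0$.

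The plan is to rule out any nontrivial first-order deformation of $(r,\delta)$ over $k(x)[\varepsilon]/(\varepsilon^2)$ lying in $X_{\rm tri}^\square(\rbar)$ that is trivial on the $\mathfrak{X}_{\rbar}^\square$-factor. Such a deformation would be of the form $(r,\delta+\varepsilon v_2)$ with $v_2 \in T_{\mathcal{T}_L^n,\delta}$ and, via the equivalence between trianguline representations and triangulated $(\varphi,\Gamma_K)$-modules, would correspond to a triangulation of the trivial first-order deformation $D_{\rig}(r)\otimes_{k(x)}k(x)[\varepsilon]/(\varepsilon^2)$ with parameter $\delta+\varepsilon v_2$. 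Since $r$ is crystalline with pairwise distinct Frobenius eigenvalues $\varphi_1,\dots,\varphi_n$ (condition (ii) in Definition \ref{veryreg}), Lemma \ref{paramofcrystpt} forces each component of the parameter to be of the form $z^{{\bf k}_i}{\rm unr}(\varphi_{\sigma(i)})$ for some permutation $\sigma$; as $r$ itself is not deformed, the Hodge-Tate weights ${\bf k}_i$ and the eigenvalues $\varphi_j$ remain constant, so $v_2=0$.

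The subtle point is that $x$ need not lie in the smooth locus $U_{\rm tri}^\square(\rbar)$, since its Weyl element $w_x$ may be nontrivial. On $U_{\rm tri}^\square(\rbar)$, the intersection $F\cap U_{\rm tri}^\square(\rbar)$ is a finite union of reduced isolated points by Lemma \ref{paramofcrystpt} and smoothness of $U_{\rm tri}^\square(\rbar)$, but this alone does not rule out embedded structure at a possibly critical $x$. Here the accumulation property of Definition \ref{accu} is used: noncritical crystalline strictly dominant points in $X$ accumulate at $x$ and lie in $U_{\rm tri}^\square(\rbar)$, so $U_{\rm tri}^\square(\rbar)\cap X$ is Zariski-dense in $X$ near $x$; this reduces the reducedness of $F$ at $x$ to a rigidity statement for first-order deformations of triangulations along the Zariski-closure defining $X_{\rm tri}^\square(\rbar)$.

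The main obstacle is precisely this last step: passing from the set-theoretic finiteness of the fiber on the smooth locus to scheme-theoretic reducedness at the possibly critical point $x$. I expect this to be handled via a direct $(\varphi,\Gamma_K)$-cohomology computation of the same flavor as those carried out in \S\ref{phiGammacohomology} for the proof of Theorem \ref{upperbound}, exploiting that the vanishing of the relevant $(\varphi,\Gamma_K)$-cohomology groups governs the obstruction to deforming the triangulation independently of the underlying Galois representation.
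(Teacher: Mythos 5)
Your opening move---composing with the projection to $\mathfrak{X}_{\rbar}^\square$ and arguing that the induced map on tangent spaces has trivial kernel---coincides with the paper's. But two of the intermediate steps do not hold as stated. First, you assert that a tangent vector with trivial $\mathfrak{X}_{\rbar}^\square$-component corresponds to a triangulation of $D_{\rig}(r)\otimes_{k(x)}k(x)[\varepsilon]/(\varepsilon^2)$ of parameter $\delta+\varepsilon v_2$. At a critical point $x$ this is false even modulo $\varepsilon$: by (\ref{triang}), $D_{\rig}(r)$ has a triangulation of parameter $\delta'_i=z^{{\bf k}_{w_x^{-1}(i)}}{\rm unr}(\varphi_i)$, which differs from $\delta_i$ whenever $w_x\neq 1$, so there is no triangulation of parameter $\delta$ to deform. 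Second, Lemma \ref{paramofcrystpt} is a statement about field-valued crystalline points of $X_{\rm tri}^\square(\rbar)$; applying it verbatim to $k(x)[\varepsilon]/(\varepsilon^2)$-valued points is exactly the rigidity that needs to be \emph{proved}, and the fact that $F\cap U^\square_{\rm tri}(\rbar)$ consists of finitely many smooth points of $U^\square_{\rm tri}(\rbar)$ does not by itself imply that those fibers are reduced.

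You correctly flag that the accumulation property must enter and that the key input should be a $(\varphi,\Gamma_K)$-cohomological rigidity statement, but you do not supply it, and this is the decisive step. The paper does: after observing that the triviality of the $r$-component forces the Sen weights $d_{\tau,i,\vec{v}}$ to vanish (so the image of $\vec{v}$ in $T_{\mathcal{W}^n_L,\omega(x)}$ is zero), it invokes Bergdall's Theorem \ref{bergsplit}, which for every $\vec{v}\in T_{X,x}$ and every $i$ provides an injection $\mathcal{R}_{k(x)[\varepsilon]/(\varepsilon^2),K}(\delta_{1,\vec v}\cdots\delta_{i,\vec v})\hookrightarrow\wedge^iD_{\rig}(r_{\vec v})$ (and whose proof is precisely where the accumulation property is used). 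Combining this with the constancy of $r_{\vec v}$, the vanishing of the Sen-weight variations, and the simplicity of $\varphi_1\cdots\varphi_i$ on $\wedge^i{\rm WD}(r)$ from Definition \ref{veryreg}, one pins down each $\delta_{i,\vec v}$ by an induction on $i$. Without that theorem (or a substitute for it), your argument leaves a genuine gap.
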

\begin{proof}
The embedding $X\hookrightarrow X_{\rm tri}^\square(\rbar)\hookrightarrow \mathfrak{X}_{\rbar}^\square\times \mathcal{T}^n_L$ induces an injection on tangent spaces (with obvious notation):
$$T_{X,x}\hookrightarrow T_{\mathfrak{X}_{\rbar}^\square,r}\oplus T_{\mathcal{T}^n_L,\delta}.$$
We thus have to show that the composition with the projection $T_{\mathfrak{X}_{\rbar}^\square,r}\oplus T_{\mathcal{T}^n_L,\delta}\twoheadrightarrow T_{\mathfrak{X}_{\rbar}^\square,r}$ remains injective. Let $\vec{v}\in T_{X,x}$ which maps to $0\in T_{\mathfrak{X}_{\rbar}^\square,r}$, and thus {\it a fortiori} to $0$ in ${\rm Ext}^1_{\mathcal{G}_K}(r,r)$ via the surjection in Lemma \ref{ext1}. We have to show that the image of $\vec{v}$ in $T_{\mathcal{T}^n_L,\delta}$ is also $0$. We know that the image of $\vec{v}$ in $T_{\mathcal{W}^n_L,\omega(x)}$ is zero since the Hodge-Tate weights don't vary (that is, the $d_{\tau,i,\vec{v}}$ below are all zero, see the beginning of \S\ref{wedgesection}). To conclude that the image in $T_{\mathcal{T}^n_L,\delta}$ is also $0$, we can for instance use Bergdall's Theorem \ref{bergsplit} below (which uses the accumulation property of $X$ at $x$) together with an obvious induction on $i$.
\end{proof}

\begin{lemm}\label{boundtx}
Assume that the $k(x)$-vector space image of $T_{X,x}$ in ${\rm Ext}^1_{\mathcal{G}_K}(r,r)$ has dimension smaller or equal than:
$$\dim_{k(x)}{\rm Ext}^1_{\mathcal{G}_K}(r,r)-d_x-\big([K:\Q_p]\frac{n(n-1)}{2}-\lg(w_x)\big).$$
Then Theorem \ref{upperbound} is true.
\end{lemm}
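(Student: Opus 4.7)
The plan is to simply combine Lemmas \ref{inj}, \ref{ext1} and \ref{dim} with the hypothesis of the lemma and do the bookkeeping. No further input about the geometry of $X_{\rm tri}^\square(\rbar)$ should be needed at this step: the whole content is that once we have bounded the image of $T_{X,x}$ in ${\rm Ext}^1_{\Gcal_K}(r,r)$, everything else is elementary linear algebra.

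First, by Lemma \ref{inj} the natural map $T_{X,x}\rightarrow T_{\Xfrak^\square_{\rbar},r}$ is injective, so $\dim_{k(x)}T_{X,x}$ is bounded above by the dimension of its image in $T_{\Xfrak^\square_{\rbar},r}$. By Lemma \ref{ext1} we have an exact sequence
$$0\longrightarrow K(r)\longrightarrow T_{\Xfrak^\square_{\rbar},r}\longrightarrow {\rm Ext}^1_{\Gcal_K}(r,r)\longrightarrow 0$$
with $\dim_{k(x)}K(r)=n^2-\dim_{k(x)}\End_{\Gcal_K}(r)$. Writing $I$ for the image of $T_{X,x}$ in ${\rm Ext}^1_{\Gcal_K}(r,r)$ and $K_0\subseteq K(r)$ for the kernel of $T_{X,x}\twoheadrightarrow I$, I would use
$$\dim_{k(x)}T_{X,x}=\dim_{k(x)}K_0+\dim_{k(x)}I\leq \dim_{k(x)}K(r)+\dim_{k(x)}I.$$

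Next, since $x$ is crystalline strictly dominant very regular, Remark \ref{cascr} lets me invoke Lemma \ref{dim}, giving
$$\dim_{k(x)}{\rm Ext}^1_{\Gcal_K}(r,r)=\dim_{k(x)}\End_{\Gcal_K}(r)+n^2[K:\Q_p].$$
Combining this with the hypothesis $\dim_{k(x)}I\leq \dim_{k(x)}{\rm Ext}^1_{\Gcal_K}(r,r)-d_x-\big([K:\Q_p]\tfrac{n(n-1)}{2}-\lg(w_x)\big)$ and with the expression for $\dim_{k(x)}K(r)$, the $\dim_{k(x)}\End_{\Gcal_K}(r)$ terms cancel and I obtain
$$\dim_{k(x)}T_{X,x}\leq n^2+n^2[K:\Q_p]-[K:\Q_p]\tfrac{n(n-1)}{2}+\lg(w_x)-d_x.$$

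Finally, the identity $n^2-\tfrac{n(n-1)}{2}=\tfrac{n(n+1)}{2}$ rewrites the right-hand side as $n^2+[K:\Q_p]\tfrac{n(n+1)}{2}+\lg(w_x)-d_x$, which is exactly $\dim X_{\rm tri}^\square(\rbar)+\lg(w_x)-d_x$ (using the dimension formula recalled in \S\ref{begin}), proving Theorem \ref{upperbound}. There is no real obstacle here: the whole nontrivial content has been pushed into the hypothesis on $\dim_{k(x)}I$, which is what the subsequent sections (\S\ref{wedgesection} and \S\ref{endofproof}) will establish using Bergdall's splitting result and a careful analysis of the $(\varphi,\Gamma_K)$-cohomological conditions imposed on tangent vectors in $T_{X,x}$.
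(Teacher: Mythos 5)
Your proof is correct and follows essentially the same route as the paper: inject $T_{X,x}$ into $T_{\Xfrak^\square_{\rbar},r}$ via Lemma \ref{inj}, bound $\dim_{k(x)}T_{X,x}$ by $\dim_{k(x)}K(r)$ plus the dimension of the image in ${\rm Ext}^1_{\Gcal_K}(r,r)$ using Lemma \ref{ext1}, invoke Lemma \ref{dim} through Remark \ref{cascr}, and do the arithmetic (your $K_0$ is exactly the paper's $K(r)\cap T_{X,x}$). The bookkeeping and the final identity $n^2-\tfrac{n(n-1)}{2}=\tfrac{n(n+1)}{2}$ are carried out correctly.
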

\begin{proof}
From Lemma \ref{inj} and Lemma \ref{ext1} we obtain a short exact sequence:
\begin{eqnarray}\label{exactseq}
0\longrightarrow K(r)\cap T_{X,x}\longrightarrow T_{X,x}\longrightarrow {\rm Ext}^1_{\mathcal{G}_K}(r,r).
\end{eqnarray}
Hence the assumption implies:
$$\dim_{k(x)}T_{X,x}\leq \dim_{k(x)}K(r)+\lg(w_x)-d_x+\dim_{k(x)}{\rm Ext}^1_{\mathcal{G}_K}(r,r)-[K:\Q_p]\frac{n(n-1)}{2}.$$
But from Lemma \ref{ext1}, Lemma \ref{dim} and Remark \ref{cascr} we have:
\begin{multline*}
\dim_{k(x}K(r)+\lg(w_x)-d_x+\dim_{k(x)}{\rm Ext}^1_{\mathcal{G}_K}(r,r)-[K:\Q_p]\frac{n(n-1)}{2}=\\
\lg(w_x)-d_x+n^2+[K:\Q_p]\frac{n(n+1)}{2}
\end{multline*}
which gives Theorem \ref{upperbound}.
\end{proof}

We will see below that $d_x$ correspond to the ``weight conditions'' and $[K:\Q_p]\frac{n(n-1)}{2}-\lg(w_x)$ correspond to the ``splitting conditions''.

\subsection{Tangent spaces and local triangulations}\label{wedgesection}

We recall some of the results of \cite{Bergdall} that we use to prove a technical statement on the image of $T_{X,x}$ in ${\rm Ext}^1_{\mathcal{G}_K}(r,r)$ (Corollary \ref{inclv}).

We keep the notation of \S\ref{start}, in particular $x=(r,\delta)=(r,\delta_1,\dots,\delta_n)\in X_{\rm tri}^\square(\rbar)$ is a crystalline strictly dominant very regular point and $X$ is a union of irreducible components of an open subset of $X_{\rm tri}^\square(\rbar)$, each component in $X$ satisfying the accumulation property at $x$. Taking a look at \cite[\S\S5.1,6.1]{Bergdall}, it is easy to see from the properties of $X_{\rm tri}^\square(\rbar)$ and from Definition \ref{accu} (together with the discussion that follows) that one can apply all the results of \cite[\S7]{Bergdall} at $X$ and the point $x$ (called the ``center'' and denoted by $x_0$ in {\it loc. cit.}). We let $w_x=(w_{x,\tau})_{\tau:\, K\hookrightarrow L}\in \prod_{\tau: K\hookrightarrow L}\Scal_n$ be the Weyl group element associated to $x$ (\S\ref{weyl}).

Recall that $D_{\rig}(r)$ is the \'etale $(\varphi,\Gamma_K)$-module over $\mathcal{R}_{k(x),K}=k(x)\otimes_{\Q_p}\mathcal{R}_{K}$ associated to $r$. Note that ${\rm Ext}^1_{\mathcal{G}_K}(r,r)\cong {\rm Ext}^1_{(\varphi,\Gamma_K)}(D_{\rig}(r),D_{\rig}(r))$ where the right hand side denotes the extension in the category of $(\varphi,\Gamma_K)$-module over $\mathcal{R}_{k(x),K}$ (see \cite[Prop.5.2.6]{BelChe} for $K=\Q_p$, the proof for any $K$ is analogous). We write $\omega(x)=\delta_{\bf k}$ for ${\bf k}=(k_{\tau,i})_{1\leq i\leq n,\tau:\, K\hookrightarrow L}\in(\mathbb{Z}^n)^{\Hom(K,L)}$. Let $\vec{v}\in T_{X,x}$, seeing the image of $\vec{v}$ in ${\rm Ext}^1_{\mathcal{G}_K}(r,r)$ as a $k(x)[\varepsilon]/(\varepsilon^2)$-valued representation of $\mathcal{G}_K$, we can write its Sen weights as $(k_{\tau,i}+\varepsilon d_{\tau,i,\vec{v}})_{1\leq i\leq n,\tau:\, K\hookrightarrow L}$ for some $d_{\tau,i,\vec{v}}\in k(x)$. The tangent space $T_{{\mathcal W}_L^n,\omega(x)}$ to ${\mathcal W}_L^n$ at $\omega(x)$ is isomorphic to $k(x)^{[K:\Q_p]n}$ and the $k(x)$-linear map of tangent spaces $d\omega : T_{X,x}\longrightarrow T_{{\mathcal W}_L,\omega(x)}$ induced by the weight map $\omega\vert_X$ sends $\vec{v}$ to the tuple $(d_{\tau,i,\vec{v}})_{1\leq i\leq n,\tau:\, K\hookrightarrow L}$. The following theorem is a direct application of \cite[Th.7.1]{Bergdall}.

\begin{theo}[\cite{Bergdall}]\label{bergweight}
For any $\vec{v}\in T_{X,x}$, we have $d_{\tau,i,\vec{v}}=d_{\tau,w_{x,\tau}^{-1}(i),\vec{v}}$ for $1\leq i\leq n$ and $\tau:\, K\hookrightarrow L$.
\end{theo}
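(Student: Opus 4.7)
The plan is to reduce the statement to \cite[Th.7.1]{Bergdall} by verifying that its hypotheses are met in our setting, since the asserted $w_{x}$-symmetry of the Sen-weight variations along a tangent vector is precisely the conclusion of \emph{loc.~cit.} First I would pull back the universal $(\varphi,\Gamma_K)$-module over $\mathfrak{X}_{\rbar}^\square$ to $X$ via the closed immersion $X\hookrightarrow X_{\rm tri}^\square(\rbar)\hookrightarrow \mathfrak{X}_{\rbar}^\square\times \mathcal{T}_L^n$ and take the composition with the second projection as parameter map; this fits us into Bergdall's framework of a rigid-analytic family of $(\varphi,\Gamma_K)$-modules equipped with a map to $\mathcal{T}_L^n$.

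Next I would check the hypotheses at the center $x$: crystallinity, strict dominance, and very regularity (hence pairwise distinct Frobenius eigenvalues by Definition \ref{veryreg}) all hold by assumption; the Weyl element $w_x\in \prod_\tau \Scal_n$ is the one defined in \S\ref{weyl} via (\ref{wxdef}), which agrees with Bergdall's; and the key accumulation by noncritical classical-weight crystalline points is exactly the content of the running hypothesis on $X$ (Definition \ref{accu}), enjoyed by each irreducible component of $X$ containing $x$ and thus by $X$ itself.

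For orientation I would then sketch the mechanism behind \cite[Th.7.1]{Bergdall}: at a noncritical accumulation point $x'=(r',\delta')$ the triangulation parameters determine the Sen weights of $D_{\rig}(r')$ \emph{in their natural order}, so that the variation of $\omega$ matches the $\tau$-components of the variation of $\delta\vert_{(\mathcal{O}_K^\times)^n}$ index by index; at the critical center $x$, however, the relative position of the triangulation and the Hodge filtration is measured by $w_x$, so the triangulation's graded pieces carry the $w_{x,\tau}$-permuted Hodge-Tate weights $(k_{\tau,w_{x,\tau}^{-1}(i)})_i$ rather than $(k_{\tau,i})_i$. Deforming the triangulation along $\vec{v}$ (legitimate after inverting $t$, via the interpolation from the accumulation set) and matching the intrinsic sequence of Sen weights $(k_{\tau,i}+\varepsilon d_{\tau,i,\vec v})_i$ against the sequence $(k_{\tau,w_{x,\tau}^{-1}(i)}+\varepsilon d_{\tau,w_{x,\tau}^{-1}(i),\vec v})_i$ read off from the deformed parameters yields exactly the claimed equality. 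The only subtlety worth flagging, rather than a genuine obstacle, is that $X$ need not coincide with $X_{\rm tri}^\square(\rbar)$; but Bergdall's argument is local at $x$ and uses only the accumulation property, so the restriction to $X$ causes no issue.
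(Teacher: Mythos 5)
Your proposal matches the paper's approach exactly: the paper itself proves Theorem~\ref{bergweight} by remarking that \cite[\S\S5.1,6.1]{Bergdall} together with Definition~\ref{accu} show that all of \cite[\S7]{Bergdall} applies to $X$ and the center $x$, and then citing \cite[Th.7.1]{Bergdall} directly. Your verification of the hypotheses (crystallinity, strict dominance, very regularity, agreement of the Weyl element, and the accumulation of noncritical crystalline points supplied by Definition~\ref{accu}) and your note that the argument is local at $x$ so restriction from $X_{\rm tri}^\square(\rbar)$ to $X$ is harmless are precisely the checks the paper has in mind; the extra sketch of Bergdall's internal mechanism is accurate in spirit and is additional exposition rather than a different route.
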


Let $\vec{v}\in T_{X,x}$, we can see $\vec{v}$ as a $k(x)[\varepsilon]/(\varepsilon^2)$-valued point of $X$, and the composition:
$$\Spm\,k(x)[\varepsilon]/(\varepsilon^2)\buildrel \vec{v}\over \longrightarrow X\hookrightarrow X_{\rm tri}^\square(\rbar)\longrightarrow \mathcal{T}^n_L$$
gives rise to continuous characters $\delta_{i,\vec{v}}:K^\times \rightarrow (k(x)[\varepsilon]/(\varepsilon^2))^\times$ for $1\leq i\leq n$. The following theorem again follows from an examination of the proof of \cite[Th.7.1]{Bergdall}.

\begin{theo}[\cite{Bergdall}]\label{bergsplit}
For any $\vec{v}\in T_{X,x}$ and $1\leq i\leq n$ we have an injection of $(\varphi,\Gamma_K)$-modules over $\mathcal{R}_{k(x)[\varepsilon]/(\varepsilon^2),K}=k(x)[\varepsilon]/(\varepsilon^2)\otimes_{\Q_p}\mathcal{R}_{K}$:
$$\mathcal{R}_{k(x)[\varepsilon]/(\varepsilon^2),K}\big(\delta_{1,\vec{v}}\delta_{2,\vec{v}}\cdots \delta_{i,\vec{v}}\big)\hookrightarrow D_{\rig}(\wedge_{k(x)[\varepsilon]/(\varepsilon^2)}^ir_{\vec{v}})\cong \wedge_{\mathcal{R}_{k(x)[\varepsilon]/(\varepsilon^2),K}}^iD_{\rig}(r_{\vec{v}})$$
where the left hand side is the rank one $(\varphi,\Gamma_K)$-module defined by the character $\delta_{1,\vec{v}}\delta_{2,\vec{v}}\cdots \delta_{i,\vec{v}}$ (\cite[Cons.6.2.4]{KPX}) and where $r_{\vec{v}}$ is the $k(x)[\varepsilon]/(\varepsilon^2)$-valued representation of $\mathcal{G}_K$ associated to $\vec{v}$.
\end{theo}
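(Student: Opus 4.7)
The plan is to adapt (and slightly extend) the argument used by Bergdall to prove \cite[Th.7.1]{Bergdall}, extracting from it not merely the Sen weight identity of Theorem \ref{bergweight} but an honest rank one sub-$(\varphi,\Gamma_K)$-module of the $i$-th exterior power. The starting input is the accumulation property of $X$ at $x$ (Definition \ref{accu} and the discussion following it): each irreducible component of $X$ containing $x$ contains a Zariski-dense subset of saturated points $y=(r_y,\delta_y)\in U_{\rm tri}^\square(\rbar)$, at which $D_{\rig}(r_y)$ admits a triangulation $0=\Fil_0\subset \Fil_1\subset\cdots \subset \Fil_n=D_{\rig}(r_y)$ with graded pieces $\Fil_j/\Fil_{j-1}\cong \mathcal{R}_{k(y),K}(\delta_{y,j})$.

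First, I would invoke the global triangulation theory of \cite{KPX}: after possibly passing to a finite surjective cover of an open affinoid neighbourhood of $x$ in $X$, the universal $(\varphi,\Gamma_K)$-module carries a filtration that, at the accumulating points above, specialises to the triangulation described. Taking the $i$-th exterior power of the $i$-th step of this filtration produces a rank one sub-$(\varphi,\Gamma_K)$-module of the $i$-th exterior power of the universal $(\varphi,\Gamma_K)$-module, whose parameter at each accumulating point is $\delta_{y,1}\delta_{y,2}\cdots \delta_{y,i}$.

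Next comes the descent of this rank one subobject from the cover back to $X$ and its extension across $x$. A priori, Zariski closure gives only a rank one subobject inside the $t$-localisation of the exterior power, so the issue is saturation. The very regularity of $x$ (Definition \ref{veryreg}), together with strict dominance, ensures via Remark \ref{varphi} that $\delta_i\delta_j^{-1}\notin \{z^{-\bf h},\vert z\vert_K z^{\bf h},\vert z\vert_K^{-1}z^{\bf h}\}$ for $i\neq j$, so the product character $\delta_1\delta_2\cdots\delta_i$ does not resonate with the Sen weights or Frobenius eigenvalues appearing in the quotient; this is precisely the non-collision hypothesis that makes the saturation well-behaved and allows the submodule to extend through $x$ into a genuine rank one sub-$(\varphi,\Gamma_K)$-module of $\wedge^i D_{\rig}$ on an open neighbourhood $U$ of $x$ in (the cover of) $X$. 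Pulling back along $\Spm\,k(x)[\varepsilon]/(\varepsilon^2)\xrightarrow{\vec{v}} X$ yields the claimed injection, and its parameter is automatically $\delta_{1,\vec{v}}\delta_{2,\vec{v}}\cdots\delta_{i,\vec{v}}$ since these characters are, by construction, the pullbacks of the universal coordinate characters on $\mathcal{T}^n_L$.

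The principal obstacle is the control of the saturation and the descent from the cover to $X$ itself near the (possibly critical) point $x$. Bergdall's proof in \cite[\S7]{Bergdall} already handles an analogous saturation problem in order to deduce the Sen weight identity, and the key mechanism — Zariski density of saturated noncritical accumulating points plus the very regularity preventing resonances — is exactly what is needed here as well. One must simply verify that his argument, phrased in terms of $k(x)[\varepsilon]/(\varepsilon^2)$-valued deformations, actually produces the rank one submodule and not only the identity on Hodge--Tate--Sen weights; this is the content one extracts by keeping track of the exterior power of the filtration rather than just its associated weights.
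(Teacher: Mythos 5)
The paper offers no independent proof of this statement: after checking that the setup matches the hypotheses of \cite[\S\S5.1,6.1]{Bergdall}, it simply invokes the proof of \cite[Th.7.1]{Bergdall}. So your proposal is really a reconstruction of Bergdall's argument, and in outline it captures the right mechanism: accumulation of saturated noncritical points, passage to exterior powers so as to track determinants of steps of the triangulation rather than the steps themselves, and a genericity hypothesis to control saturation. This matches the spirit of what Bergdall does. That said, three points in your write-up deserve scrutiny.

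First, you pass to a finite surjective cover of a neighbourhood of $x$ and then pull back along $\vec{v}:\Spm\,k(x)[\varepsilon]/(\varepsilon^2)\to X$. But a cover can be ramified over $x$, in which case a tangent vector of $X$ at $x$ need not lift to the cover at all; the tangent space upstairs can strictly inject into $T_{X,x}$. Since the statement is quantified over \emph{all} $\vec{v}\in T_{X,x}$, you cannot afford to lose tangent directions. Either you should argue that in this situation no cover is needed (the parameter $\delta$ is already part of the data of $X\subseteq X_{\rm tri}^\square(\rbar)$, so no further choice of ordering is required), or you need to explain why the cover is \'etale at the relevant point.

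Second, you invoke Remark \ref{varphi}, i.e. condition (i) of Definition \ref{veryreg}, but the condition that does the work in the exterior-power argument is condition (ii): $\varphi_1\cdots\varphi_i$ is a \emph{simple} eigenvalue of Frobenius on $\bigwedge^i{\rm WD}(r)$. It is this simplicity that gives a canonical rank one subobject of $\bigwedge^iD_{\rig}(r)$ deforming rigidly to first order; condition (i) alone does not ensure the relevant non-collision at the level of $\bigwedge^i$. Your phrase ``does not resonate with the Sen weights or Frobenius eigenvalues appearing in the quotient'' gestures at this, but does not make clear that (ii) is the operative hypothesis.

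Third, and most seriously, your claim that the rank one subobject extends ``into a genuine rank one sub-$(\varphi,\Gamma_K)$-module of $\wedge^iD_{\rig}$ on an open neighbourhood $U$ of $x$'' is precisely the delicate point. The analogous globalization question for the filtration steps $t^{\Sigma_i({\bf k},w_x)}D_{\rig}(r)^{\leq i}$ is raised as an open problem in the remark at the end of \S\ref{endofproof} and is only known when $\Sigma_i=0$ (cf. \cite[Th.A]{Bergdall}). For a critical $x$, some $\Sigma_i\ne 0$. Bergdall's Theorem 7.1 is a statement at the level of first-order deformations and its proof should be presented at that level (working with the $\varepsilon^2=0$ family directly and exploiting the simple-eigenvalue hypothesis to deform the determinant line, rather than appealing to a sheaf-theoretic globalization over an actual open $U$). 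Your proposal as written secretly asserts a stronger statement than what is proved, and you should either establish this globalization (which the paper itself is cautious about) or rephrase the argument infinitesimally.
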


From \cite[Prop.2.4.1]{BelChe} (which readily extends to $K\ne \Q_p$) or arguing as in \S\ref{variant}, the $(\varphi,\Gamma_K)$-module $D_{\rig}(r)$ has a triangulation $\Fil_{\bullet}$ for $\bullet\in \{1,\cdots,n\}$, the graded pieces being:
\begin{eqnarray}\label{triang}
\mathcal{R}_{k(x),K}\big(z^{{\bf k}_{w_x^{-1}(1)}}{\rm unr}(\varphi_1)\big),\dots,\mathcal{R}_{k(x),K}\big(z^{{\bf k}_{w_x^{-1}(n)}}{\rm unr}(\varphi_n)\big)
\end{eqnarray}
where ${\bf k}_{w_x^{-1}(i)}:=(k_{\tau,w_{x,\tau}^{-1}(i)})_{\tau:\, K\hookrightarrow L}$ (see (\ref{charc}) for $z^{{\bf k}_{j}}$). Note that we have:
\begin{equation}\label{dexat}
\delta_{i}(z)=z^{{\bf k}_{i}-{\bf k}_{w_x^{-1}(i)}}(z^{{\bf k}_{w_x^{-1}(i)}}{\rm unr}(\varphi_i)).
\end{equation}
For $1\leq i\leq n$ we let $D_{\rig}(r)^{\leq i}:=\Fil_{i}\subseteq D_{\rig}(r)$, and we set $D_{\rig}(r)^{\leq 0}:=0$. We thus have for $1\leq i\leq n$:
$${\rm gr}_iD_{\rig}(r):=D_{\rig}(r)^{\leq i}/D_{\rig}(r)^{\leq i-1}=\mathcal{R}_{k(x),K}\big(z^{{\bf k}_{w_x^{-1}(i)}}{\rm unr}(\varphi_i)\big).$$
For $\tau:\, K\hookrightarrow L$ we fix a Lubin-Tate element $t_\tau\in \mathcal{R}_{L,K}$ as in \cite[Not.6.2.7]{KPX} (recall that the ideal $t_\tau\mathcal{R}_{L,K}$ is uniquely determined). If ${\bf k}:=(k_\tau)_{\tau:\, K\hookrightarrow L}\in\Z_{\geq 0}^{\Hom(K,L)}$, we let $t^{\bf k}:=\prod_{\tau:\, K\hookrightarrow L}t_\tau^{k_{\tau}}$. We set for $1\leq i\leq n$:
$$\Sigma_i({\bf k},w_x):=\sum_{j=1}^i({\bf k}_{j}-{\bf k}_{w_x^{-1}(j)})\in \Z_{\geq 0}^{\Hom(K,L)}$$
(where nonnegativity comes from $k_{\tau,i}\geq k_{\tau,i+1}$ for every $i,\tau$) and we can thus define $t^{\Sigma_i({\bf k},w_x)}\in \mathcal{R}_{L,K}$. In particular we deduce from (\ref{dexat}) (and the properties of the $t_\tau$):
\begin{eqnarray}\label{wedgedelta}
\mathcal{R}_{k(x),K}(\delta_{1}\cdots \delta_{i})\cong t^{\Sigma_i({\bf k},w_x)}\wedge^i_{\mathcal{R}_{k(x),K}}D_{\rig}(r)^{\leq i}\hookrightarrow \wedge^i_{\mathcal{R}_{k(x),K}}D_{\rig}(r).
\end{eqnarray}

We consider for $1\leq i\leq n$ the cartesian square (which defines $V_i$):
$$\xymatrix{{\rm Ext}^1_{(\varphi,\Gamma_K)}\big(D_{\rig}(r),D_{\rig}(r)\big)\ar[r]&{\rm Ext}^1_{(\varphi,\Gamma_K)}\big(t^{\Sigma_i({\bf k},w_x)}D_{\rig}(r)^{\leq i},D_{\rig}(r)\big)\\ 
V_i \ar[r]\ar@{^{(}->}[u]& 
{\rm Ext}^1_{(\varphi,\Gamma_K)}\big(t^{\Sigma_i({\bf k},w_x)}D_{\rig}(r)^{\leq i},D_{\rig}(r)^{\leq i}\big)\ar@{^{(}->}[u]}$$
where the first horizontal map is the restriction map and where the injection on the right easily follows from the very regularity assumption (Definition \ref{veryreg}). Equivalently we have:
\begin{multline}\label{viker}
V_i\cong \ker\!\Big({\rm Ext}^1_{(\varphi,\Gamma_K)}\big(D_{\rig}(r),D_{\rig}(r)\big)\longrightarrow
{\rm Ext}^1_{(\varphi,\Gamma_K)}\big(t^{\Sigma_i({\bf k},w_x)}D_{\rig}(r)^{\leq i},D_{\rig}(r)/D_{\rig}(r)^{\leq i}\big)\Big)
\end{multline}
where the map is defined by pushforward along $D_{\rig}(r)\twoheadrightarrow D_{\rig}(r)/D_{\rig}(r)^{\leq i}$ and pullback along $t^{\Sigma_{i}({\bf k},w_x)}D_{\rig}(r)^{\leq i}\hookrightarrow D_{\rig}(r)$.

\begin{coro}\label{inclv}
The image of any $\vec{v}\in T_{X,x}$ in ${\rm Ext}^1_{\mathcal{G}_K}(r,r)\cong {\rm Ext}^1_{(\varphi,\Gamma_K)}(D_{\rig}(r),D_{\rig}(r))$ is in $V_1\cap \cdots \cap V_{n-1}$ (where the intersection is within ${\rm Ext}^1_{(\varphi,\Gamma_K)}(D_{\rig}(r),D_{\rig}(r))$).
\end{coro}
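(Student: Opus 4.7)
The plan is to use Theorem \ref{bergsplit} to produce a rank $i$ sub-$(\varphi,\Gamma_K)$-module of $D_{\rig}(r_{\vec v})$ that splits the extension defining $V_i$. Fix $\vec v \in T_{X,x}$ and $i \in \{1,\dots,n-1\}$, and set $\tilde D := D_{\rig}(r_{\vec v})$ and $D := D_{\rig}(r)$. By (\ref{viker}) it suffices to check that the extension in $\mathrm{Ext}^1_{(\varphi,\Gamma_K)}(t^{\Sigma_i({\bf k},w_x)}D^{\leq i}, D/D^{\leq i})$ obtained from $\tilde D$ by pullback along $t^{\Sigma_i({\bf k},w_x)}D^{\leq i} \hookrightarrow D$ and pushforward along $D \twoheadrightarrow D/D^{\leq i}$ is split.

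Theorem \ref{bergsplit} furnishes a sub-$(\varphi,\Gamma_K)$-module $\tilde L_i \hookrightarrow \wedge^i \tilde D$ isomorphic to $\mathcal{R}_{k(x)[\varepsilon]/(\varepsilon^2),K}(\delta_{1,\vec v}\cdots\delta_{i,\vec v})$, whose reduction modulo $\varepsilon$ is identified via (\ref{wedgedelta}) with the sub-line $L_i := t^{\Sigma_i({\bf k},w_x)}\wedge^i D^{\leq i}$ of $\wedge^i D$. Choosing a basis $e_1,\ldots,e_i$ of $D^{\leq i}$ compatible with the triangulation $\Fil_\bullet$, the line $L_i$ is generated by the decomposable vector $\ell_0 := (t^{\Sigma_i({\bf k},w_x)}e_1)\wedge e_2 \wedge \cdots \wedge e_i$, and the module $M_i := D^{\leq i-1} + t^{\Sigma_i({\bf k},w_x)}D^{\leq i}$ is a rank $i$ sub-$(\varphi,\Gamma_K)$-module of $D^{\leq i}$ with $\wedge^i M_i = \mathcal{R}_{k(x),K}\cdot \ell_0$. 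The core step of the plan is to lift $\tilde L_i$ to a rank $i$ sub-$(\varphi,\Gamma_K)$-module $\tilde M_i \subset \tilde D$, free over $\mathcal{R}_{k(x)[\varepsilon]/(\varepsilon^2),K}$, reducing to $M_i$ modulo $\varepsilon$ and with top exterior power $\tilde L_i$. Concretely I will pick lifts $\tilde m_1,\ldots,\tilde m_i \in \tilde D$ of $m_1 := t^{\Sigma_i({\bf k},w_x)}e_1, m_2 := e_2, \ldots, m_i := e_i$ and adjust them by elements of $\varepsilon D$ so that $\tilde m_1 \wedge \cdots \wedge \tilde m_i$ generates $\tilde L_i$; the obstruction lives in a quotient of $\wedge^i D$ and will be shown to vanish using that $\tilde L_i$ is an honest $(\varphi,\Gamma_K)$-stable lift of $L_i$. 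Then $\tilde M_i$ will be the $\mathcal{R}_{k(x)[\varepsilon]/(\varepsilon^2),K}$-span of the $\tilde m_j$.

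Once $\tilde M_i$ is in hand, the conclusion is direct: for $m \in t^{\Sigma_i({\bf k},w_x)}D^{\leq i} \subseteq M_i$, any lift $\tilde m \in \tilde M_i$ is unique modulo $\varepsilon \tilde M_i \cong M_i \subseteq D^{\leq i}$, hence is well defined modulo $\varepsilon D^{\leq i}$; the resulting $\mathcal{R}_{k(x),K}$-linear and $(\varphi,\Gamma_K)$-equivariant map $t^{\Sigma_i({\bf k},w_x)}D^{\leq i} \to \tilde D/\varepsilon D^{\leq i}$ lands in the preimage of $t^{\Sigma_i({\bf k},w_x)}D^{\leq i}$ and provides the desired section of the pullback-pushforward extension, whence $\vec v \in V_i$. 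The principal obstacle I expect is the middle step, namely producing $\tilde M_i$ from $\tilde L_i$: this is a deformation-theoretic problem about decomposability of top wedges over the non-reduced base $\mathcal{R}_{k(x)[\varepsilon]/(\varepsilon^2),K}$, and verifying the vanishing of the relevant obstructions together with the $(\varphi,\Gamma_K)$-stability of $\tilde M_i$ will require some care.
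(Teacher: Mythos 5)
Your plan is organized around lifting the rank-one submodule $\tilde L_i\hookrightarrow\wedge^i\tilde D$ supplied by Theorem~\ref{bergsplit} to a rank-$i$ submodule $\tilde M_i\subset\tilde D$ with $\wedge^i\tilde M_i=\tilde L_i$. That ``middle step'' is not an obstacle you can expect to clear with deformation-theoretic bookkeeping: it is, in essence, equivalent to the very assertion you are trying to prove. If such a $\tilde M_i$ exists then the extension class vanishes as you describe; conversely, a splitting of the pullback-pushforward class produces such a $\tilde M_i$. So the plan reformulates $\vec v\in V_i$ rather than reducing it.

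Worse, decomposability over the nilpotent base is genuinely false in the absence of further input: take a free module with basis $e_1,\dots,e_4$ and the line generated by $e_1\wedge e_2+\varepsilon\,e_3\wedge e_4$; its reduction mod $\varepsilon$ is decomposable but $(e_1\wedge e_2+\varepsilon\,e_3\wedge e_4)\wedge(e_1\wedge e_2+\varepsilon\,e_3\wedge e_4)=2\varepsilon\,e_1\wedge e_2\wedge e_3\wedge e_4\neq 0$, so the line is not decomposable. Theorem~\ref{bergsplit} gives you only a $(\varphi,\Gamma_K)$-stable line in $\wedge^i\tilde D$; it makes no claim that this line is the top exterior power of a rank-$i$ submodule of $\tilde D$, and the $(\varphi,\Gamma_K)$-equivariance alone does not force decomposability. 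You would need an argument that the relevant Pl\"ucker-type obstruction vanishes, and no such argument is provided.

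The paper takes a different, indirect route that avoids this entirely. It argues by induction on $i$: the inductive hypothesis kills the contribution from $D^{\leq i-1}$, and the residual obstruction is an element $\widetilde D_{\rig}(r_{\vec v})^{(i)}$ in an $\mathrm{Ext}^1$ group involving $t^{\Sigma_i({\bf k},w_x)}\mathrm{gr}_iD_{\rig}(r)$ and $D_{\rig}(r)/D_{\rig}(r)^{\leq i}$. This class is then identified with a pullback-pushforward of the class of $\wedge^i D_{\rig}(r_{\vec v})$ after twisting by $\wedge^{i-1}D_{\rig}(r)^{\leq i-1}$, and Theorem~\ref{bergsplit} is used only at the level of this $\wedge^i$ $\mathrm{Ext}$ class, where the existence of the line $\tilde L_i$ directly forces the class to vanish. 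Crucially, the step that lets one pass from the level-$(i-1)$ statement to the level-$i$ statement is Corollary~\ref{zero2}, which rests on the crystalline $\mathrm{Ext}$ computations (Proposition~\ref{cris}) and uses the strict-dominance hypothesis on the weights in an essential way. None of that machinery appears in your sketch, and I do not see how to bypass it: the passage from $\tilde L_i\subset\wedge^i\tilde D$ back to a filtration-compatible statement inside $\tilde D$ is exactly where the cohomological input is needed.
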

\begin{proof}
Note that $V_1\cap V_2\cap \cdots \cap V_{n}=V_1\cap V_2\cap \cdots \cap V_{n-1}$. Let $\vec{v}\in T_{X,x}$, $r_{\vec{v}}$ the associated $k(x)[\varepsilon]/(\varepsilon^2)$-deformation and see $D_{\rig}(r_{\vec{v}})$ as an element of ${\rm Ext}^1_{(\varphi,\Gamma_K)}(D_{\rig}(r),D_{\rig}(r))$. We have to prove that the image of $D_{\rig}(r_{\vec{v}})$ \ in ${\rm Ext}^1_{(\varphi,\Gamma_K)}(t^{\Sigma_i({\bf k},w_x)}D_{\rig}(r)^{\leq i},D_{\rig}(r)/D_{\rig}(r)^{\leq i})$ is zero for any $1\leq i\leq n$ (see (\ref{viker})). The proof is by induction on $i\geq 1$. The case $i=1$ follows immediately from Theorem \ref{bergsplit} and (\ref{wedgedelta}) (together with Definition \ref{veryreg}). We prove that the statement for $i-1$ implies the statement for $i$. 

So, assume $i\geq 2$ and that the image of $D_{\rig}(r_{\vec{v}})$ in:
$${\rm Ext}^1_{(\varphi,\Gamma_K)}\big(t^{\Sigma_{i-1}({\bf k},w_x)}D_{\rig}(r)^{\leq i-1},D_{\rig}(r)/D_{\rig}(r)^{\leq i-1}\big)$$
is zero. Then by Corollary \ref{zero2} the image of $D_{\rig}(r_{\vec{v}})$ in:
$${\rm Ext}^1_{(\varphi,\Gamma_K)}\big(t^{\Sigma_{i}({\bf k},w_x)}D_{\rig}(r)^{\leq i-1},D_{\rig}(r)/D_{\rig}(r)^{\leq i}\big)$$
is also zero. From the exact sequence:
\begin{multline*}
0\rightarrow {\rm Ext}^1_{(\varphi,\Gamma_K)}\big(t^{\Sigma_{i}({\bf k},w_x)}{\rm gr}_iD_{\rig}(r),D_{\rig}(r)/D_{\rig}(r)^{\leq i}\big)\rightarrow \\
{\rm Ext}^1_{(\varphi,\Gamma_K)}\big(t^{\Sigma_{i}({\bf k},w_x)}D_{\rig}(r)^{\leq i},D_{\rig}(r)/D_{\rig}(r)^{\leq i}\big)\rightarrow \\
{\rm Ext}^1_{(\varphi,\Gamma_K)}\big(t^{\Sigma_{i}({\bf k},w_x)}D_{\rig}(r)^{\leq i-1},D_{\rig}(r)/D_{\rig}(r)^{\leq i}\big)
\end{multline*}
(where the injectivity on the left follows from Definition \ref{veryreg}), we see that the image of $D_{\rig}(r_{\vec{v}})$ in ${\rm Ext}^1_{(\varphi,\Gamma_K)}\big(t^{\Sigma_{i}({\bf k},w_x)}D_{\rig}(r)^{\leq i},D_{\rig}(r)/D_{\rig}(r)^{\leq i}\big)$ comes from a unique extension:
$$D_{\rig}(r_{\vec{v}})^{(i)}\in {\rm Ext}^1_{(\varphi,\Gamma_K)}\big(t^{\Sigma_{i}({\bf k},w_x)}{\rm gr}_iD_{\rig}(r),D_{\rig}(r)/D_{\rig}(r)^{\leq i}\big).$$
We thus have to prove that $D_{\rig}(r_{\vec{v}})^{(i)}=0$. 

The twist by the rank one $(\varphi,\Gamma_K)$-module $\wedge_{\mathcal{R}_{k(x),K}}^{i-1}D_{\rig}(r)^{\leq i-1}$ is easily seen (by elementary linear algebra) to induce an isomorphism:
\begin{multline}\label{wedgetwist}
{\rm Ext}^1_{(\varphi,\Gamma_K)}\big(t^{\Sigma_{i}({\bf k},w_x)}{\rm gr}_iD_{\rig}(r),D_{\rig}(r)/D_{\rig}(r)^{\leq i}\big)\buildrel\sim\over\longrightarrow \\
{\rm Ext}^1_{(\varphi,\Gamma_K)}\big(t^{\Sigma_{i}({\bf k},w_x)}\wedge^iD_{\rig}(r)^{\leq i},(D_{\rig}(r)/D_{\rig}(r)^{\leq i})\wedge (\wedge^{i-1}D_{\rig}(r)^{\leq i-1})\big)
\end{multline}
where we write $\wedge^\cdot D_{\rig}(r)$ for $\wedge_{\mathcal{R}_{k(x),K}}^\cdot D_{\rig}(r)$ and where $(D_{\rig}(r)/D_{\rig}(r)^{\leq i})\wedge (\wedge^{i-1}D_{\rig}(r)^{\leq i-1})$ stands for the quotient:
\begin{multline*}
\big(D_{\rig}(r)\wedge (\wedge^{i-1}D_{\rig}(r)^{\leq i-1})\big)/\big(D_{\rig}(r)^{\leq i}\wedge (\wedge^{i-1}D_{\rig}(r)^{\leq i-1})\big)\cong\\
\big(D_{\rig}(r)\wedge (\wedge^{i-1}D_{\rig}(r)^{\leq i-1})\big)/\wedge^{i}D_{\rig}(r)^{\leq i}.
\end{multline*}
(here, $D_{\rig}(r)\wedge (\wedge^{i-1}D_{\rig}(r)^{\leq i-1})$ and $D_{\rig}(r)^{\leq i}\wedge (\wedge^{i-1}D_{\rig}(r)^{\leq i-1})$ are seen inside $\wedge^{i}D_{\rig}(r)$). Moreover the injective map $\wedge^{i-1}D_{\rig}(r)^{\leq i-1}\hookrightarrow \wedge^{i-1}D_{\rig}(r)^{\leq i}$ still induces an injection (using Definition \ref{veryreg}):
\begin{multline*}
{\rm Ext}^1_{(\varphi,\Gamma_K)}\big(t^{\Sigma_{i}({\bf k},w_x)}\wedge^iD_{\rig}(r)^{\leq i},(D_{\rig}(r)/D_{\rig}(r)^{\leq i})\wedge (\wedge^{i-1}D_{\rig}(r)^{\leq i-1})\big)\hookrightarrow\\
{\rm Ext}^1_{(\varphi,\Gamma_K)}\big(t^{\Sigma_{i}({\bf k},w_x)}\wedge^iD_{\rig}(r)^{\leq i},(D_{\rig}(r)/D_{\rig}(r)^{\leq i})\wedge (\wedge^{i-1}D_{\rig}(r)^{\leq i})\big).
\end{multline*}
Denote by:
\begin{equation}\label{rtilde}
\widetilde D_{\rig}(r_{\vec{v}})^{(i)}\ \in \ {\rm Ext}^1_{(\varphi,\Gamma_K)}\big(t^{\Sigma_{i}({\bf k},w_x)}\wedge^iD_{\rig}(r)^{\leq i},(D_{\rig}(r)/D_{\rig}(r)^{\leq i})\wedge (\wedge^{i-1}D_{\rig}(r)^{\leq i})\big)
\end{equation}
the image of $D_{\rig}(r_{\vec{v}})^{(i)}$ (using the isomorphism (\ref{wedgetwist})). It is thus equivalent to prove that $\widetilde D_{\rig}(r_{\vec{v}})^{(i)}=0$. Note that:
\begin{multline}\label{wedgequot}
(D_{\rig}(r)/D_{\rig}(r)^{\leq i})\wedge (\wedge^{i-1}D_{\rig}(r)^{\leq i})\cong \big(D_{\rig}(r)\wedge (\wedge^{i-1}D_{\rig}(r)^{\leq i})\big)/\wedge^{i}D_{\rig}(r)^{\leq i}.
\end{multline}

For $1\leq i\leq n$, we have a $k(x)$-linear map ${\rm Ext}^1_{\mathcal{G}_K}(r,r)\rightarrow {\rm Ext}^1_{\mathcal{G}_K}(\wedge_{k(x)}^ir,\wedge_{k(x)}^ir)$ defined by mapping a $k(x)[\varepsilon]/(\varepsilon^2)$-valued representation of $\mathcal{G}_K$ to its $i$-th exterior power over $k(x)[\varepsilon]/(\varepsilon^2)$. This induces an $\mathcal{R}_{k(x)[\varepsilon]/(\varepsilon^2),K}$-linear map:
$${\rm Ext}^1_{(\varphi,\Gamma_K)}(D_{\rig}(r),D_{\rig}(r))\longrightarrow {\rm Ext}^1_{(\varphi,\Gamma_K)}\big(\wedge^i\!D_{\rig}(r),\wedge^iD_{\rig}(r)\big).$$
Let $D_{\rig}(\wedge^ir_{\vec{v}})\in  {\rm Ext}^1_{(\varphi,\Gamma_K)}(\wedge^i\!D_{\rig}(r),\wedge^iD_{\rig}(r))$ be the image of $D_{\rig}(r_{\vec{v}})$. The pull-back along $\wedge^iD_{\rig}(r)^{\leq i}\hookrightarrow \wedge^iD_{\rig}(r)$ sends $D_{\rig}(\wedge^ir_{\vec{v}})$ to an element in ${\rm Ext}^1_{(\varphi,\Gamma_K)}(\wedge^iD_{\rig}(r)^{\leq i},\wedge^{i}D_{\rig}(r))$. Elementary linear algebra (recall $\varepsilon^2=0$!) shows this element in fact belongs to:
$${\rm Ext}^1_{(\varphi,\Gamma_K)}\big(\wedge^iD_{\rig}(r)^{\leq i},D_{\rig}(r)\wedge (\wedge^{i-1}D_{\rig}(r)^{\leq i})\big)$$
(which embeds into ${\rm Ext}^1_{(\varphi,\Gamma_K)}(\wedge^iD_{\rig}(r)^{\leq i},\wedge^{i}D_{\rig}(r))$ again by Definition \ref{veryreg}). The pushforward along:
$$D_{\rig}(r)\wedge (\wedge^{i-1}D_{\rig}(r)^{\leq i})\twoheadrightarrow (D_{\rig}(r)\wedge (\wedge^{i-1}D_{\rig}(r)^{\leq i}))/\wedge^{i}D_{\rig}(r)^{\leq i}$$
now gives by (\ref{wedgequot}) an element in:
$${\rm Ext}^1_{(\varphi,\Gamma_K)}\big(\wedge^iD_{\rig}(r)^{\leq i},(D_{\rig}(r)/D_{\rig}(r)^{\leq i})\wedge (\wedge^{i-1}D_{\rig}(r)^{\leq i})\big)$$
and further pull-back along $t^{\Sigma_{i}({\bf k},w_x)}\wedge^iD_{\rig}(r)^{\leq i}\hookrightarrow \wedge^iD_{\rig}(r)^{\leq i}$ finally gives an element:
\begin{equation}\label{wedgetilde}
\widetilde D_{\rig}(\wedge^ir_{\vec{v}})\ \in \ {\rm Ext}^1_{(\varphi,\Gamma_K)}\big(t^{\Sigma_{i}({\bf k},w_x)}\wedge^iD_{\rig}(r)^{\leq i},(D_{\rig}(r)/D_{\rig}(r)^{\leq i})\wedge (\wedge^{i-1}D_{\rig}(r)^{\leq i})\big).
\end{equation}
Now, again manipulations of elementary linear algebra show we recover the element $\widetilde D_{\rm rig}(r_{\vec{v}})^{(i)}$ of (\ref{rtilde}), that is, we have $\widetilde D_{\rig}(\wedge^ir_{\vec{v}})=\widetilde D_{\rm rig}(r_{\vec{v}})^{(i)}$.

But we know from Theorem \ref{bergsplit} (using (\ref{wedgedelta}) and Definition \ref{veryreg}) that the image of $D_{\rig}(\wedge^ir_{\vec{v}})$ (by pullback) in ${\rm Ext}^1_{(\varphi,\Gamma_K)}\big(t^{\Sigma_{i}({\bf k},w_x)}\wedge^iD_{\rig}(r)^{\leq i},\wedge^iD_{\rig}(r)\big)$ actually sits in:
$${\rm Ext}^1_{(\varphi,\Gamma_K)}\big(t^{\Sigma_{i}({\bf k},w_x)}\wedge^iD_{\rig}(r)^{\leq i},\wedge^iD_{\rig}(r)^{\leq i}\big)$$
(in fact even in the image of ${\rm Ext}^1_{(\varphi,\Gamma_K)}(t^{\Sigma_{i}({\bf k},w_x)}\wedge^iD_{\rig}(r)^{\leq i},t^{\Sigma_{i}({\bf k},w_x)}\wedge^iD_{\rig}(r)^{\leq i})$). In particular its image $\widetilde D_{\rig}(\wedge^ir_{\vec{v}})$ in:
$${\rm Ext}^1_{(\varphi,\Gamma_K)}\big(t^{\Sigma_{i}({\bf k},w_x)}\wedge^iD_{\rig}(r)^{\leq i},(D_{\rig}(r)\wedge (\wedge^{i-1}D_{\rig}(r)^{\leq i}))/\wedge^{i}D_{\rig}(r)^{\leq i}\big)$$
must be zero. Since $\widetilde D_{\rig}(\wedge^ir_{\vec{v}})=\widetilde D_{\rm rig}(r_{\vec{v}})^{(i)}$, we obtain $\widetilde D_{\rm rig}(r_{\vec{v}})^{(i)}=0$.
\end{proof}

\subsection{Proof of the main local theorem}\label{endofproof}

We compute various dimensions and finish the proof of Theorem \ref{upperbound}.

Seeing an element of ${\rm Ext}^1_{\mathcal{G}_K}(r,r)$ as a $k(x)[\varepsilon]/(\varepsilon^2)$-valued representation of $\mathcal{G}_K$, we can write its Sen weights as $(k_{\tau,i}+\varepsilon d_{\tau,i})_{1\leq i\leq n,\tau:\, K\hookrightarrow L}$ for some $d_{\tau,i}\in k(x)$. We let $V$ be the $k(x)$-subvector space of ${\rm Ext}^1_{\mathcal{G}_K}(r,r)$ (or of ${\rm Ext}^1_{(\varphi,\Gamma_K)}(D_{\rig}(r),D_{\rig}(r))$) of extensions such that $d_{\tau,i}=d_{\tau,w_{x,\tau}^{-1}(i)}$ for $1\leq i\leq n$ and $\tau:\, K\hookrightarrow L$.

\begin{prop}\label{condweight}
We have $\dim_{k(x)}V =\dim_{k(x)}{\rm Ext}^1_{\mathcal{G}_K}(r,r)-d_x$.
\end{prop}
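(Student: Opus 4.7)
The plan is to identify $V$ as the preimage of an explicit subspace under the infinitesimal Sen weight map, and then reduce the dimension count to the surjectivity of that map.

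Define the $k(x)$-linear map
$$\Psi: {\rm Ext}^1_{\mathcal{G}_K}(r,r) \longrightarrow k(x)^{n[K:\Q_p]} \cong \bigoplus_{\tau: K\hookrightarrow L} k(x)^n$$
sending a class, viewed as a $k(x)[\varepsilon]/(\varepsilon^2)$-deformation $r_\varepsilon$ of $r$, to the tuple $(d_{\tau,i})_{i,\tau}$ such that the Sen weights of $r_\varepsilon$ are $(k_{\tau,i}+\varepsilon d_{\tau,i})_{i,\tau}$. This is well-defined since the $k_{\tau,i}$ are pairwise distinct in each $\tau$-component by strict dominance, hence the Sen weights deform uniquely and can be labelled consistently. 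Let $W=\prod_\tau \Scal_n$ act on $\bigoplus_\tau k(x)^n$ by the natural coordinate permutation in each $\tau$-component and set $F := (k(x)^{n[K:\Q_p]})^{w_x}$. By construction, $V = \Psi^{-1}(F)$.

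A direct cycle-structure calculation shows $\dim_{k(x)}F = \sum_\tau c_\tau$, where $c_\tau$ is the number of cycles of $w_{x,\tau} \in \Scal_n$. On the other hand, the proof of Lemma \ref{coxeter} computes $d_x = \dim_\Q X_\Q - \dim_\Q \ker(w_x - \id)$; decomposing $\ker(w_x-\id) = \bigoplus_\tau \ker(w_{x,\tau}-\id)$ with $\dim_\Q \ker(w_{x,\tau}-\id) = c_\tau$ yields $d_x = \sum_\tau (n-c_\tau) = \mathrm{codim}_{k(x)^{n[K:\Q_p]}} F$. It therefore suffices to prove that $\Psi$ is surjective.

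For surjectivity, I exploit the triangulation $\Fil_\bullet$ of $D_{\rig}(r)$ from (\ref{triang}), with graded pieces $\Rcal_{k(x),K}(\tilde\delta_i)$ for $\tilde\delta_i = z^{{\bf k}_{w_x^{-1}(i)}}\mathrm{unr}(\varphi_i)$. Given any $(e_{\tau,i})_{i,\tau} \in k(x)^{n[K:\Q_p]}$, choose continuous characters $\chi_i: K^\times \to (k(x)[\varepsilon]/(\varepsilon^2))^\times$ reducing to the trivial character mod $\varepsilon$ and with Sen weight perturbation $(e_{\tau,i})_\tau$ (e.g.\ trivial on a fixed uniformizer with $\chi_i(u) = 1 + \varepsilon \sum_\tau e_{\tau,i}\log\tau(u)$ on $\Ocal_K^\times$). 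Build a triangulated $(\varphi,\Gamma_K)$-module $D_\varepsilon$ over $\Rcal_{k(x)[\varepsilon]/(\varepsilon^2), K}$ that lifts $D_{\rig}(r)$, using the same extension-class data as $\Fil_\bullet$ but with graded pieces $\Rcal(\tilde\delta_i\chi_i)$; such a lift exists because the pertinent $(\varphi,\Gamma_K)$-module extension groups are flat (finite-dimensional) over $k(x)$, so extension classes lift across the square-zero surjection $k(x)[\varepsilon]/(\varepsilon^2) \twoheadrightarrow k(x)$. The module $D_\varepsilon$ is automatically étale, as its reduction is étale and slopes are unchanged under infinitesimal deformation, and hence corresponds to some $r_\varepsilon \in {\rm Ext}^1_{\mathcal{G}_K}(r,r)$. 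Its Sen weights, the union of those of the graded pieces $\{k_{\tau,w_{x,\tau}^{-1}(i)}+\varepsilon e_{\tau,i}\}_i$, give after canonical relabelling $\Psi(r_\varepsilon)_{\tau,j} = e_{\tau, w_{x,\tau}(j)}$. As $(e_{\tau,i})$ varies over $k(x)^{n[K:\Q_p]}$, so does $\Psi(r_\varepsilon)$, proving surjectivity.

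The main technical obstacle is the existence of the triangulated deformation $D_\varepsilon$ with prescribed perturbed graded pieces, which is a routine lifting of extension classes across a square-zero extension using flatness. Combining gives $\dim_{k(x)} V = \dim_{k(x)} {\rm Ext}^1_{\mathcal{G}_K}(r,r) - \mathrm{codim}\, F = \dim_{k(x)}{\rm Ext}^1_{\mathcal{G}_K}(r,r) - d_x$, as required.
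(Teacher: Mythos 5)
Your proposal is correct and follows essentially the same strategy as the paper: prove surjectivity of the infinitesimal Sen weight map $\Psi$, then identify the codimension of the $w_x$-invariant subspace $F\subseteq k(x)^{[K:\Q_p]n}$ with $d_x$. Your cycle-count computation of $\mathrm{codim}_{k(x)}F=\sum_\tau(n-c_\tau)=d_x$ is the same rank-formula argument the paper invokes by pointing to the beginning of the proof of Lemma \ref{coxeter}, and your explicit construction of triangulated deformations with perturbed rank-one graded pieces is a fleshed-out version of the paper's terse ``d\'evissage argument using Definition \ref{veryreg}, reduced to the rank one case.'' One imprecision worth flagging: the claim that ``extension classes lift across the square-zero surjection $k(x)[\varepsilon]/(\varepsilon^2)\twoheadrightarrow k(x)$ because the pertinent extension groups are flat (finite-dimensional) over $k(x)$'' does not actually justify the lift --- flatness of $k(x)$-vector spaces is vacuous and says nothing about surjectivity of the reduction map on ${\rm Ext}^1$. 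The correct justification is that the obstruction to lifting an extension of $(\varphi,\Gamma_K)$-modules through the square-zero thickening lies in ${\rm Ext}^2_{(\varphi,\Gamma_K)}$ of the relevant rank-one pieces $\Rcal_{k(x),K}(\tilde\delta_i/\tilde\delta_j)$, and these vanish under the very regularity hypothesis (Definition \ref{veryreg}), e.g.\ by the duality of \cite{Liuduality}. With that substitution the argument is complete.
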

\begin{proof}
The Sen map ${\rm Ext}^1_{\mathcal{G}_K}(r,r)\cong {\rm Ext}^1_{(\varphi,\Gamma_K)}(D_{\rig}(r),D_{\rig}(r))\longrightarrow k(x)^{[K:\Q_p]n}$ sending an extension to $(d_{\tau,i})_{1\leq i\leq n,\tau:\, K\hookrightarrow L}$ is easily checked to be surjective (by a d\'evissage argument using D\'efinition \ref{veryreg}, we are reduced to the rank one case where it is obvious). The \ $k(x)$-subvector \ space \ of \ $k(x)^{[K:\Q_p]n}$ \ of \ tuples $(d_{\tau,i})_{1\leq i\leq n,\tau:\, K\hookrightarrow L}$ such that $d_{\tau,i}=d_{\tau,w_{x,\tau}^{-1}(i)}$ for $1\leq i\leq n$ and $\tau:\, K\hookrightarrow L$ has dimension $[K:\Q_p]n-d_x$ (argue as in the beginning of the proof of Lemma \ref{coxeter}). The result follows.
\end{proof}

Recall that a $(\varphi,\Gamma_K)$-module $D$ over $\mathcal{R}_{k(x),K}$ is called {\it crystalline} if $D[1/\prod_{\tau:\, K\hookrightarrow L}t_\tau]^{\Gamma_K}$ is free over $K_0\otimes_{\Q_p}k(x)$ of the same rank as $D$. If $D,D'$ are two crystalline $(\varphi,\Gamma_K)$-module over $\mathcal{R}_{k(x),K}$, one can define the $k(x)$-subvector space of {\it crystalline extensions} ${\rm Ext}^1_{\rm cris}(D,D')\subseteq {\rm Ext}^1_{(\varphi,\Gamma_K)}(D,D')$. Note that ${\rm Ext}^1_{\rm cris}(\cdot,\cdot)$ respects surjectivities on the right entry (resp. sends injectivities to surjectivities on the left entry) as there is no ${\rm Ext}^2_{\rm cris}$, see \cite[Cor.1.4.6]{Benois}.

\begin{lemm}\label{dim1}
For $1\leq i\leq \ell\leq n$ we have:
\begin{multline*}
\dim_{k(x)}{\rm Ext}^1_{(\varphi,\Gamma_K)}\big({\rm gr}_iD_{\rig}(r)/(t^{\Sigma_{\ell}({\bf k},w_x)}),D_{\rig}(r)/D_{\rig}(r)^{\leq \ell}\big)=\\
\sum_{\tau:\, K\hookrightarrow L}\big\vert\{i_1\in \{\ell+1,\dots,n\}, w_{x,\tau}^{-1}(i_1)<w_{x,\tau}^{-1}(i)\}\big\vert.
\end{multline*}
\end{lemm}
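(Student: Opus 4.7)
My strategy is to use the triangulation filtration on $D_{\rig}(r)/D_{\rig}(r)^{\leq\ell}$ to reduce the Ext computation to one of $H^0$ of a torsion quotient of a rank-one $(\varphi,\Gamma_K)$-module, and then carry out a $\tau$-by-$\tau$ combinatorial count.

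\emph{Step 1: reduction to rank one.} Write $A:={\rm gr}_iD_{\rig}(r)/(t^{\Sigma_\ell({\bf k},w_x)})$ and $M:=D_{\rig}(r)/D_{\rig}(r)^{\leq\ell}$. The triangulation $\Fil_\bullet$ descends to a filtration on $M$ with rank-one graded pieces ${\rm gr}_{i_1}D_{\rig}(r)\cong\Rcal_{k(x),K}(z^{{\bf k}_{w_x^{-1}(i_1)}}{\rm unr}(\varphi_{i_1}))$ for $i_1\in\{\ell+1,\dots,n\}$. For each $i_1$, set $\eta_{i_1}:=z^{{\bf k}_{w_x^{-1}(i_1)}-{\bf k}_{w_x^{-1}(i)}}{\rm unr}(\varphi_{i_1}/\varphi_i)$. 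Since $A$ is $\Rcal_{k(x),K}$-torsion while the targets ${\rm gr}_{i_1}D_{\rig}(r)$ and $M$ are $\Rcal_{k(x),K}$-free, all groups $\Hom(A,\cdot)$ vanish. Combined with the vanishing of the relevant $\mathrm{Ext}^2$ obstructions (an application of Tate duality for $(\varphi,\Gamma_K)$-modules together with the very regularity assumption $\varphi_i/\varphi_{i_1}\notin\{1,q\}$, which rules out the problematic Tate twists), the long exact sequences associated to the filtration give
\[
\dim_{k(x)}{\rm Ext}^1_{(\varphi,\Gamma_K)}(A,M) = \sum_{i_1=\ell+1}^n \dim_{k(x)}{\rm Ext}^1_{(\varphi,\Gamma_K)}(A,{\rm gr}_{i_1}D_{\rig}(r)).
\]

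\emph{Step 2: identification with an $H^0$.} For each $i_1$, applying $\Hom(-,{\rm gr}_{i_1}D_{\rig}(r))$ to the short exact sequence $0\to {\rm gr}_iD_{\rig}(r)\xrightarrow{t^{\Sigma_\ell({\bf k},w_x)}}{\rm gr}_iD_{\rig}(r)\to A\to 0$, the resulting long exact sequence together with $H^0(\Rcal_{k(x),K}(\eta_{i_1}))=0$ (from $\varphi_i\ne\varphi_{i_1}$) identifies ${\rm Ext}^1(A,{\rm gr}_{i_1}D_{\rig}(r))$ with the kernel of multiplication by $t^{\Sigma_\ell({\bf k},w_x)}$ on $H^1(\Rcal_{k(x),K}(\eta_{i_1}))$. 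Feeding this in turn into the cohomology long exact sequence of $0\to\Rcal(\eta_{i_1})\xrightarrow{t^{\Sigma_\ell({\bf k},w_x)}}\Rcal(\eta_{i_1})\to\Rcal(\eta_{i_1})/(t^{\Sigma_\ell({\bf k},w_x)})\to 0$, and again using $H^0(\Rcal(\eta_{i_1}))=0$, yields
\[
\dim_{k(x)}{\rm Ext}^1_{(\varphi,\Gamma_K)}(A,{\rm gr}_{i_1}D_{\rig}(r)) = \dim_{k(x)} H^0\big(\Rcal_{k(x),K}(\eta_{i_1})/(t^{\Sigma_\ell({\bf k},w_x)})\big).
\]

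\emph{Step 3: the $\tau$-by-$\tau$ count.} Using the factorization $t^{\Sigma_\ell({\bf k},w_x)}=\prod_\tau t_\tau^{(\Sigma_\ell({\bf k},w_x))_\tau}$ and filtering $\Rcal(\eta_{i_1})/(t^{\Sigma_\ell({\bf k},w_x)})$ by the ideals generated by the $t_\tau^{m}$, I would decompose the $H^0$ into contributions indexed by pairs $(\tau,m)$ with $0\le m<(\Sigma_\ell({\bf k},w_x))_\tau$. Using the explicit description of $(\varphi,\Gamma_K)$-cohomology of rank one Robba-ring modules (compare \cite[Cor.6.2.9]{KPX} and \cite[Prop.2.9]{BHS}), an invariant class arises from such a pair exactly when a precise Hodge-Tate weight condition on $\eta_{i_1}$ at $\tau$, together with the unramified condition coming from $\varphi_{i_1}/\varphi_i$, is satisfied; since the unramified part of $\eta_{i_1}$ is nontrivial (because $\varphi_i\ne\varphi_{i_1}$), at most one $m$ works per $\tau$. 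An elementary manipulation using strict dominance $k_{\tau,1}>\cdots>k_{\tau,n}$ combined with the identity $(\Sigma_\ell({\bf k},w_x))_\tau=\sum_{j=1}^\ell(k_{\tau,j}-k_{\tau,w_{x,\tau}^{-1}(j)})$ then matches the existence of such an $m$ exactly with the inversion condition $w_{x,\tau}^{-1}(i_1)<w_{x,\tau}^{-1}(i)$. Summing over $\tau$ and $i_1$ yields the claim.

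\emph{Main obstacle.} The hardest step is the last one: the explicit combinatorial matching between the range condition for $m$ arising from $(\Sigma_\ell({\bf k},w_x))_\tau$ and the inversion $w_{x,\tau}^{-1}(i_1)<w_{x,\tau}^{-1}(i)$, together with the verification that one gets exactly a one-dimensional contribution for each such $\tau$. This is precisely where strict dominance of $\bf k$ and the finer very regularity hypotheses are used in an essential way.
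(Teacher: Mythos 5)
Your Step 1 is sound (the dévissage in the $(\varphi,\Gamma_K)$-Ext world, using that torsion modules have no $\Hom$ to free ones and that the relevant ${\rm Ext}^2$ vanish by the very regularity hypothesis), and the overall strategy — reduce to rank-one graded pieces and then count $H^0$ of a torsion quotient of a rank-one $(\varphi,\Gamma_K)$-module — is viable. However, Step 2 contains a genuine error that would reverse the combinatorics in Step 3 and give the wrong answer.

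The problem is that multiplication by $t^{\Sigma_\ell({\bf k},w_x)}$ is \emph{not} a morphism of $(\varphi,\Gamma_K)$-modules, because $t_\tau$ is not $\Gamma_K$-invariant: $\gamma(t_\tau)=\tau(\chi_{\rm cyc}(\gamma))\,t_\tau$. So the sequences you write, such as $0\to\Rcal(\eta_{i_1})\xrightarrow{\;t^{\Sigma_\ell}\;}\Rcal(\eta_{i_1})\to\Rcal(\eta_{i_1})/(t^{\Sigma_\ell})\to 0$, are not short exact sequences of $(\varphi,\Gamma_K)$-modules. The kernel of ${\rm gr}_iD_{\rig}(r)\twoheadrightarrow A$ is $t^{\Sigma_\ell({\bf k},w_x)}{\rm gr}_iD_{\rig}(r)$, which as a $(\varphi,\Gamma_K)$-module is isomorphic to $\Rcal(z^{\Sigma_\ell({\bf k},w_x)}\mu_i)$ and not to ${\rm gr}_iD_{\rig}(r)=\Rcal(\mu_i)$ itself, where $\mu_i=z^{{\bf k}_{w_x^{-1}(i)}}{\rm unr}(\varphi_i)$. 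Running the correct long exact sequences (and similarly for the $\Rcal(\eta_{i_1})$-side, as in the paper's display (\ref{exactR})) gives
\[
{\rm Ext}^1_{(\varphi,\Gamma_K)}\big(A,{\rm gr}_{i_1}D_{\rig}(r)\big)\;\cong\; H^0\big(\Rcal_{k(x),K}(z^{-\Sigma_\ell({\bf k},w_x)}\,\eta_{i_1})/(t^{\Sigma_\ell({\bf k},w_x)})\big),
\]
i.e.\ with an extra weight twist by $z^{-\Sigma_\ell({\bf k},w_x)}$ that your version omits.

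This twist is exactly what makes Step 3 come out right. With the correct character, the condition that the $\tau$-component of the $H^0$ is nonzero (as in Lemma \ref{null}(i) of the paper) becomes $1\le k_{\tau,w_{x,\tau}^{-1}(i_1)}-k_{\tau,w_{x,\tau}^{-1}(i)}\le (\Sigma_\ell({\bf k},w_x))_\tau$, whose lower bound is equivalent (by strict dominance of ${\bf k}$) to $w_{x,\tau}^{-1}(i_1)<w_{x,\tau}^{-1}(i)$, and whose upper bound holds automatically (this is the paper's inequality (\ref{bound})). With your untwisted $\eta_{i_1}$, the weight condition from Lemma \ref{null}(i) is $-(\Sigma_\tau-1)\le k_{\tau,w_{x,\tau}^{-1}(i_1)}-k_{\tau,w_{x,\tau}^{-1}(i)}\le 0$, whose upper bound forces $w_{x,\tau}^{-1}(i_1)>w_{x,\tau}^{-1}(i)$ — the wrong inversion. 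So your Step 3's assertion that ``the matching works'' would actually fail as stated; the twist must be inserted. (Incidentally, the unramified part of $\eta_{i_1}$ plays no role in the $H^0$ of the torsion module — the uniqueness of $m$ per $\tau$ comes purely from the Sen weight condition — so that part of your Step 3 is a red herring.)

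For comparison, the paper proceeds differently: it first passes from ${\rm Ext}^1_{(\varphi,\Gamma_K)}({\rm gr}_iD/(t^{\Sigma_\ell}),D/D^{\le\ell})$ to ${\rm Ext}^1_{\rm cris}({\rm gr}_iD,D/D^{\le\ell})$ using Proposition \ref{cris}, then does the dévissage inside ${\rm Ext}^1_{\rm cris}$ (which respects short exact sequences because there is no ${\rm Ext}^2_{\rm cris}$), and finishes with the classical count (\ref{dimcris}) of crystalline extensions of rank-one pieces. Your route dévissages first in the $(\varphi,\Gamma_K)$-world and then computes the $H^0$ of a torsion module directly, which is essentially the content of the paper's Lemma \ref{null} (proved there by the same twisting trick you need); the two routes are equivalent in the end, but yours requires more care with the $\Gamma_K$-equivariance of the maps involved, and as written your Step 2 gets that wrong.
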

\begin{proof}
It follows from Proposition \ref{cris} below (applied with $(i,\ell)=(i,i)$ and $(i,\ell)=(i-1,i)$) together with the two exact sequences:
\begin{multline*}
0\rightarrow {\rm Ext}^1_{(\varphi,\Gamma_K)}\big({\rm gr}_iD_{\rig}(r)/(t^{\Sigma_{i}({\bf k},w_x)}),D_{\rig}(r)/D_{\rig}(r)^{\leq i}\big)\rightarrow \\
{\rm Ext}^1_{(\varphi,\Gamma_K)}\big(D_{\rig}^{\leq i}(r)/(t^{\Sigma_{i}({\bf k},w_x)}),D_{\rig}(r)/D_{\rig}(r)^{\leq i}\big)\rightarrow \\
{\rm Ext}^1_{(\varphi,\Gamma_K)}\big(D_{\rig}(r)^{\leq i-1}/(t^{\Sigma_{i}({\bf k},w_x)}),D_{\rig}(r)/D_{\rig}(r)^{\leq i}\big),
\end{multline*}
\begin{multline*}
0\rightarrow {\rm Ext}^1_{\rm cris}\big({\rm gr}_iD_{\rig}(r),D_{\rig}(r)/D_{\rig}(r)^{\leq i}\big)\rightarrow {\rm Ext}^1_{\rm cris}\big(D_{\rig}^{\leq i},D_{\rig}(r)/D_{\rig}(r)^{\leq i}\big)
\rightarrow \\
{\rm Ext}^1_{\rm cris}\big(D_{\rig}(r)^{\leq i-1},D_{\rig}(r)/D_{\rig}(r)^{\leq i}\big)
\end{multline*}
(injectivity on the left following again from Definition \ref{veryreg}), that we have:
\begin{multline}\label{dimcrisi}
{\rm Ext}^1_{(\varphi,\Gamma_K)}\big({\rm gr}_iD_{\rig}(r)/(t^{\Sigma_{\ell}({\bf k},w_x)}),D_{\rig}(r)/D_{\rig}(r)^{\leq \ell}\big)\cong \\
{\rm Ext}^1_{\rm cris}\big({\rm gr}_iD_{\rig}(r),D_{\rig}(r)/D_{\rig}(r)^{\leq \ell}\big).
\end{multline}
By d\'evissage on $D_{\rig}(r)/D_{\rig}(r)^{\leq \ell}$ using that ${\rm Ext}^1_{\rm cris}$ respects here short exact sequences (by Definition \ref{veryreg} and the discussion above), we have:
$$\dim_{k(x)}{\rm Ext}^1_{\rm cris}\big({\rm gr}_iD_{\rig}(r),D_{\rig}(r)/D_{\rig}(r)^{\leq \ell}\big)=
\sum_{i_1=\ell+1}^{n}\dim_{k(x)}{\rm Ext}^1_{\rm cris}\big({\rm gr}_iD_{\rig}(r),{\rm gr}_{i_1}D_{\rig}(r)\big).$$
The result follows from (\ref{dimcris}) below.
\end{proof}

\begin{prop}\label{condsplit}
We have:
$$\dim_{k(x)}\big( V_1\cap V_2\cap \cdots \cap V_{n-1}\big) = \dim_{k(x)}{\rm Ext}^1_{\mathcal{G}_K}(r,r)-\big([K:\Q_p]\frac{n(n-1)}{2}-\lg(w_x)\big).$$
\end{prop}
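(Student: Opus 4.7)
The plan is first to observe that, by (\ref{viker}), $V_1\cap\cdots\cap V_{n-1}$ is the kernel of the combined map
$$\pi: {\rm Ext}^1_{(\varphi,\Gamma_K)}\big(D_{\rig}(r),D_{\rig}(r)\big)\longrightarrow \bigoplus_{\ell=1}^{n-1}{\rm Ext}^1_{(\varphi,\Gamma_K)}\big(t^{\Sigma_\ell({\bf k},w_x)}D_{\rig}(r)^{\leq \ell},D_{\rig}(r)/D_{\rig}(r)^{\leq \ell}\big),$$
so the proposition reduces to the dimension formula $\dim_{k(x)}\mathrm{Im}(\pi)=[K:\Q_p]\tfrac{n(n-1)}{2}-\lg(w_x)$.

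To compute this, I would perform a double devissage with respect to the triangulation $D_{\rig}(r)^{\leq\bullet}$. On the source side, the short exact sequence $0\to D_{\rig}(r)^{\leq\ell}\to D_{\rig}(r)\to D_{\rig}(r)/D_{\rig}(r)^{\leq\ell}\to 0$ factors each $f_\ell$ through ${\rm Ext}^1(D_{\rig}(r)^{\leq\ell},D_{\rig}(r)/D_{\rig}(r)^{\leq\ell})$, the first step of this factorization being surjective by a standard very-regularity argument, in the spirit of the vanishings used in the proofs of Lemmas \ref{dim} and \ref{dim1}. On the target side, the short exact sequence $0\to t^{\Sigma_\ell({\bf k},w_x)}D_{\rig}(r)^{\leq\ell}\to D_{\rig}(r)^{\leq\ell}\to D_{\rig}(r)^{\leq\ell}/(t^{\Sigma_\ell({\bf k},w_x)})\to 0$, combined with the isomorphism ${\rm Ext}^1(D_{\rig}(r)^{\leq\ell}/(t^{\Sigma_\ell({\bf k},w_x)}),D_{\rig}(r)/D_{\rig}(r)^{\leq\ell})\cong{\rm Ext}^1_{\rm cris}(D_{\rig}(r)^{\leq\ell},D_{\rig}(r)/D_{\rig}(r)^{\leq\ell})$ coming from (\ref{dimcrisi}), identifies $\mathrm{Im}(f_\ell)$ with the quotient ${\rm Ext}^1(D_{\rig}(r)^{\leq\ell},D_{\rig}(r)/D_{\rig}(r)^{\leq\ell})/{\rm Ext}^1_{\rm cris}(D_{\rig}(r)^{\leq\ell},D_{\rig}(r)/D_{\rig}(r)^{\leq\ell})$, the vanishing of the relevant ${\rm Ext}^2$ of a torsion module against a torsion-free one being handled once more by very regularity. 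Further devissage using the triangulation on both $D_{\rig}(r)^{\leq\ell}$ and $D_{\rig}(r)/D_{\rig}(r)^{\leq\ell}$ reduces this to the $(j,k)$-isotypic components ${\rm Ext}^1({\rm gr}_jD_{\rig}(r),{\rm gr}_kD_{\rig}(r))/{\rm Ext}^1_{\rm cris}({\rm gr}_jD_{\rig}(r),{\rm gr}_kD_{\rig}(r))$ with $j\leq\ell<k$.

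Now comes the key compatibility: for each pair $(j,k)$ with $1\le j<k\le n$, the $(j,k)$-contribution to $\mathrm{Im}(\pi)$ is a \emph{single} copy of ${\rm Ext}^1({\rm gr}_jD_{\rig}(r),{\rm gr}_kD_{\rig}(r))/{\rm Ext}^1_{\rm cris}({\rm gr}_jD_{\rig}(r),{\rm gr}_kD_{\rig}(r))$, rather than an independent copy for each $\ell\in\{j,\dots,k-1\}$, because the different $\ell$-components all record the same graded extension datum for the pair $(j,k)$. Granting this, under very regularity the usual cohomology computation for rank-one $(\varphi,\Gamma_K)$-modules yields $\dim_{k(x)}{\rm Ext}^1({\rm gr}_jD_{\rig}(r),{\rm gr}_kD_{\rig}(r))=[K:\Q_p]$, while the formula used in the proof of Lemma \ref{dim1} gives $\dim_{k(x)}{\rm Ext}^1_{\rm cris}({\rm gr}_jD_{\rig}(r),{\rm gr}_kD_{\rig}(r))=\sum_\tau\mathbf{1}_{w_{x,\tau}^{-1}(k)<w_{x,\tau}^{-1}(j)}$. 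Summing over $j<k$ then yields
$$\dim_{k(x)}\mathrm{Im}(\pi)=\sum_{j<k}\Big([K:\Q_p]-\dim_{k(x)}{\rm Ext}^1_{\rm cris}({\rm gr}_jD_{\rig}(r),{\rm gr}_kD_{\rig}(r))\Big)=[K:\Q_p]\tfrac{n(n-1)}{2}-\lg(w_x),$$
the last equality using that for each fixed $\tau$ the double sum $\sum_{j<k}\mathbf{1}_{w_{x,\tau}^{-1}(k)<w_{x,\tau}^{-1}(j)}$ counts the inversions of $w_{x,\tau}^{-1}$ and hence equals $\lg(w_{x,\tau})$.

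The main obstacle I foresee is precisely the compatibility claim of the third paragraph. Making it rigorous requires verifying that the natural pull-back maps between the groups ${\rm Ext}^1(t^{\Sigma_\ell({\bf k},w_x)}{\rm gr}_jD_{\rig}(r),{\rm gr}_kD_{\rig}(r))$ as $\ell$ ranges over $\{j,\dots,k-1\}$ all have the same kernel from ${\rm Ext}^1({\rm gr}_jD_{\rig}(r),{\rm gr}_kD_{\rig}(r))$, namely ${\rm Ext}^1_{\rm cris}({\rm gr}_jD_{\rig}(r),{\rm gr}_kD_{\rig}(r))$, and that no extraneous relations among the $\ell$-components of $\mathrm{Im}(\pi)$ emerge from the devissage. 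This should follow from a careful diagram chase using the $\mathrm{Hom}/{\rm Ext}^1/{\rm Ext}^2$ long exact sequences attached to the various short exact sequences above, with repeated appeal to very regularity to kill auxiliary cohomology terms at each stage, but the bookkeeping is nontrivial and is where the bulk of the work will lie.
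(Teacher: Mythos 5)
Your outline identifies the correct answer and the correct combinatorics, but the compatibility claim in your third paragraph is the mathematical content of the proposition, not a bookkeeping step to be checked at the end; your sketch does not establish it. The issue is that the devissages you invoke produce filtrations, not direct sum decompositions, and the ``$(j,k)$-isotypic component'' of $\mathrm{Im}(f_\ell)$ is a subquotient attached to a filtration that depends on $\ell$ (through the twist $t^{\Sigma_\ell({\bf k},w_x)}$). There is therefore no canonical way to compare the $(j,k)$-piece across different values of $\ell$, and the assertion that all $\ell$-components ``record the same graded extension datum'' cannot be verified pair by pair; you would have to control the global interaction of all the $f_\ell$ simultaneously and rule out extension phenomena between filtration steps, which is where a naive graded count can go wrong.

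The paper sidesteps this by a different organization of the same computation: rather than studying $\mathrm{Im}(\pi)$ directly, it uses the decreasing filtration $\mathrm{Ext}^1(D_{\rig}(r),D_{\rig}(r))\supseteq V_1\supseteq V_1\cap V_2\supseteq\cdots$ and proves, via a concrete diagram chase built on very regularity together with Corollaries \ref{zero2} and \ref{zero3}, the isomorphism
$$V_1\cap\cdots\cap V_{i-1}\big/V_1\cap\cdots\cap V_i\ \cong\ {\rm Ext}^1\big({\rm gr}_iD_{\rig}(r),D_{\rig}(r)/D_{\rig}(r)^{\leq i}\big)\big/{\rm Ext}^1\big({\rm gr}_iD_{\rig}(r)/(t^{\Sigma_{i}({\bf k},w_x)}),D_{\rig}(r)/D_{\rig}(r)^{\leq i}\big).$$
This pins down exactly which new conditions $V_i$ imposes beyond $V_1,\ldots,V_{i-1}$ (those involving ${\rm gr}_i$ against ${\rm gr}_k$ for $k>i$), so the cross-$\ell$ compatibility you flag becomes automatic: each pair $(i,k)$ with $i<k$ is accounted for exactly once, at step $i$. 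The successive quotient dimensions are then computed from Lemma \ref{dim1} and summed over $i$, reproducing your double sum over pairs and hence your final formula. If you wished to pursue your route instead, proving something equivalent to the displayed isomorphism would be unavoidable; the successive-quotient reformulation is precisely what makes it tractable.
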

\begin{proof}
To lighten notation in this proof, we write $D_{\rig}$ instead of $D_{\rig}(r)$ and drop the subscript $(\varphi,\Gamma_K)$. We first prove that, for $1\leq i\leq n$, we have an isomorphism of $k(x)$-vector spaces:
\begin{multline}\label{isoi}
V_1\cap\cdots \cap V_{i-1}/V_1\cap\cdots \cap V_{i} \buildrel\sim\over\longrightarrow\\
{\rm Ext}^1\big({\rm gr}_iD_{\rig},D_{\rig}/D_{\rig}^{\leq i}\big)\big/{\rm Ext}^1\big({\rm gr}_iD_{\rig}/(t^{\Sigma_{i}({\bf k},w_x)}),D_{\rig}/D_{\rig}^{\leq i}\big)
\end{multline}
where $V_1\cap\cdots \cap V_{i-1}:={\rm Ext}^1(D_{\rig},D_{\rig})$ if $i=1$. We first define the map. We have the following commutative diagram:
$$\begin{matrix}
&&\scriptstyle 0 &&\scriptstyle 0  &&\scriptstyle 0  &&\\
&&\downarrow &&\downarrow &&\downarrow &&\\
\!\!\!\scriptstyle{0}&\!\!\!\scriptstyle{\rightarrow} &\!\!\!\scriptstyle{{\rm Ext}^1({\rm gr}_iD_{\rig}/(t^{\Sigma_{i}({\bf k},w_x)}),D_{\rig}/D_{\rig}^{\leq i})}&\!\!\!\scriptstyle\rightarrow &\!\!\!\scriptstyle{\rm Ext}^1(D_{\rig}^{\leq i}/(t^{\Sigma_{i}({\bf k},w_x)}),D_{\rig}/D_{\rig}^{\leq i})&\!\!\!\scriptstyle\rightarrow &\!\!\!\scriptstyle{\rm Ext}^1(D_{\rig}^{\leq i-1}/(t^{\Sigma_{i}({\bf k},w_x)}),D_{\rig}/D_{\rig}^{\leq i})&\!\!\!\scriptstyle\rightarrow &\!\!\!\scriptstyle 0\\
&&\downarrow &&\downarrow &&\downarrow &&\\
\!\!\!\scriptstyle 0&\!\!\!\scriptstyle\rightarrow &\!\!\!\scriptstyle{\rm Ext}^1({\rm gr}_iD_{\rig},D_{\rig}/D_{\rig}^{\leq i})&\!\!\!\scriptstyle\rightarrow &\!\!\!\scriptstyle{\rm Ext}^1(D_{\rig}^{\leq i},D_{\rig}/D_{\rig}^{\leq i})&\!\!\!\scriptstyle\rightarrow &\!\!\!\scriptstyle{\rm Ext}^1(D_{\rig}^{\leq i-1},D_{\rig}/D_{\rig}^{\leq i})&\!\!\!\scriptstyle\rightarrow &\!\!\!\scriptstyle 0\\
&&\downarrow &&\downarrow &&\downarrow &&\\
\!\!\!\scriptstyle 0&\!\!\!\scriptstyle\rightarrow &\!\!\!\scriptstyle{\rm Ext}^1(t^{\Sigma_{i}({\bf k},w_x)}{\rm gr}_iD_{\rig},D_{\rig}/D_{\rig}^{\leq i})&\!\!\!\scriptstyle\rightarrow &\!\!\!\scriptstyle{\rm Ext}^1(t^{\Sigma_{i}({\bf k},w_x)}D_{\rig}^{\leq i},D_{\rig}/D_{\rig}^{\leq i})&\!\!\!\scriptstyle\rightarrow &\!\!\!\scriptstyle{\rm Ext}^1(t^{\Sigma_{i}({\bf k},w_x)}D_{\rig}^{\leq i-1},D_{\rig}/D_{\rig}^{\leq i})&\!\!\!\scriptstyle\rightarrow &\!\!\!\scriptstyle 0
\end{matrix}$$
where the injections on top and left and the surjections on the two bottom lines all follow from Definition \ref{veryreg}, and where the surjection on the top right corner follows from Corollary \ref{zero3} below. Denote by $E_i$ the inverse image of ${\rm Ext}^1(D_{\rig}^{\leq i-1}/(t^{\Sigma_{i}({\bf k},w_x)}),D_{\rig}/D_{\rig}^{\leq i})$ in ${\rm Ext}^1(D_{\rig}^{\leq i},D_{\rig}/D_{\rig}^{\leq i})$, then we have an isomorphism:
\begin{multline}\label{coim}
{\rm Ext}^1\big({\rm gr}_iD_{\rig},D_{\rig}/D_{\rig}^{\leq i}\big)\big/{\rm Ext}^1\big({\rm gr}_iD_{\rig}/(t^{\Sigma_{i}({\bf k},w_x)}),D_{\rig}/D_{\rig}^{\leq i}\big)\buildrel\sim\over\rightarrow \\
E_i\big/{\rm Ext}^1\big(D_{\rig}^{\leq i}/(t^{\Sigma_{i}({\bf k},w_x)}),D_{\rig}/D_{\rig}^{\leq i}\big).
\end{multline}
We consider the composition:
\begin{multline}\label{mapi}
V_1\cap\cdots \cap V_{i-1}\hookrightarrow {\rm Ext}^1\big(D_{\rig},D_{\rig}\big)\twoheadrightarrow {\rm Ext}^1\big(D_{\rig}^{\leq i},D_{\rig}/D_{\rig}^{\leq i}\big)\twoheadrightarrow \\
{\rm Ext}^1\big(D_{\rig}^{\leq i},D_{\rig}/D_{\rig}^{\leq i}\big)\big/{\rm Ext}^1\big(D_{\rig}^{\leq i}/(t^{\Sigma_{i}({\bf k},w_x)}),D_{\rig}/D_{\rig}^{\leq i}\big)
\end{multline}
and note that the image of $V_1\cap\cdots \cap V_{i-1}$ falls in $E_i\big/{\rm Ext}^1(D_{\rig}^{\leq i}/(t^{\Sigma_{i}({\bf k},w_x)}),D_{\rig}/D_{\rig}^{\leq i})$ by Corollary \ref{zero2} below. If $v\in V_1\cap\cdots \cap V_{i-1}$ is also in $V_i$, then its image in ${\rm Ext}^1(D_{\rig}^{\leq i},D_{\rig}/D_{\rig}^{\leq i})$ maps to $0$ in ${\rm Ext}^1(t^{\Sigma_{i}({\bf k},w_x)}D_{\rig}^{\leq i},D_{\rig}/D_{\rig}^{\leq i})$, hence belongs to ${\rm Ext}^1(D_{\rig}^{\leq i}/(t^{\Sigma_{i}({\bf k},w_x)}),D_{\rig}/D_{\rig}^{\leq i})$. By (\ref{coim}), we thus have a canonical induced map:
\begin{multline}\label{imap}
V_1\cap\cdots \cap V_{i-1}/V_1\cap\cdots \cap V_{i} \rightarrow \\
{\rm Ext}^1\big({\rm gr}_iD_{\rig},D_{\rig}/D_{\rig}^{\leq i}\big)\big/{\rm Ext}^1\big({\rm gr}_iD_{\rig}/(t^{\Sigma_{i}({\bf k},w_x)}),D_{\rig}/D_{\rig}^{\leq i}\big).
\end{multline}
Let us prove that (\ref{imap}) is surjective. One easily checks that ${\rm Ext}^1(D_{\rig}/D_{\rig}^{\leq i-1},D_{\rig})\subseteq V_1\cap\cdots \cap V_{i-1}$ and that the natural map ${\rm Ext}^1(D_{\rig}/D_{\rig}^{\leq i-1},D_{\rig})\rightarrow {\rm Ext}^1\big({\rm gr}_iD_{\rig},D_{\rig}/D_{\rig}^{\leq i}\big)$ is surjective (again by Definition \ref{veryreg}). This implies that {\it a fortiori} (\ref{imap}) must also be surjective. Let us prove that (\ref{imap}) is injective. If $v\in V_1\cap\cdots \cap V_{i-1}$ maps to zero, then the image of $v$ in ${\rm Ext}^1(D_{\rig}^{\leq i},D_{\rig}/D_{\rig}^{\leq i})$ belongs to ${\rm Ext}^1(D_{\rig}^{\leq i}/(t^{\Sigma_{i}({\bf k},w_x)}),D_{\rig}/D_{\rig}^{\leq i})$ by (\ref{coim}), i.e. maps to zero in ${\rm Ext}^1(t^{\Sigma_{i}({\bf k},w_x)}D_{\rig}^{\leq i},D_{\rig}/D_{\rig}^{\leq i})$, i.e. $v\in V_i$ by (\ref{viker}), hence $v\in V_1\cap\cdots \cap V_{i}$.

We now prove the statement of the proposition. From (\ref{isoi}) and Lemma \ref{dim1}, we obtain for $1\leq i\leq n$:
\begin{multline*}
\dim_{k(x)}\big(V_1\cap\cdots \cap V_{i-1}/V_1\cap\cdots \cap V_{i}\big)=\\
[K:\Q_p](n-i)-\sum_{\tau:\, K\hookrightarrow L}\big\vert\{j\in \{i+1,\dots,n\}, w_{x,\tau}^{-1}(j)<w_{x,\tau}^{-1}(i)\}\big\vert=\\
\sum_{\tau:\, K\hookrightarrow L}\big\vert\{j\in \{i+1,\dots,n\}, w_{x,\tau}^{-1}(i)<w_{x,\tau}^{-1}(j)\}\big\vert.
\end{multline*}
Summing up $\dim_{k(x)}(V_1\cap\cdots \cap V_{i-1}/V_1\cap\cdots \cap V_{i})$ for $i=1$ to $n-1$ thus yields:
\begin{multline*}
\dim_{k(x)}{\rm Ext}^1\big(D_{\rig},D_{\rig}\big)-\dim_{k(x)}(V_1\cap\cdots \cap V_{n-1})= \\
\sum_{\tau:\, K\hookrightarrow L}\big\vert\{1\leq i_1<i_2\leq n,\ w_{x,\tau}^{-1}(i_1)<w_{x,\tau}^{-1}(i_2)\}\big\vert.
\end{multline*}
But $\vert\{1\leq i_1<i_2\leq n,\ w_{x,\tau}^{-1}(i_1)<w_{x,\tau}^{-1}(i_2)\}\vert= \frac{n(n-1)}{2}-\lg(w_{x,\tau})$ (see e.g. \cite[\S0.3]{HumBGG}), and thus we get:
\begin{eqnarray*}
\dim_{k(x)}(V_1\cap\cdots \cap V_{n-1})&= &\dim_{k(x)}{\rm Ext}^1\big(D_{\rig},D_{\rig}\big)-\sum_{\tau:\, K\hookrightarrow L}\big(\frac{n(n-1)}{2}-\lg(w_{x,\tau})\big)\\
&= &\dim_{k(x)}{\rm Ext}^1\big(D_{\rig},D_{\rig}\big)-\big([K:\Q_p]\frac{n(n-1)}{2}-\lg(w_x)\big)
\end{eqnarray*}
which finishes the proof.
\end{proof}

\begin{prop}\label{inter}
We have:
$$\dim_{k(x)}\!\big(V \cap (V_1\cap \cdots \cap V_{n-1})\big)\!= \dim_{k(x)}{\rm Ext}^1_{\mathcal{G}_K}(r,r)-d_x-\big([K:\Q_p]\frac{n(n-1)}{2}-\lg(w_x)\big).$$
\end{prop}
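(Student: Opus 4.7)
The plan is to deduce Proposition \ref{inter} from the standard dimension formula
\begin{equation*}
\dim_{k(x)}(A) + \dim_{k(x)}(B) = \dim_{k(x)}(A \cap B) + \dim_{k(x)}(A + B),
\end{equation*}
applied with $A = V$ and $B = V_1 \cap \cdots \cap V_{n-1}$ inside ${\rm Ext}^1_{\mathcal{G}_K}(r,r)$. Combined with Propositions \ref{condweight} and \ref{condsplit}, the conclusion then reduces to establishing the identity $V + (V_1 \cap \cdots \cap V_{n-1}) = {\rm Ext}^1_{\mathcal{G}_K}(r,r)$. Recall that $V$ is the preimage of the ``balanced'' subspace $W \subseteq k(x)^{[K:\Q_p]n}$ under the Sen weight map $\mathrm{sen}: {\rm Ext}^1_{\mathcal{G}_K}(r,r) \twoheadrightarrow k(x)^{[K:\Q_p]n}$, and in particular $\ker(\mathrm{sen}) \subseteq V$. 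The identity above is therefore a consequence of the stronger claim that $\mathrm{sen}$ is surjective when restricted to $V_1 \cap \cdots \cap V_{n-1}$, which is what I would prove.

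To produce elements of $V_1 \cap \cdots \cap V_{n-1}$ with arbitrary Sen weights, I would use \emph{triangulation-preserving} infinitesimal deformations. Given continuous homomorphisms $\chi_1, \ldots, \chi_n : K^\times \to k(x)$, consider the rank-one deformations $\widetilde{\mathrm{gr}}_j := \Rcal_{k(x)[\varepsilon]/(\varepsilon^2), K}(\delta'_j (1 + \varepsilon \chi_j))$ of $\mathrm{gr}_j D_{\rig}(r) = \Rcal_{k(x), K}(\delta'_j)$, where $\delta'_j = z^{{\bf k}_{w_x^{-1}(j)}} \mathrm{unr}(\varphi_j)$ as in (\ref{triang}). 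I would construct, by induction on the filtration step, a deformation $\widetilde D$ of $D_{\rig}(r)$ over $\Rcal_{k(x)[\varepsilon]/(\varepsilon^2), K}$ carrying a filtration $\widetilde{\Fil}_\bullet$ lifting $\Fil_\bullet$ whose graded pieces are the $\widetilde{\mathrm{gr}}_j$. Any such $\widetilde D$ automatically lies in every $V_i$: the lifted subobject $\widetilde{\Fil}_i \subseteq \widetilde D$ provides a splitting in the category of $(\varphi,\Gamma_K)$-modules of the push-pulled extension $0 \to D_{\rig}(r)/D_{\rig}(r)^{\le i} \to \widetilde D|_{D_{\rig}(r)^{\le i}}/D_{\rig}(r)^{\le i} \to D_{\rig}(r)^{\le i} \to 0$ associated to $\widetilde D$, and a fortiori of its further pull-back along $t^{\Sigma_i({\bf k}, w_x)} D_{\rig}(r)^{\le i} \hookrightarrow D_{\rig}(r)^{\le i}$. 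Moreover the Sen weight of $\widetilde D$ at position $(\tau, \ell)$ equals that of $\widetilde{\mathrm{gr}}_{w_{x,\tau}(\ell)}$ at $\tau$, namely $k_{\tau, \ell} + \varepsilon (\chi_{w_{x,\tau}(\ell)})_\tau$, where $(\chi_j)_\tau \in k(x)$ denotes the $\tau$-component of $\chi_j|_{\Ocal_K^\times}$. Varying the $\chi_j$ independently, and using that each $w_{x,\tau}$ is a bijection, every tuple $(d_{\tau, \ell}) \in k(x)^{[K:\Q_p]n}$ is attained.

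The main obstacle is the inductive existence of $\widetilde D$. At step $i$, given $\widetilde{\Fil}_{i-1}$ and $\widetilde{\mathrm{gr}}_i$, the construction of $\widetilde{\Fil}_i$ amounts to lifting the class $[D_{\rig}(r)^{\le i}] \in {\rm Ext}^1_{(\varphi,\Gamma_K)}(\mathrm{gr}_i D_{\rig}(r), D_{\rig}(r)^{\le i-1})$ through the reduction-mod-$\varepsilon$ map from ${\rm Ext}^1_{(\varphi,\Gamma_K)}(\widetilde{\mathrm{gr}}_i, \widetilde{\Fil}_{i-1})$. The obstruction lies in ${\rm Ext}^2_{(\varphi,\Gamma_K)}(\mathrm{gr}_i D_{\rig}(r), D_{\rig}(r)^{\le i-1})$, which by d\'evissage along the triangulation of $D_{\rig}(r)^{\le i-1}$ is a successive extension of groups $H^2(\Rcal_{k(x), K}((\delta'_i)^{-1} \delta'_j))$ for $j < i$. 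By the description of $\Tcal_{\rm reg}$ recalled in \S\ref{begin} (equivalently, by Tate duality combined with the $H^0$ computation for rank-one $(\varphi,\Gamma_K)$-modules), each such $H^2$ is non-zero only if $(\delta'_i)^{-1}\delta'_j = |z|_K \cdot z^{{\bf m} + {\bf 1}}$ for some ${\bf m} \in \Z_{\ge 0}^{\Hom(K,L)}$, which in turn forces $\varphi_j / \varphi_i = q^{-1}$. This is precisely excluded by condition (i) of Definition \ref{veryreg} (very regularity), so each obstruction vanishes, the inductive construction goes through, and the proof is complete.
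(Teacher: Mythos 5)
Your proof is correct and essentially the same as the paper's: both reduce to showing that the Sen weight map is surjective on $V_1\cap\cdots\cap V_{n-1}$, which you establish via triangulation-preserving deformations (this is exactly the subspace the paper calls $W_1\cap\cdots\cap W_{n-1}$). Your obstruction-theoretic construction of such deformations is precisely the d\'evissage argument the paper explicitly leaves to the reader, and your closing use of $\dim(A+B)+\dim(A\cap B)=\dim A+\dim B$ is just a repackaging of the paper's ``same proof as Proposition \ref{condweight}'' step.
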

\begin{proof}
Consider the following cartesian diagram which defines $W_i$:
$$\xymatrix{{\rm Ext}^1_{(\varphi,\Gamma_K)}\big(D_{\rig}(r),D_{\rig}(r)\big)\ar[r]&{\rm Ext}^1_{(\varphi,\Gamma_K)}\big(D_{\rig}(r)^{\leq i},D_{\rig}(r)\big)\\ 
W_i \ar[r]\ar@{^{(}->}[u]& 
{\rm Ext}^1_{(\varphi,\Gamma_K)}\big(D_{\rig}(r)^{\leq i},D_{\rig}(r)^{\leq i}\big)\ar@{^{(}->}[u]},$$
then $W_i\subseteq V_i$, hence $W_1\cap \cdots \cap W_{n-1}\subseteq V_1\cap \cdots \cap V_{n-1}$. In fact, $W_1\cap \cdots \cap W_{n-1}$ is the $k(x)$-subvector space of ${\rm Ext}^1_{(\varphi,\Gamma_K)}(D_{\rig}(r),D_{\rig}(r))$ of extensions which respect the triangulation $(D_{\rig}(r)^{\leq i})_{1\leq i\leq n}$ on $D_{\rig}(r)$. A d\'evissage argument (using Definition \ref{veryreg}) that we leave to the reader then shows that the composition:
$$W_1\cap \cdots \cap W_{n-1}\hookrightarrow {\rm Ext}^1_{(\varphi,\Gamma_K)}\big(D_{\rig}(r),D_{\rig}(r)\big)\longrightarrow k(x)^{[K:\Q_p]n}$$
(where the second map is the Sen map in the proof of Proposition \ref{condweight}) remains surjective. {\it A fortiori}, $V_1\cap \cdots \cap V_{n-1}\hookrightarrow {\rm Ext}^1_{(\varphi,\Gamma_K)}(D_{\rig}(r),D_{\rig}(r))\rightarrow k(x)^{[K:\Q_p]n}$ is also surjective. By the same proof as that of Proposition \ref{condweight} we get:
$$\dim_{k(x)}\!\big(V \cap (V_1\cap \cdots \cap V_{n-1})\big)\!= \dim_{k(x)}(V_1\cap \cdots \cap V_{n-1})-d_x$$
and the result follows from Proposition \ref{condsplit}.
\end{proof}

\begin{coro}\label{ouf}
The image of $T_{X,x}$ in ${\rm Ext}^1_{\mathcal{G}_K}(r,r)$ has dimension $\leq \dim_{k(x)}{\rm Ext}^1_{\mathcal{G}_K}(r,r)-d_x-\big([K:\Q_p]\frac{n(n-1)}{2}-\lg(w_x)\big)$.
\end{coro}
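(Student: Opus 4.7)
The plan is to observe that Corollary \ref{ouf} is essentially an immediate consequence of three previously established results, all of which place restrictions on where the image of $T_{X,x}$ in ${\rm Ext}^1_{\mathcal{G}_K}(r,r)$ can live, combined with the dimension count carried out in Proposition \ref{inter}.

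First I would unpack the definition of $V$. The subspace $V \subseteq {\rm Ext}^1_{\mathcal{G}_K}(r,r)$ was defined to consist of those $k(x)[\varepsilon]/(\varepsilon^2)$-valued deformations whose Sen weights $(k_{\tau,i}+\varepsilon d_{\tau,i})$ satisfy $d_{\tau,i}=d_{\tau,w_{x,\tau}^{-1}(i)}$. But this is exactly the content of Theorem \ref{bergweight} applied to the image of any $\vec{v}\in T_{X,x}$: the Sen weights of the associated deformation satisfy $d_{\tau,i,\vec{v}}=d_{\tau,w_{x,\tau}^{-1}(i),\vec{v}}$. Hence the image of $T_{X,x}$ in ${\rm Ext}^1_{\mathcal{G}_K}(r,r)$ is contained in $V$.

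Next I would invoke Corollary \ref{inclv}, which states precisely that the image of any $\vec{v}\in T_{X,x}$ in ${\rm Ext}^1_{\mathcal{G}_K}(r,r)\cong {\rm Ext}^1_{(\varphi,\Gamma_K)}(D_{\rig}(r),D_{\rig}(r))$ lies in $V_1\cap\cdots\cap V_{n-1}$. Combining this with the previous observation, the image of $T_{X,x}$ is contained in:
\[
V\cap(V_1\cap\cdots\cap V_{n-1}).
\]
Finally I would apply Proposition \ref{inter}, which computes this intersection to have dimension exactly:
\[
\dim_{k(x)}{\rm Ext}^1_{\mathcal{G}_K}(r,r)-d_x-\Big([K:\Q_p]\tfrac{n(n-1)}{2}-\lg(w_x)\Big),
\]
yielding the claimed upper bound on the dimension of the image.

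There is no real obstacle at this stage: the genuine work was done in establishing Theorem \ref{bergweight} (the ``weight conditions''), Corollary \ref{inclv} (the ``splitting conditions'' coming from Bergdall's Theorem \ref{bergsplit} applied to wedge powers), and Proposition \ref{inter} (the dimension count combining both). The present corollary simply assembles these ingredients. Once established, combining it with Lemma \ref{boundtx} will deliver Theorem \ref{upperbound}.
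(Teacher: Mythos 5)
Your proposal is correct and matches the paper's proof exactly: Theorem \ref{bergweight} places the image in $V$, Corollary \ref{inclv} places it in $V_1\cap\cdots\cap V_{n-1}$, and Proposition \ref{inter} gives the dimension of the intersection. Nothing to add.
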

\begin{proof}
It follows from Theorem \ref{bergweight} that the image of any $\vec{v}\in T_{X,x}$ in ${\rm Ext}^1_{\mathcal{G}_K}(r,r)$ is in $V$. It follows from Corollary \ref{inclv} that the image of any $\vec{v}\in T_{X,x}$ in ${\rm Ext}^1_{\mathcal{G}_K}(r,r)$ is also in $V_1\cap \cdots \cap V_{n-1}$. One concludes with Proposition \ref{inter}.
\end{proof}

By Lemma \ref{boundtx} this finishes the proof of Theorem \ref{upperbound}.

\begin{rema}
{\rm The collection of $(\varphi,\Gamma_K)$-submodules $(t^{\Sigma_i({\bf k},w_x)}D_{\rig}(r)^{\leq i})_{1\leq i\leq n}$ of $D_{\rig}(r)$ plays an important role in the proof of Corollary \ref{ouf}. One can wonder if they ``globalize'' in a neighbourhood of the point $x$ in $X$? Note that those for which $\Sigma_i({\bf k},w_x)=0$ do by \cite[Th.A]{Bergdall}.}
\end{rema}

\subsection{Calculation of some Ext groups}

We prove several technical but crucial results of Galois cohomology that were used above. 

For a continuous character $\delta:K^\times\rightarrow L^\times$ and $\tau:\, K\hookrightarrow L$, we define its (Sen) weight ${\rm wt}_\tau(\delta)\in L$ in the direction $\tau$ by taking the {\it opposite} of the weight defined in \cite[\S2.3]{Bergdall}. For instance ${\rm wt}_\tau(\tau(z)^{k_\tau})=k_\tau$ ($k_\tau\in \Z$).

\begin{lemm}\label{null}
Let $\tau:\, K\hookrightarrow L$ and $k_\tau\in \Z_{>0}$.\\
(i) For $j\in \{0,1\}$ we have ${\rm Ext}^j_{(\varphi,\Gamma_K)}\big(\mathcal{R}_{L,K},\mathcal{R}_{L,K}(\delta)/(t_\tau^{k_\tau})\big)\ne 0$ if and only if ${\rm wt}_\tau(\delta)\in \{-(k_\tau-1),\dots,0\}$ and we have ${\rm Ext}^2_{(\varphi,\Gamma_K)}\big(\mathcal{R}_{L,K},\mathcal{R}_{L,K}(\delta)/(t_\tau^{k_\tau})\big)=0$ for all $\delta$.\\
(ii) For $j\in \{1,2\}$ we have ${\rm Ext}^j_{(\varphi,\Gamma_K)}\big(\mathcal{R}_{L,K}(\delta)/(t_\tau^{k_\tau}),\mathcal{R}_{L,K}\big)\ne 0$ if and only if ${\rm wt}_\tau(\delta)\in \{-k_\tau,\dots,-1\}$ and we have ${\rm Ext}^0_{(\varphi,\Gamma_K)}\big(\mathcal{R}_{L,K}(\delta)/(t_\tau^{k_\tau}),\mathcal{R}_{L,K}\big)=0$ for all $\delta$.\\
(iii) When either of these spaces is nonzero, it has dimension $1$ over $L$.
\end{lemm}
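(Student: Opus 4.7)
The strategy is to reduce both statements to the standard computation of cohomology for rank one $(\varphi,\Gamma_K)$-modules, by exploiting the short exact sequence
$$0 \longrightarrow \mathcal{R}_{L,K}(\delta') \xrightarrow{\,\cdot\, t_\tau^{k_\tau}\,} \mathcal{R}_{L,K}(\delta) \longrightarrow \mathcal{R}_{L,K}(\delta)/(t_\tau^{k_\tau}) \longrightarrow 0,$$
where $\delta'$ is the character satisfying ${\rm wt}_\tau(\delta') = {\rm wt}_\tau(\delta) + k_\tau$, ${\rm wt}_{\tau'}(\delta') = {\rm wt}_{\tau'}(\delta)$ for $\tau'\neq\tau$, and whose smooth part is adjusted so that multiplication by $t_\tau^{k_\tau}$ is $(\varphi,\Gamma_K)$-equivariant (cf.\ \cite[Cons.~6.2.4, Not.~6.2.7]{KPX}). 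All the rank-one cohomology and extension groups that arise below are known explicitly from \cite[\S6.2, Thm.~4.4.5]{KPX}: the exceptional characters $\chi$ for which $H^j_{(\varphi,\Gamma_K)}(\mathcal{R}_{L,K}(\chi))$ is nonzero are detected by an explicit weight-and-smooth-part condition, and in every exceptional case the cohomology is one-dimensional.

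For (i), I would apply $H^\bullet_{(\varphi,\Gamma_K)}(-) = {\rm Ext}^\bullet_{(\varphi,\Gamma_K)}(\mathcal{R}_{L,K},-)$ to the above sequence. The vanishing of ${\rm Ext}^2$ is essentially immediate: $H^2_{(\varphi,\Gamma_K)}(\mathcal{R}_{L,K}(\chi))$ is nonzero only for characters $\chi$ of a specific ``dual exceptional'' form, and a short check rules out $\delta$ and $\delta'$ simultaneously being of that form. For $j\in\{0,1\}$, the long exact sequence combined with the rank-one computation shows that $H^j(\mathcal{R}_{L,K}(\delta)/(t_\tau^{k_\tau}))$ is nonzero exactly when one of the weights at $\tau$ of $\delta,\delta'$ lands in the trivial slot, i.e.\ when $0\in\{{\rm wt}_\tau(\delta),{\rm wt}_\tau(\delta)+1,\ldots,{\rm wt}_\tau(\delta)+k_\tau-1\}$, which is the condition ${\rm wt}_\tau(\delta)\in\{-(k_\tau-1),\ldots,0\}$. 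The one-dimensionality asserted by (iii) then follows from the Euler characteristic formula for $(\varphi,\Gamma_K)$-cohomology applied to the torsion module $\mathcal{R}_{L,K}(\delta)/(t_\tau^{k_\tau})$ (of generic rank $0$): one has $\sum_j (-1)^j \dim_L H^j_{(\varphi,\Gamma_K)} = 0$, and combined with $\dim H^2=0$ and the fact that each of $\dim H^0,\dim H^1$ is at most $1$ in the exceptional range, all nonzero $H^j$ must have dimension exactly $1$.

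For (ii), one could argue symmetrically by applying ${\rm Ext}^\bullet_{(\varphi,\Gamma_K)}(-,\mathcal{R}_{L,K})$ to the same short exact sequence, but the cleanest route is to invoke Tate-local duality for $(\varphi,\Gamma_K)$-modules (see \cite[Thm.~4.4.5]{KPX}), which provides canonical perfect pairings identifying ${\rm Ext}^j_{(\varphi,\Gamma_K)}(M, \mathcal{R}_{L,K})$ with the $L$-dual of ${\rm Ext}^{2-j}_{(\varphi,\Gamma_K)}(\mathcal{R}_{L,K}, M \otimes \omega)$, where $\omega$ is the Tate dualizing twist (a rank one module of weight $1$ at every embedding). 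Applied to $M = \mathcal{R}_{L,K}(\delta)/(t_\tau^{k_\tau})$, and using $M \otimes \omega \cong \mathcal{R}_{L,K}(\delta\omega)/(t_\tau^{k_\tau})$, part (ii) together with its share of (iii) reduces to (i) with the weight range shifted by $-{\rm wt}_\tau(\omega)=-1$, which gives exactly $\{-k_\tau,\ldots,-1\}$. The vanishing of ${\rm Ext}^0(\mathcal{R}_{L,K}(\delta)/(t_\tau^{k_\tau}), \mathcal{R}_{L,K})$ is in any case transparent, since the source is $t_\tau$-torsion while the target is $t_\tau$-torsion-free. The main technical obstacle I anticipate is simply the careful tracking of conventions (the precise shape of $\delta'$ and the precise form of the Tate twist under \cite{KPX}'s conventions); once these are pinned down, everything follows from a routine application of long exact sequences and the classical rank-one cohomology computations.
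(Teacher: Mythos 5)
Your proof is correct, but it takes a genuinely different route from the paper's for part (ii). For part (i) the paper does no work at all: it cites Bergdall's Prop.~2.7 (the $j\in\{0,1\}$ computation) and Liu's Thm.~3.7(2) (vanishing of $H^2$) and moves on. Your derivation of (i) from the long exact sequence attached to $0\to\mathcal{R}_{L,K}(\tau(z)^{k_\tau}\delta)\to\mathcal{R}_{L,K}(\delta)\to\mathcal{R}_{L,K}(\delta)/(t_\tau^{k_\tau})\to 0$ together with the known rank-one cohomology is more self-contained and is a valid alternative; the point to watch there is the surjectivity of $H^2(\delta')\to H^2(\delta)$ (needed for $H^2$ of the quotient to vanish), which holds because when both are nonzero they are one-dimensional and the map between them can be identified, via duality, with a nonzero map of one-dimensional $H^0$'s.

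For part (ii), which is where the paper does real work, you invoke Tate--Liu duality to reduce directly to (i) after a twist by the cyclotomic character, whereas the paper avoids quoting a duality statement for torsion $(\varphi,\Gamma_K)$-modules and instead constructs the required isomorphisms ``by hand'': it cups with the short exact sequence $0\to\mathcal{R}_{L,K}(\delta^{-1})\to\mathcal{R}_{L,K}(\tau(z)^{-k_\tau}\delta^{-1})\to\mathcal{R}_{L,K}(\tau(z)^{-k_\tau}\delta^{-1})/(t_\tau^{k_\tau})\to 0$ and checks that the resulting maps assemble into a shifted morphism of the two long exact sequences (the one computing $H^j$ of the twist of the SES, and the one computing $\Ext^j(-,\mathcal{R}_{L,K}(\delta^{-1}))$ of $0\to t_\tau^{k_\tau}\mathcal{R}_{L,K}\to\mathcal{R}_{L,K}\to\mathcal{R}_{L,K}/(t_\tau^{k_\tau})\to 0$), concluding by the five lemma. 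Conceptually the two arguments are the same: the paper's cup-product comparison is essentially a hands-on proof of the degree-shifted duality $\Ext^j_{(\varphi,\Gamma_K)}(D,\mathcal{R}_{L,K})\cong H^{j-1}_{(\varphi,\Gamma_K)}(\Ext^1_{\mathcal{R}_{L,K}}(D,\mathcal{R}_{L,K}))$ for $D$ torsion, which is what your Tate-duality formula packages. Your route buys brevity and conceptual clarity; the paper's buys independence from any off-the-shelf duality for torsion $(\varphi,\Gamma_K)$-modules. The one point you should tighten is the citation: KPX Thm.~4.4.5 is stated for $(\varphi,\Gamma)$-modules, and the duality statement you use (with $M$ a $t_\tau$-torsion module, producing a degree-$2$ shift on $\Ext$'s that becomes a degree-$1$ shift on cohomology after passing through the internal $\Ext^1_{\mathcal{R}_{L,K}}$) should be sourced from Liu's duality paper or rederived from the free case by the same two-term resolution; as stated, your perfect-pairing formula is correct but not a verbatim quotation of KPX.
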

\begin{proof}
The first part of (i) is in \cite[Prop.2.7]{Bergdall} (and initially in \cite[Prop.2.18]{Co} for $K=\Q_p$) and the second part in \cite[Th.3.7(2)]{Liuduality}. The second part of (ii) is obvious, let us prove the first. We have an exact sequence:
\begin{equation}\label{exactR}
0\longrightarrow \mathcal{R}_{L,K}(\delta^{-1})\longrightarrow \mathcal{R}_{L,K}(\tau(z)^{-k_\tau}\delta^{-1})\longrightarrow \mathcal{R}_{L,K}(\tau(z)^{-k_\tau}\delta^{-1})/(t_\tau^{k_\tau})\longrightarrow 0.
\end{equation}
The cup product with (\ref{exactR}) yields canonical morphisms of $L$-vector spaces:
\begin{eqnarray*}\label{mapL}
{\rm Ext}^0_{(\varphi,\Gamma_K)}\big(\mathcal{R}_{L,K}/(t_\tau^{k_\tau}),\mathcal{R}_{L,K}(\tau(z)^{-k_\tau}\delta^{-1})/(t_\tau^{k_\tau})\big)&\rightarrow & {\rm Ext}^1_{(\varphi,\Gamma_K)}\big(\mathcal{R}_{L,K}/(t_\tau^{k_\tau}),\mathcal{R}_{L,K}(\delta^{-1})\big)\\
\nonumber {\rm Ext}^1_{(\varphi,\Gamma_K)}\big(\mathcal{R}_{L,K}/(t_\tau^{k_\tau}),\mathcal{R}_{L,K}(\tau(z)^{-k_\tau}\delta^{-1})/(t_\tau^{k_\tau})\big)&\rightarrow & {\rm Ext}^2_{(\varphi,\Gamma_K)}\big(\mathcal{R}_{L,K}/(t_\tau^{k_\tau}),\mathcal{R}_{L,K}(\delta^{-1})\big).
\end{eqnarray*}
There is an obvious isomorphism of $L$-vector spaces:
\begin{eqnarray*}
{\rm Ext}^0_{(\varphi,\Gamma_K)}\big(\mathcal{R}_{L,K},\mathcal{R}_{L,K}(\tau(z)^{-k_\tau}\delta^{-1})/(t_\tau^{k_\tau})\big)&\!\!\cong \!\!& {\rm Ext}^0_{(\varphi,\Gamma_K)}\big(\mathcal{R}_{L,K}/(t_\tau^{k_\tau}),\mathcal{R}_{L,K}(\tau(z)^{-k_\tau}\delta^{-1})/(t_\tau^{k_\tau})\big)
\end{eqnarray*}
and an analysis of the cokernel of the multiplication by $t_\tau^{k_\tau}$ map on a short exact sequence $0\rightarrow  \mathcal{R}_{L,K}(\tau(z)^{-k_\tau}\delta^{-1})/(t_\tau^{k_\tau})\rightarrow {\mathcal E}\rightarrow \mathcal{R}_{L,K}\rightarrow 0$ of $(\varphi,\Gamma_K)$-module over $\mathcal{R}_{L,K}$ yields a canonical morphism of $L$-vector spaces:
\begin{eqnarray*}
{\rm Ext}^1_{(\varphi,\Gamma_K)}\big(\mathcal{R}_{L,K},\mathcal{R}_{L,K}(\tau(z)^{-k_\tau}\delta^{-1})/(t_\tau^{k_\tau})\big)&\!\!\!\rightarrow \!\!\!& {\rm Ext}^1_{(\varphi,\Gamma_K)}\big(\mathcal{R}_{L,K}/(t_\tau^{k_\tau}),\mathcal{R}_{L,K}(\tau(z)^{-k_\tau}\delta^{-1})/(t_\tau^{k_\tau})\big).
\end{eqnarray*}
Thus we have canonical morphisms of $L$-vector spaces:
\begin{eqnarray}\label{mapL2}
{\rm Ext}^0_{(\varphi,\Gamma_K)}\big(\mathcal{R}_{L,K},\mathcal{R}_{L,K}(\tau(z)^{-k_\tau}\delta^{-1})/(t_\tau^{k_\tau})\big)&\!\!\!\rightarrow \!\!\!& {\rm Ext}^1_{(\varphi,\Gamma_K)}\big(\mathcal{R}_{L,K}/(t_\tau^{k_\tau}),\mathcal{R}_{L,K}(\delta^{-1})\big)\\
\nonumber {\rm Ext}^1_{(\varphi,\Gamma_K)}\big(\mathcal{R}_{L,K},\mathcal{R}_{L,K}(\tau(z)^{-k_\tau}\delta^{-1})/(t_\tau^{k_\tau})\big)&\!\!\!\rightarrow \!\!\!& {\rm Ext}^2_{(\varphi,\Gamma_K)}\big(\mathcal{R}_{L,K}/(t_\tau^{k_\tau}),\mathcal{R}_{L,K}(\delta^{-1})\big).
\end{eqnarray}
It is then a simple exercise of linear algebra to check that the morphisms in (\ref{mapL}) fit into a natural morphism of complexes of $L$-vector spaces from the long exact sequence of ${\rm Ext}^j_{(\varphi,\Gamma_K)}(\mathcal{R}_{L,K},\cdot)$ applied to the short exact sequence (\ref{exactR}) to the long exact sequence of ${\rm Ext}^j_{(\varphi,\Gamma_K)}(\cdot,\mathcal{R}_{L,K}(\delta^{-1}))$ applied to the short exact sequence $0\rightarrow t_\tau^{k_\tau}\mathcal{R}_{L,K}\rightarrow \mathcal{R}_{L,K}\rightarrow \mathcal{R}_{L,K}/(t_\tau^{k_\tau})\rightarrow 0$ (note that there is a shift in this map of complexes). Since all the morphisms are obviously isomorphisms except possibly the morphisms (\ref{mapL2}), we deduce that the latter are also isomorphisms. Twisting by $\mathcal{R}_{L,K}(\delta)$ on the right hand side of (\ref{mapL2}) and using (i) applied to the left hand side, the first part of (ii) easily follows. Finally (iii) follows from \cite[Prop.2.7]{Bergdall} and from the previous isomorphisms (\ref{mapL2}).
\end{proof}

Recall that for $i,\ell\in \{1,\dots,n\}$ we have an exact sequence:
\begin{multline}\label{ses}
0\rightarrow {\rm Ext}^1_{(\varphi,\Gamma_K)}\big(D_{\rig}(r)^{\leq i}/t^{\Sigma_{\ell}({\bf k},w_x)}D_{\rig}(r)^{\leq i},D_{\rig}(r)/D_{\rig}(r)^{\leq \ell}\big)\rightarrow \\
{\rm Ext}^1_{(\varphi,\Gamma_K)}\big(D_{\rig}(r)^{\leq i},D_{\rig}(r)/D_{\rig}(r)^{\leq \ell}\big)\rightarrow \\
{\rm Ext}^1_{(\varphi,\Gamma_K)}\big(t^{\Sigma_{\ell}({\bf k},w_x)}D_{\rig}(r)^{\leq i},D_{\rig}(r)/D_{\rig}(r)^{\leq \ell}\big)
\end{multline}
where the injection on the left follows as usual from Definition \ref{veryreg}.

\begin{prop}\label{cris}
For \ $1\leq i\leq \ell\leq n$, \ we \ have \ an \ isomorphism \ of \ subspaces \ of ${\rm Ext}^1_{(\varphi,\Gamma_K)}(D_{\rig}(r)^{\leq i},D_{\rig}(r)/D_{\rig}(r)^{\leq \ell})$:
\begin{multline*}
{\rm Ext}^1_{(\varphi,\Gamma_K)}\big(D_{\rig}(r)^{\leq i}/t^{\Sigma_{\ell}({\bf k},w_x)}D_{\rig}(r)^{\leq i},D_{\rig}(r)/D_{\rig}(r)^{\leq \ell}\big)\cong \\
{\rm Ext}^1_{\rm cris}\big(D_{\rig}(r)^{\leq i},D_{\rig}(r)/D_{\rig}(r)^{\leq \ell}\big).
\end{multline*}
\end{prop}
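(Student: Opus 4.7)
The plan is to identify both subspaces with the kernel of a common restriction map and verify equality by a dimension count that reduces, via dévissage along the triangulations of both arguments, to the rank-one-by-rank-one case, where Lemma \ref{null} and the standard filtered $\varphi$-module description of crystalline ${\rm Ext}^1$ apply.

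First, I would use the long exact ${\rm Ext}$-sequence attached to
$$0 \to t^{\Sigma_\ell({\bf k},w_x)} D_{\rig}(r)^{\leq i} \to D_{\rig}(r)^{\leq i} \to D_{\rig}(r)^{\leq i}/t^{\Sigma_\ell({\bf k},w_x)} D_{\rig}(r)^{\leq i} \to 0$$
to identify the left-hand side of the proposition with the kernel $V$ of the restriction map
$${\rm Ext}^1_{(\varphi,\Gamma_K)}\big(D_{\rig}(r)^{\leq i}, D_{\rig}(r)/D_{\rig}(r)^{\leq \ell}\big) \to {\rm Ext}^1_{(\varphi,\Gamma_K)}\big(t^{\Sigma_\ell({\bf k},w_x)}D_{\rig}(r)^{\leq i}, D_{\rig}(r)/D_{\rig}(r)^{\leq \ell}\big),$$
injectivity on the left coming from the vanishing of $\Hom(D_{\rig}(r)^{\leq i}/t^{\Sigma_\ell({\bf k},w_x)}, D_{\rig}(r)/D_{\rig}(r)^{\leq \ell})$, which by dévissage reduces to the rank-one-by-rank-one situation covered by Definition \ref{veryreg}.

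Next, I would prove ${\rm Ext}^1_{\rm cris}\subseteq V$. The pullback of a crystalline extension along the inclusion $t^{\Sigma_\ell({\bf k},w_x)}D_{\rig}(r)^{\leq i}\hookrightarrow D_{\rig}(r)^{\leq i}$ is again a crystalline extension, now of $t^{\Sigma_\ell({\bf k},w_x)}D_{\rig}(r)^{\leq i}$ by $D_{\rig}(r)/D_{\rig}(r)^{\leq \ell}$; I claim it is trivial. By dévissage along the triangulations (\ref{triang}) it suffices to show, for all $j\leq i$ and $j'>\ell$, that ${\rm Ext}^1_{\rm cris}(t^{\Sigma_\ell({\bf k},w_x)}\mathcal{R}_{k(x),K}(\delta_j), \mathcal{R}_{k(x),K}(\delta_{j'}))=0$. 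Via Berger's equivalence with filtered $\varphi$-modules and the vanishing of Frobenius-isotypic Hom coming from very regularity, this rank-one crystalline ${\rm Ext}^1$ has dimension $\vert\{\tau : k_{\tau, w_{x,\tau}^{-1}(j')} > k_{\tau, w_{x,\tau}^{-1}(j)} + \Sigma_{\ell,\tau}({\bf k},w_x)\}\vert$, and this set is empty by the key combinatorial inequality
$$k_{\tau, w_{x,\tau}^{-1}(j')} - k_{\tau, w_{x,\tau}^{-1}(j)} \leq \Sigma_{\ell,\tau}({\bf k},w_x).$$
The inequality holds by rearrangement: writing $\Sigma_{\ell,\tau}({\bf k},w_x) = \sum_{b=1}^\ell k_{\tau,b} - \sum_{a\in w_{x,\tau}^{-1}(\{1,\dots,\ell\})} k_{\tau,a}$, the first sum is the maximum among all $\ell$-subset sums of the strictly decreasing sequence $k_{\tau,\cdot}$ (strict dominance), while $w_{x,\tau}^{-1}(j)\in w_{x,\tau}^{-1}(\{1,\dots,\ell\})$ and $w_{x,\tau}^{-1}(j')\notin w_{x,\tau}^{-1}(\{1,\dots,\ell\})$, so substituting one for the other in the $\ell$-subset can only decrease the sum.

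Finally, for the equality $V={\rm Ext}^1_{\rm cris}$, I would match dimensions via the same dévissage. Very regularity and Lemma \ref{null}(i) ensure vanishing of all the intermediate ${\rm Ext}^0$ and ${\rm Ext}^2$, so that dimensions add over pairs $(j, j')$ with $j\leq i$ and $j'>\ell$. For each such pair, further dévissage in $\tau$ using $t^{\Sigma_\ell({\bf k},w_x)} = \prod_\tau t_\tau^{\Sigma_{\ell,\tau}({\bf k},w_x)}$ together with Lemma \ref{null}(ii) computes the $V$-contribution as $\vert\{\tau : k_{\tau, w_{x,\tau}^{-1}(j')} - k_{\tau, w_{x,\tau}^{-1}(j)} \in \{-\Sigma_{\ell,\tau}({\bf k},w_x), \dots, -1\}\}\vert$, while the rank-one filtered-$\varphi$-module formula gives the ${\rm Ext}^1_{\rm cris}$-contribution as $\vert\{\tau : k_{\tau, w_{x,\tau}^{-1}(j')} > k_{\tau, w_{x,\tau}^{-1}(j)}\}\vert$. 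By the same combinatorial inequality, the upper bound in the $V$-condition is automatic whenever the lower bound holds, so both conditions cut out the same set of $\tau$. The main obstacle is the careful bookkeeping to ensure all dévissage sequences remain exact and that the $t$-torsion interacts correctly with the triangulation filtration on both sides; once this setup is in place, the combinatorial inequality makes both sides align.
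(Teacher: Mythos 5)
Your approach is essentially the paper's: identify the left-hand side with the kernel of the pull\-back map, show crystalline extensions lie in this kernel via the key weight inequality (the paper's~(\ref{bound})), and then match dimensions rank-one by rank-one using Lemma~\ref{null}(ii) and the crystalline $\mathrm{Ext}^1$ formula, with the same rearrangement inequality doing the work in both steps. One slip: applying Lemma~\ref{null}(ii) to $\mathrm{Ext}^1({\rm gr}_jD_{\rig}/(t^{\Sigma_\ell}),{\rm gr}_{j'}D_{\rig})$ (after twisting by $({\rm gr}_{j'}D_{\rig})^{-1}$) gives the condition $k_{\tau,w^{-1}(j)}-k_{\tau,w^{-1}(j')}\in\{-\Sigma_{\ell,\tau},\dots,-1\}$, i.e.\ $k_{\tau,w^{-1}(j')}-k_{\tau,w^{-1}(j)}\in\{1,\dots,\Sigma_{\ell,\tau}\}$, whereas you wrote the difference with the opposite sign; as written your $V$-condition and your $\mathrm{Ext}^1_{\rm cris}$-condition are mutually exclusive, but with the sign corrected your subsequent ``the upper bound is automatic by the combinatorial inequality'' argument is exactly right and the two conditions coincide.
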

\begin{proof}
To lighten notation, we write $D_{\rig}$ instead of $D_{\rig}(r)$ and drop the subscript $(\varphi,\Gamma_K)$. By the exact sequence (\ref{ses}) and a d\'evissage on $D_{\rig}^{\leq i}$ and $D_{\rig}/D_{\rig}^{\leq \ell}$ (recall from Definition \ref{veryreg} and the discussion preceding Lemma \ref{dim1} that ${\rm Ext}^1_{\rm cris}$ respects short exact sequences here), it is enough to prove (i) that the composition:
$${\rm Ext}^1_{\rm cris}\big(D_{\rig}^{\leq i},D_{\rig}/D_{\rig}^{\leq \ell}\big)\subseteq {\rm Ext}^1\big(D_{\rig}^{\leq i},D_{\rig}/D_{\rig}^{\leq \ell}\big)\longrightarrow  {\rm Ext}^1\big(t^{\Sigma_{\ell}({\bf k},w_x)}D_{\rig}^{\leq i},D_{\rig}/D_{\rig}^{\leq \ell}\big)$$
is zero and (ii) that:
$${\rm Ext}^1\big({\rm gr}_{\ell'}D_{\rig}/(t^{\Sigma_{\ell}({\bf k},w_x)}),{\rm gr}_{\ell''}D_{\rig}\big)\cong {\rm Ext}^1_{\rm cris}\big({\rm gr}_{\ell'}D_{\rig},{\rm gr}_{\ell''}D_{\rig}\big)$$
(inside ${\rm Ext}^1({\rm gr}_{\ell'}D_{\rig},{\rm gr}_{\ell''}D_{\rig})$) for all $\ell',\ell''$ such that $\ell'\leq \ell$ and $\ell''\geq \ell+1$. 

We prove (i). The map clearly factors through:
$${\rm Ext}^1_{\rm cris}\big(t^{\Sigma_{\ell}({\bf k},w_x)}D_{\rig}^{\leq i},D_{\rig}/D_{\rig}^{\leq \ell}\big),$$
let us prove that the latter vector space is zero. By d\'evissage again, it is enough to prove that:
$${\rm Ext}^1_{\rm cris}\big(t^{\Sigma_{\ell}({\bf k},w_x)}{\rm gr}_{\ell'}D_{\rig},{\rm gr}_{\ell''}D_{\rig}\big)=0$$
for $\ell',\ell''$ such that $\ell'\leq \ell$ and $\ell''\geq \ell+1$. It is enough to prove that, for all $\tau:\, K\hookrightarrow L$, we have ${\rm wt}_\tau(t^{\Sigma_{\ell}({\bf k},w_x)}{\rm gr}_{\ell'}D_{\rig})\geq {\rm wt}_\tau({\rm gr}_{\ell''}D_{\rig})$ (using Definition \ref{veryreg} when these two weights are equal). This is equivalent to:
\begin{eqnarray}\label{bound}
\sum_{j=1}^\ell(k_{\tau,j} - k_{\tau,w_{x,\tau}^{-1}(j)})+k_{\tau,w_{x,\tau}^{-1}(\ell')}\geq k_{\tau,w_{x,\tau}^{-1}(\ell'')}
\end{eqnarray}
which indeed holds for $\ell'$, $\ell''$ as above because $k_{\tau,1}>k_{\tau,2}>\cdots>k_{\tau,n}$. 

We prove (ii). From (i) we have in particular an inclusion:
\begin{eqnarray}\label{incl}
{\rm Ext}^1_{\rm cris}\big({\rm gr}_{\ell'}D_{\rig},{\rm gr}_{\ell''}D_{\rig}\big)\subseteq {\rm Ext}^1\big({\rm gr}_{\ell'}D_{\rig}/(t^{\Sigma_{\ell}({\bf k},w_x)}),{\rm gr}_{\ell''}D_{\rig}\big).
\end{eqnarray}
It is an easy (and well-known) exercise that we leave to the reader to check that:
\begin{eqnarray}\label{dimcris}
\dim_{k(x)}{\rm Ext}^1_{\rm cris}\big({\rm gr}_{\ell'}D_{\rig},{\rm gr}_{\ell''}D_{\rig}\big)=\big\vert\{\tau:\, K\hookrightarrow L, w_{x,\tau}^{-1}(\ell'')<w_{x,\tau}^{-1}(\ell')\}\big\vert.
\end{eqnarray}
On the other hand, from (ii) and (iii) of Lemma \ref{null}, using (\ref{bound}) and $\mathcal{R}_{L,K}(\delta)/(t_\tau^{k_\tau}t_\sigma^{k_\sigma})\cong \mathcal{R}_{L,K}(\delta)/(t_\tau^{k_\tau})\times \mathcal{R}_{L,K}(\delta)/(t_\sigma^{k_\sigma})$ if $\tau\ne\sigma$, we deduce:
\begin{multline}\label{dimrig}
\dim_{k(x)}{\rm Ext}^1\big({\rm gr}_{\ell'}D_{\rig}/(t^{\Sigma_{\ell}({\bf k},w_x)}),{\rm gr}_{\ell''}D_{\rig}\big)=\big\vert\{\tau:\, K\hookrightarrow L, w_{x,\tau}^{-1}(\ell'')<w_{x,\tau}^{-1}(\ell')\}\big\vert.
\end{multline}
(\ref{incl}), (\ref{dimcris}) and (\ref{dimrig}) imply ${\rm Ext}^1_{\rm cris}({\rm gr}_{\ell'}D_{\rig},{\rm gr}_{\ell''}D_{\rig})\cong {\rm Ext}^1({\rm gr}_{\ell'}D_{\rig}/(t^{\Sigma_{\ell}({\bf k},w_x)}),{\rm gr}_{\ell''}D_{\rig})$ which finishes the proof.
\end{proof}

\begin{coro}\label{zero2}
Let $i\in \{1,\dots,n\}$, ${\mathcal E}\in {\rm Ext}^1_{(\varphi,\Gamma_K)}(D_{\rig}(r),D_{\rig}(r))$ and assume that the image of $\mathcal E$ (by pullback and pushforward) in:
$${\rm Ext}^1_{(\varphi,\Gamma_K)}\big(t^{\Sigma_{i-1}({\bf k},w_x)}D_{\rig}(r)^{\leq i-1},D_{\rig}(r)/D_{\rig}(r)^{\leq i-1}\big)$$
is zero. Then the image of $\mathcal E$ in:
$${\rm Ext}^1_{(\varphi,\Gamma_K)}\big(t^{\Sigma_{i}({\bf k},w_x)}D_{\rig}(r)^{\leq i-1},D_{\rig}(r)/D_{\rig}(r)^{\leq i}\big)$$
is also zero.
\end{coro}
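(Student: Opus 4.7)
The plan is a diagram chase combined with a torsion analysis, exploiting multiplication by an appropriate $t$-power.

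Set $\nu := {\bf k}_i - {\bf k}_{w_x^{-1}(i)} \in \Z^{\Hom(K,L)}$ and decompose $\nu = \nu_+ - \nu_-$ with $\nu_{\pm}\in \Z_{\geq 0}^{\Hom(K,L)}$ of disjoint supports. The underlying algebraic identity is $\Sigma_i({\bf k},w_x) + \nu_- = \Sigma_{i-1}({\bf k},w_x) + \nu_+$, so inside $\Rcal_{k(x),K}$ we have $t^{\Sigma_i({\bf k},w_x)} \cdot t^{\nu_-} = t^{\Sigma_{i-1}({\bf k},w_x)} \cdot t^{\nu_+}$. Write $A_j := t^{\Sigma_j({\bf k},w_x)} D_{\rig}(r)^{\leq i-1}$ and $B_j := D_{\rig}(r)/D_{\rig}(r)^{\leq j}$ for $j\in\{i-1,i\}$. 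Then multiplication by $t^{\nu_-}$ on $D_{\rig}(r)$ restricts to an injection $A_i \hookrightarrow A_{i-1}$.

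First I would carry out a diagram chase in the commutative square whose vertical arrows are the canonical inclusions $A_j \hookrightarrow D_{\rig}(r)$ and whose horizontal arrows are multiplication by $t^{\nu_-}$. Pulling $\mathcal{E}$ back along the two paths and pushing forward along $D_{\rig}(r)\twoheadrightarrow B_{i-1}$ yields the identity
$$(t^{\nu_-})^* \widetilde{\mathcal E}_{i-1} \ =\ t^{\nu_-} \cdot \widetilde{\mathcal E}^{\circ}_i \quad \in \quad {\rm Ext}^1_{(\varphi,\Gamma_K)}(A_i,B_{i-1}),$$
where $\widetilde{\mathcal E}_{i-1}$ denotes the image of $\mathcal{E}$ in ${\rm Ext}^1(A_{i-1},B_{i-1})$ (which vanishes by hypothesis), and $\widetilde{\mathcal E}^{\circ}_i\in{\rm Ext}^1(A_i,B_{i-1})$ denotes the image of $\mathcal{E}$ via the \emph{direct} inclusion $A_i \hookrightarrow D_{\rig}(r)$. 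Thus $t^{\nu_-}\widetilde{\mathcal E}^{\circ}_i = 0$, and pushing further along $B_{i-1}\twoheadrightarrow B_i$ shows that the image $\widetilde{\mathcal E}_i$ of $\mathcal{E}$ in ${\rm Ext}^1(A_i,B_i)$ (the content of the corollary is $\widetilde{\mathcal E}_i = 0$) is at least annihilated by $t^{\nu_-}$.

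Second, I would upgrade this $t^{\nu_-}$-torsion statement to actual vanishing. Applying $\Hom_{(\varphi,\Gamma_K)}(A_i,-)$ to the short exact sequence $0\to B_i \xrightarrow{t^{\nu_-}} B_i \to B_i/t^{\nu_-}B_i \to 0$ identifies the $t^{\nu_-}$-torsion of ${\rm Ext}^1(A_i,B_i)$ with the cokernel of $\Hom(A_i,B_i)\xrightarrow{t^{\nu_-}}\Hom(A_i,B_i)$ sitting inside $\Hom(A_i,B_i/t^{\nu_-}B_i)$. A d\'evissage along the triangulations of $A_i$ and $B_i$---whose graded pieces have characters $z^{{\bf k}_{w_x^{-1}(j)}+\Sigma_i({\bf k},w_x)}{\rm unr}(\varphi_j)$ for $j\leq i-1$ and $z^{{\bf k}_{w_x^{-1}(\ell)}}{\rm unr}(\varphi_\ell)$ for $\ell\geq i+1$---reduces the question to computing ${\rm Ext}^0_{(\varphi,\Gamma_K)}(\Rcal_{k(x),K},\Rcal_{k(x),K}(\chi_{j,\ell})/(t_\tau^{\nu_{-,\tau}}))$ for such pairs $(j,\ell)$ and for $\tau$ in the support of $\nu_-$. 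Lemma \ref{null}(i), combined with the very regularity condition $\varphi_\ell\varphi_j^{-1}\notin\{1,q\}$ (Definition \ref{veryreg}) and the strict dominance $k_{\tau,1}>\cdots>k_{\tau,n}$, is expected to rule out the entire non-vanishing range and force this Hom to vanish.

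The main obstacle will be this second step: carefully tracking the weight of each character $\chi_{j,\ell}$ at each embedding $\tau$ after the $\Sigma_i({\bf k},w_x)$-shift, and checking it never lies in the forbidden range $\{-(\nu_{-,\tau}-1),\dots,0\}$ of Lemma \ref{null}(i), together with how the unramified part $\varphi_\ell\varphi_j^{-1}$ enters the analysis of the cokernel. If the direct weight-counting proves delicate, an alternative would be to d\'evisser first on the source $A_i$ and invoke an appropriate variant of Proposition \ref{cris} to identify the torsion subspace with a crystalline Ext group whose vanishing follows transparently from the inequalities between the ${\bf k}_{w_x^{-1}(j)}$.
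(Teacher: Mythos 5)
Your first step is sound: writing $\nu := {\bf k}_i-{\bf k}_{w_x^{-1}(i)}$ with $\nu=\nu_+-\nu_-$, and using the identity $\Sigma_i({\bf k},w_x)+\nu_-=\Sigma_{i-1}({\bf k},w_x)+\nu_+$ together with the $\Rcal_{k(x),K}$-linearity of pullback, you do obtain that the image $\widetilde{\mathcal E}_i\in{\rm Ext}^1_{(\varphi,\Gamma_K)}(A_i,B_i)$ (with your notation $A_i=t^{\Sigma_i({\bf k},w_x)}D_{\rig}(r)^{\leq i-1}$, $B_i=D_{\rig}(r)/D_{\rig}(r)^{\leq i}$) lies in the kernel of the pushforward along $t^{\nu_-}\colon B_i\to B_i$. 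The gap is in the second step: you need ${\rm Ext}^1_{(\varphi,\Gamma_K)}(A_i,B_i)$ to have no $t^{\nu_-}$-torsion, and this is false in general.

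Since $\Hom_{(\varphi,\Gamma_K)}(A_i,B_i)=0$ (the unramified parts of the graded characters are pairwise distinct), the long exact sequence identifies the $t^{\nu_-}$-torsion with $\Hom_{(\varphi,\Gamma_K)}(A_i,B_i/t^{\nu_-}B_i)$. After d\'evissage and Lemma \ref{null}(i), the vanishing of this Hom would require, for all $j\leq i-1$, $\ell\geq i+1$ and all $\tau$, the sharper inequality
$$\Sigma_{i,\tau}({\bf k},w_x)+k_{\tau,w_{x,\tau}^{-1}(j)}-k_{\tau,w_{x,\tau}^{-1}(\ell)}\;\geq\;\nu_{-,\tau},$$
whereas (\ref{bound}) only furnishes $\geq 0$. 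Crucially, Lemma \ref{null}(i) depends \emph{only} on the Sen weight of the character — the unramified part is invisible to it — so the very regularity condition $\varphi_\ell\varphi_j^{-1}\notin\{1,q\}$ that you hope to call on cannot close the gap. And the sharper inequality genuinely fails: take $K=\Q_p$, $n=3$, ${\bf k}=(2,1,0)$, and $w_x$ with $w_x^{-1}(1)=3$, $w_x^{-1}(2)=1$, $w_x^{-1}(3)=2$. At $i=2$, $j=1$, $\ell=3$ one has $\Sigma_2=1$, $\nu_-=1$, and the displayed quantity equals $1+0-1=0<1$. In that example ${\rm Ext}^1_{(\varphi,\Gamma_K)}(A_2,B_2)$ is one-dimensional and the pushforward by $t^{\nu_-}=t$ is the \emph{zero} map on it, so knowing that $\widetilde{\mathcal E}_i$ is $t^{\nu_-}$-torsion gives no information whatsoever.

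The mechanism the paper actually uses is not a torsion argument but a transfer through the crystalline subspace, via Proposition \ref{cris}: for $i'\leq\ell$ the kernel of the restriction ${\rm Ext}^1(D_{\rig}(r)^{\leq i'},D_{\rig}(r)/D_{\rig}(r)^{\leq\ell})\to{\rm Ext}^1(t^{\Sigma_\ell({\bf k},w_x)}D_{\rig}(r)^{\leq i'},D_{\rig}(r)/D_{\rig}(r)^{\leq\ell})$ is exactly ${\rm Ext}^1_{\rm cris}$. One applies this with $(i',\ell)=(i-1,i-1)$ to upgrade the hypothesis to ``the image of $\mathcal E$ in ${\rm Ext}^1(D_{\rig}(r)^{\leq i-1},D_{\rig}(r)/D_{\rig}(r)^{\leq i-1})$ is crystalline'', notes that pushforward along $D_{\rig}(r)/D_{\rig}(r)^{\leq i-1}\twoheadrightarrow D_{\rig}(r)/D_{\rig}(r)^{\leq i}$ preserves crystalline extensions, and applies Proposition \ref{cris} again with $(i',\ell)=(i-1,i)$ to convert back. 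Your closing sentence does gesture at Proposition \ref{cris}, but not accurately: nothing is being shown to vanish and there is no ``torsion subspace'' being identified with a crystalline Ext group. The point is that the crystalline condition — unlike the $t^{\nu_-}$-torsion condition — is the invariant that survives the pushforward from $B_{i-1}$ to $B_i$.
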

\begin{proof}
By \ Proposition \ \ref{cris} \ applied \ with \ $(i,\ell)=(i-1,i-1)$, \ the \ image \ of \ $\mathcal E$ \ in ${\rm Ext}^1_{(\varphi,\Gamma_K)}(D_{\rig}(r)^{\leq i-1},D_{\rig}(r)/D_{\rig}(r)^{\leq i-1})$ sits in:
$${\rm Ext}^1_{\rm cris}\big(D_{\rig}(r)^{\leq i-1},D_{\rig}(r)/D_{\rig}(r)^{\leq i-1}\big).$$
Hence its image in ${\rm Ext}^1_{(\varphi,\Gamma_K)}(D_{\rig}(r)^{\leq i-1},D_{\rig}(r)/D_{\rig}(r)^{\leq i})$ sits in:
$${\rm Ext}^1_{\rm cris}\big(D_{\rig}(r)^{\leq i-1},D_{\rig}(r)/D_{\rig}(r)^{\leq i}\big).$$
It \ follows \ from \ Proposition \ \ref{cris} \ again \ applied \ with \ $(i,\ell)=(i-1,i)$ \ that \ it \ maps \ to zero in ${\rm Ext}^1_{(\varphi,\Gamma_K)}(t^{\Sigma_{i}({\bf k},w_x)}D_{\rig}(r)^{\leq i-1},D_{\rig}(r)/D_{\rig}(r)^{\leq i})$.
\end{proof}

\begin{coro}\label{zero3}
For \ $2\leq i\leq n$ we have a surjection:
\begin{multline*}
{\rm Ext}^1_{(\varphi,\Gamma_K)}\big(D_{\rig}(r)^{\leq i}/t^{\Sigma_{i}({\bf k},w_x)}D_{\rig}(r)^{\leq i},D_{\rig}(r)/D_{\rig}(r)^{\leq i}\big)\twoheadrightarrow \\
{\rm Ext}^1_{(\varphi,\Gamma_K)}\big(D_{\rig}(r)^{\leq i-1}/t^{\Sigma_{i}({\bf k},w_x)}D_{\rig}(r)^{\leq i-1},D_{\rig}(r)/D_{\rig}(r)^{\leq i}\big)
\end{multline*}
where the map is the pullback along:
$$D_{\rig}(r)^{\leq i-1}/t^{\Sigma_{i}({\bf k},w_x)}D_{\rig}(r)^{\leq i-1}\hookrightarrow D_{\rig}(r)^{\leq i}/t^{\Sigma_{i}({\bf k},w_x)}D_{\rig}(r)^{\leq i}.$$
\end{coro}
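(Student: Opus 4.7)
The plan is to reduce Corollary~\ref{zero3} to a statement about crystalline $(\varphi,\Gamma_K)$-cohomology, where the surjectivity would then be essentially formal.

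First I would invoke Proposition~\ref{cris} twice. Applied with $(i,\ell) = (i,i)$ it gives
$${\rm Ext}^1_{(\varphi,\Gamma_K)}\big(D_{\rig}(r)^{\leq i}/t^{\Sigma_{i}({\bf k},w_x)}D_{\rig}(r)^{\leq i},\, D_{\rig}(r)/D_{\rig}(r)^{\leq i}\big) \cong {\rm Ext}^1_{\rm cris}\big(D_{\rig}(r)^{\leq i},\, D_{\rig}(r)/D_{\rig}(r)^{\leq i}\big),$$
while applied with $(i,\ell) = (i-1, i)$---legal because the hypothesis $i \geq 2$ forces $i-1 \geq 1$---it gives
$${\rm Ext}^1_{(\varphi,\Gamma_K)}\big(D_{\rig}(r)^{\leq i-1}/t^{\Sigma_{i}({\bf k},w_x)}D_{\rig}(r)^{\leq i-1},\, D_{\rig}(r)/D_{\rig}(r)^{\leq i}\big) \cong {\rm Ext}^1_{\rm cris}\big(D_{\rig}(r)^{\leq i-1},\, D_{\rig}(r)/D_{\rig}(r)^{\leq i}\big).$$

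Next I would check that these identifications intertwine the pullback map of Corollary~\ref{zero3} with the canonical pullback ${\rm Ext}^1_{\rm cris}(D_{\rig}(r)^{\leq i},N) \to {\rm Ext}^1_{\rm cris}(D_{\rig}(r)^{\leq i-1},N)$ along $D_{\rig}(r)^{\leq i-1} \hookrightarrow D_{\rig}(r)^{\leq i}$, writing $N := D_{\rig}(r)/D_{\rig}(r)^{\leq i}$. This comes from the obvious commutative square relating the inclusion $D_{\rig}(r)^{\leq i-1} \hookrightarrow D_{\rig}(r)^{\leq i}$ with its quotient $D_{\rig}(r)^{\leq i-1}/t^{\Sigma_{i}({\bf k},w_x)}D_{\rig}(r)^{\leq i-1} \hookrightarrow D_{\rig}(r)^{\leq i}/t^{\Sigma_{i}({\bf k},w_x)}D_{\rig}(r)^{\leq i}$ via the canonical quotient maps, together with the realization in Proposition~\ref{cris} of both Ext groups as subspaces of the corresponding Ext groups of the untwisted modules. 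It is a routine diagram chase.

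Finally I would produce the surjectivity on the crystalline side. The short exact sequence $0 \to D_{\rig}(r)^{\leq i-1} \to D_{\rig}(r)^{\leq i} \to {\rm gr}_i D_{\rig}(r) \to 0$ is an exact sequence of crystalline $(\varphi,\Gamma_K)$-modules: each $D_{\rig}(r)^{\leq j}$, as well as $N$, is crystalline as a successive extension of the rank-one crystalline graded pieces listed in (\ref{triang}). Applying ${\rm Ext}^*_{\rm cris}(\cdot, N)$ yields a long exact sequence, and since there is no ${\rm Ext}^2_{\rm cris}$---as recalled just before Lemma~\ref{dim1}, invoking \cite[Cor.~1.4.6]{Benois}---this long exact sequence terminates at a surjection ${\rm Ext}^1_{\rm cris}(D_{\rig}(r)^{\leq i},N) \twoheadrightarrow {\rm Ext}^1_{\rm cris}(D_{\rig}(r)^{\leq i-1},N)$. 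Combined with the two Proposition~\ref{cris} identifications, this gives Corollary~\ref{zero3}. No real obstacle appears; the only mildly delicate step is the diagrammatic compatibility in step two, and that is purely formal once one tracks through the constructions in the proof of Proposition~\ref{cris}.
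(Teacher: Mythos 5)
Your proposal is correct and takes exactly the same route as the paper: applying Proposition~\ref{cris} with $(i,\ell)=(i,i)$ and $(i,\ell)=(i-1,i)$ to identify both sides with crystalline Ext groups, and then deducing surjectivity of the crystalline pullback from the absence of ${\rm Ext}^2_{\rm cris}$ (the paper's phrasing before Lemma~\ref{dim1}: ${\rm Ext}^1_{\rm cris}$ sends injections to surjections in the left entry, via \cite[Cor.1.4.6]{Benois}). The only thing you spell out that the paper leaves implicit is the compatibility of the two identifications with the pullback map, which is indeed a routine diagram chase.
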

\begin{proof}
This follows from Proposition \ref{cris} (applied with $(i,\ell)=(i,i)$ and $(i,\ell)=(i-1,i)$) and the fact that the map:
$${\rm Ext}^1_{\rm cris}\big(D_{\rig}(r)^{\leq i},D_{\rig}(r)/D_{\rig}(r)^{\leq i}\big)\longrightarrow
{\rm Ext}^1_{\rm cris}\big(D_{\rig}(r)^{\leq i-1},D_{\rig}(r)/D_{\rig}(r)^{\leq i}\big)$$
is surjective.
\end{proof}

\section{Modularity and local geometry of the trianguline variety}\label{modularity}

We prove that the main conjecture of \cite{BHS} (see \cite[Conj.3.22]{BHS}), and thus the classical modularity conjectures by \cite[Prop.3.26]{BHS}, imply Conjecture \ref{mainconj} when $\rbar$ ``globalizes'' and $x$ is very regular.

\subsection{A closed embedding}\label{closed}

Assuming the main conjecture of \cite{BHS} and using Theorem \ref{intercompanion} below we construct a certain closed embedding in the trianguline variety (Proposition \ref{companion}).

We fix a continuous representation $\rbar:\mathcal{G}_K\rightarrow \GL_n(k_L)$ as in \S\ref{begin} and keep the local notation of \S\ref{localpart1} and \S\ref{phiGammacohomology}. We also assume that there exist number fields $F/F^+$, a unitary group $G/F^+$, a tame level $U^p$, a set of finite places $S$ and an irreducible representation $\rhobar$ as in \S\ref{classic} such that all the assumptions in \S\ref{classic} and \S\ref{firstclassical} are satisfied, and such that for each place $v\in S_p$ there is a place $\tilde v$ of $F$ dividing $v$ satisfying $F_{\tilde v}\cong K$ and $\rhobar_{\tilde v}\cong \rbar$. Note that this implies in particular $(2n,p)=1$ (as $p>2$ and as $(n,p)=1$ by the proof of \cite[Th.9]{GHTT}). Assuming $(2n,p)=1$, it follows from \cite[Lem.2.2]{CEGGPS} and \cite[\S2.3]{CEGGPS} that such $(F/F^+,G,U^p,S,\rhobar)$ always exist if $n=2$ or if $\rbar$ is (absolutely) semi-simple (increasing $L$ if necessary).

We recall the statement of \cite[Conj.3.22]{BHS} (see \S\ref{weyl} for $\widetilde X_{\rm tri}^\square(\rbar)$).

\begin{conj}\label{BHS}
The rigid subvariety $X_{\rm tri}^{\Xfrak^p \rm-aut}(\rhobar_p)$ of $X_{\rm tri}^\square(\rhobar_p)$ doesn't depend on $\Xfrak^p$ and is isomorphic to $\widetilde X_{\rm tri}^\square(\rhobar_p):= \prod_{v\in S_p}\widetilde X_{\rm tri}^\square(\rhobar_{\tilde v})$.
\end{conj}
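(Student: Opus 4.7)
The plan is to establish the two inclusions $X_{\rm tri}^{\Xfrak^p\rm-aut}(\rhobar_p) \subseteq \widetilde X_{\rm tri}^\square(\rhobar_p)$ and $\widetilde X_{\rm tri}^\square(\rhobar_p) \subseteq X_{\rm tri}^{\Xfrak^p\rm-aut}(\rhobar_p)$ separately, the latter being the substantive content and essentially equivalent to classical modularity lifting in the form of the Taylor--Wiles--Kisin method.

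First I would prove the forward inclusion, which should be unconditional. By density of very classical points on each irreducible component of $X_p(\rhobar)$ (of the kind used in the proof of Proposition \ref{acconZaut} and in \cite[Th.3.18]{BHS}), every such component contains a Zariski-dense set of crystalline strictly dominant noncritical points whose image in $X_{\rm tri}^\square(\rhobar_{\tilde v})$ at each $v \in S_p$ lies in $U_{\rm tri}^\square(\rhobar_{\tilde v})$ (using Lemma \ref{genericopensinXcris} and the fact that noncritical crystalline dominant points with pairwise distinct Frobenius eigenvalues are exactly the image of $\widetilde V_{\rhobar_{\tilde v}}^{\square,{\bf k}{\rm -cr}}$). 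Hence every component of $X_{\rm tri}^{\Xfrak^p\rm-aut}(\rhobar_p)$ contains such a noncritical crystalline point, and by the very definition of $\widetilde X_{\rm tri}^\square(\rhobar_{\tilde v})$ (as the union of components of $X_{\rm tri}^\square(\rhobar_{\tilde v})$ meeting $U_{\rm tri}^\square(\rhobar_{\tilde v})$ in a crystalline point) this component lies in $\prod_{v\in S_p}\widetilde X_{\rm tri}^\square(\rhobar_{\tilde v})$.

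For the reverse inclusion, the strategy is to start from any irreducible component $Z = \prod_{v\in S_p} Z_v$ of $\widetilde X_{\rm tri}^\square(\rhobar_p)$ and produce an automorphic point on $Z$ that lifts to a point of $X_p(\rhobar)$; by the description (\ref{union}) of $X_p(\rhobar)$ as a union of irreducible components of the ambient product, this will force the whole $\Xfrak^p \times Z \times \Ubb^g$ into $X_p(\rhobar)$ for every $\Xfrak^p$. By the definition of $\widetilde X_{\rm tri}^\square(\rhobar_{\tilde v})$, I can pick a crystalline strictly dominant noncritical point $x_v = (r_v, \delta_v)$ in $Z_v \cap U_{\rm tri}^\square(\rhobar_{\tilde v})$, and by moving within $Z_v$ through the smooth Zariski-open locus of Lemma \ref{genericopensinXcris}, I may assume the Frobenius eigenvalues are pairwise distinct, the parameter $\delta_v$ is very regular, and the weights are as regular as desired. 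Now invoke the classical modularity lifting conjectures (for trivial inertial type and arbitrary sufficiently regular dominant Hodge--Tate weights) to find a Galois representation $\rho: \Gcal_F \to \GL_n(\overline\Q_p)$ with $\rho|_{\Gcal_{F_{\tilde v}}} \cong r_v$, lifting $\rhobar$ and associated to some automorphic representation contributing to $\widehat S(U^p,L)_{\mathfrak{m}^S}$. Such $\rho$ gives a classical point on $Y(U^p,\rhobar) \hookrightarrow X_p(\rhobar)$, corresponding to some specific refinement of $(r_v)_v$; by the companion points theorem (Theorem \ref{intercompanion}), applied to the locally algebraic character whose shift by strongly linked weights gives the chosen triangulation parameter $\delta_v$ on each $Z_v$, the companion point $((r_v)_v, \delta)$ is also in $X_p(\rhobar)$. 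Since this point lies in $Z$, we conclude $Z \subseteq X_{\rm tri}^{\Xfrak^p\rm-aut}(\rhobar_p)$, and the independence of $\Xfrak^p$ follows at the same time.

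The main obstacle is, unsurprisingly, the global input: producing automorphic Galois representations realizing arbitrary prescribed crystalline local representations at each place above $p$, for any choice of noncritical refinement after a companion-points shift. The currently available modularity lifting theorems (BLGGT, Thorne, etc.) come close but do not cover all weights and all inertial-type-trivial crystalline representations needed here, which is precisely why Conjecture \ref{BHS} remains a conjecture. A subsidiary subtlety is the independence on the component $\Xfrak^p \subset \Xfrak_{\rhobar^p}$ of the auxiliary bad-place framed deformation ring: the argument above produces a point whose image in $\Xfrak_{\rhobar^p}$ might {\it a priori} live on any component, but by invoking local-global compatibility at places in $S \setminus S_p$ (for instance Caraiani's temperedness result used in the proof of Corollary \ref{mainclassic}) together with the smoothness of generic fibers of framed local deformation rings at tempered points (\cite[Lem.1.3.2(1)]{BLGGT}), one verifies that the Taylor--Wiles--Kisin patched construction is agnostic to the specific choice of $\Xfrak^p$ and that all components of $\Xfrak_{\rhobar^p}$ appear in the support of $M_\infty$, giving the claimed independence.
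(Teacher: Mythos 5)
This statement is a \emph{conjecture}, recalled verbatim from \cite[Conj.3.22]{BHS}; the paper offers no proof of it and cannot, since as you yourself note the required modularity lifting statements remain open. What the paper does say (Remark \ref{conjvariant}(iii), citing \cite[Prop.3.26]{BHS}) is that the conjecture follows from the classical modularity lifting conjectures for $\rhobar$ in all weights with trivial inertial type. Your proposal is therefore not a proof of the statement but a sketch of that conditional implication, and you are right to flag the conditional nature explicitly at the end.

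As a sketch of the conditional argument, the high-level shape is correct (the forward inclusion is indeed unconditional via accumulation of noncritical crystalline points, cf. Proposition \ref{acconZaut}), but your reverse inclusion contains a specific misapplication of Theorem \ref{intercompanion}. The companion points theorem moves between parameters $\epsilon$ and $\delta$ with $\epsilon\delta^{-1}$ an \emph{algebraic} character of $T_p$ and $\epsilon$ strongly linked to $\delta$: it changes the algebraic (weight) part of the parameter while keeping the smooth part, hence the refinement, fixed. It cannot move between different orderings of Frobenius eigenvalues on $D_{\rm cris}(r_v)$, since two parameters arising from distinct refinements of the same crystalline $r_v$ at a fixed weight differ by a smooth, non-algebraic character. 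In your argument this step is unnecessary anyway: having picked $(r_v,\delta_v)\in Z_v\cap U_{\rm tri}^\square(\rhobar_{\tilde v})$ noncritical and very regular, the classical automorphic form furnished by modularity lifting already produces, via its locally algebraic vectors, a $T_p^+$-eigenvector in the Jacquet module for each of the $n!$ orderings of Frobenius eigenvalues; the one matching $\delta_v$ gives directly a classical point of $Y(U^p,\rhobar)\hookrightarrow X_p(\rhobar)$ mapping to $(r_v,\delta_v)$, which is a smooth point of $X_{\rm tri}^\square(\rhobar_{\tilde v})$ and hence pins down the component $Z_v$. The place where Theorem \ref{intercompanion} genuinely does work in this paper is in Proposition \ref{companion} of \S\ref{closed}, to produce companion points with shifted (non-dominant) weight at a \emph{fixed} refinement; that is a different maneuver. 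Finally, your claim that ``all components of $\Xfrak_{\rhobar^p}$ appear in the support of $M_\infty$'' is the crux of the independence-of-$\Xfrak^p$ assertion and needs more than a citation of temperedness and smoothness; it is itself a modularity statement at auxiliary bad places, and the hand-wave here is where the real difficulty concentrates.
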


\begin{rema}\label{conjvariant}
{\rm (i) By (\ref{union}), Conjecture \ref{BHS} is thus equivalent to $X_p(\rhobar)\buildrel{\sim}\over \rightarrow  \Xfrak_{\rhobar^p}\times \widetilde X_{\rm tri}^\square(\rhobar_p)\times \Ubb^g$.\\
(ii) The authors do not know if $\widetilde X_{\rm tri}^\square(\rhobar_p)$ is really strictly smaller than $X_{\rm tri}^\square(\rhobar_p)$.\\
(iii) Finally, recall that Conjecture \ref{BHS} is {\it implied} by the classical modularity lifting conjectures for $\rhobar$ (in all weights with trivial inertial type), see \cite[Prop.3.26]{BHS}.}
\end{rema}

Let ${\bf k}:=({\bf k}_i)_{1\leq i\leq n}$ where ${\bf k}_i:=(k_{\tau,i})_{\tau:\, K\hookrightarrow L}\in\mathbb{Z}^{\Hom(K,L)}$ is such that $k_{\tau,i}> k_{\tau,i+1}$ for all $i$ and $\tau$. For $w=(w_{\tau})_{\tau:\, K\hookrightarrow L}\in W=\prod_{\tau: K\hookrightarrow L}\Scal_n$, denote by $\mathcal{W}^n_{w,{\bf k},L}\subset \mathcal{W}^n_L$ the Zariski-closed (reduced) subset of characters $(\eta_1,\dots,\eta_n)$ defined by the equations:
\begin{equation}\label{equations}
{\rm wt}_\tau(\eta_{w_\tau(i)}\eta_i^{-1})=k_{\tau,i}-k_{\tau,w_\tau^{-1}(i)},\ \ 1\leq i\leq n,\ \ \tau:\, K\hookrightarrow L.
\end{equation}
For instance one always has:
\begin{equation}\label{point}
(z^{{\bf k}_{w^{-1}(1)}}\chi_1,\dots,z^{{\bf k}_{w^{-1}(n)}}\chi_n)\in \mathcal{W}^n_{w,{\bf k},L}
\end{equation}
where $\chi_i\in \mathcal{W}_L$ are finite order characters. Note that $\mathcal{W}^n_{1,{\bf k},L}=\mathcal{W}^n_L$. We define an automorphism $\jmath_{w,{\bf k}}:\mathcal{T}^n_L\buildrel\sim\over\rightarrow \mathcal{T}^n_L$, $\eta=(\eta_1,\dots,\eta_n)\mapsto \jmath_{w,{\bf k}}(\eta)=\jmath_{w,{\bf k}}(\eta_1,\dots,\eta_n)$ by:
$$\jmath_{w,{\bf k}}(\eta_1,\dots,\eta_n):= (z^{{\bf k}_1-{\bf k}_{w^{-1}(1)}}\eta_1,\dots,z^{{\bf k}_n-{\bf k}_{w^{-1}(n)}}\eta_n)$$
which we extend to an automorphism $\jmath_{w,{\bf k}}:\mathfrak{X}_{\rbar}^\square\times \mathcal{T}^n_L\buildrel\sim\over\rightarrow\mathfrak{X}_{\rbar}^\square\times \mathcal{T}^n_L$, $(r,\eta)\mapsto (r,\jmath_{w,{\bf k}}(\eta))$. We will be particularly interested in applying $\jmath_{w,{\bf k}}$ to points whose image in $\mathcal{W}^n_L$ lies in $\mathcal{W}^n_{w,{\bf k},L}$.

\begin{ex}\label{example}
{\rm Consider the case $[K:\Q_p]=2$ (so $\Hom(K,L)=\{\tau,\tau'\}$), $n=3$ and $w=(w_\tau,w_{\tau'})$ with $w_{\tau}=s_1s_2s_1$, $w_{\tau'}=s_2s_1$ ($s_1,s_2$ being the simple reflections in ${\mathcal S}_3$). Then $\mathcal{W}^3_{w,{\bf k},L}$ is the set of characters of the form:
$$\eta=(\eta_1,\eta_2,\eta_3) =  \big(\tau(z)^{k_{\tau,3}}\tau'(z)^{k_{\tau',2}}\chi_1, \ \tau(z)^{k_{\tau,2}}\tau'(z)^{k_{\tau',3}}\chi_2, \ \tau(z)^{k_{\tau,1}}\tau'(z)^{k_{\tau',1}}\chi_3\big)$$
where ${\rm wt}_\tau(\chi_1)={\rm wt}_\tau(\chi_3)$ and ${\rm wt}_{\tau'}(\chi_1)={\rm wt}_{\tau'}(\chi_2)={\rm wt}_{\tau'}(\chi_3)$. Note that there is no condition on ${\rm wt}_\tau(\chi_2)$ (so one could as well rewrite the middle character as just $\tau'(z)^{k_{\tau',3}}\chi_2 $). One has (when the $\eta_i$, or equivalently the $\chi_i$, come from characters in $\mathcal{T}_L$):
$$\jmath_{w,{\bf k}}(\eta)= \big(\tau(z)^{k_{\tau,1}}\tau'(z)^{k_{\tau',1}}\chi_1,\ \tau(z)^{k_{\tau,2}}\tau'(z)^{k_{\tau',2}}\chi_2,\ \tau(z)^{k_{\tau,3}}\tau'(z)^{k_{\tau',3}}\chi_3\big).$$}
\end{ex}

Let $\widetilde U_{\rm tri}^\square(\rbar):=U_{\rm tri}^\square(\rbar)\cap \widetilde X_{\rm tri}^\square(\rbar)$ (a union of connected components of $U_{\rm tri}^\square(\rbar)$), then $\widetilde U_{\rm tri}^\square(\rbar)\times_{\mathcal{W}^n_L}\mathcal{W}^n_{w,{\bf k},L}$ is reduced (since smooth over $\mathcal{W}^n_{w,{\bf k},L}$) and Zariski-open (but not necessarily Zariski-dense) in $(\widetilde X_{\rm tri}^\square(\rbar)\times_{\mathcal{W}^n_L}\mathcal{W}^n_{w,{\bf k},L})^\red$ where $(-)^{\red}$ means the associated reduced closed analytic subvariety. We denote by $\overline{\widetilde U_{\rm tri}^\square(\rbar)\times_{\mathcal{W}^n_L}\mathcal{W}^n_{w,{\bf k},L}}$ its Zariski-closure, so that we have a chain of Zariski-closed embeddings:
\begin{multline*}
\overline{\widetilde U_{\rm tri}^\square(\rbar)\times_{\mathcal{W}^n_L}\mathcal{W}^n_{w,{\bf k},L}}\subseteq (\widetilde X_{\rm tri}^\square(\rbar)\times_{\mathcal{W}^n_L}\mathcal{W}^n_{w,{\bf k},L})^\red\subseteq \widetilde X_{\rm tri}^\square(\rbar)\times_{\mathcal{W}^n_L}\mathcal{W}^n_{w,{\bf k},L}\subseteq \widetilde X_{\rm tri}^\square(\rbar)\\
\subseteq X_{\rm tri}^\square(\rbar)\subseteq \mathfrak{X}_{\rbar}^\square\times \mathcal{T}^n_L.
\end{multline*}

\begin{prop}\label{companion}
Assume Conjecture \ref{BHS}, then for $w\in W$ the automorphism $\jmath_{w,{\bf k}}:\mathfrak{X}_{\rbar}^\square\times \mathcal{T}^n_L\buildrel\sim\over\rightarrow\mathfrak{X}_{\rbar}^\square\times \mathcal{T}^n_L$ induces a closed embedding of reduced rigid analytic spaces over $L$:
$$\jmath_{w,{\bf k}}: \overline{\widetilde U_{\rm tri}^\square(\rbar)\times_{\mathcal{W}^n_L}\mathcal{W}^n_{w,{\bf k},L}}\hookrightarrow \widetilde X_{\rm tri}^\square(\rbar)\subseteq X_{\rm tri}^\square(\rbar).$$
\end{prop}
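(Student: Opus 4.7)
The plan is to combine Conjecture \ref{BHS} with Theorem \ref{intercompanion} (companion points on the patched eigenvariety) together with a Zariski-density argument. By the hypothesis of the subsection, we have global data $(F/F^+,G,U^p,S,\rhobar)$ with $\rhobar_{\tilde v}\cong \rbar$ for every $v\in S_p$, so that $\widetilde X_{\rm tri}^\square(\rhobar_p)\cong \prod_{v\in S_p}\widetilde X_{\rm tri}^\square(\rbar)$. Conjecture \ref{BHS} (cf.~Remark \ref{conjvariant}(i)) gives $X_p(\rhobar)\cong \Xfrak_{\rhobar^p}\times \prod_{v\in S_p}\widetilde X_{\rm tri}^\square(\rbar)\times \Ubb^g$. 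Fix a place $v_0\in S_p$: every $x\in \widetilde X_{\rm tri}^\square(\rbar)$ is then the $v_0$-component of some $y=(y^p,(r_v)_{v\in S_p},u,\epsilon)\in X_p(\rhobar)$, with the other factors chosen freely. Since $\widetilde X_{\rm tri}^\square(\rbar)$ is Zariski-closed in $X_{\rm tri}^\square(\rbar)$ and the source $\overline{\widetilde U_{\rm tri}^\square(\rbar)\times_{\mathcal{W}^n_L}\mathcal{W}^n_{w,{\bf k},L}}$ is the Zariski-closure of a subspace of $\widetilde U_{\rm tri}^\square(\rbar)\subseteq \widetilde X_{\rm tri}^\square(\rbar)$, it already lies inside $\widetilde X_{\rm tri}^\square(\rbar)$, so such a lift always exists.

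Given a point $x=(r,\delta)$ in the source, set $\epsilon_{v_0}:=\imath_{v_0}^{-1}(\delta)$, $\epsilon'_{v_0}:=\imath_{v_0}^{-1}(\jmath_{w,{\bf k}}(\delta))$, and $\epsilon'_v:=\epsilon_v$ for $v\neq v_0$. The shift $\jmath_{w,{\bf k}}$ multiplies $\delta$ by the algebraic character $(z^{{\bf k}_i-{\bf k}_{w^{-1}(i)}})_i$, hence $\epsilon(\epsilon')^{-1}$ is algebraic. A direct computation using the defining equations (\ref{equations}) of $\mathcal{W}^n_{w,{\bf k},L}$ together with the $(\varepsilon\circ\rec)^{i-1}$-twist in $\imath_{v_0}$ shows that the Lie-algebra differentials of $\epsilon_{v_0}$ and $\epsilon'_{v_0}$ are related by the dot-action of the Weyl-group element $w\in\prod_\tau\mathcal{S}_n$, and that the differential of $\epsilon'_{v_0}$ is strictly dominant on a Zariski-dense open locus of the source (using that ${\bf k}$ is strictly dominant). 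On this locus $\epsilon_{v_0}$ is strongly linked to $\epsilon'_{v_0}$ in the sense of \cite[\S5.1]{HumBGG}; applying Theorem \ref{intercompanion} to $y$ and $\epsilon'$ produces a point $y'=(y^p,(r_v)_{v\in S_p},u,\epsilon')\in X_p(\rhobar)$ whose $v_0$-component in $\widetilde X_{\rm tri}^\square(\rbar)$, under the identification of Remark \ref{conjvariant}(i), is exactly $\jmath_{w,{\bf k}}(x)$. Hence $\jmath_{w,{\bf k}}(x)\in \widetilde X_{\rm tri}^\square(\rbar)$ for every $x$ in this Zariski-dense open locus.

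Since $\widetilde X_{\rm tri}^\square(\rbar)$ is Zariski-closed in $\Xfrak_{\rbar}^\square\times \mathcal{T}^n_L$ and $\jmath_{w,{\bf k}}$ is continuous, the inclusion $\jmath_{w,{\bf k}}(x)\in \widetilde X_{\rm tri}^\square(\rbar)$ obtained on the dense locus extends by continuity to the whole source. The resulting map, being the restriction of the automorphism $\jmath_{w,{\bf k}}$ of $\Xfrak_{\rbar}^\square\times \mathcal{T}^n_L$, is then automatically a closed embedding into $\widetilde X_{\rm tri}^\square(\rbar)$. The main technical obstacle is the passage from the weight equations (\ref{equations}) to the strong-linkage relation of \cite[\S5.1]{HumBGG}: one must carefully track both the $(i-1)$-shift coming from the modulus-character twist $\imath_{v_0}$ and the half-sum-of-positive-roots shift intrinsic to the dot-action, and then verify that the dominance of $\epsilon'_{v_0}$ (which fixes the direction of strong linkage) really does hold on a Zariski-dense open subset of the source whenever ${\bf k}$ is strictly dominant. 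A secondary but pleasant point is that Theorem \ref{intercompanion} modifies only the locally analytic character, so the components $y^p$, $(r_v)_{v\in S_p}$ and $u$ of the companion point $y'$ coincide with those of $y$, which is exactly what is needed to read off $\jmath_{w,{\bf k}}(x)$ as the $v_0$-component of $y'$.
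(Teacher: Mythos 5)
Your overall strategy (invoke Conjecture \ref{BHS} to identify $X_p(\rhobar)$ with $\Xfrak_{\rhobar^p}\times \prod_{v}\widetilde X_{\rm tri}^\square(\rbar)\times\Ubb^g$, lift a point of the Zariski-dense subset $\widetilde U_{\rm tri}^\square(\rbar)\times_{\mathcal{W}^n_L}\mathcal{W}^n_{w,{\bf k},L}$ to $X_p(\rhobar)$, apply Theorem \ref{intercompanion}, and close up by density) is the paper's. Your variant of twisting only the $v_0$-component rather than all $v\in S_p$ at once is harmless, since Definition \ref{stronglink} is a product condition over places.

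However, there is a genuine gap in the crux of the argument, namely the verification that $\epsilon_{v_0}\uparrow_{\mathfrak{t}_{v_0}}\epsilon'_{v_0}$. You propose to argue that "the differential of $\epsilon'_{v_0}$ is strictly dominant on a Zariski-dense open locus" and then invoke strong linkage from the dot-action relation. This does not work, for two reasons. First, the $\tau$-component of the differential of $\epsilon'_{v_0}$ is $(k_{\tau,1}+s_{\tau,1},\,k_{\tau,2}+s_{\tau,2}+1,\,\dots,\,k_{\tau,n}+s_{\tau,n}+n-1)$ where the $s_{\tau,i}\in k(x)$ are the Sen weights of the smooth parts of $\delta$; these are arbitrary $p$-adic numbers, so "strict dominance" is not even a well-defined (let alone Zariski-open or generically satisfied) condition. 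Second, even if one could arrange dominance, the implication "$\mu$ lies in the dot-orbit of a dominant $\lambda$ $\Rightarrow \mu\uparrow\lambda$" is a theorem only when the reflections in the chain have integral pairings with $\lambda+\rho$; for non-integral $\lambda$ this requires real verification. The paper's argument exploits precisely what you omit: the equations (\ref{equations}) defining $\mathcal{W}^n_{w,{\bf k},L}$ force $s_{\tau,i}=s_{\tau,w_\tau^{-1}(i)}$ for all $i$, so the smooth Sen weights are constant on each cycle of $w_\tau$. This both produces the dot-action relation $\imath_{v_0}(\delta)_\tau=w_\tau\cdot\jmath_{w,{\bf k}}(\imath_{v_0}(\delta))_\tau$ and guarantees that the pairings $\langle\lambda+\rho,\alpha^\vee\rangle$ are integers whenever $\alpha$ connects two indices in the support of a single cycle. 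The paper then decomposes $w_\tau$ into commuting cycles, builds a Bruhat-increasing chain of reflections $s_{\alpha_d}\cdots s_{\alpha_1}$ with each $\alpha_i$ inside one cycle's support, and verifies the linkage inequalities along this chain using the strict dominance of ${\bf k}$ itself (not of $\epsilon'_{v_0}$) together with the equality of the $s_{\tau,i}$ within cycles. This holds for \emph{every} point of $\widetilde U_{\rm tri}^\square(\rbar)\times_{\mathcal{W}^n_L}\mathcal{W}^n_{w,{\bf k},L}$, so no genericity is needed for strong linkage; Zariski-density enters only to pass from $\widetilde U_{\rm tri}^\square$ to its closure.
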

\begin{proof}
Since $\widetilde U_{\rm tri}^\square(\rbar)\times_{\mathcal{W}^n_L}\mathcal{W}^n_{w,{\bf k},L}$ is Zariski-dense in $\overline{\widetilde U_{\rm tri}^\square(\rbar)\times_{\mathcal{W}^n_L}\mathcal{W}^n_{w,{\bf k},L}}$, it is enough to prove $\jmath_{w,{\bf k}}(\widetilde U_{\rm tri}^\square(\rbar)\times_{\mathcal{W}^n_L}\mathcal{W}^n_{w,{\bf k},L})\subseteq \widetilde X_{\rm tri}^\square(\rbar)$, i.e. that any point $x'=(r',\delta')$ in $\widetilde U_{\rm tri}^\square(\rbar)$ with $\omega(x')\in \mathcal{W}^n_{w,{\bf k},L}$ is such that $\jmath_{w,{\bf k}}(x')$ is still in $\widetilde X_{\rm tri}^\square(\rbar)$.

Recall that by assumption:
\begin{equation}\label{rappelconj}
X_p(\rhobar) \buildrel{\substack{(\ref{union})\\ \sim}}\over \longrightarrow  \Xfrak_{\rhobar^p}\times \widetilde X_{\rm tri}^\square(\rhobar_p)\times \Ubb^g\subseteq \Xfrak_{\rhobar^p}\times (\Xfrak_{\rhobar_p}\times \widehat T_{p,L})\times \Ubb^g.
\end{equation}
Let $y'\in X_p(\rhobar)$ be any point such that its image in $\widetilde X_{\rm tri}^\square(\rhobar_p)$ by (\ref{rappelconj}) is $(x')_{v\in S_p}$. Write again $w$ for the element $(w)_{v\in S_p}\in \prod_{v\in S_p}(\prod_{F_{\tilde v}\hookrightarrow L}\Scal_n)$ (that is, for each $v$ we have the same element $w=(w_{\tau})_{\tau:\, K\hookrightarrow L}\in \prod_{\tau: K\hookrightarrow L}\Scal_n$), ${\bf k}$ for $({\bf k})_{v\in S_p}\in \prod_{v\in S_p}\Z^{\Hom(K,L)}$ (ibid.), $\jmath_{w,{\bf k}}$ for the automorphism $(\jmath_{w,{\bf k}})_{v\in S_p}$ of $\widehat T_{p,L}\cong \prod_{v\in S_p}\widehat T_{v,L}\cong \prod_{v\in S_p}\mathcal{T}^n_L$ and (again) $\jmath_{w,{\bf k}}$ for the automorphism $\id \times (\id \times \jmath_{w,{\bf k}})\times \id$ of $\Xfrak_{\rhobar^p}\times (\Xfrak_{\rhobar_p}\times \widehat T_{p,L})\times \Ubb^g$. Then it is enough to prove that $\jmath_{w,{\bf k}}(y')\in X_p(\rhobar)$ (via (\ref{rappelconj})). Writing $y'=(\mathfrak{m}',\epsilon')\in X_p(\rhobar)\subseteq \Xfrak_\infty\times \widehat T_{p,L}$ where $\mathfrak{m}'\subset R_\infty[1/p]$ is the maximal ideal corresponding to the projection of $y'$ in $\Xfrak_\infty$ and $\epsilon'=(\imath_v(\delta'))_{v\in S_p}$, we have $\Hom_{T_p}(\epsilon',J_{B_p}(\Pi_\infty^{R_\infty-\rm an}[\mathfrak{m}']\otimes_{k(\mathfrak{m}')}k(y')))\ne 0$ (see (\ref{pointpatching})) and we have to prove (note that $\jmath_{w,{\bf k}}\circ\imath_v^{-1}=\imath_v^{-1}\circ\jmath_{w,{\bf k}}$ on $\widehat T_{v,L}$ and that $k(y')=k(\jmath_{w,{\bf k}}(y'))$):
\begin{equation}\label{nonzero}
\Hom_{T_p}\big(\jmath_{w,{\bf k}}(\epsilon'),J_{B_p}(\Pi_\infty^{R_\infty-\rm an}[\mathfrak{m}']\otimes_{k(\mathfrak{m}')}k(y'))\big)\ne 0.
\end{equation}

From Theorem \ref{intercompanion} below, it is enough to prove $\epsilon' \uparrow \jmath_{w,{\bf k}}(\epsilon')$ in the sense of Definition \ref{stronglink} below. Since $\epsilon'\jmath_{w,{\bf k}}(\epsilon')^{-1}$ is clearly an algebraic character of $T_p$ by definition of $\jmath_{w,{\bf k}}$, it is enough to prove $\imath_v(\delta') \uparrow_{{\mathfrak t}_v} \jmath_{w,{\bf k}}(\imath_v(\delta'))$ (see \S\ref{jw} for the notation) for one, or equivalently all here, $v\in S_p$. From (\ref{equations}), we see that we can write:
$$\delta'=(z^{{\bf k}_{w^{-1}(1)}}\chi_1,\dots,z^{{\bf k}_{w^{-1}(n)}}\chi_n)\ \ {\rm and}\ \ \jmath_{w,{\bf k}}(\imath_v(\delta'))=(z^{{\bf k}_1}\chi_1,\dots,z^{{\bf k}_n}\chi_n)$$
where ${\rm wt}_\tau(\chi_i)={\rm wt}_\tau(\chi_{w_\tau(i)})$ for $1\leq i\leq n$ and $\tau:\, K=F_{\tilde v}\hookrightarrow L$ (compare Example \ref{example}). As we only care about the ${\mathfrak t}_{v,L}$-action, setting $s_{\tau,i}:={\rm wt}_\tau(\chi_i)\in L$ and using usual additive notation, we can write $\imath_v(\delta')\vert_{{\mathfrak t}_{v,L}}=(\imath_v(\delta')_\tau)_{\tau:\, F_{\tilde v}\hookrightarrow L}$ and $\jmath_{w,{\bf k}}(\imath_v(\delta'))\vert_{{\mathfrak t}_{v,L}}=(\jmath_{w,{\bf k}}(\imath_v(\delta'))_\tau)_{\tau:\, F_{\tilde v}\hookrightarrow L}$ with:
\begin{eqnarray*}
\imath_v(\delta')_\tau&=&(k_{\tau,w_{\tau}^{-1}(1)}+s_{\tau,1},k_{\tau,w_{\tau}^{-1}(2)}+s_{\tau,2}+1,\dots,k_{\tau,w_{\tau}^{-1}(n)}+s_{\tau,n}+n-1)\\
\jmath_{w,{\bf k}}(\imath_v(\delta'))_\tau&=&(k_{\tau,1}+s_{\tau,1},k_{\tau,2}+s_{\tau,2}+1,\dots,k_{\tau,n}+s_{\tau,n}+n-1)
\end{eqnarray*}
(see the beginning of \S\ref{jw}). Since $s_{\tau,i}=s_{\tau,w_\tau^{-1}(i)}$ for all $i,\tau$, we can rewrite:
$$\imath_v(\delta')_\tau=(k_{\tau,w_{\tau}^{-1}(1)}+s_{\tau,w_{\tau}^{-1}(1)},k_{\tau,w_{\tau}^{-1}(2)}+s_{\tau,w_{\tau}^{-1}(2)}+1,\dots,k_{\tau,w_{\tau}^{-1}(n)}+s_{\tau,w_{\tau}^{-1}(n)}+n-1)$$
hence we have $\imath_v(\delta')_\tau=w_\tau\cdot \jmath_{w,{\bf k}}(\imath_v(\delta'))_\tau$ for the ``dot action'' $\cdot$ with respect to the upper triangular matrices in $\GL_{n,F_{\tilde v}}\times_{F_{\tilde v},\tau}L$ (see \cite[\S1.8]{HumBGG}). Let us write the permutation $w_\tau$ on $\{1,\dots,n\}$ as a product of commuting cycles $c_1\circ\cdots \circ c_m$ with pairwise disjoint support ${\rm supp}(c_i)\subseteq \{1,\dots,n\}$. Let us denote by ${\mathcal S}_{n,i}\subseteq {\mathcal S}_{n}$ the subgroup of permutations which fixes the elements in $\{1,\dots,n\}$ {\it not} in ${\rm supp}(c_i)$ and set ${\mathcal S}_{n,w_\tau}:= \prod_{i=1}^m{\mathcal S}_{n,i}\subseteq {\mathcal S}_n$. Then, arguing in each ${\rm supp}(c_i)$, it is not difficult to see that one can write $w_\tau$ as a product:
$$w_\tau = s_{\alpha_d}s_{\alpha_{d-1}}\cdots s_{\alpha_1}$$
where the $\alpha_i$ are (not necessarily simple) roots of the upper triangular matrices in $\GL_{n,F_{\tilde v}}\times_{F_{\tilde v},\tau}L$, the associated reflections $s_{\alpha_i}$ are in ${\mathcal S}_{n,w_\tau}$ and where $s_{\alpha_i+1}s_{\alpha_{i}}\cdots s_{\alpha_1}> s_{\alpha_i}\cdots s_{\alpha_1}$ for the Bruhat order in ${\mathcal S}_n$ ($1\leq i\leq n-1$). By an argument analogous {\it mutatis mutandis} to the one in \cite[\S5.2]{HumBGG}, it then follows from the above assumptions (in particular $s_{\tau,i}=s_{\tau,w_\tau^{-1}(i)}$ for all $i$) that we have for $1\leq i\leq n-1$ with obvious notation:
$$(s_{\alpha_i+1}\cdots s_{\alpha_1})\cdot \jmath_{w,{\bf k}}(\imath_v(\delta'))_\tau \leq (s_{\alpha_i}\cdots s_{\alpha_1})\cdot \jmath_{w,{\bf k}}(\imath_v(\delta'))_\tau.$$
By definition this implies that $w_\tau\cdot\jmath_{w,{\bf k}}(\imath_v(\delta'))_\tau$ is {\it strongly linked} to $\jmath_{w,{\bf k}}(\imath_v(\delta'))_\tau$ (\cite[\S5.1]{HumBGG}). As this holds for all $\tau$, we have $\imath_v(\delta') \uparrow_{{\mathfrak t}_v} \jmath_{w,{\bf k}}(\imath_v(\delta'))$.
\end{proof}

\begin{rema}
{\rm It would be very interesting to find a purely local proof of the local statement of Proposition \ref{companion} without assuming Conjecture \ref{BHS}.}
\end{rema}

\subsection{Companion points on the patched eigenvariety}\label{jw}

We prove that the existence of certain points on the patched eigenvariety $X_p(\rhobar)$ implies the existence of others (Theorem \ref{intercompanion}). This result is crucially used in the proof of Proposition \ref{companion} above.

We use the notation of \S\ref{globalpart}. We denote by ${\mathfrak g}$ (resp. ${\mathfrak b}$, resp. ${\mathfrak t}$)  the $\Q_p$-Lie algebra of $G_p$ (resp. $B_p$, resp. $T_p$). We also denote by ${\mathfrak n}$ (resp. $\overline{\mathfrak n}$) the $\Q_p$-Lie algebra of the inverse image $N_p$ in $B_p$ (resp. $\overline N_p$ in $\overline B_p$) of the subgroup of upper (resp. lower) unipotent matrices of $\prod_{v\in S_p}\GL_n(F_{\tilde v})$. We add an index $L$ for the $L$-Lie algebras obtained by scalar extension $\cdot \otimes_{\Q_p}L$ (e.g. ${\mathfrak g}_L$, etc.) and we denote by $U(\cdot)$ the corresponding enveloping algebras.

For $v\in S_p$ we denote by ${\mathfrak t}_v$ the $\Q_p$-Lie algebra of the torus $T_v$, so that ${\mathfrak t}=\prod_{v\in S_p}{\mathfrak t}_v$. Recall that ${\mathfrak t}_v$ is an $F_{\tilde v}$-vector space, and thus ${\mathfrak t}_{v,L}={\mathfrak t}_v\otimes_{\Q_p}L\cong \prod_{\tau:\, F_{\tilde v}\hookrightarrow L}{\mathfrak t}_v\otimes_{F_{\tilde v},\tau}L$. We can see any $\eta=(\eta_v)_{v\in S_p}=(\eta_{v,1},\dots,\eta_{v,n})_{v\in S_p}\in \widehat T_{p,L}$ as an $L$-valued additive character of ${\mathfrak t}$, and thus of ${\mathfrak t}_{L}$ by $L$-linearity, via the usual derivative action $({\mathfrak z}_{v,1},\dots,{\mathfrak z}_{v,n})_{v\in S_p}\mapsto \sum_{v\in S_p}\sum_{i=1}^n\frac{d}{dt}\eta_{v,i}(\exp(t{\mathfrak z}_{v,i}))\vert_{t=0}$. Recall that the character ${\mathfrak z}_{v,i}\in F_{\tilde v}\mapsto \frac{d}{dt}\eta_{v,i}(\exp(t{\mathfrak z}_{v,i}))\vert_{t=0}$ is nothing else than $\sum_{\tau:\, F_{\tilde v}\hookrightarrow L}\tau({\mathfrak z}_{v,i}){\rm wt}_\tau(\eta_{v,i})\in L$.

In what follows we use notation and definitions from \cite{STdist} concerning $L$-Banach representations of $p$-adic Lie groups and their locally $\Q_p$-analytic vectors. If $\Pi$ is an admissible continuous representation of $G_p$ on a $L$-Banach space we denote by $\Pi^{\rm an}\subseteq \Pi$ its invariant subspace of locally $\Q_p$-analytic vectors.

\begin{lemm}\label{injective}
Let $\Pi$ be an admissible continuous representation of $G_p$ on a $L$-Banach space and assume that the continuous dual $\Pi'$ is a finite projective $\mathcal{O}_L\dbl K_p\dbr[1/p]$-module. Let $\lambda$, $\mu$ be $L$-valued characters of ${\mathfrak t}_L$ that we see as $L$-valued characters of ${\mathfrak b}_L$ by sending ${\mathfrak n}_L$ to $0$. If $U(\mathfrak{g}_{L})\otimes_{U(\mathfrak{b}_L)}\mu\hookrightarrow U(\mathfrak{g}_{L})\otimes_{U(\mathfrak{b}_L)}\lambda$ is an injection of $U(\mathfrak{g}_{L})$-modules, then the map:
$$\Hom_{U(\mathfrak{g}_{L})}\big(U(\mathfrak{g}_{L})\otimes_{U(\mathfrak{b}_L)}\lambda,\Pi^{\rm an}\big)\longrightarrow\Hom_{U(\mathfrak{g}_{L})}\big(U(\mathfrak{g}_{L})\otimes_{U(\mathfrak{b}_L)}\mu,\Pi^{\rm an}\big)$$
induced by functoriality is surjective.
\end{lemm}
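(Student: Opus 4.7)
The plan is to reduce to a universal model via the projectivity hypothesis, then verify the statement in that model by an adjunction argument.

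First, I would use the hypothesis on $\Pi'$ to reduce to a specific case. Since $\Pi'$ is a finite projective $\mathcal{O}_L\dbl K_p\dbr[1/p]$-module, there exists $d \geq 1$ such that $\Pi'$ is a direct summand of $(\mathcal{O}_L\dbl K_p\dbr[1/p])^{\oplus d}$ as an $\mathcal{O}_L\dbl K_p\dbr[1/p]$-module. Dualizing, $\Pi$ is a direct summand of $\mathcal{C}(K_p,L)^{\oplus d}$ as a Banach $K_p$-representation. The functor of locally analytic vectors is additive and compatible with $K_p$-equivariant continuous morphisms, so $\Pi^{\rm an}$ is a direct summand of $\mathcal{C}^{\rm an}(K_p,L)^{\oplus d}$ as a locally analytic $K_p$-representation. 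Since the $U(\mathfrak{g}_L)$-action on any locally analytic representation is the derivative of the $K_p$-action, this is also a decomposition of $U(\mathfrak{g}_L)$-modules. The $\Hom_{U(\mathfrak{g}_L)}(-,\cdot)$ functor commutes with direct sums and preserves direct summands, so surjectivity for $\Pi^{\rm an}$ follows from surjectivity for the model case $\Pi^{\rm an}=\mathcal{C}^{\rm an}(K_p,L)$.

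Second, I would rewrite the statement via the standard Frobenius reciprocity
$$\Hom_{U(\mathfrak{g}_L)}\bigl(U(\mathfrak{g}_L)\otimes_{U(\mathfrak{b}_L)}\chi,\,\Pi^{\rm an}\bigr)\cong \Hom_{U(\mathfrak{b}_L)}(L_\chi,\Pi^{\rm an})=(\Pi^{\rm an})^{\mathfrak{n}=0,\,\mathfrak{t}=\chi}$$
valid for $\chi\in\{\lambda,\mu\}$. Using the PBW decomposition $\mathfrak{g}_L=\overline{\mathfrak{n}}_L\oplus \mathfrak{b}_L$, the injection $M(\mu)\hookrightarrow M(\lambda)$ is multiplication by a well-defined element $u\in U(\overline{\mathfrak{n}}_L)$ of weight $\mu-\lambda$ (i.e.\ $u\otimes 1_\lambda$ is the image of $1\otimes 1_\mu$). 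The map in the statement becomes the operator $v\mapsto u\cdot v$ from $(\Pi^{\rm an})^{\mathfrak{n}=0,\,\mathfrak{t}=\lambda}$ to $(\Pi^{\rm an})^{\mathfrak{n}=0,\,\mathfrak{t}=\mu}$, and one needs to show surjectivity of this operator on the model $\Pi^{\rm an}=\mathcal{C}^{\rm an}(K_p,L)$.

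Third, and this is the main obstacle, I would verify the surjectivity in the model case. Using an Iwahori-type factorization $K_p\simeq \overline{N}_0\times T_0\times N_0$ for sufficiently small compact open subgroups $\overline{N}_0\subseteq \overline{N}_p$, $T_0\subseteq T_p$, $N_0\subseteq N_p$, the locally analytic functions on $K_p$ admit a corresponding factorization. The condition $\mathfrak{n}\cdot f=0$ forces $f$ to be constant along the $N_0$-factor after an explicit analysis, so $(\mathcal{C}^{\rm an}(K_p,L))^{\mathfrak{n}=0}$ identifies with locally analytic functions on $\overline{N}_0\times T_0$, and the weight condition $\mathfrak{t}=\chi$ picks out a subspace on which $T_0$ acts through $\chi$. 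The operator $u\in U(\overline{\mathfrak{n}}_L)$ acts as a differential operator on the $\overline{N}_0$-variable that is (after identifying $\overline{N}_0$ with an open ball in $\overline{\mathfrak{n}}_L$ via the exponential) a polynomial differential operator with constant coefficients. Surjectivity then reduces to an explicit claim about polynomial differential operators acting on locally analytic functions on a $p$-adic polydisc, which can be proved by a power series construction of preimages.

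The delicate point will be making the identification in step three rigorous and checking the surjectivity of $u$, viewed as a differential operator, between the appropriate spaces of locally analytic functions on $\overline{N}_0$ twisted by characters of $T_0$.
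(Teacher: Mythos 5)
Your first two steps are sound and parallel the paper's initial setup in spirit: the reduction via projectivity is legitimate, and Frobenius reciprocity correctly translates the statement into the surjectivity of the operator "multiply by the canonical generator $u\in U(\overline{\mathfrak n}_L)$ of $M(\mu)\hookrightarrow M(\lambda)$" from $(\Pi^{\rm an})^{\mathfrak n_L=0,\,\mathfrak t_L=\lambda}$ onto $(\Pi^{\rm an})^{\mathfrak n_L=0,\,\mathfrak t_L=\mu}$. The problem is step three, where the actual content of the lemma lives, and this is where there is a genuine gap.

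Two specific difficulties. First, the identification you describe is off: the condition $\mathfrak n_L f=0$ does \emph{not} force $f$ to be constant along the $N_0$-factor, only locally constant (smooth) in that direction, so $(\mathcal{C}^{\rm an}(K_p,L))^{\mathfrak n_L=0}$ is not a space of locally analytic functions on $\overline N_0\times T_0$ but rather an inductive limit mixing a locally analytic factor and a smooth factor. Second, and more seriously, your claim that $u$ becomes a \emph{constant-coefficient} polynomial differential operator after identifying $\overline N_0$ with a ball via $\exp$ is incorrect once $n\geq 3$: right-invariant vector fields on a non-abelian unipotent group are polynomial-coefficient, not constant-coefficient, in exponential coordinates. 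Surjectivity of such an operator on spaces of locally analytic functions is exactly the hard analytic statement one would be trying to prove, and the proposal offers nothing to address it — the ``power series construction of preimages'' is not more than a restatement of the goal, and even in the rank-one, constant-coefficient case one has to control the shrinkage of radii of convergence.

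The paper sidesteps this difficulty entirely by a different mechanism. It uses the isomorphism $\Pi^{\rm an}\cong\varinjlim_r\Pi_r$ of Schneider--Teitelbaum to reduce to Banach-level objects $\Pi_r$, extends the $U(\mathfrak g_L)$-action to the Banach completion $U_r(\mathfrak g_L)\subset D_r(K_p,L)$, and invokes \cite[Prop.3.4.8]{OrlikStrauch} to know that the Verma embedding $M(\mu)\hookrightarrow M(\lambda)$ extends to a closed embedding of Banach $U_r(\mathfrak g_L)$-modules after base change to $U_r$. The required surjectivity of $\Hom(-,\Pi_r)$ then follows because $\Pi_r$ is shown to be an \emph{injective} object in the category of Banach $U_r(\mathfrak g_L)$-modules with continuous maps: the projectivity hypothesis on $\Pi'$, combined with the fact that $D_r(K_p,L)$ is finite free over $U_r(\mathfrak g_L)$ (\cite{Ko}), exhibits $\Pi_r$ as a direct factor of a coinduced module ${\rm Hom}_{\rm cont}(U_r(\mathfrak g_L)\otimes_L W,L)$, and the injectivity of the latter is Hahn--Banach. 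This categorical argument delivers surjectivity for every such injection at once, with no explicit analysis of $u$ as a differential operator. Your reduction in step one is compatible with this, but to close the gap you would need to replace step three by something equivalent to the paper's injectivity argument (or to the corresponding Orlik--Strauch machinery on the induced-representation side).
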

\begin{proof}
We have as in \cite[Prop.6.5]{STdist} a $K_p$-equivariant isomorphism:
\begin{equation}\label{limr}
\Pi^{\rm an}\cong {\lim_{\stackrel{r\rightarrow 1}{r<1}}}\ \Pi_r
\end{equation}
where each $\Pi_r\subseteq \Pi^{\rm an}$ is a Banach space over $L$ endowed with an admissible locally $\Q_p$-analytic action of $K_p$. In particular each $\Pi_r$ is stable under $U(\mathfrak{g}_L)$ in $\Pi^{\rm an}$. If
$f:U(\mathfrak{g}_{L})\otimes_{U(\mathfrak{b}_L)}\lambda\rightarrow\Pi^{\rm an}$ is a $U(\mathfrak{g}_L)$-equivariant morphism, the source, being of finite type over $U(\mathfrak{g}_{L})$, factors through some $\Pi_r$ by (\ref{limr}). Moreover the action of $U(\mathfrak{g}_{L})$ on $\Pi_r$ extends to an action of the $L$-Banach algebra $U_r(\mathfrak{g}_{L})$ which is the topological closure of $U(\mathfrak{g}_{L})$ in the completed distribution algebra $D_r(K_p,L)$ (see \cite[\S5]{STdist}). Consequently $f$ extends to a $U_r(\mathfrak{g}_{L})$-equivariant morphism:
$$f_r:\, U_r(\mathfrak{g}_{L})\otimes_{U(\mathfrak{g}_{L})}(U(\mathfrak{g}_{L})\otimes_{U(\mathfrak{b}_L)}\lambda)\cong
U_r(\mathfrak{g}_{L})\otimes_{U_r(\mathfrak{b}_L)} \lambda\longrightarrow\Pi_r$$
where $U_r(\mathfrak{b}_L)$ is the closure of $U(\mathfrak{b}_L)$ in $D_r(K_p,L)$ and the first isomorphism follows from
$\lambda\buildrel\sim\over\rightarrow U_r(\mathfrak{b}_L)\otimes_{U(\mathfrak{b}_L)}\lambda$ (as $\lambda$ is both finite dimensional with dense image). We deduce from \cite[Prop.3.4.8]{OrlikStrauch} (applied with $w=1$) that the injection $U(\mathfrak{g}_{L})\otimes_{U(\mathfrak{b}_L)}\lambda\hookrightarrow U(\mathfrak{g}_{L})\otimes_{U(\mathfrak{b}_L)}\mu$ extends to an injection
of $U_r(\mathfrak{g}_{L})$-modules $U_r(\mathfrak{g}_{L})\otimes_{U_r(\mathfrak{b}_L)}\lambda\hookrightarrow U_r(\mathfrak{g}_{L})\otimes_{U_r(\mathfrak{b}_L)}\mu$. Moreover, as $U_r(\mathfrak{g}_{L})\otimes_{U_r(\mathfrak{b}_L)}\lambda$ and $U_r(\mathfrak{g}_{L})\otimes_{U_r(\mathfrak{b}_L)}\mu$ are $U_r(\mathfrak{g}_{L})$-modules of finite type, they have a unique topology of Banach module over $U_r(\mathfrak{g}_{L})$ and every $U_r(\mathfrak{g}_{L})$-linear map of one of them into $\Pi_r$ is automatically continuous (see \cite[Prop.2.1]{STdist}). We deduce from all this isomorphisms:
\begin{eqnarray*}
\Hom_{U(\mathfrak{g}_{L})}\big(U(\mathfrak{g}_{L})\otimes_{U(\mathfrak{b}_L)}\lambda,\Pi^{\rm an}\big)&\buildrel\sim\over\longrightarrow &\varinjlim_r\Hom_{U(\mathfrak{g}_{L})}\big(U(\mathfrak{g}_{L})\otimes_{U(\mathfrak{b}_L)}\lambda,\Pi_r\big)\\
&\buildrel\sim\over\longrightarrow &\varinjlim_r\Hom_{U_r(\mathfrak{g}_{L})-{\rm cont}}\big(U_r(\mathfrak{g}_{L})\otimes_{U_r(\mathfrak{b}_L)}\lambda,\Pi_r\big)
\end{eqnarray*}
where $\Hom_{U_r(\mathfrak{g}_{L})-{\rm cont}}$ means continuous homomorphisms of $U_r(\mathfrak{g}_{L})$-Banach modules, and likewise with $\mu$ instead of $\lambda$. By exactitude of $\varinjlim_r$, we see that it is enough to prove that $\Pi_r$ is an injective object (with respect to injections which have closed image) in the category of $U_r(\mathfrak{g}_{L})$-Banach modules with continuous maps.

By assumption the dual $\Pi'$ is a projective module of finite type over $\mathcal{O}_L\dbl K_p\dbr[1/p]$, hence a direct summand of $\mathcal{O}_L\dbl K_p\dbr[1/p]^{\oplus s}$ for some $s>0$. From the proof of \cite[Prop.6.5]{STdist} together with \cite[Th.7.1(iii)]{STdist}, we also know that $\Pi_r$ is the continuous dual of the $D_r(K_p,L)$-Banach module:
$$\Pi_r':=D_r(K_p,L)\otimes_{\mathcal{O}_L\dbl K_p\dbr[1/p]}\Pi'.$$
We get that the $D_r(K_p,L)$-module $\Pi_r'$ is a direct summand of $D_r(K_p,L)^{\oplus s}$. Now it easily follows from the results in \cite[\S1.4]{Ko} that $D_r(K_p,L)$ is itself a free $U_r(\mathfrak{g}_{L})$-module of finite rank. Dualizing, we finally obtain that there is a finite dimensional $L$-vector space $W$ such that the left $U_r(\mathfrak{g}_{L})$-Banach module $\Pi_r$ is a direct factor of the left $U_r(\mathfrak{g}_{L})$-Banach module ${\rm Hom}_{\rm cont}(U_r(\mathfrak{g}_{L})\otimes_L W,L)$ (which is seen as a left
$U_r(\mathfrak{g}_{L})$-module via the automorphism on $U_r(\mathfrak{g}_{L})$ extending the multiplication by $-1$ on $\mathfrak{g}_{L}$). Since direct summands and finite sums of injective modules are still injective, it is enough to prove the injectivity of ${\rm Hom}_{\rm cont}(U_r(\mathfrak{g}_{L}),L)$ in the category of $U_r(\mathfrak{g}_{L})$-Banach modules with continuous maps.

If $V$ is any $U_r(\mathfrak{g}_L)$-Banach module, it is not difficult to see that there is a canonical isomorphism of Banach spaces over $L$:
\begin{equation}\label{dualdual}
{\Hom}_{U_r(\mathfrak{g}_{L})-{\rm cont}}\big(V,{\Hom}_{\rm cont}(U_r(\mathfrak{g}_L),L)\big)\buildrel\sim\over\longrightarrow {\Hom}_{\rm cont}(V,L)
\end{equation}
so that the required injectivity property is a consequence of the Hahn-Banach Theorem (see for example \cite[Prop.9.2]{Schneidernfa}).
\end{proof}

We go on with two technical lemmas which require more notation. Fix a compact open uniform normal pro-$p$ subgroup $H_p$ of $K_p$ such that $H_p=(\overline N_p\cap H_p)(T_p\cap H_p)(N_p\cap H_p)$. For example $H_p$ can be chosen of the form $\prod_{v\in S_p}H_{v}$ where $H_v$ is the inverse image in $K_v$ of matrices of $\GL_n(\mathcal{O}_{F_{\tilde{v}}})$ congruent to $1$ mod $p^m$ for $m$ big enough. Let $N_0:=N_p\cap H_p$, $T_{p,0}:=T_p\cap H_p$, $\overline{N}_0:=\overline{N}_p\cap H_p$ (which are still uniform pro-p-groups) and $T_p^+:=\{ t\in T_p\ {\rm such\ that}\ tN_0t^{-1}\subseteq N_0\}$ (which is a multiplicative monoid in $T_p$). We also fix $z\in T_p^+$ such that $zN_0z^{-1}\subseteq N_0^p$ and we assume moreover $z^{-1}H_pz\subseteq K_p$ so that the elements of $z^{-1}H_pz$ normalize $H_p$ (as $H_p$ is normal in $K_p$). Note that such a $z$ always exists, for instance take $z$ such that $zN_0z^{-1}\subseteq N_0^p$, choose $r$ such that $H_p^{p^r}\subseteq zK_pz^{-1}$ and replace $H_p$ by $H_p^{p^r}$: with this new choice we still have $zN_0z^{-1}\subseteq N_0^p$.

For any uniform pro-$p$-group $H$ we denote by $\mathcal{C}(H,L)$ the Banach space of continuous $L$-valued functions on $H$ and, if $h\geq 1$, by $\mathcal{C}^{(h)}(H,L)$ the Banach space of $h$-analytic $L$-valued functions on $H$ defined in \cite[\S0.3]{CD}. We have $\mathcal{C}(H_p,L)\cong \mathcal{C}(\overline{N}_0,L)\widehat{\otimes}_L\mathcal{C}(T_{p,0},L)\widehat{\otimes}_L\mathcal{C}(N_0,L)$ and likewise with $\mathcal{C}^{(h)}(\cdot,L)$.

\begin{lemm}\label{restriction}
Let $f\in\mathcal{C}(H_p,L)$ such that for each left coset $(zH_pz^{-1}\cap H_p)n\subset H_p$, there exists $f_n\in\mathcal{C}^{(h)}(H_p,L)$ such that $f(gn)=f_n(z^{-1}gz)$ for $g\in zH_pz^{-1}\cap H_p$. Then we have:
$$f\in\mathcal{C}^{(h-1)}(\overline{N}_0,L)\widehat{\otimes}_L\mathcal{C}^{(h)}(T_{p,0},L)\widehat{\otimes}_L\mathcal{C}(N_0,L).$$
\end{lemm}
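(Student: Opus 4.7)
The plan is to exploit the given product decomposition $H_p = \overline{N}_0 \cdot T_{p,0} \cdot N_0$ of the uniform pro-$p$ group $H_p$, combined with the left coset decomposition
\begin{equation*}
H_p = \bigsqcup_n (zH_pz^{-1} \cap H_p) n,
\end{equation*}
where $n$ runs over representatives of $(zH_pz^{-1} \cap H_p)\backslash H_p$. First I would identify $zH_pz^{-1} \cap H_p$ explicitly. Since $zT_{p,0}z^{-1}=T_{p,0}$ and $zN_0z^{-1} \subseteq N_0^p$, while for $z\in T_p^+$ the automorphism $\mathrm{Ad}(z^{-1})$ contracts each negative root space of the Lie algebra by the same scalar factor that $\mathrm{Ad}(z)$ contracts the corresponding positive root space (the factors are inverses of each other), one obtains $z^{-1}\overline{N}_0 z \subseteq \overline{N}_0^p$, equivalently $z\overline{N}_0z^{-1} \supseteq \overline{N}_0$. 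Using the product decomposition, it follows that $zH_pz^{-1} \cap H_p = \overline{N}_0 \cdot T_{p,0} \cdot (zN_0z^{-1})$, so the coset representatives $n$ can be chosen inside $N_0$ and parameterize the finite set $N_0/zN_0z^{-1}$.

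On a fixed coset, any element has a unique expression $\overline{\nu}t\nu' n$ with $\overline{\nu} \in \overline{N}_0$, $t \in T_{p,0}$, $\nu' \in zN_0z^{-1}$, and the hypothesis reads
\begin{equation*}
f(\overline{\nu}t\nu' n) = f_n\bigl((z^{-1}\overline{\nu}z)\cdot t\cdot(z^{-1}\nu'z)\bigr).
\end{equation*}
In exponential coordinates on the three factors, the map $\overline{\nu} \mapsto z^{-1}\overline{\nu}z$ is (diagonal) multiplication on each negative root space by a scalar of valuation at least $1$; pulling back an $h$-analytic function of $z^{-1}\overline{\nu}z$ through this contraction by a factor of $p$ shrinks the admissible radius of convergence in the $\overline{\nu}$-coordinate by $p$, yielding an $(h-1)$-analytic function of $\overline{\nu}$ by direct Taylor series inspection. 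In the $T_{p,0}$-direction, conjugation by $z$ is the identity (the torus being abelian), so $f_n$ remains $h$-analytic in $t$. In the third direction, $\nu' \mapsto z^{-1}\nu'z$ is an expansion sending $zN_0z^{-1}$ onto $N_0$, so $f_n$ composed with it is \emph{a fortiori} analytic (even more strongly) in $\nu'$, hence continuous in the variable $\nu_0 = \nu' n$ for $n$ fixed.

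Combining these three directional analyses, on each coset the restriction of $f$ lies in $\mathcal{C}^{(h-1)}(\overline{N}_0, L)\widehat{\otimes}_L\mathcal{C}^{(h)}(T_{p,0}, L)\widehat{\otimes}_L\mathcal{C}(zN_0z^{-1}\cdot n, L)$, with uniform bounds over the finitely many cosets. Since $f$ is globally continuous on $H_p$ by hypothesis and the cosets form a finite partition, globalization in the $N_0$-direction yields only continuity across cosets, producing exactly
\begin{equation*}
f \in \mathcal{C}^{(h-1)}(\overline{N}_0, L)\widehat{\otimes}_L\mathcal{C}^{(h)}(T_{p,0}, L)\widehat{\otimes}_L\mathcal{C}(N_0, L)
\end{equation*}
as claimed. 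The main technical obstacle will be the bookkeeping of convergence radii under the conjugation-by-$z^{-1}$ substitution on the exponential charts, together with verifying that the Iwahori-style product decomposition $H_p = \overline{N}_0 T_{p,0} N_0$ is compatible with the tensor factorization of the $\mathcal{C}^{(h)}$-spaces. Both rest on the fact that $H_p$ is chosen uniform with $H_p = (\overline{N}_p\cap H_p)(T_p\cap H_p)(N_p\cap H_p)$, so that the global chart on $H_p$ splits as a product of charts on the three factors and the conjugation-induced substitution acts componentwise on each factor with the expected semisimple scaling by root characters of $z$.
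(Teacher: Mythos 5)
Your proof takes essentially the same route as the paper's: identify $zH_pz^{-1}\cap H_p$ via the Iwahori-type decomposition, work coset by coset, and exploit the inclusion $z^{-1}\overline{N}_0z\subseteq\overline{N}_0^p$ (equivalently $\overline{N}_0\subseteq(z\overline{N}_0z^{-1})^p$) to gain one unit of analyticity in the $\overline N_0$-direction while only keeping continuity in the $N_0$-direction. The paper phrases this by extending $f$ (via $F(z^{-1}\cdot z)$) to an $h$-analytic function on $z\overline{N}_0z^{-1}T_{p,0}N_0$ and then restricting back to $H_p$, invoking \cite[Rem.IV.12]{CD} for the fact that restriction of an $h$-analytic function to the $p$-th power subgroup is $(h-1)$-analytic; you instead argue by directly analyzing the precomposition with the contraction $\overline\nu\mapsto z^{-1}\overline\nu z$, which is a legitimate re-derivation of that remark. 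One small caution: your phrase that the pullback ``shrinks the admissible radius of convergence by $p$'' has the direction reversed — precomposing with multiplication by a scalar of valuation $\geq 1$ \emph{enlarges} the radius of convergence in the $\overline\nu$-coordinate by a factor of $p$, which is precisely what buys the improvement from $\mathcal{C}^{(h)}$ to the \emph{smaller} space $\mathcal{C}^{(h-1)}$; your conclusion is nevertheless the correct one.
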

\begin{proof}
Representatives of the quotient $(zH_pz^{-1}\cap H_p)\backslash H_p$ can be chosen in $N_0$, whence the above notation $n$ (do not confuse with the $n$ of $\GL_n$!). Restricting $f$ to the left coset $(zH_pz^{-1}\cap H_p)n$ for some $n\in N_0$ and translating on the right by $n$ we can assume that the support of $f$ is contained in $zH_pz^{-1}\cap H_p$. Then if $g\in zH_pz^{-1}\cap H_p$, we have by assumption $f(g)=F(z^{-1}gz)$ for some $F\in\mathcal{C}^{(h)}(H_p,L)$. Consequently $f|_{zH_pz^{-1}\cap H_p}$ can be extended to an $h$-analytic function on $zH_pz^{-1}$ and $f$ can be extended (by $0$) on $zH_pz^{-1}N_0=z\overline N_0z^{-1} T_{p,0} N_0$ as an element of:
$$\mathcal{C}^{(h)}(z\overline{N}_0z^{-1},L)\widehat{\otimes}_L\mathcal{C}^{(h)}(T_{p,0},L)\widehat{\otimes}_L\mathcal{C}(N_0,L).$$
We deduce that $f$ is in the image of the restriction map (note that $zN_0z^{-1}\subseteq N_0$ implies $\overline{N}_0\subseteq z\overline{N_0}z^{-1}$):
$$\mathcal{C}^{(h)}(z\overline{N}_0z^{-1},L)\widehat{\otimes}_L\mathcal{C}^{(h)}(T_{p,0},L)\widehat{\otimes}_L\mathcal{C}(N_0,L)\longrightarrow\mathcal{C}^{(h)}(\overline{N}_0,L)\widehat{\otimes}_L\mathcal{C}^{(h)}(T_{p,0},L)\widehat{\otimes}_L\mathcal{C}(N_0,L).$$
Now the stronger condition $zN_0z^{-1}\subseteq N_0^p$ implies $\overline{N}_0\subseteq z\overline{N_0}^pz^{-1}=(z\overline{N}_0z^{-1})^p$. But by \cite[Rem.IV.12]{CD} the restriction to $(z\overline{N}_0z^{-1})^p$ (and {\it a fortiori} to $\overline N_0$) of an $h$-analytic function on $z\overline{N}_0z^{-1}$ is $(h-1)$-analytic and we can conclude.
\end{proof}

If $\Pi$ is an admissible continuous representation of $G_p$ on a $L$-Banach space and if $h\geq1$, we denote by $\Pi_{H_p}^{(h)}$ the $H_p$-invariant Banach subspace of $\Pi^{\rm an}$ defined in \cite[\S0.3]{CD}. If $V$ is any (left) $U(\mathfrak{t}_L)$-module over $L$ and $\lambda:\mathfrak{t}_L\rightarrow L$ is a character, we let $V_\lambda$ be the $L$-subvector space of $V$ on which $\mathfrak{t}_L$ acts via the multiplication by $\lambda$. Recall that if $V$ is any $L[G_p]$-module, the monoid $T_p^+$ acts on $V^{N_0}$ via $v\longmapsto t\cdot v:=\delta_{B_p}(t)\sum_{n_0\in N_0/tN_0t^{-1}}n_0tv$ ($v\in V^{N_0}$, $t\in T_p^+$, see \S\ref{classic} for $\delta_{B_p}$). This $T_p^+$-action respects the subspace $(\Pi_\lambda^{\rm an})^{N_0}$ of $(\Pi^{\rm an})^{N_0}$ (use that $tN_0t^{-1}=N_0$ for $t\in T_p^0$).

We don't claim any originality on the following lemma which is a variant of classical results (see e.g. \cite{EmertonJacquetI}), however we couldn't find its exact statement in the literature.

\begin{lemm}\label{compact}
Let $\Pi$ be an admissible continuous representation of $G_p$ on a $L$-Banach space, $\lambda$ an $L$-valued character of $\mathfrak{t}_L$ and $h\geq1$. Then the action of $z$ on $(\Pi^{\rm an})^{N_0}$ preserves the subspace $(\Pi^{(h)}_{H_p})^{N_0}_\lambda=(\Pi^{(h)}_{H_p})^{N_0}\cap \Pi^{\rm an}_\lambda$ and is a {\rm compact} operator on this subspace.
\end{lemm}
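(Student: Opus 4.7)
The proof separates into a stability claim and a compactness claim. For stability of $(\Pi_{H_p}^{(h)})^{N_0}_\lambda$ under $z$: the $\mathfrak{t}_L$-weight $\lambda$ is preserved because $z\in T_p$ centralizes $\mathfrak{t}$; the $N_0$-invariance follows from the standard calculation, using $zN_0z^{-1}\subseteq N_0$, that the sum over cosets in the definition of $z\cdot v$ lands in the $N_0$-invariants; and the $h$-analyticity is preserved because the standing hypothesis $z^{-1}H_pz\subseteq K_p$ means the orbit map $H_p\to\Pi$, $g\mapsto g(zv)=z(z^{-1}gz)v$, is the composition of the analytic map $g\mapsto z^{-1}gz$ on $H_p$ with the orbit map of $v$, which is $h$-analytic on a neighbourhood of $H_p$ in $K_p$ containing $z^{-1}H_pz$; the finitely many further translates $n_0(zv)$ with $n_0\in N_0\subseteq H_p$ are then each still $h$-analytic.

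The crux is compactness, for which the essential input is Lemma \ref{restriction}. The plan is to factor the $z$-action through a compact inclusion built from improved analyticity along $\overline{N}_0$. Using the Iwasawa decomposition $H_p=\overline{N}_0T_{p,0}N_0$ and the resulting tensor decomposition of function spaces, any $v\in (\Pi_{H_p}^{(h)})^{N_0}_\lambda$ is encoded, via the orbit maps $g\mapsto \ell(gv)$ as $\ell$ ranges over continuous functionals on $\Pi$, by a function in $\mathcal{C}^{(h)}(\overline{N}_0,L)\widehat{\otimes}_L\mathcal{C}^{(h)}(T_{p,0},L)\widehat{\otimes}_L\mathcal{C}^{(h)}(N_0,L)$. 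The sum-of-translates formula $z\cdot v=\delta_{B_p}(z)\sum_{n_0} n_0 z v$ then produces orbit functions whose form (after translating by $z^{-1}$ on the inside) satisfies exactly the hypothesis of Lemma \ref{restriction}; applying that lemma places the orbit functions of $z\cdot v$ into the smaller space $\mathcal{C}^{(h-1)}(\overline{N}_0,L)\widehat{\otimes}_L\mathcal{C}^{(h)}(T_{p,0},L)\widehat{\otimes}_L\mathcal{C}(N_0,L)$.

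Thus $z$ factors through a Banach subspace of $(\Pi_{H_p}^{(h)})^{N_0}_\lambda$ consisting of vectors whose $\overline{N}_0$-orbit functions are $(h-1)$-analytic rather than merely $h$-analytic, and the inclusion of this subspace back into $(\Pi_{H_p}^{(h)})^{N_0}_\lambda$ is induced by the inclusion $\mathcal{C}^{(h-1)}(\overline{N}_0,L)\hookrightarrow\mathcal{C}^{(h)}(\overline{N}_0,L)$, which is compact since restriction of analytic functions from a polydisk of larger radius to a strictly smaller polydisk is compact. As the composition of a bounded operator with a compact operator is compact, this yields compactness of $z$. The main technical obstacle will be to translate precisely between the intrinsic description of $(\Pi_{H_p}^{(h)})^{N_0}_\lambda$ and the function-space picture so that Lemma \ref{restriction} can be applied cleanly; in particular, one has to match the index set $N_0/(zH_pz^{-1}\cap H_p)$ entering that lemma with the index set $N_0/zN_0z^{-1}$ appearing in the definition of the $z$-action, and to verify that the equivariance constraints imposed on the $T_{p,0}$ and $N_0$ factors by the conditions defining $(\Pi_{H_p}^{(h)})^{N_0}_\lambda$ are compatible with the factorization described above.
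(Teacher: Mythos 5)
Your outline follows the same route as the paper up to the application of Lemma \ref{restriction}: embed $\Pi$ into $\mathcal{C}(H_p,L)^{\oplus s}$ via orbit maps for generators of $\Pi'$, observe that $z\cdot v$ lands in $\bigl(\mathcal{C}^{(h-1)}(\overline{N}_0,L)\widehat{\otimes}_L\mathcal{C}^{(h)}(T_{p,0},L)\widehat{\otimes}_L\mathcal{C}(N_0,L)\bigr)^{\oplus s}$, and use $N_0$-invariance to drop the $\mathcal{C}(N_0,L)$ factor. That much is sound.

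The gap is in the final compactness step. You claim that compactness of the inclusion $\mathcal{C}^{(h-1)}(\overline{N}_0,L)\hookrightarrow\mathcal{C}^{(h)}(\overline{N}_0,L)$ alone suffices, but after discarding the $N_0$ factor you are still sitting inside a completed tensor product $\mathcal{C}^{(h-1)}(\overline{N}_0,L)\widehat{\otimes}_L\mathcal{C}^{(h)}(T_{p,0},L)$, and the relevant inclusion is a tensor product of a compact operator with the \emph{identity} on the $T_{p,0}$-factor. Such a map is \emph{not} compact when the second factor is infinite-dimensional, so your argument as stated does not conclude. The missing ingredient is that the $\lambda$-eigenvalue condition forces the $T_{p,0}$-factor to be \emph{finite-dimensional}: one must show $\mathcal{C}^{(h)}(T_{p,0},L)_\lambda$ has finite $L$-dimension, which the paper does by dualizing to $D_r(T_{p,0},L)$ (for $r=p^{-1/p^h}$), invoking the fact that $D_r(T_{p,0},L)$ is finite free over $U_r(\mathfrak{t}_L)$, and using $\lambda\buildrel\sim\over\rightarrow U_r(\mathfrak{t}_L)\otimes_{U(\mathfrak{t}_L)}\lambda$. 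Only once the image of $z$ lands in $\mathcal{C}^{(h-1)}(\overline{N}_0,L)\otimes_L W_\lambda$ for a finite-dimensional $W_\lambda$ does the compactness of $\mathcal{C}^{(h-1)}(\overline{N}_0,L)\hookrightarrow\mathcal{C}^{(h)}(\overline{N}_0,L)$ give a compact operator. You flag ``the equivariance constraints imposed on the $T_{p,0}$ factor'' as a technical issue to sort out, but this is not a bookkeeping matter: it is the crucial finiteness statement without which the whole compactness claim fails.
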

\begin{proof}
Let $l_1,\dots,l_s$ be a system of generators of the continous dual $\Pi'$ as a module over the algebra $\mathcal{O}_L\dbl H_p\dbr [1/p]$. Define a closed embedding of $\Pi$ into $\mathcal{C}(H_p,L)^{\oplus s}$ via the map $v\mapsto (g\mapsto l_i(gv))_{1\leq i\leq s}$. This embedding is $H_p$-equivariant for the left action of $H_p$ on $\mathcal{C}(H_p,L)$ by right translation on functions. By \cite[Prop.IV.5]{CD}, we have $\Pi_{H_p}^{(h)}=\Pi\cap \mathcal{C}^{(h)}(H_p,L)^{\oplus s}$. If $v\in\Pi_{H_p}^{(h)}$, $n\in N_0$ and $g\in H_p\cap z H_pz^{-1}$, we have $l_i(gnzv)=l_i(z(z^{-1}gz)(z^{-1}nz)v)$. Let $v\in\Pi_{H_p}^{(h)}$ and $n\in N_0$. As $z^{-1}N_0z$ normalizes $H_p$ (by the choice of $z$) we have $w:=(z^{-1}nz)v\in\Pi_{H_p}^{(h)}$ (see \cite[Prop.IV.16]{CD}). As $l_i(z\cdot)$ is a continuous linear form on $\Pi$, using \cite[Thm.IV.6(i)]{CD} the function $f_n:H_p\rightarrow L$, $g\mapsto f_n(g):=l_i(zgw)$ is in $\mathcal{C}^{(h)}(H_p,L)$ and $l_i(gnzv)=f_n(z^{-1}gz)$ for $g\in zH_pz^{-1}\cap H_p$. We deduce from Lemma \ref{restriction} applied to the functions $f:H_p\rightarrow L$, $g\mapsto l_i(gzv)$ for $1\leq i\leq s$ that:
\begin{equation}\label{incl(h)}
z\Pi_{H_p}^{(h)}\subset \big(\mathcal{C}^{(h-1)}(\overline{N}_0,L)\widehat{\otimes}_L\mathcal{C}^{(h)}(T_{p,0},L)\widehat{\otimes}_L\mathcal{C}(N_0,L)\big)^{\oplus s}.
\end{equation}
Let $v\in(\Pi_{H_p}^{(h)})^{N_0}$, the space on the right hand side of (\ref{incl(h)}) being stable under $N_0$ (acting by right translation on functions), it still contains $z\cdot v=\sum_{n_0\in N_0/tN_0t^{-1}}n_0zv$. Since $z\cdot v$ is fixed under $N_0$, we deduce:
$$z\cdot v\ \in\ \big(\mathcal{C}^{(h-1)}(\overline{N}_0,L)\widehat{\otimes}_L\mathcal{C}^{(h)}(T_{p,0},L)\big)^{\oplus s}\subseteq \ \mathcal{C}^{(h)}(H_p,L)^{\oplus s}.$$
In particular $z\cdot(\Pi_{H_p}^{(h)})^{N_0}_\lambda \subseteq (\Pi^{\rm an}_\lambda)^{N_0}\cap \mathcal{C}^{(h)}(H_p,L)^{\oplus s}=(\Pi_{H_p}^{(h)})^{N_0}_\lambda$ which shows the first statement. We also deduce:
$$z\cdot(\Pi_{H_p}^{(h)})^{N_0}_\lambda\subseteq\big(\mathcal{C}^{(h-1)}(\overline{N}_0,L)\widehat{\otimes}_L\mathcal{C}^{(h)}(T_{p,0},L)\big)_{\!\lambda}^{\oplus s}\cong \mathcal{C}^{(h-1)}(\overline{N}_0,L)\widehat{\otimes}_L\big(\mathcal{C}^{(h)}(T_{p,0},L)_\lambda^{\oplus s}\big).$$
But by \cite[Prop.~IV.13.(i)]{CD}, we have $\mathcal{C}^{(h)}(T_{p,0},L)'\simeq D_r(T_{p,0},L)$ for $r=p^{-1/p^h}$ where $D_r(T_{p,0},L)$ is as in \cite[\S4]{STdist}. Let $U_r(\mathfrak{t}_L)$ be the closure of $U(\mathfrak{t}_L)$ in $D_r(T_{p,0},L)$, then (as in the proof of Lemma \ref{injective}) $D_r(T_{p,0},L)$ is a finite free $U_r(\mathfrak{t}_L)$-module (\cite[\S1.4]{Ko}). Using $\lambda\buildrel\sim\over\rightarrow U_r(\mathfrak{t}_L)\otimes_{U(\mathfrak{t}_L)}\lambda$, it follows that $(\mathcal{C}^{(h)}(T_{p,0},L)_\lambda)'$, and hence $\mathcal{C}^{(h)}(T_{p,0},L)_\lambda^{\oplus s}$, are finite dimensional $L$-vector spaces. We denote the latter by $W_\lambda$.

We thus have $z\cdot(\Pi_{H_p}^{(h)})^{N_0}_\lambda\subseteq \mathcal{C}^{(h-1)}(\overline{N}_0,L)\otimes_L W_\lambda$: the endomorphism induced by $z$ on $(\Pi_{H_p}^{(h)})^{N_0}_\lambda$ factors through the subspace $(\Pi_{H_p}^{(h)})^{N_0}_\lambda\cap\big(\mathcal{C}^{(h-1)}(\overline{N}_0,L)\otimes_L W_\lambda\big)$. As the inclusion of $\mathcal{C}^{(h-1)}(\overline{N}_0,L)$ into $\mathcal{C}^{(h)}(\overline{N}_0,L)$ is compact and $W_\lambda$ is finite dimensional over $L$, the inclusion of $(\Pi_{H_p}^{(h)})^{N_0}_\lambda\cap\big(\mathcal{C}^{(h-1)}(\overline{N}_0,L)\otimes_L W\big)$ into $(\Pi_{H_p}^{(h)})^{N_0}_\lambda$ is compact, which proves the result.
\end{proof}

If $\delta_v,\epsilon_v\in \widehat T_{v,L}$, we write $\epsilon_v \uparrow_{{\mathfrak t}_v} \delta_v$ if, seeing $\delta_v,\epsilon_v$ as $U({\mathfrak t}_{v,L})$-modules, we have $\epsilon_v \uparrow \delta_v$ in the sense of \cite[\S5.1]{HumBGG} with respect to the roots of the upper triangular matrices in $(\Res_{F_{\tilde v}/\Q_p}\GL_{n,F_{\tilde v}})_L$. Likewise if $\delta,\epsilon\in \widehat T_{p,L}$, we write $\epsilon \uparrow_{{\mathfrak t}} \delta$ if, seeing $\delta,\epsilon$ as $U({\mathfrak t}_{L})$-modules, we have $\epsilon \uparrow \delta$ in the sense of \cite[\S5.1]{HumBGG} with respect to the roots of the upper triangular matrices in $\prod_{v\in S_p}(\Res_{F_{\tilde v}/\Q_p}\GL_{n,F_{\tilde v}})_L$. Thus writing $\delta=(\delta_v)_{v\in S_p},\epsilon=(\epsilon_v)_{v\in S_p}$, we have $\epsilon \uparrow_{{\mathfrak t}} \delta$ if and only if $\epsilon_v \uparrow_{{\mathfrak t}_v} \delta_v$ for all $v\in S_p$.

\begin{defi}\label{stronglink}
Let $\delta,\epsilon\in \widehat T_{p,L}$, we write $\epsilon \uparrow \delta$ if $\epsilon \uparrow_{{\mathfrak t}} \delta$ and if $\epsilon\delta^{-1}$ is an algebraic character of $T_p$, i.e. $\epsilon\delta^{-1}=\delta_{\lambda}$ for some ${\lambda}=({\lambda}_v)_{v\in S_p}\in\prod_{v\in S_p}(\mathbb{Z}^n)^{\Hom(F_{\tilde v},L)}$.
\end{defi}

We can now prove the main theorem of this section.

\begin{theo}\label{intercompanion}
Let ${\mathfrak m}\subseteq R_\infty[1/p]$ be a maximal ideal, $\delta,\epsilon\in \widehat T_{p,L}$ such that $\epsilon \uparrow \delta$ and $L'$ a finite extension of $L$ containing the residue fields $k(\delta)=k(\epsilon)$ and $k({\mathfrak m})$. Then we have:
$$\Hom_{T_p}\big(\epsilon,J_{B_p}(\Pi_\infty^{R_\infty-\rm an}[\mathfrak{m}]\otimes_{k({\mathfrak m})}L')\big)\ne 0\Longrightarrow \Hom_{T_p}\big(\delta,J_{B_p}(\Pi_\infty^{R_\infty-\rm an}[\mathfrak{m}]\otimes_{k({\mathfrak m})}L')\big)\ne 0.$$
\end{theo}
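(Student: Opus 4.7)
The plan is to produce a nonzero $\delta$-eigenvector in $J_{B_p}(V)$, with $V:=\Pi_\infty^{R_\infty-\rm an}[\mathfrak{m}]\otimes_{k(\mathfrak{m})}L'$, starting from the given $\epsilon$-eigenvector by combining the strong linkage theorem of Bernstein--Gelfand--Gelfand in category $\mathcal{O}$ with the two local tools Lemma \ref{injective} and Lemma \ref{compact}. A nonzero element of $\Hom_{T_p}(\epsilon,J_{B_p}(V))$ produces a vector $v\in V^{N_0}$ that is a $T_p$-eigenvector of character $\epsilon$ for the twisted $T_p^+$-action on $V^{N_0}$; in particular $v$ is killed by $\mathfrak{n}_L$ and is a $\mathfrak{t}_L$-eigenvector with well-defined weight (essentially the derivative $\lambda_\epsilon$ of $\epsilon$, up to a uniform shift by the derivative of $\delta_{B_p}$ coming from the Jacquet twist), so $v$ defines a $U(\mathfrak{g}_L)$-equivariant morphism $\phi\colon M(\lambda_\epsilon):=U(\mathfrak{g}_L)\otimes_{U(\mathfrak{b}_L)}\lambda_\epsilon\to V^{\rm an}$ sending $1\otimes 1$ to $v$.

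The hypothesis $\epsilon\uparrow\delta$ says that $\epsilon\delta^{-1}$ is algebraic (so in particular $\epsilon_{\rm sm}=\delta_{\rm sm}$) and that $\lambda_\epsilon\uparrow_{\mathfrak{t}}\lambda_\delta$ in the sense of \cite[\S5.1]{HumBGG}. By Humphreys' strong linkage theorem \cite[Thm.5.1]{HumBGG}, applied factor-by-factor to the reductive $L$-Lie algebra $\mathfrak{g}_L\cong\prod_{v,\tau}\mathfrak{gl}_{n,L}$, one obtains a $U(\mathfrak{g}_L)$-equivariant injection $\iota\colon M(\lambda_\epsilon)\hookrightarrow M(\lambda_\delta)$ that sends the generator of $M(\lambda_\epsilon)$ to a singular vector $Y\otimes 1\in M(\lambda_\delta)$, for some $Y\in U(\overline{\mathfrak{n}}_L)$ that is $\mathrm{Ad}(\mathfrak{t}_L)$-homogeneous of weight $\lambda_\epsilon-\lambda_\delta$. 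Before invoking Lemma \ref{injective}, one checks its projectivity hypothesis for $\Pi=V$: the continuous dual $V'$ is finite projective over $\mathcal{O}_{L'}\dbl K_p\dbr[1/p]$, a consequence of the finite projectivity of $M_\infty$ over $S_\infty\dbl K_p\dbr$ (from \cite{CEGGPS}) together with the fact that the preimage of $\mathfrak{m}$ in $S_\infty[1/p]$ is a maximal ideal with residue field contained in $L'$. Lemma \ref{injective} then lifts $\phi$ to a $U(\mathfrak{g}_L)$-equivariant morphism $\tilde\phi\colon M(\lambda_\delta)\to V^{\rm an}$ with $\tilde\phi\circ\iota=\phi$, and setting $\tilde v:=\tilde\phi(1\otimes 1)$, one obtains a highest weight vector $\tilde v\in V^{N_0}$ of $\mathfrak{t}_L$-weight $\lambda_\delta$ satisfying $Y\tilde v=v$.

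To finish, $\tilde v$ has to be upgraded into a $\delta$-eigenvector for the full $T_p^+$-action on $V^{N_0}$. For the smooth part, since $\epsilon_{\rm sm}=\delta_{\rm sm}$ and $T_{p,0}$ is compact and abelian, integration against $\delta|_{T_{p,0}}$ defines a continuous projector $P$ from the $\mathfrak{t}_L$-weight-$\lambda_\delta$ subspace of $(V^{(h)}_{H_p})^{N_0}$ onto its $\delta|_{T_{p,0}}$-isotypic component; using that $Y$ is $\mathrm{Ad}(T_{p,0})$-homogeneous of weight $\lambda_\epsilon-\lambda_\delta$ and that $v$ is $\epsilon|_{T_{p,0}}$-isotypic, one checks $Y(P\tilde v)=P'(Y\tilde v)=P'(v)=v$ (with $P'$ the analogous projector for $\epsilon$), so $\tilde v':=P\tilde v$ is a nonzero $T_{p,0}$-eigenvector of character $\delta|_{T_{p,0}}$. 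For the $z$-eigenvalue, Lemma \ref{compact} implies that $z$ acts compactly on the closed subspace $X\subseteq(V^{(h)}_{H_p})^{N_0}_{\lambda_\delta}$ cut out by the $\delta|_{T_{p,0}}$-isotypic condition, so $X$ decomposes as a direct sum of finite-dimensional generalized $z$-eigenspaces for the nonzero eigenvalues; commuting $z$ past $Y$ via $\mathrm{Ad}(z)(Y)=\delta_{\lambda_\epsilon-\lambda_\delta}(z)Y$ and comparing with the known $z$-eigenvalue $\epsilon(z)$ on $v=Y\tilde v'$ forces the $\delta(z)$-generalized eigenspace of $z$ on $X$ to contain a nonzero preimage of $v$ under $Y$, and diagonalizing $z$ on this finite-dimensional space yields the desired actual $\delta$-eigenvector in $J_{B_p}(V)$.

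The hard part is this last extraction, where an exact (not merely generalized) $z$-eigenvalue $\delta(z)$ must be obtained while the smooth $T_{p,0}$-isotypic constraint is preserved, and where the twist by $\delta_{B_p}$ relating the raw $G_p$-action on $V$ to the Jacquet $T_p^+$-action on $V^{N_0}$ has to be tracked cleanly. The crucial algebraic input that makes both eigenvalue conditions on $v$ and $\tilde v'$ compatible is the precise $\mathrm{Ad}(\mathfrak{t}_L)$-weight $\lambda_\epsilon-\lambda_\delta$ of the BGG singular vector $Y$. A secondary technical issue is the verification of the projectivity hypothesis of Lemma \ref{injective} for the fibre $V$, which ultimately reduces to the projectivity of the patched module $M_\infty$ over $S_\infty\dbl K_p\dbr$.
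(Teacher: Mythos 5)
Your overall strategy matches the paper's (BGG strong linkage giving an injection of Verma modules, Lemma \ref{injective} to push homomorphisms forward, Lemma \ref{compact} plus Serre's spectral theory for the $T_p^+$-action), but there is one genuine gap and one cleaner formulation you should adopt.

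The gap is in your verification of the hypothesis of Lemma \ref{injective}. You want to apply it with $\Pi=V:=\Pi_\infty^{R_\infty\text{-}\rm an}[\mathfrak m]\otimes_{k(\mathfrak m)}L'$, and you assert that $V'$ is finite projective over $\mathcal O_{L'}\dbl K_p\dbr[1/p]$ ``because $M_\infty$ is finite projective over $S_\infty\dbl K_p\dbr$ and the preimage of $\mathfrak m$ in $S_\infty[1/p]$ is maximal.'' This does not follow. The module $\Pi_\infty'$ is finite projective over $S_\infty\dbl K_p\dbr$, so if $\mathfrak I\subset S_\infty[1/p]$ is an ideal of finite codimension then $\Pi_\infty[\mathfrak I]'\cong \Pi_\infty'/\mathfrak I$ is indeed finite projective over $\mathcal O_L\dbl K_p\dbr[1/p]$. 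But $\mathfrak m$ is a maximal ideal of the much larger ring $R_\infty[1/p]$, and $\Pi_\infty'$ is \emph{not} projective over $R_\infty\dbl K_p\dbr$; the $\mathfrak m$-torsion $\Pi_\infty[\mathfrak m]$ is a proper $R_\infty$-stable subspace of $\Pi_\infty[\mathfrak I]$ (where $\mathfrak I$ is the preimage of $\mathfrak m$), so its continuous dual is a proper \emph{quotient} of the projective module $\Pi_\infty[\mathfrak I]'$, and quotients of projective modules over $\mathcal O_L\dbl K_p\dbr[1/p]$ need not be projective. The paper's proof is structured precisely to avoid this: Lemma \ref{injective} is applied to $\Pi_{\mathfrak I}=\Pi_\infty[\mathfrak I]$ for $\mathfrak I\subset S_\infty[1/p]$ of finite codimension (where the projectivity hypothesis really does hold), the resulting map $\iota^*_{\mu,\lambda}$ is observed to be $R_\infty$-equivariant, and only at the very end one intersects with the $\mathfrak m^t$-torsion. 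As things stand you cannot invoke Lemma \ref{injective} for $V$ itself.

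The second point is more a matter of economy than correctness. Your final ``upgrading'' step tries to track a specific preimage $\tilde v'$ of $v$ under the singular vector $Y$ and then diagonalize $z$ on a finite-dimensional generalized eigenspace. This is delicate: a preimage of an actual $z$-eigenvector under $Y$ need not be an actual $z$-eigenvector, and picking one that is requires more than finite-dimensionality. The paper instead establishes that $\iota^*_{\mu,\lambda}$ is \emph{surjective} on $N_0$-invariants (your lifting of a single $\phi$ to $\tilde\phi$ via Lemma \ref{injective} is only a special case of this surjectivity; the full surjectivity is what one needs), checks that surjectivity is preserved on generalized $\mathfrak m_\delta^s$-eigenspaces for the compact operator $z$ via \cite[Prop.\,9, Prop.\,12]{SerreCC}, and then intersects with $\mathfrak m^t$-torsion using $R_\infty$-equivariance. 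Nonvanishing of the target then forces nonvanishing of the source, and a nonzero finite-dimensional space on which $\mathfrak m_\delta$ acts nilpotently automatically contains a nonzero $\mathfrak m_\delta$-torsion vector, i.e.\ an honest $\delta$-eigenvector. Restructuring your last step along these lines both simplifies it and makes rigorous the $z$-eigenvalue extraction that you flag as the ``hard part.''
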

\begin{proof}
We assume first $k(\delta)=k(\epsilon)=L$ and $L'=k({\mathfrak m})$, so that we can forget about $L'$.
Let $\Pi$ be a locally $\Q_p$-analytic representation of $B_p$ over $L$. The subspace $\Pi^{\mathfrak{n}_L}$ of vectors killed by $\mathfrak{n}_L$ is a smooth representation of the group $N_0$ and we denote by $\pi_{N_0}:\Pi^{\mathfrak{n}_L}\twoheadrightarrow \Pi^{N_0}\subseteq \Pi^{\mathfrak{n}_L}$ the unique $N_0$-equivariant projection on its subspace $\Pi^{N_0}$.  It is preserved by the action of $T_p$ inside $\Pi$, hence also by the action of $\mathfrak{t}_L$ and one easily checks that:
\begin{equation}\label{commutt}
\pi_{N_0}\circ \mathfrak{x}=\mathfrak{x}\circ \pi_{N_0}\ \ \ \ (\mathfrak{x}\in \mathfrak{t}_L)
\end{equation}
(use $tN_0t^{-1}=N_0$ for $t\in T_p^0$). The subspace $\Pi_{\lambda}^{\mathfrak{n}_L}:=\Pi_\lambda\cap \Pi^{\mathfrak{n}_L}\subseteq \Pi^{\mathfrak{n}_L}$ is still preserved by $T_p$ and by (\ref{commutt}) the projection $\pi_{N_0}$ sends $\Pi_{\lambda}^{\mathfrak{n}_L}$ onto $\Pi_{\lambda}^{N_0}:=\Pi^{N_0}\cap \Pi_{\lambda}^{\mathfrak{n}_L}\subseteq \Pi_{\lambda}^{\mathfrak{n}_L}$. We have $t\cdot v=\pi_{N_0}(tv)$ for $t\in T_p^+$, $v\in \Pi^{N_0}_\lambda$ and in the rest of the proof we view $\Pi^{N_0}_\lambda$ as an $L[T_p^+]$-module via this monoid action.

The locally $\Q_p$-analytic character $\delta:T_p\rightarrow L^\times$ determines a surjection of $L$-algebras $L[T_p]\twoheadrightarrow L$ and we denote its kernel by $\mathfrak{m}_\delta$ (a maximal ideal of the $L$-algebra $L[T_p]$). We still write $\mathfrak{m}_\delta$ for its intersection with $L[T_p^+]$, which is then a maximal ideal of $L[T_p^+]$. Let $\lambda:\mathfrak{t}_L\rightarrow L$ be the derivative of $\delta$, arguing as in \cite[Prop.3.2.12]{EmertonJacquetI} we get for $s\geq 1$:
\begin{equation}\label{jm}
J_{B_p}(\Pi)[\mathfrak{m}_\delta^s]\cong \Pi^{N_0}[\mathfrak{m}_{\delta}^s]\cong \Pi_\lambda^{N_0}[\mathfrak{m}_{\delta}^s],
\end{equation}
(in particular $\Hom_{T_p}(\delta,J_{B_p}(\Pi))\cong \Pi^{N_0}[\mathfrak{m}_{\delta}]\cong \Pi_\lambda^{N_0}[\mathfrak{m}_{\delta}]$). Likewise we have $J_{B_p}(\Pi)[\mathfrak{m}_{\epsilon}^s]\cong \Pi^{N_0}[\mathfrak{m}_{\epsilon}^s]\cong \Pi_\mu^{N_0}[\mathfrak{m}_{\epsilon}^s]$ if $\mu:\mathfrak{t}_L\rightarrow L$ is the derivative of $\epsilon$.

Let $\mathfrak{I}\subset S_\infty[1/p]$ be an ideal such that $\dim_L(S_\infty[1/p]/\mathfrak{I})<\infty$ and define $\Pi_{\mathfrak{I}}:=\Pi_\infty[\mathfrak{I}]$. As the continuous dual $\Pi_\infty'$ is a finite projective $S_\infty\dbl K_p\dbr [1/p]$-module (property (ii) in \S\ref{firstclassical}), the continuous dual $\Pi_\infty[\mathfrak{I}]'$ of the $G_p$-representation $\Pi_\infty[\mathfrak{I}]$, which is isomorphic to $\Pi_\infty'/\mathfrak{I}$ by the discussion at the end of \cite[\S3.1]{BHS}, is a finite projective $S_\infty\dbl K_p\dbr [1/p]/\mathfrak{I}S_\infty\dbl K_p\dbr [1/p]\cong \mathcal{O}_L\dbl K_p\dbr [1/p]$-module (in particular it is an admissible continuous representation of $G_p$ over $L$). Moreover it is immediate to check that $\Pi_\infty^{R_\infty-\rm an}[\mathfrak{I}]$ is isomorphic to the subspace $\Pi_{\mathfrak{I}}^{\rm an}$ of locally $\Q_p$-analytic vectors of $\Pi_{\mathfrak{I}}$.

Taking the image of a vector in (the underlying $L$-vector space of) $\lambda$ or $\mu$ gives natural isomorphisms:
$$\Hom_{U(\mathfrak{g}_L)}\big(U(\mathfrak{g}_L)\otimes_{U(\mathfrak{b}_L)}\lambda,\Pi_{\mathfrak{I}}^{\rm an}\big)\buildrel\sim\over\longrightarrow (\Pi_{\mathfrak{I}}^{\rm an})_\lambda^{\mathfrak{n}_L}\ {\rm and}\ \Hom_{U(\mathfrak{g}_L)}\big(U(\mathfrak{g}_L)\otimes_{U(\mathfrak{b}_L)}\mu,\Pi_{\mathfrak{I}}^{\rm an}\big)\buildrel\sim\over\longrightarrow (\Pi_{\mathfrak{I}}^{\rm an})_\mu^{\mathfrak{n}_L}.$$
As $\mu$ is strongly linked to $\lambda$ by assumption, \cite[Th.5.1]{HumBGG} implies the existence of a unique (up to $L$-homothety) $U(\mathfrak{g}_L)$-equivariant injection:
\begin{equation}\label{mulambdau}
\iota_{\mu, \lambda}:\,U(\mathfrak{g}_L)\otimes_{U(\mathfrak{b}_L)}\mu\hookrightarrow U(\mathfrak{g}_L)\otimes_{U(\mathfrak{b}_L)}\lambda
\end{equation}
which induces an $L$-linear map:
\begin{equation}\label{imulambda}
\iota^*_{\mu, \lambda}:\,(\Pi_{\mathfrak{I}}^{\rm an})_\lambda^{\mathfrak{n}_L}\longrightarrow (\Pi_{\mathfrak{I}}^{\rm an})_\mu^{\mathfrak{n}_L}.
\end{equation}

We claim that (\ref{imulambda}) maps the subspace $(\Pi_{\mathfrak{I}}^{\rm an})_\lambda^{N_0}$ to the subspace $(\Pi_{\mathfrak{I}}^{\rm an})_\mu^{N_0}$. It is enough to prove:
\begin{equation}\label{mulambda}
\iota^*_{\mu, \lambda}\circ\pi_{N_0}=\pi_{N_0}\circ \iota^*_{\mu, \lambda}.
\end{equation}
Let $\mathfrak{v}$ be the image by (\ref{mulambdau}) of a nonzero vector $v$ in the underlying $L$-vector space of $\mu$. Writing $U(\mathfrak{g}_L)\otimes_{U(\mathfrak{b}_L)}\lambda\cong U(\mathfrak{n}_L^-)$ we see that $\mathfrak{v}\in U(\mathfrak{n}_L^-)_{\mu-\lambda}$ (with obvious notation), that $\mathfrak{v}$ is killed by $\mathfrak{n}_L$ in $U(\mathfrak{g}_L)\otimes_{U(\mathfrak{b}_L)}\lambda$ and that the morphism $\iota^*_{\mu, \lambda}$ is given by the action (on the left) by $\mathfrak{v}$. To get (\ref{mulambda}) it is enough to prove $\pi_{N_0}\circ\mathfrak{v}=\mathfrak{v}\circ\pi_{N_0}$ on $(\Pi_\mathfrak{I}^{\rm an})^{\mathfrak{n}_L}_{\lambda}$, which itself follows from $n\circ \mathfrak{v}=\mathfrak{v}\circ n$ for $n\in N_0$ ($n$ acting via the underlying $G_p$-action on $\Pi_{\mathfrak{I}}^{\rm an}$), or equivalently $\mathrm{Ad}(n)(\mathfrak{v})=\mathfrak{v}$ on $(\Pi_\mathfrak{I}^{\rm an})^{\mathfrak{n}_L}_{\lambda}$. Writing $n=\exp(\mathfrak{m})$ with $\mathfrak{m}\in\mathfrak{n}_L$ (do not confuse here with the maximal ideal $\mathfrak{m}$!), recall we have $\mathrm{Ad}(n)(\mathfrak{v})=\exp(\mathrm{ad}(\mathfrak{m}))(\mathfrak{v})$ (using standard notation). Since $\mathfrak{v}$ is killed by left multiplication by $\mathfrak{n}_L$ in $U(\mathfrak{g}_L)\otimes_{U(\mathfrak{b}_L)}\lambda$, we have:
$$\exp(\mathrm{ad}(\mathfrak{m}))(\mathfrak{v})\in\mathfrak{v}+U(\mathfrak{g}_L)(\mathfrak{n}_L+\ker(\lambda))$$
where $\ker(\lambda):=\ker(U(\mathfrak{g}_L)\rightarrow U(\mathfrak{g}_L)\otimes_{U(\mathfrak{b}_L)}\lambda, \ \mathfrak{x}\mapsto \mathfrak{x}\otimes v)$. The action of $\mathrm{Ad}(n)(\mathfrak{v})$ on $(\Pi_\mathfrak{I}^{\rm an})^{\mathfrak{n}_L}_{\lambda}$ is thus the same as that of $\mathfrak{v}$. 

We still write:
\begin{equation}\label{mulambda0}
\iota^*_{\mu, \lambda}:\,(\Pi_{\mathfrak{I}}^{\rm an})_\lambda^{N_0}\longrightarrow (\Pi_{\mathfrak{I}}^{\rm an})_\mu^{N_0}
\end{equation}
for the map induced by (\ref{imulambda}). Using $\mathfrak{v}\in U(\mathfrak{n}_L^-)_{\mu-\lambda}$ together with (\ref{mulambda}), it is easy to check that $\iota^*_{\mu, \lambda}\circ t=(\delta\epsilon^{-1})(t)(t\circ \iota^*_{\mu, \lambda})$ for $t\in T_p^+$ (for the previous $L[T_p^+]$-module structure). Moreover, it follows from Lemma \ref{injective} that (\ref{imulambda}) is surjective, hence the top horizontal map and the two vertical maps are surjective in the commutative diagram:
$$\begin{xy}\xymatrix{(\Pi_{\mathfrak{I}}^{\rm an})_\lambda^{\mathfrak{n}_L}\ar[d]_{\pi_{N_0}}\ar[r]^{(\ref{imulambda})} & (\Pi_{\mathfrak{I}}^{\rm an})_\mu^{\mathfrak{n}_L}\ar[d]^{\pi_{N_0}}\\(\Pi_{\mathfrak{I}}^{\rm an})_\lambda^{N_0}\ar[r]^{(\ref{mulambda0})} & (\Pi_{\mathfrak{I}}^{\rm an})_\mu^{N_0}}
\end{xy}$$
which implies that (\ref{mulambda0}) is also surjective. Note also that both (\ref{imulambda}) and (\ref{mulambda0}) trivially commute with the action of $R_\infty$ (which factors through $R_\infty/\mathfrak{I}R_\infty$).

From \cite[\S0.3]{CD} we have $\Pi_{\mathfrak{I}}^{\rm an}\cong \lim_{h\rightarrow +\infty}\Pi_{\mathfrak{I},H_p}^{(h)}$ and thus:
$$(\Pi_{\mathfrak{I}}^{\rm an})_\lambda^{N_0}\cong \lim_{h\rightarrow +\infty}(\Pi_{\mathfrak{I},H_p}^{(h)})_\lambda^{N_0}\ \ {\rm and}\ \ (\Pi_{\mathfrak{I}}^{\rm an})_\mu^{N_0}\cong \lim_{h\rightarrow +\infty}(\Pi_{\mathfrak{I},H_p}^{(h)})_\mu^{N_0}.$$
By Lemma \ref{compact} there is $z\in T_p^+$ which acts compactly on $(\Pi_{\mathfrak{I},H_p}^{(h)})_\lambda^{N_0}$ and $(\Pi_{\mathfrak{I},H_p}^{(h)})_\mu^{N_0}$. We deduce from this fact together with \cite[Prop.9]{SerreCC} and \cite[Prop.12]{SerreCC} that the map $\iota^*_{\mu,\lambda}$ in (\ref{mulambda0}) remains surjective at the level of {\it generalized eigenspaces} for the action of $T_p^+$ (twisting this action by the character $\delta\epsilon^{-1}$ on the right hand side). Consequently $\iota^*_{\mu,\lambda}$ induces a surjective map:
$$\bigcup_{s\geq 1}(\Pi_{\mathfrak{I}}^{\rm an})_{\lambda}^{N_0}[\mathfrak{m}_{\delta}^s]\twoheadrightarrow \bigcup_{s\geq 1}(\Pi_{\mathfrak{I}}^{\rm an})^{N_0}_\mu[\mathfrak{m}_{\epsilon}^s].$$
As both the source and target of this map are unions of finite dimensional $L$-vector spaces (as follows from the admissibility of $\Pi_{\mathfrak{I}}^{\rm an}$, \cite[Th.4.3.2]{EmertonJacquetI} and (\ref{jm})) which are stable under $R_\infty$ and as $\iota^*_{\mu,\lambda}$ is $R_\infty$-equivariant, the following map induced by $\iota^*_{\mu,\lambda}$ remains surjective:
\begin{equation}\label{cupcup}
\bigcup_{s,t\geq 1}(\Pi_\mathfrak{I}^{\rm an})_{\lambda}^{N_0}[\mathfrak{m}_{\delta}^s, \mathfrak{m}^t]\twoheadrightarrow \bigcup_{s,t\geq 1}(\Pi_\mathfrak{I}^{\rm an})^{N_0}_\mu[\mathfrak{m}_{\epsilon}^s,\mathfrak{m}^t].
\end{equation}
Since $\mathfrak{m}^t$ is an ideal of cofinite dimension in $R_\infty[1/p]$, the inverse image $\mathfrak{I}$ of $\mathfrak{m}^t$ in $S_\infty[1/p]$ is {\it a fortiori} of cofinite dimension in $S_\infty[1/p]$ and we can apply (\ref{cupcup}) with such an $\mathfrak I$. But we have for this $\mathfrak{I}$:
\begin{equation*}
(\Pi_{\mathfrak{I}}^{\rm an})^{N_0}_\lambda[\mathfrak{m}_\delta^s,\mathfrak{m}^t]=(\Pi_\infty^{R_\infty-\rm an})^{N_0}_\lambda[\mathfrak{m}_\delta^s,\mathfrak{m}^t,\mathfrak{I}]=(\Pi_\infty^{R_\infty-\rm an})^{N_0}_\lambda[\mathfrak{m}_{\delta}^s,\mathfrak{m}^t]
\end{equation*}
and likewise with $\mathfrak{m}_{\epsilon}$, so that (\ref{cupcup}) is a surjection:
$$\bigcup_{s,t\geq 1}(\Pi_\infty^{R_\infty-\rm an})_{\lambda}^{N_0}[\mathfrak{m}_{\delta}^s, \mathfrak{m}^t]\twoheadrightarrow \bigcup_{s,t\geq 1}(\Pi_\infty^{R_\infty-\rm an})_{\mu}^{N_0}[\mathfrak{m}_{\epsilon}^s, \mathfrak{m}^t].$$
Looking at the eigenspaces on both sides, we obtain $\Hom_{T_p}(\delta,J_{B_p}(\Pi_\infty^{R_\infty-\rm an}[\mathfrak{m}]))\ne 0$ if $\Hom_{T_p}(\epsilon,J_{B_p}(\Pi_\infty^{R_\infty-\rm an}[\mathfrak{m}]))\ne 0$.\\
Finally, when $k(\delta)=k(\epsilon)$ is larger than $L$, we replace $\Pi_\infty$ by $\Pi_\infty':=\Pi_\infty\otimes_LL'$, $S_\infty[1/p]$ by $S_\infty[1/p]\otimes_LL'$, ${\mathfrak m}$ by ${\mathfrak m}':=\ker(R_\infty[1/p]\otimes_LL'\twoheadrightarrow k({\mathfrak m})\otimes_LL'\twoheadrightarrow L')$ (the last surjection coming from the inclusion $k({\mathfrak m})\subseteq L'$) and the reader can check that all the arguments of the previous proof go through {\it mutatis mutandis}.
\end{proof}

\subsection{Tangent spaces on the trianguline variety}\label{geometry}

We prove that Conjecture \ref{BHS} implies Conjecture \ref{mainconj} (when $\rbar$ ``globalizes'' and $x$ is very regular) and give one (conjectural) application.

We keep the notation and assumptions of \S\ref{closed}. We fix $x=(r,\delta)\in \widetilde X_{\rm tri}^\square(\rbar)\subseteq X_{\rm tri}^\square(\rbar)$ which is crystalline strictly dominant very regular. Recall from Lemma \ref{paramofcrystpt} that $\delta=(\delta_1\dots,\delta_n)$ where $\delta_i=z^{{\bf k}_i}{\rm unr}(\varphi_i)$ with ${\bf k}_i=(k_{\tau,i})_{\tau:\, K\hookrightarrow L}\in\mathbb{Z}^{\Hom(K,L)}$ and $\varphi_i\in k(x)^\times$. The following result immediately follows from Proposition \ref{acconZaut} and Theorem \ref{upperbound} applied to $X=\widetilde X_{\rm tri}^\square(\overline r)$.

\begin{coro}\label{easybound}
Assume Conjecture \ref{BHS}, then we have:
$$\dim_{k(x)}T_{\widetilde X_{\rm tri}^\square(\overline r),x}\leq \lg(w_x)-d_x+\dim X_{\rm tri}^\square(\rbar)=\lg(w_x)-d_x+n^2+[K:\Q_p]\frac{n(n+1)}{2}.$$
\end{coro}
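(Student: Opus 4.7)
The strategy is to directly feed Theorem \ref{upperbound} with $X=\widetilde X_{\rm tri}^\square(\overline r)$, so the entire content of the proof is to check the accumulation hypothesis at $x$. The upper bound and the equality with $n^2+[K:\Q_p]\frac{n(n+1)}{2}$ are then immediate from Theorem \ref{upperbound} itself (which already bakes in $\dim X_{\rm tri}^\square(\overline r)=n^2+[K:\Q_p]\frac{n(n+1)}{2}$).

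The accumulation property for $\widetilde X_{\rm tri}^\square(\overline r)$ in the purely local setting does not appear to be accessible without going through the patched picture, which is precisely why the global hypothesis is in force in \S\ref{closed}. The plan is therefore: invoke the globalization of $\overline r$ fixed at the beginning of \S\ref{closed} — an $n$-dimensional $\rhobar:\Gcal_F\to\GL_n(k_L)$ with $\rhobar_{\tilde v}\cong\overline r$ for every $v\in S_p$ — and form the diagonal point $\tilde x:=(x)_{v\in S_p}$ in $\prod_{v\in S_p}\widetilde X_{\rm tri}^\square(\rhobar_{\tilde v})=\widetilde X_{\rm tri}^\square(\rhobar_p)$. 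Since $x$ is crystalline strictly dominant very regular, so is $\tilde x$.

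Next I would apply Conjecture \ref{BHS}: for any irreducible component $\Xfrak^p\subset\Xfrak_{\rhobar^p}$, the conjecture identifies
\[
X_{\rm tri}^{\Xfrak^p\text{-aut}}(\rhobar_p)\;=\;\widetilde X_{\rm tri}^\square(\rhobar_p),
\]
so $\tilde x$ is a crystalline strictly dominant point of $X_{\rm tri}^{\Xfrak^p\text{-aut}}(\rhobar_p)$. Proposition \ref{acconZaut} then implies that $X_{\rm tri}^{\Xfrak^p\text{-aut}}(\rhobar_p)=\widetilde X_{\rm tri}^\square(\rhobar_p)$ satisfies the (global) accumulation property at $\tilde x$ in the sense introduced just before that proposition. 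Remark \ref{obvious} says that such accumulation on a product of trianguline varieties projects to accumulation on each factor; applying it to the projection onto any one copy of $\widetilde X_{\rm tri}^\square(\rhobar_{\tilde v})=\widetilde X_{\rm tri}^\square(\overline r)$ gives that $\widetilde X_{\rm tri}^\square(\overline r)$ satisfies the (purely local) accumulation property of Definition \ref{accu} at $x$.

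With the accumulation hypothesis secured, Theorem \ref{upperbound} applied to the union of irreducible components $X=\widetilde X_{\rm tri}^\square(\overline r)\subseteq X_{\rm tri}^\square(\overline r)$ (which is indeed a union of irreducible components of the whole $X_{\rm tri}^\square(\overline r)$, hence {\it a fortiori} of an open subset of it) yields
\[
\dim_{k(x)}T_{\widetilde X_{\rm tri}^\square(\overline r),x}\;\leq\;\lg(w_x)-d_x+\dim X_{\rm tri}^\square(\overline r),
\]
and the second equality is the formula $\dim X_{\rm tri}^\square(\overline r)=n^2+[K:\Q_p]\frac{n(n+1)}{2}$ recalled in \S\ref{begin}. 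The only real step is the chain Conjecture \ref{BHS} $\to$ Proposition \ref{acconZaut} $\to$ Remark \ref{obvious}; there is no further subtlety, since the very regularity and strict dominance of the diagonal point $\tilde x$ are inherited from those of $x$. The genuine obstacle — and the reason Corollary \ref{easybound} is only conditional — is precisely the input of Conjecture \ref{BHS}: without it, one has no way to promote the patched accumulation of Proposition \ref{acconZaut} to local accumulation on $\widetilde X_{\rm tri}^\square(\overline r)$.
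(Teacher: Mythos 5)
Your proof is correct and follows the same one-line argument the paper gives (``immediately follows from Proposition \ref{acconZaut} and Theorem \ref{upperbound}''): you have simply unpacked the implicit chain, including the needed use of Remark \ref{obvious} to descend accumulation from $\widetilde X_{\rm tri}^\square(\rhobar_p)$ to a single factor. One small imprecision in your closing remark: the obstruction without Conjecture \ref{BHS} is not the descent step itself (Remark \ref{obvious} is unconditional), but that one would then only obtain accumulation for the projection of $X_{\rm tri}^{\Xfrak^p\text{-aut}}(\rhobar_p)$ — and indeed would not even know that the diagonal point $\tilde x$ lies on $X_{\rm tri}^{\Xfrak^p\text{-aut}}(\rhobar_p)$ — which could be a proper union of components of $\widetilde X_{\rm tri}^\square(\overline r)$.
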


The rest of this section is devoted to the proof of the converse inequality (still assuming Conjecture \ref{BHS}).

As in the proof of Proposition \ref{inter}, we consider for $1\leq i\leq n$ the cartesian diagram which defines $W_i$ (with the notation of \S\ref{wedgesection}):
$$\xymatrix{{\rm Ext}^1_{(\varphi,\Gamma_K)}\big(D_{\rig}(r),D_{\rig}(r)\big)\ar[r]&{\rm Ext}^1_{(\varphi,\Gamma_K)}\big(D_{\rig}(r)^{\leq i},D_{\rig}(r)\big)\\ 
W_i \ar[r]\ar@{^{(}->}[u]& 
{\rm Ext}^1_{(\varphi,\Gamma_K)}\big(D_{\rig}(r)^{\leq i},D_{\rig}(r)^{\leq i}\big).\ar@{^{(}->}[u]}$$
We define $W_{{\rm cris},i}\subseteq {\rm Ext}^1_{\rm cris}(D_{\rig}(r),D_{\rig}(r))$ as $W_i$ but replacing everywhere ${\rm Ext}^1_{(\varphi,\Gamma_K)}$ by its subspace ${\rm Ext}^1_{\rm cris}$. Note that $W_{{\rm cris},i}\subseteq W_i$ for $1\leq i\leq n$.

\begin{prop}\label{gradcris}
For $1\leq i\leq n$, we have isomorphisms of $k(x)$-vector spaces:
\begin{eqnarray}
W_1\cap\cdots \cap W_{i-1}/W_1\cap\cdots \cap W_{i} &\buildrel\sim\over\longrightarrow &
{\rm Ext}_{(\varphi,\Gamma_K)}^1\big({\rm gr}_iD_{\rig}(r),D_{\rig}(r)/D_{\rig}(r)^{\leq i}\big)\\
\nonumber W_{{\rm cris},1}\cap\cdots \cap W_{{\rm cris},i-1}/W_{{\rm cris},1}\cap\cdots \cap W_{{\rm cris},i} &\buildrel\sim\over\longrightarrow &
{\rm Ext}_{\rm cris}^1\big({\rm gr}_iD_{\rig}(r),D_{\rig}(r)/D_{\rig}(r)^{\leq i}\big)
\end{eqnarray}
where \ $W_1\cap\cdots \cap W_{i-1}:={\rm Ext}_{(\varphi,\Gamma_K)}^1(D_{\rig}(r),D_{\rig}(r))$ \ (resp. \ $W_{{\rm cris},1}\cap\cdots \cap W_{{\rm cris},i-1}:={\rm Ext}_{\rm cris}^1(D_{\rig}(r),D_{\rig}(r))$) if $i=1$.
\end{prop}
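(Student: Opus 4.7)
The proof will closely follow the template of Proposition \ref{condsplit}. The plan is to construct an explicit map from $W_1\cap\cdots\cap W_{i-1}$ to ${\rm Ext}^1_{(\varphi,\Gamma_K)}({\rm gr}_i D_{\rig}(r), D_{\rig}(r)/D_{\rig}(r)^{\leq i})$, identify its kernel with $W_1\cap\cdots\cap W_i$, prove surjectivity, and then check that the same argument restricts cleanly to crystalline extensions.

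First I would define the map as follows. Given $\mathcal{E}\in W_1\cap\cdots\cap W_{i-1}$, pushforward along $D_{\rig}(r)\twoheadrightarrow D_{\rig}(r)/D_{\rig}(r)^{\leq i}$ and pullback along $D_{\rig}(r)^{\leq i}\hookrightarrow D_{\rig}(r)$ to produce an element of ${\rm Ext}^1_{(\varphi,\Gamma_K)}(D_{\rig}(r)^{\leq i}, D_{\rig}(r)/D_{\rig}(r)^{\leq i})$. Using the short exact sequence
$$0\to {\rm Ext}^1_{(\varphi,\Gamma_K)}({\rm gr}_i D_{\rig}(r), D_{\rig}(r)/D_{\rig}(r)^{\leq i})\to {\rm Ext}^1_{(\varphi,\Gamma_K)}(D_{\rig}(r)^{\leq i}, D_{\rig}(r)/D_{\rig}(r)^{\leq i})\to {\rm Ext}^1_{(\varphi,\Gamma_K)}(D_{\rig}(r)^{\leq i-1}, D_{\rig}(r)/D_{\rig}(r)^{\leq i})$$
(where injectivity on the left uses ${\rm Hom}_{(\varphi,\Gamma_K)}({\rm gr}_i D_{\rig}(r), D_{\rig}(r)/D_{\rig}(r)^{\leq i})=0$ by Definition \ref{veryreg}), I would check that the condition $\mathcal{E}\in W_{i-1}$ forces the image on the right to vanish (as it equals the pushforward along $D_{\rig}(r)/D_{\rig}(r)^{\leq i-1}\twoheadrightarrow D_{\rig}(r)/D_{\rig}(r)^{\leq i}$ of a class which is already zero in ${\rm Ext}^1_{(\varphi,\Gamma_K)}(D_{\rig}(r)^{\leq i-1}, D_{\rig}(r)/D_{\rig}(r)^{\leq i-1})$). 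Thus $\mathcal{E}$ lifts uniquely to ${\rm Ext}^1_{(\varphi,\Gamma_K)}({\rm gr}_i D_{\rig}(r), D_{\rig}(r)/D_{\rig}(r)^{\leq i})$, giving the desired map.

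For the kernel, by construction $\mathcal{E}$ maps to zero iff its image in ${\rm Ext}^1_{(\varphi,\Gamma_K)}(D_{\rig}(r)^{\leq i}, D_{\rig}(r)/D_{\rig}(r)^{\leq i})$ vanishes, which is exactly the defining condition for $\mathcal{E}\in W_i$. For surjectivity, I would use the pullback along $D_{\rig}(r)\twoheadrightarrow D_{\rig}(r)/D_{\rig}(r)^{\leq i-1}$ to see that ${\rm Ext}^1_{(\varphi,\Gamma_K)}(D_{\rig}(r)/D_{\rig}(r)^{\leq i-1}, D_{\rig}(r))$ maps into $W_1\cap\cdots\cap W_{i-1}$ (such extensions tautologically respect the subfiltration $(D_{\rig}(r)^{\leq j})_{j\leq i-1}$), and the composite with the map above is the natural pushforward-pullback morphism ${\rm Ext}^1_{(\varphi,\Gamma_K)}(D_{\rig}(r)/D_{\rig}(r)^{\leq i-1}, D_{\rig}(r))\to {\rm Ext}^1_{(\varphi,\Gamma_K)}({\rm gr}_i D_{\rig}(r), D_{\rig}(r)/D_{\rig}(r)^{\leq i})$. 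A double d\'evissage on each argument, combined with the vanishing of ${\rm Ext}^2_{(\varphi,\Gamma_K)}({\rm gr}_{j}D_{\rig}(r), {\rm gr}_{\ell}D_{\rig}(r))$ for $1\leq j,\ell\leq n$ (a consequence of $H^2(\mathcal{G}_K, r\otimes r')=0$, cf.\ Lemma \ref{dim} and Remark \ref{cascr}), makes this composite surjective.

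For the crystalline statement, I would rerun exactly the same diagram chase, observing that pushforward along $D_{\rig}(r)\twoheadrightarrow D_{\rig}(r)/D_{\rig}(r)^{\leq i}$ and pullback along $D_{\rig}(r)^{\leq i}\hookrightarrow D_{\rig}(r)$ both preserve crystallinity (the graded pieces are crystalline $(\varphi,\Gamma_K)$-modules by construction), so the induced map lands in ${\rm Ext}^1_{\rm cris}({\rm gr}_i D_{\rig}(r), D_{\rig}(r)/D_{\rig}(r)^{\leq i})$. The surjectivity argument transfers to the crystalline setting because, as already invoked in the discussion preceding Lemma \ref{dim1} (via \cite[Cor.1.4.6]{Benois}), the functor ${\rm Ext}^1_{\rm cris}(\cdot,\cdot)$ respects short exact sequences here, i.e.\ there is no ${\rm Ext}^2_{\rm cris}$ obstruction. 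I expect the main obstacle to be the bookkeeping: carefully matching the three pushforward/pullback operations to ensure the map really factors through the graded piece and that the crystalline version restricts cleanly without shrinking the image. Once the relevant commutative diagrams are laid out, each step is forced by the exact sequences and by Definition \ref{veryreg}.
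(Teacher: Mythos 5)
Your proposal follows the paper's proof essentially verbatim: the same pushforward-and-pullback map into ${\rm Ext}^1(D_{\rig}(r)^{\leq i}, D_{\rig}(r)/D_{\rig}(r)^{\leq i})$, the same short exact sequence that forces the image to lie in the graded Ext group, the same identification of the kernel with $W_1\cap\cdots\cap W_i$ via the defining cartesian square, and the same surjectivity argument via ${\rm Ext}^1(D_{\rig}(r)/D_{\rig}(r)^{\leq i-1}, D_{\rig}(r))\subseteq W_1\cap\cdots\cap W_{i-1}$. Your remark that ${\rm Ext}^1_{\rm cris}$ respects exact sequences (via the absence of ${\rm Ext}^2_{\rm cris}$, citing Benois) simply spells out what the paper compresses into ``exactly the same, replacing ${\rm Ext}^1_{(\varphi,\Gamma_K)}$ by ${\rm Ext}^1_{\rm cris}$,'' so there is no genuine divergence.
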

\begin{proof}
We write $D_{\rig}$ instead of $D_{\rig}(r)$ and drop the subscript $(\varphi,\Gamma_K)$ in this proof. We start with the first isomorphism, the proof of which is analogous to (though simpler than) the proof of (\ref{isoi}) in \S\ref{endofproof}. We have the exact sequence (using Definition \ref{veryreg}):
\begin{equation}\label{exactwcris}
\!\!0\rightarrow {\rm Ext}^1\big({\rm gr}_iD_{\rig},D_{\rig}/D_{\rig}^{\leq i}\big)\rightarrow {\rm Ext}^1\big(D_{\rig}^{\leq i},D_{\rig}/D_{\rig}^{\leq i}\big)\rightarrow {\rm Ext}^1\big(D_{\rig}^{\leq i-1},D_{\rig}/D_{\rig}^{\leq i}\big)\rightarrow  0.
\end{equation}
The composition:
\begin{equation*}
W_1\cap\cdots \cap W_{i-1}\hookrightarrow {\rm Ext}^1\big(D_{\rig},D_{\rig}\big)\twoheadrightarrow {\rm Ext}^1\big(D_{\rig}^{\leq i},D_{\rig}/D_{\rig}^{\leq i}\big)
\end{equation*}
lands in ${\rm Ext}^1({\rm gr}_iD_{\rig},D_{\rig}/D_{\rig}^{\leq i})$ by (\ref{exactwcris}). If $v\in W_1\cap\cdots \cap W_{i-1}$ is also in $W_i$, then its image in ${\rm Ext}^1(D_{\rig}^{\leq i},D_{\rig}/D_{\rig}^{\leq i})$ is $0$. We thus deduce a canonical induced map:
\begin{equation}\label{wimap}
W_1\cap\cdots \cap W_{i-1}/W_1\cap\cdots \cap W_{i} \rightarrow {\rm Ext}^1\big({\rm gr}_iD_{\rig},D_{\rig}/D_{\rig}^{\leq i}\big).
\end{equation}
Let us prove that (\ref{wimap}) is surjective. One easily checks that ${\rm Ext}^1(D_{\rig}/D_{\rig}^{\leq i-1},D_{\rig})\subseteq W_1\cap\cdots \cap W_{i-1}$ and that the natural map ${\rm Ext}^1(D_{\rig}/D_{\rig}^{\leq i-1},D_{\rig})\rightarrow {\rm Ext}^1({\rm gr}_iD_{\rig},D_{\rig}/D_{\rig}^{\leq i})$ is surjective (again by Definition \ref{veryreg}). This implies that {\it a fortiori} (\ref{wimap}) must also be surjective. Let us prove that (\ref{wimap}) is injective. If $w\in W_1\cap\cdots \cap W_{i-1}$ maps to zero, then the image of $w$ in ${\rm Ext}^1(D_{\rig}^{\leq i},D_{\rig}/D_{\rig}^{\leq i})$ is also zero, i.e. $w\in W_i$ hence $w\in W_1\cap\cdots \cap W_{i}$. 

The proof for the second isomorphism is exactly the same replacing everywhere $W_j$ by $W_{{\rm cris},j}$ and ${\rm Ext}^1_{(\varphi,\Gamma_K)}$ by ${\rm Ext}^1_{\rm cris}$.
\end{proof}

\begin{coro}\label{condcris}
We have:
\begin{eqnarray*}
\dim_{k(x)}\big(W_{1}\cap\cdots \cap W_{n-1}\big) &=& \dim_{k(x)}{\rm Ext}_{(\varphi,\Gamma_K)}^1\big(D_{\rig}(r),D_{\rig}(r)\big) - [K:\Q_p]\frac{n(n-1)}{2}\\
\dim_{k(x)}\big(W_{{\rm cris},1}\cap\cdots \cap W_{{\rm cris},n-1}\big) &=& \dim_{k(x)}{\rm Ext}^1_{\rm cris}\big(D_{\rig}(r),D_{\rig}(r)\big)-\lg(w_x).
\end{eqnarray*}
\end{coro}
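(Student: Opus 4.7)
The plan is to derive both equalities from Proposition \ref{gradcris} by summing the dimensions of the successive quotients and computing each ${\rm Ext}^1({\rm gr}_i D_{\rig}(r), {\rm gr}_j D_{\rig}(r))$.

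First I observe that $W_n = {\rm Ext}^1_{(\varphi,\Gamma_K)}(D_{\rig}(r),D_{\rig}(r))$ (and similarly $W_{{\rm cris},n} = {\rm Ext}^1_{\rm cris}(D_{\rig}(r),D_{\rig}(r))$), since $D_{\rig}(r)^{\leq n} = D_{\rig}(r)$ and the right vertical map in the cartesian diagram defining $W_n$ is the identity. Hence Proposition \ref{gradcris} applied with $i=n$ gives $W_1 \cap \cdots \cap W_{n-1} = W_1 \cap \cdots \cap W_n$ (the associated quotient being ${\rm Ext}^1({\rm gr}_n D_{\rig}(r),0)=0$), and likewise for the cris version. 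Summing $\dim_{k(x)}(W_1\cap \cdots \cap W_{i-1}/W_1\cap \cdots \cap W_i)$ over $i=1,\dots,n-1$ using Proposition \ref{gradcris}, I obtain
$$\dim_{k(x)}{\rm Ext}^1_{(\varphi,\Gamma_K)}(D_{\rig}(r),D_{\rig}(r)) - \dim_{k(x)}(W_1 \cap \cdots \cap W_{n-1}) = \sum_{i=1}^{n-1} \dim_{k(x)}{\rm Ext}^1_{(\varphi,\Gamma_K)}({\rm gr}_i D_{\rig}(r),D_{\rig}(r)/D_{\rig}(r)^{\leq i})$$
and similarly for the cris version.

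Next I perform a d\'evissage on the second argument using the filtration induced on $D_{\rig}(r)/D_{\rig}(r)^{\leq i}$ to reduce each ${\rm Ext}^1({\rm gr}_i,D_{\rig}(r)/D_{\rig}(r)^{\leq i})$ to the sum $\sum_{j=i+1}^{n}\dim_{k(x)}{\rm Ext}^1({\rm gr}_i,{\rm gr}_j)$. For the cris variant this is exactly the argument at the end of the proof of Lemma \ref{dim1}. For the $(\varphi,\Gamma_K)$ variant, the additivity requires vanishing of $\Hom$ and ${\rm Ext}^2$ between distinct graded pieces ${\rm gr}_i = \mathcal{R}_{k(x),K}(z^{{\bf k}_{w_x^{-1}(i)}}{\rm unr}(\varphi_i))$: the Hom vanishes because for $i\ne j$ the ratio has nontrivial unramified part ${\rm unr}(\varphi_j/\varphi_i) \ne 1$ by very regularity (condition (ii) of Definition \ref{veryreg} forces the $\varphi_i$ to be pairwise distinct), and ${\rm Ext}^2$ vanishes by Tate duality for $(\varphi,\Gamma_K)$-modules (see \cite[Th.3.7(2)]{Liuduality}) combined with very regularity (i) (which excludes the ratios $\varphi_i\varphi_j^{-1} = q^{\pm 1}$).

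Finally I compute each summand. In the $(\varphi,\Gamma_K)$ case, since $\Hom$ and ${\rm Ext}^2$ between distinct graded pieces vanish, the Euler characteristic formula for rank one $(\varphi,\Gamma_K)$-modules gives $\dim_{k(x)} {\rm Ext}^1_{(\varphi,\Gamma_K)}({\rm gr}_i,{\rm gr}_j) = [K:\Q_p]$ for $i\ne j$, whence
$$\sum_{i=1}^{n-1}\sum_{j=i+1}^{n}[K:\Q_p] = [K:\Q_p]\frac{n(n-1)}{2},$$
giving the first formula. In the cris case, formula (\ref{dimcris}) already established in the paper gives $\dim_{k(x)}{\rm Ext}^1_{\rm cris}({\rm gr}_i,{\rm gr}_j) = |\{\tau:w_{x,\tau}^{-1}(j)<w_{x,\tau}^{-1}(i)\}|$, and summing over pairs $1\leq i<j\leq n$ yields
$$\sum_{\tau:K\hookrightarrow L}|\{1\leq i<j\leq n,\ w_{x,\tau}^{-1}(j)<w_{x,\tau}^{-1}(i)\}| = \sum_{\tau}\lg(w_{x,\tau}^{-1}) = \sum_\tau\lg(w_{x,\tau}) = \lg(w_x),$$
which is the second formula. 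The main technical point (though quite minor) is verifying the $\Hom$ and ${\rm Ext}^2$ vanishing in step two; the rest is straightforward bookkeeping.
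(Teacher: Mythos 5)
Your proof is correct and follows essentially the same route as the paper's, which derives Corollary \ref{condcris} from Proposition \ref{gradcris} combined with Lemma \ref{dim1}, (\ref{dimcrisi}), and the combinatorial identity at the end of the proof of Proposition \ref{condsplit}. The minor differences are presentational: you re-derive the dimension formula for $\dim_{k(x)}{\rm Ext}^1_{\rm cris}({\rm gr}_i,D_{\rig}(r)/D_{\rig}(r)^{\leq i})$ by d\'evissage and (\ref{dimcris}) rather than invoking Lemma \ref{dim1} and (\ref{dimcrisi}) directly (this is the same computation, since that is exactly how Lemma \ref{dim1} is proved), and you helpfully make explicit that $W_n={\rm Ext}^1_{(\varphi,\Gamma_K)}(D_{\rig}(r),D_{\rig}(r))$ so the $i=n$ quotient vanishes. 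One small remark: for the pairwise distinctness of the $\varphi_i$, condition (i) of Definition \ref{veryreg} already gives it immediately ($\varphi_i\varphi_j^{-1}\neq 1$); citing (ii) is also valid but is the less direct source.
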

\begin{proof}
This follows from Proposition \ref{gradcris} together with (\ref{dimcrisi}) and Lemma \ref{dim1} (both for $\ell=i$) by the same argument as at the end of the proof of Proposition \ref{condsplit}.
\end{proof}

\begin{rema}\label{intercris}
{\rm Note that $W_1\cap\cdots \cap W_{n-1} \cap {\rm Ext}^1_{\rm cris}(D_{\rig}(r),D_{\rig}(r))=W_{{\rm cris},1}\cap\cdots \cap W_{{\rm cris},n-1}$.}
\end{rema}

Now consider $x':=(r,\delta')=(r,\delta'_1,\dots,\delta'_n)$ with $\delta'_i:=z^{{\bf k}_{w_x^{-1}(i)}}{\rm unr}(\varphi_i)$, then $x'\in \widetilde U_{\rm tri}^\square(\rbar)$ by (\ref{triang}). We also have $\omega(x')\in \mathcal{W}^n_{w_x,{\bf k},L}$ by (\ref{point}), thus $x'\in \widetilde U_{\rm tri}^\square(\rbar)\times_{\mathcal{W}^n_L}\mathcal{W}^n_{w_x,{\bf k},L}\subseteq \widetilde U_{\rm tri}^\square(\rbar)$ and $\jmath_{w,{\bf k}}(x')=x$. Recall from \S\ref{wedgesection} and the smoothness of $U_{\rm tri}^\square(\rbar)$ over $\mathcal{W}^n_{L}$ that the weight map $\omega$ induces a $k(x)$-linear surjection on tangent spaces (note that $k(x')=k(x)$):
\begin{equation}\label{sen}
d\omega : T_{\widetilde X_{\rm tri}^\square(\rbar),x'}\cong T_{X_{\rm tri}^\square(\rbar),x'}\twoheadrightarrow T_{{\mathcal W}^n_L,\omega(x')}\cong k(x)^{[K:\Q_p]n}, \ \ \vec{v} \longmapsto (d_{\tau,i,\vec{v}})_{1\leq i\leq n,\tau:\, K\hookrightarrow L}.
\end{equation}

\begin{prop}\label{tgtx'}
We have an isomorphism of $k(x)$-subvector spaces of $T_{\widetilde X_{\rm tri}^\square(\rbar),x'}$:
\begin{multline*}
T_{(\widetilde X_{\rm tri}^\square(\rbar)\times_{\mathcal{W}^n_L}\mathcal{W}^n_{w_x,{\bf k},L})^\red,x'}\buildrel\sim\over\longrightarrow \\
\left\{\vec{v}\in T_{\widetilde X_{\rm tri}^\square(\rbar),x'}\ {\rm such\ that}\ d_{\tau,i,\vec{v}}=d_{\tau,w_{x,\tau}^{-1}(i),\vec{v}},\ 1\leq i\leq n,\ \tau:\, K\hookrightarrow L\right\}.
\end{multline*}
In particular $\dim_{k(x)}T_{(\widetilde X_{\rm tri}^\square(\rbar)\times_{\mathcal{W}^n_L}\mathcal{W}^n_{w_x,{\bf k},L})^\red,x'}=\dim X_{\rm tri}^\square(\rbar)-d_x$.
\end{prop}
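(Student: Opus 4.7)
The plan is to exploit two smoothness facts: the smoothness of $U_{\rm tri}^\square(\rbar)$ over $\mathcal{W}^n_L$ (already recalled in the discussion preceding $(\ref{sen})$), and the fact that the closed subvariety $\mathcal{W}^n_{w_x,{\bf k},L}\subseteq \mathcal{W}^n_L$ is itself smooth at $\omega(x')$. Together these force the fiber product to be smooth, hence reduced, at $x'$, and allow the tangent space to be read off from the obvious cartesian diagram. First I would note that $x'\in \widetilde U_{\rm tri}^\square(\rbar)$ by $(\ref{triang})$ (the displayed graded pieces show that $x'$ is saturated and lies in $U_{\rm tri}^\square(\rbar)$), and that $\omega(x')\in \mathcal{W}^n_{w_x,{\bf k},L}$ by $(\ref{point})$. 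Since $\widetilde U_{\rm tri}^\square(\rbar)$ is a union of connected components of $U_{\rm tri}^\square(\rbar)$ and since $U_{\rm tri}^\square(\rbar)$ is smooth over $\mathcal{W}^n_L$, $\widetilde X_{\rm tri}^\square(\rbar)$ is smooth over $\mathcal{W}^n_L$ at $x'$; in particular $\dim_{k(x)}T_{\widetilde X_{\rm tri}^\square(\rbar),x'}=\dim X_{\rm tri}^\square(\rbar)$ and $d\omega:T_{\widetilde X_{\rm tri}^\square(\rbar),x'}\twoheadrightarrow T_{\mathcal{W}^n_L,\omega(x')}\cong k(x)^{[K:\Q_p]n}$ is surjective.

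Next I would analyze $\mathcal{W}^n_{w_x,{\bf k},L}$ locally at $\omega(x')$. In the local coordinates on $\mathcal{W}^n_L$ given by the Sen weights $s_{\tau,i}:={\rm wt}_\tau(\eta_i)$, the defining equations $(\ref{equations})$ become the affine-linear equations $s_{\tau,w_{x,\tau}(i)}-s_{\tau,i}=k_{\tau,i}-k_{\tau,w_{x,\tau}^{-1}(i)}$ for $1\le i\le n$ and $\tau:K\hookrightarrow L$. Hence $\mathcal{W}^n_{w_x,{\bf k},L}$ is smooth at $\omega(x')$, and $T_{\mathcal{W}^n_{w_x,{\bf k},L},\omega(x')}$ is the $k(x)$-subvector space of $T_{\mathcal{W}^n_L,\omega(x')}$ cut out by $d_{\tau,w_{x,\tau}(i)}=d_{\tau,i}$, equivalently $d_{\tau,j}=d_{\tau,w_{x,\tau}^{-1}(j)}$, for all $j,\tau$. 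The same orbit-count as in the proof of Lemma $\ref{coxeter}$ yields $\dim_{k(x)}T_{\mathcal{W}^n_{w_x,{\bf k},L},\omega(x')}=[K:\Q_p]n-d_x$.

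Then by base change of smooth morphisms, $\widetilde X_{\rm tri}^\square(\rbar)\times_{\mathcal{W}^n_L}\mathcal{W}^n_{w_x,{\bf k},L}$ is smooth over $\mathcal{W}^n_{w_x,{\bf k},L}$ at $x'$, hence smooth over $L$ and {\it a fortiori} reduced in some open neighbourhood of $x'$. Consequently $(\widetilde X_{\rm tri}^\square(\rbar)\times_{\mathcal{W}^n_L}\mathcal{W}^n_{w_x,{\bf k},L})^\red$ agrees with the fiber product locally at $x'$, and the standard exact sequence for the tangent space of a fiber product, combined with the surjectivity of $d\omega$, identifies the tangent space in question with the preimage $(d\omega)^{-1}(T_{\mathcal{W}^n_{w_x,{\bf k},L},\omega(x')})$ inside $T_{\widetilde X_{\rm tri}^\square(\rbar),x'}$. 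Using the explicit description $d\omega(\vec{v})=(d_{\tau,i,\vec{v}})$ from $(\ref{sen})$, this preimage is exactly the subspace displayed in the statement. The dimension count then follows:
\[ \dim\ker(d\omega)+\dim_{k(x)}T_{\mathcal{W}^n_{w_x,{\bf k},L},\omega(x')}=\big(\dim X_{\rm tri}^\square(\rbar)-[K:\Q_p]n\big)+\big([K:\Q_p]n-d_x\big)=\dim X_{\rm tri}^\square(\rbar)-d_x. \]

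There is no genuine obstacle here: all the ingredients (smoothness of $U_{\rm tri}^\square(\rbar)$ over $\mathcal{W}^n_L$, the affine-linear equations defining $\mathcal{W}^n_{w_x,{\bf k},L}$ in the Sen coordinates, and the explicit Sen-weight description of $d\omega$) are already in place. The only mild point to check is that the scheme-theoretic tangent space of $\mathcal{W}^n_{w_x,{\bf k},L}$ at $\omega(x')$ coincides with the naive linearization of its defining equations, which is automatic as soon as the defining equations are affine-linear in étale-local coordinates, as is the case here.
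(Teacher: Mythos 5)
Your proof is correct and is essentially the same as the paper's. Both arguments identify the tangent space of the fiber product with the fiber product of tangent spaces (equivalently, with the preimage of $T_{\mathcal{W}^n_{w_x,{\bf k},L},\omega(x')}$ under $d\omega$), use reducedness of $\widetilde U_{\rm tri}^\square(\rbar)\times_{\mathcal{W}^n_L}\mathcal{W}^n_{w_x,{\bf k},L}$ near $x'$ to replace $(\cdot)^{\rm red}$ by the fiber product itself, and compute the dimension from smoothness of $X_{\rm tri}^\square(\rbar)$ at $x'$, surjectivity of $d\omega$, and the rank count on $(w_x-\mathrm{id})$ already carried out in Lemma~\ref{coxeter} and Proposition~\ref{condweight}. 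The one presentational difference is that you establish reducedness by first verifying that $\mathcal{W}^n_{w_x,{\bf k},L}$ is smooth (affine-linear equations in Sen coordinates) and then invoking smooth base change, whereas the paper gets reducedness more directly from ``smooth over a reduced base is reduced'' and computes the tangent space via the tensor-product description of the local ring at $x'$ (its formula~(\ref{abcd})); both routes are valid and rely on the same facts.
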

\begin{proof}
We write $\Hom$ instead of $\Hom_{k(x)-{\rm alg}}$ in this proof. Let $\widetilde U_{{\rm tri},w_x,{\bf k}}^\square(\rbar):=\widetilde U_{\rm tri}^\square(\rbar)\times_{\mathcal{W}^n_L}\mathcal{W}^n_{w_x,{\bf k},L}$, we have:
\begin{equation}\label{tensorlocal}
{\mathcal O}_{\widetilde U_{{\rm tri},w_x,{\bf k}}^\square(\rbar),x'}\cong {\mathcal O}_{\widetilde U_{\rm tri}^\square(\rbar),x'}\otimes_{{\mathcal O}_{\mathcal{W}^n_L,\omega(x')}}{\mathcal O}_{\mathcal{W}^n_{w_x,{\bf k},L},\omega(x')}
\end{equation}
and note that $T_{(\widetilde X_{\rm tri}^\square(\rbar)\times_{\mathcal{W}^n_L}\mathcal{W}^n_{w_x,{\bf k},L})^\red,x'}=T_{\widetilde U_{{\rm tri},w,{\bf k}}^\square(\rbar),x'}$. Recall that, if $A,B,C,D$ are commutative $k(x)$-algebras with $B,C$ being $A$-algebras, we have:
\begin{equation}\label{abcd}
\Hom(B\otimes_AC,D)\buildrel\sim\over\longrightarrow \Hom(B,D)\times_{\Hom(A,D)}\Hom(C,D).
\end{equation}
From (\ref{tensorlocal}) and (\ref{abcd}) we deduce:
\begin{multline}
T_{\widetilde U_{{\rm tri},w,{\bf k}}^\square(\rbar),x'}=\Hom\big({\mathcal O}_{\widetilde U_{{\rm tri},w_x,{\bf k}}^\square(\rbar),x'},k(x)[\varepsilon]/(\varepsilon^2)\big)\cong T_{\widetilde X_{\rm tri}^\square(\rbar),x'}\times_{T_{{\mathcal W}^n_L,\omega(x')}}T_{{\mathcal W}^n_{w_x,{\bf k},L},\omega(x')}.
\end{multline}
But from (\ref{equations}) we have:
$$T_{{\mathcal W}^n_{w_x,{\bf k},L},\omega(x')}=\left\{(d_{\tau,i})_{1\leq i\leq n,\tau:\, K\hookrightarrow L}\in T_{{\mathcal W}^n_{L},\omega(x')}\ {\rm such\ that}\ d_{\tau,i}=d_{\tau,w_{x,\tau}^{-1}(i)},\ \forall\ i,\ \forall\ \tau\right\}$$
whence the first statement. The last statement comes from $\dim_{k(x)}T_{\widetilde X_{\rm tri}^\square(\rbar),x'}=\dim \widetilde X_{\rm tri}^\square(\rbar)=\dim X_{\rm tri}^\square(\rbar)$ (since $x'$ is smooth on $X_{\rm tri}^\square(\rbar)$ as $x'\in U_{\rm tri}^\square(\rbar)$), the surjectivity of $T_{\widetilde X_{\rm tri}^\square(\rbar),x'}\rightarrow T_{{\mathcal W}^n_{L},\omega(x')}$ (since the morphism $\widetilde U_{\rm tri}^\square(\rbar)\rightarrow {\mathcal W}^n_{L}$ is smooth by \cite[Th.2.6(iii)]{BHS}) and the same argument as in the proof of Proposition \ref{condweight}. 
\end{proof}

Recall from the discussion just before Conjecture \ref{mainconj} that we have a closed embedding $\iota_{\bf k}:\widetilde\Xfrak_{\overline r}^{\square,{\bf k}\rm -cr}\hookrightarrow \widetilde X_{\rm tri}^\square(\overline r)$ with $x\in \iota_{\bf k}(\widetilde\Xfrak_{\overline r}^{\square,{\bf k}\rm -cr})$. We deduce an injection of $k(x)$-vector spaces:
$$T_{\iota_{\bf k}(\widetilde \Xfrak_{\overline r}^{\square,{\bf k}\rm -cr}),x}\hookrightarrow T_{\widetilde X_{\rm tri}^\square(\overline r),x}.$$
Likewise we deduce from Proposition \ref{companion} (assuming Conjecture \ref{BHS}) another injection of $k(x)$-vector spaces:
$$T_{\jmath_{w_x,{\bf k}}\big(\overline{\widetilde U_{\rm tri}^\square(\rbar)\times_{\mathcal{W}^n_L}\mathcal{W}^n_{w,{\bf k},L}}\big),x}\hookrightarrow T_{\widetilde X_{\rm tri}^\square(\overline r),x}.$$
Taking the sum in $T_{\widetilde X_{\rm tri}^\square(\rbar),x}$ of these two subspaces of $T_{\widetilde X_{\rm tri}^\square(\rbar),x}$, we have an injection of $k(x)$-vector spaces:
\begin{equation}\label{injtang}
T_{\jmath_{w_x,{\bf k}}\big(\overline{\widetilde U_{\rm tri}^\square(\rbar)\times_{\mathcal{W}^n_L}\mathcal{W}^n_{w,{\bf k},L}}\big),x}+T_{\iota_{\bf k}(\widetilde\Xfrak_{\overline r}^{\square,{\bf k}\rm -cr}),x}\hookrightarrow T_{\widetilde X_{\rm tri}^\square(\rbar),x}.
\end{equation}

\begin{prop}\label{dimsum}
Assume Conjecture \ref{BHS}, then we have:
$$\dim_{k(x)}\big(T_{\jmath_{w_x,{\bf k}}\big(\overline{\widetilde U_{\rm tri}^\square(\rbar)\times_{\mathcal{W}^n_L}\mathcal{W}^n_{w,{\bf k},L}}\big),x}+T_{\iota_{\bf k}(\widetilde\Xfrak_{\overline r}^{\square,{\bf k}\rm -cr}),x}\big)=\lg(w_x)-d_{x}+\dim X_{\rm tri}^\square(\rbar).$$
\end{prop}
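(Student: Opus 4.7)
The plan is to pass from $T_1 := T_{\jmath_{w_x,{\bf k}}(\overline{\widetilde U_{\rm tri}^\square(\rbar) \times_{\mathcal{W}^n_L} \mathcal{W}^n_{w_x,{\bf k},L}}), x}$ and $T_2 := T_{\iota_{\bf k}(\widetilde\Xfrak_{\overline r}^{\square,{\bf k}\rm -cr}), x}$, seen as subspaces of $T_{\widetilde X_{\rm tri}^\square(\rbar), x} \hookrightarrow T_{\mathfrak{X}_{\rbar}^\square,r}$ (Lemma \ref{inj}), down to their images $\overline T_1, \overline T_2$ in ${\rm Ext}^1_{\mathcal{G}_K}(r,r)$ via the exact sequence of Lemma \ref{ext1}, to identify each $\overline T_i$ explicitly using the results of \S\ref{wedgesection}--\S\ref{endofproof}, and finally to compute $\dim(T_1 + T_2) = \dim K(r) + \dim \overline T_1 + \dim \overline T_2 - \dim(\overline T_1 \cap \overline T_2)$. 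The equality $T_1 \cap T_2 \supseteq K(r)$ needed to obtain this formula is immediate since both spaces contain the $\GL_n$-framing orbit of $r$.

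For $\overline T_2$: the very regularity of $x$ forces the Frobenius eigenvalues at $r$ to be pairwise distinct, whence the finite map $\widetilde\Xfrak_{\overline r}^{\square,{\bf k}\rm -cr} \to \Xfrak_{\overline r}^{\square,{\bf k}\rm -cr}$ is étale at $\iota_{\bf k}^{-1}(x)$. The tangent space to Kisin's framed crystalline deformation space with fixed Hodge--Tate weights ${\bf k}$, modulo $K(r)$, is standardly identified with ${\rm Ext}^1_{\rm cris}(D_{\rig}(r), D_{\rig}(r))$, whose dimension is $\dim_{k(x)}\End_{\mathcal{G}_K}(r) + [K:\Q_p]\frac{n(n-1)}{2}$.

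For $\overline T_1$: the point $x' := \jmath_{w_x,{\bf k}}^{-1}(x) = (r,\delta')$ with $\delta'_i = z^{{\bf k}_{w_x^{-1}(i)}}{\rm unr}(\varphi_i)$ lies in $\widetilde U_{\rm tri}^\square(\rbar)$ and has trivial associated Weyl element, so its triangulation coincides (as a filtration of $D_{\rig}(r)$) with the flag $D_{\rig}(r)^{\leq \bullet}$ of \S\ref{wedgesection}. A tangent vector at $x'$ to $\widetilde U_{\rm tri}^\square(\rbar)$ lifts this flag, so its image in ${\rm Ext}^1_{\mathcal{G}_K}(r,r)$ lies in $W_1 \cap \cdots \cap W_{n-1}$; by a dimension count using Corollary \ref{condcris} and the smoothness of $U_{\rm tri}^\square(\rbar)$, the image is exactly this subspace. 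Now the weight conditions $(\ref{equations})$ defining $\mathcal{W}^n_{w_x,{\bf k},L}$, when translated back to $x$ via $\jmath_{w_x,{\bf k}}$ and compared with the labelled Sen weights of $r_{\vec v}$ (with the labelling at $x'$ differing from that at $x$ by $w_x$), yield precisely the relations $d_{\tau,i,\vec v} = d_{\tau, w_{x,\tau}^{-1}(i), \vec v}$, i.e.\ the defining condition of $V \subseteq {\rm Ext}^1_{\mathcal{G}_K}(r,r)$ from \S\ref{endofproof}. Matching dimensions via Proposition \ref{tgtx'} and the surjectivity of the Sen map on $W_1 \cap \cdots \cap W_{n-1}$ (established in the proof of Proposition \ref{inter}) gives $\overline T_1 = V \cap (W_1 \cap \cdots \cap W_{n-1})$.

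Finally, crystalline $k(x)[\varepsilon]$-self-extensions of a crystalline $(\varphi,\Gamma_K)$-module have integer Sen weights, so ${\rm Ext}^1_{\rm cris} \subseteq V$, and Remark \ref{intercris} then yields $\overline T_1 \cap \overline T_2 = W_{{\rm cris},1}\cap\cdots\cap W_{{\rm cris},n-1}$, of dimension $\dim {\rm Ext}^1_{\rm cris} - \lg(w_x)$ by Corollary \ref{condcris}. Substituting into the formula for $\dim(T_1+T_2)$ and using Lemma \ref{ext1} together with Corollary \ref{condcris} gives $\dim K(r) + \dim\End_{\mathcal{G}_K}(r) + [K:\Q_p]\frac{n(n+1)}{2} + \lg(w_x) - d_x = n^2 + [K:\Q_p]\frac{n(n+1)}{2} + \lg(w_x) - d_x$, which is exactly $\dim X_{\rm tri}^\square(\rbar) + \lg(w_x) - d_x$. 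The main obstacle is the precise identification of $\overline T_1$: this is where Conjecture \ref{BHS} enters, through Proposition \ref{companion}, to ensure $T_1$ is even a well-defined subspace of $T_{\widetilde X_{\rm tri}^\square(\rbar),x}$, and it requires a careful bookkeeping of how the flag and the Sen weights transform under the translation $\jmath_{w_x,{\bf k}}$ between the smooth point $x'$ and the (possibly singular) point $x$.
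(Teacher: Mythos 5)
Your proposal is correct and follows essentially the same route as the paper's proof: pass both tangent subspaces through $T_{\mathfrak{X}_{\rbar}^\square,r}$ to ${\rm Ext}^1_{\mathcal{G}_K}(r,r)$, identify the images as $V\cap W_1\cap\cdots\cap W_{n-1}$ (via Proposition~\ref{tgtx'} and the surjectivity of the tangent map onto $W_1\cap\cdots\cap W_{n-1}$) and ${\rm Ext}^1_{\rm cris}(D_{\rig}(r),D_{\rig}(r))$ respectively, reduce their intersection to $W_{{\rm cris},1}\cap\cdots\cap W_{{\rm cris},n-1}$ using Remark~\ref{intercris} and ${\rm Ext}^1_{\rm cris}\subseteq V$, and finish by inclusion--exclusion with Corollary~\ref{condcris} and Lemmas~\ref{ext1},~\ref{dim}.
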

\begin{proof}
The composition $\widetilde X_{\rm tri}^\square(\rbar)\hookrightarrow \mathfrak{X}_{\rbar}^\square\times \mathcal{T}^n_L\twoheadrightarrow \mathfrak{X}_{\rbar}^\square$ induces a $k(x)$-linear morphism $T_{\widetilde X_{\rm tri}^\square(\rbar),x'}\!\rightarrow T_{\mathfrak{X}_{\rbar}^\square,r}$. Since $x'\in \widetilde U_{\rm tri}^\square(\rbar)\subseteq U_{\rm tri}^\square(\rbar)$, it follows from \cite[Th.Cor.6.3.10]{KPX} (arguing e.g. as in the proof of \cite[Lem.2.11]{BHS}) that the triangulation $(D_{\rig}^{\leq i})_{1\leq i\leq n}$ ``globalizes'' in a small neighbourhood of $x'$ in $U_{\rm tri}^\square(\rbar)$, or equivalenty in $\widetilde U_{\rm tri}^\square(\rbar)$. In particular, for any $\vec{v}\in T_{\widetilde U_{\rm tri}^\square(\rbar),x'}\cong T_{\widetilde X_{\rm tri}^\square(\rbar),x'}$ we have a triangulation of $D_{\rig}(r_{\vec{v}})$ by free $(\varphi,\Gamma_K)$-submodules over $\mathcal{R}_{k(x)[\varepsilon]/(\varepsilon^2),K}$ such that the associated parameter is $(\delta_{1,\vec{v}},\dots,\delta_{n,\vec{v}})$ (see \S\ref{wedgesection} for the notation). This has two consequences: (1) the proof of Lemma \ref{inj} goes through and the above map $T_{\widetilde X_{\rm tri}^\square(\rbar),x'}\rightarrow T_{\mathfrak{X}_{\rbar}^\square,r}$ is an injection of $k(x)$-vector spaces and (2) the image of the composition $T_{\widetilde X_{\rm tri}^\square(\rbar),x'}\hookrightarrow T_{\mathfrak{X}_{\rbar}^\square,r}\twoheadrightarrow {\rm Ext}_{(\varphi,\Gamma_K)}^1(D_{\rig}(r),D_{\rig}(r))$ (see Lemma \ref{ext1}) lies in $W_{1}\cap\cdots \cap W_{n-1}\!\subseteq \!{\rm Ext}_{(\varphi,\Gamma_K)}^1(D_{\rig}(r),D_{\rig}(r))$. From Lemma \ref{ext1} we thus obtain an exact sequence:
$$0\rightarrow K(r)\cap T_{\widetilde X_{\rm tri}^\square(\rbar),x'}\rightarrow T_{\widetilde X_{\rm tri}^\square(\rbar),x'}\rightarrow W_{1}\cap\cdots \cap W_{n-1}.$$
But $\dim_{k(x)}T_{\widetilde X_{\rm tri}^\square(\rbar),x'}=\dim X_{\rm tri}^\square(\rbar)$ since $\widetilde X_{\rm tri}^\square(\rbar)$ is smooth at $x'$, and from Lemma \ref{ext1}, Lemma \ref{dim} and Corollary \ref{condcris}, we have:
$$\dim_{k(x)}K(r) + \dim_{k(x)}\big(W_{1}\cap\cdots \cap W_{n-1}\big) = n^2 + [K:\Q_p]\frac{n(n+1)}{2}= \dim X_{\rm tri}^\square(\rbar)$$
which forces a short exact sequence $0\rightarrow K(r)\rightarrow T_{\widetilde X_{\rm tri}^\square(\rbar),x'}\rightarrow W_{1}\cap\cdots \cap W_{n-1}\rightarrow 0$. It then follows from Proposition \ref{tgtx'} that we have a short exact sequence of $k(x)$-vector spaces:
\begin{equation}\label{se1}
0\rightarrow K(r)\rightarrow T_{(\widetilde X_{\rm tri}^\square(\rbar)\times_{\mathcal{W}^n_L}\mathcal{W}^n_{w_x,{\bf k},L})^\red,x'}\rightarrow W_{1}\cap\cdots \cap W_{n-1}\cap V\rightarrow 0
\end{equation}
where $V\subseteq {\rm Ext}^1_{(\varphi,\Gamma_K)}(D_{\rig}(r),D_{\rig}(r))$ is as in \S\ref{endofproof} (the intersection on the right hand side being in ${\rm Ext}^1_{(\varphi,\Gamma_K)}(D_{\rig}(r),D_{\rig}(r))$). 

Arguing as in \cite[\S2.3.5]{KisinModularity} we also have a short exact sequence (see \cite[(3.3.5)]{Kisindef}):
\begin{equation}\label{se2}
0\rightarrow K(r)\rightarrow T_{\Xfrak_{\overline r}^{\square,{\bf k}\rm -cr},r}\rightarrow {\rm Ext}^1_{\rm cris}(D_{\rig}(r),D_{\rig}(r))\rightarrow 0.
\end{equation}
Using $T_{\iota_{\bf k}(\widetilde\Xfrak_{\overline r}^{\square,{\bf k}\rm -cr}),x}\cong T_{\Xfrak_{\overline r}^{\square,{\bf k}\rm -cr},r}$ (which easily follows from the fact that the Frobenius eigenvalues $(\varphi_1,\dots,\varphi_n)$ are pairwise distinct) and:
$$T_{(\widetilde X_{\rm tri}^\square(\rbar)\times_{\mathcal{W}^n_L}\mathcal{W}^n_{w,{\bf k},L})^\red,x'}\cong T_{\widetilde U_{\rm tri}^\square(\rbar)\times_{\mathcal{W}^n_L}\mathcal{W}^n_{w,{\bf k},L},x'}\cong T_{\overline{\widetilde U_{\rm tri}^\square(\rbar)\times_{\mathcal{W}^n_L}\mathcal{W}^n_{w,{\bf k},L}},x'}\buildrel\sim\over\rightarrow T_{\jmath_{w_x,{\bf k}}(\overline{\widetilde U_{\rm tri}^\square(\rbar)\times_{\mathcal{W}^n_L}\mathcal{W}^n_{w,{\bf k},L}}),x},$$
we deduce from (\ref{se1}) and (\ref{se2}) a short exact sequence of $k(x)$-vector spaces:
\begin{multline*}
0\rightarrow K(r)\rightarrow T_{\jmath_{w_x,{\bf k}}\big(\overline{\widetilde U_{\rm tri}^\square(\rbar)\times_{\mathcal{W}^n_L}\mathcal{W}^n_{w,{\bf k},L}}\big),x}\cap T_{\iota_{\bf k}(\widetilde\Xfrak_{\overline r}^{\square,{\bf k}\rm -cr}),x}\rightarrow \\
W_{1}\cap\cdots \cap W_{n-1}\cap V\cap {\rm Ext}^1_{\rm cris}(D_{\rig}(r),D_{\rig}(r))\rightarrow 0,
\end{multline*}
the intersection in the middle being in $T_{\mathfrak{X}_{\rbar}^\square,r}$. But we have:
\begin{multline*}
W_{1}\cap\cdots \cap W_{n-1}\cap V\cap {\rm Ext}^1_{\rm cris}(D_{\rig}(r),D_{\rig}(r))\buildrel\sim\over\rightarrow W_{{\rm cris},1}\cap\cdots \cap W_{{\rm cris},n-1}\cap V\\
\buildrel\sim\over\rightarrow W_{{\rm cris},1}\cap\cdots \cap W_{{\rm cris},n-1}
\end{multline*}
where \ \ the \ \ first \ \ isomorphism \ \ is \ \ Remark \ \ \ref{intercris} \ \ and \ \ the \ \ second \ \ follows \ \ from \ ${\rm Ext}^1_{\rm cris}(D_{\rig}(r),D_{\rig}(r))\subseteq V$ (since the Hodge-Tate weights don't vary at all in ${\rm Ext}^1_{\rm cris}(D_{\rig}(r),D_{\rig}(r))$). From Corollary \ref{condcris} we thus get:
\begin{multline}\label{dimker}
\dim_{k(x)}\Big(T_{\jmath_{w_x,{\bf k}}\big(\overline{\widetilde U_{\rm tri}^\square(\rbar)\times_{\mathcal{W}^n_L}\mathcal{W}^n_{w,{\bf k},L}}\big),x}\cap T_{\iota_{\bf k}(\widetilde\Xfrak_{\overline r}^{\square,{\bf k}\rm -cr}),x}\big)=\\
\dim_{k(x)}K(r)+\dim_{k(x)}{\rm Ext}^1_{\rm cris}\big(D_{\rig}(r),D_{\rig}(r)\big)-\lg(w_x).
\end{multline}
We now compute using Proposition \ref{tgtx'}, (\ref{se2}) and (\ref{dimker}):
\begin{multline*}
\dim_{k(x)}\big(T_{\jmath_{w_x,{\bf k}}\big(\overline{\widetilde U_{\rm tri}^\square(\rbar)\times_{\mathcal{W}^n_L}\mathcal{W}^n_{w,{\bf k},L}}\big),x}+ T_{\iota_{\bf k}(\widetilde\Xfrak_{\overline r}^{\square,{\bf k}\rm -cr}),x}\big)=\big(\dim X_{\rm tri}^\square(\rbar)-d_x\big)+\\
\big(\dim_{k(x)}K(r)+\dim_{k(x)}{\rm Ext}^1_{\rm cris}\big(D_{\rig}(r),D_{\rig}(r)\big)\big)-\\
\big(\dim_{k(x)}K(r)+\dim_{k(x)}{\rm Ext}^1_{\rm cris}\big(D_{\rig}(r),D_{\rig}(r)\big)-\lg(w_x)\big)=\\
\dim X_{\rm tri}^\square(\rbar)-d_x+\lg(w_x).
\end{multline*}
\end{proof}

\begin{coro}\label{bonnedim}
Conjecture \ref{BHS} implies Conjecture \ref{mainconj} for $\rbar=\rhobar_{\tilde v}$ ($v\in S_p$), i.e.:
$$\dim_{k(x)}T_{\widetilde X_{\rm tri}^\square(\rbar),x}=\lg(w_x)-d_{x}+\dim X_{\rm tri}^\square(\rbar).$$
In particular $x$ is smooth on $\widetilde X_{\rm tri}^\square(\rbar)$ if and only if $w_x$ is a product of distinct simple reflections.
\end{coro}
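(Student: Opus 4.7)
The plan is to sandwich the dimension $\dim_{k(x)} T_{\widetilde X_{\rm tri}^\square(\rbar),x}$ between matching upper and lower bounds, both of which have already been established in the preceding subsections (the first unconditionally and the second conditional on Conjecture \ref{BHS}). The upper bound is provided directly by Corollary \ref{easybound}, which applies Theorem \ref{upperbound} to the union of components $X = \widetilde X_{\rm tri}^\square(\rbar)$ after checking via Proposition \ref{acconZaut} and Remark \ref{obvious} that this union satisfies the accumulation property at $x$ (here we use Conjecture \ref{BHS} only to identify $\widetilde X_{\rm tri}^\square(\rhobar_{\tilde v})$ with the ``automorphic'' part $X_{\rm tri}^{\Xfrak^p\rm-aut}(\rhobar_p)$ at the place $v$, so that the accumulation statement on the patched eigenvariety propagates).

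For the matching lower bound, the key input is the injection (\ref{injtang}) of $k(x)$-vector spaces
\[
T_{\jmath_{w_x,{\bf k}}\big(\overline{\widetilde U_{\rm tri}^\square(\rbar)\times_{\mathcal{W}^n_L}\mathcal{W}^n_{w_x,{\bf k},L}}\big),x}+T_{\iota_{\bf k}(\widetilde\Xfrak_{\overline r}^{\square,{\bf k}\rm -cr}),x}\hookrightarrow T_{\widetilde X_{\rm tri}^\square(\rbar),x},
\]
whose existence uses Conjecture \ref{BHS} (via Proposition \ref{companion}, which in turn rests on the companion point Theorem \ref{intercompanion}). Combined with the exact dimension count of Proposition \ref{dimsum}, this yields
\[
\lg(w_x)-d_x+\dim X_{\rm tri}^\square(\rbar)\leq \dim_{k(x)}T_{\widetilde X_{\rm tri}^\square(\rbar),x}.
\]
Together with Corollary \ref{easybound} this forces equality, establishing Conjecture \ref{mainconj} for $\rbar=\rhobar_{\tilde v}$.

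For the final ``in particular'' assertion, recall that $\widetilde X_{\rm tri}^\square(\rbar)$ is a union of irreducible components of $X_{\rm tri}^\square(\rbar)$, which is equidimensional of dimension $\dim X_{\rm tri}^\square(\rbar)$ by \cite[Th.2.6]{BHS}. Hence $x$ is a smooth point of $\widetilde X_{\rm tri}^\square(\rbar)$ if and only if $\dim_{k(x)}T_{\widetilde X_{\rm tri}^\square(\rbar),x}=\dim X_{\rm tri}^\square(\rbar)$, which by the equality just proved is equivalent to $\lg(w_x)=d_x$. By Lemma \ref{coxeter} this last condition holds exactly when $w_x$ is a product of distinct simple reflections. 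No serious obstacle remains at this stage: all the hard geometric and Galois-cohomological input has been concentrated in Proposition \ref{dimsum} and Corollary \ref{easybound}, and what is left is an essentially formal assembly of these two estimates.
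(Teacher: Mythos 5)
Your proof is correct and takes essentially the same approach as the paper: upper bound from Corollary \ref{easybound}, lower bound from the injection (\ref{injtang}) together with Proposition \ref{dimsum}, and then Lemma \ref{coxeter} for the smoothness criterion. The only cosmetic difference is that the paper presents the lower bound first; your parenthetical remark ``the first unconditionally'' is a slight slip since Corollary \ref{easybound} itself is stated under Conjecture \ref{BHS}, but you immediately correct this by explaining that the conjecture is needed to invoke the accumulation property on $\widetilde X_{\rm tri}^\square(\rbar)$.
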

\begin{proof}
It follows from (\ref{injtang}) and Proposition \ref{dimsum} that we have $\lg(w_x)-d_{x}+\dim X_{\rm tri}^\square(\rbar)\leq \dim_{k(x)}T_{\widetilde X_{\rm tri}^\square(\rbar),x}$. The equality follows from Corollary \ref{easybound} which gives the converse inequality. Note that we also deduce $T_{\jmath_{w_x,{\bf k}}\big(\overline{\widetilde U_{\rm tri}^\square(\rbar)\times_{\mathcal{W}^n_L}\mathcal{W}^n_{w,{\bf k},L}}\big),x}+T_{\iota_{\bf k}(\widetilde \Xfrak_{\overline r}^{\square,{\bf k}\rm -cr}),x}\buildrel\sim\over\longrightarrow T_{\widetilde X_{\rm tri}^\square(\rbar),x}$. Finally, as we have already seen, the last statement follows from Lemma \ref{coxeter}.
\end{proof}

We end up with an application of Corollary \ref{bonnedim} (thus assuming Conjecture \ref{BHS}) to the classical eigenvariety $Y(U^p,\rhobar)$ of \S\ref{classic}. We keep the notation and assumptions of \S\ref{classic} and \S\ref{firstclassical} and we consider a point $x\in Y(U^p,\rhobar)$ which is crystalline strictly dominant very regular. In a recent on-going work (\cite{Bergdraft}), Bergdall, inspired by the upper bound in Theorem \ref{upperbound}, proved an analogous upper bound for $\dim_{k(x)}T_{Y(U^p,\rhobar),x}$, and obtained in particular that $Y(U^p,\rhobar)$ is smooth at $x$ when the Weyl group element $w_x$ in (\ref{wx}) is a product of distinct simple reflections and when some Selmer group (which is always conjectured to be zero) vanishes. As a consequence of Corollary \ref{bonnedim} we prove that this should {\it not} remain so when $w_x$ is {\it not} a product of distinct simple reflections.

\begin{coro}\label{singhecke}
Assume Conjecture \ref{BHS} and assume that $w_x$ is {\rm not} a product of distinct simple reflections. Then the eigenvariety $Y(U^p,\rhobar)$ is singular at $x$.
\end{coro}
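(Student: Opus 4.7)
The plan is to prove singularity of $Y(U^p,\rhobar)$ at $x$ by first showing, using Conjecture~\ref{BHS} and Corollary~\ref{bonnedim}, that $X_p(\rhobar)$ is strictly singular at $x$ with a specific excess in tangent space dimension, and then transferring this singularity through the defining equations of $Y(U^p,\rhobar)$ inside $X_p(\rhobar)$.

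First, under Conjecture~\ref{BHS}, Remark~\ref{conjvariant}(i) gives an isomorphism $X_p(\rhobar)\cong \Xfrak_{\rhobar^p}\times\widetilde X_{\rm tri}^\square(\rhobar_p)\times \Ubb^g$. Tangent spaces of products decompose as direct sums, so combining the trivial bound $\dim_{k(x)} T_{\Xfrak_{\rhobar^p},y^p}\geq \dim \Xfrak_{\rhobar^p}$ with Corollary~\ref{bonnedim} applied at each $v\in S_p$ (which gives $\dim_{k(x)} T_{\widetilde X_{\rm tri}^\square(\rhobar_{\tilde v}),x_v}=\dim X_{\rm tri}^\square(\rhobar_{\tilde v})+\lg(w_{x,v})-d_{x,v}$) yields
$$\dim_{k(x)} T_{X_p(\rhobar),x}\geq \dim X_p(\rhobar)+\sum_{v\in S_p}\big(\lg(w_{x,v})-d_{x,v}\big).$$
Under our hypothesis that $w_{x,v}$ is not a product of distinct simple reflections for some $v\in S_p$, Lemma~\ref{coxeter} guarantees the right-hand excess is strictly positive, so $X_p(\rhobar)$ is singular at $x$.

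Next, recall from \S\ref{firstclassical} that $Y(U^p,\rhobar)$ is the reduced Zariski-closed subspace of $X_p(\rhobar)$ defined by the ideal $\mathfrak{a}\Gamma(\Xfrak_\infty,\mathcal{O}_{\Xfrak_\infty})$, which has $t:=g+[F^+:\Q]\tfrac{n(n-1)}{2}+|S|n^2$ generators. A direct dimension count gives $\dim Y(U^p,\rhobar)=n[F^+:\Q]=\dim X_p(\rhobar)-t$, so $Y(U^p,\rhobar)$ has codimension $t$ in $X_p(\rhobar)$. Since $\mathcal{M}_\infty$ is Cohen-Macaulay on $X_p(\rhobar)$ (Lemma~\ref{CM}) and its support drops dimension by exactly $t$ upon quotienting by $\mathfrak{a}$, the generators of $\mathfrak{a}$ form a regular sequence on $\mathcal{M}_\infty$ at $x$, and $\mathcal{M}_\infty/\mathfrak{a}\mathcal{M}_\infty$ remains Cohen-Macaulay of depth $n[F^+:\Q]$.

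The main obstacle is to combine this Cohen-Macaulay structure with a generic multiplicity-one property of $\mathcal{M}_\infty$ on $X_p(\rhobar)$ to conclude that the closed subscheme $V(\mathfrak{a})\subseteq X_p(\rhobar)$ is reduced at $x$, hence coincides scheme-theoretically with $Y(U^p,\rhobar)$ in a neighbourhood of $x$. Once this is granted, the argument concludes as follows: if $Y(U^p,\rhobar)$ were smooth at $x$, then so would be $V(\mathfrak{a})$; but $V(\mathfrak{a})$ is cut out in $X_p(\rhobar)$ by the $t=\dim X_p(\rhobar)-\dim V(\mathfrak{a})$ generators of $\mathfrak{a}$ forming a regular sequence, and its smoothness would force the corresponding $t$ differentials to be linearly independent in the cotangent space of $X_p(\rhobar)$ at $x$, yielding $\dim_{k(x)} T_{X_p(\rhobar),x}=\dim V(\mathfrak{a})+t=\dim X_p(\rhobar)$, in direct contradiction with the strict inequality established in the second paragraph.
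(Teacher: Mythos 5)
Your overall strategy matches the paper's: argue by contraposition, relating the tangent spaces of $Y(U^p,\rhobar)$ and $X_p(\rhobar)$, and invoke Corollary \ref{bonnedim} (plus Lemma \ref{coxeter}) to relate smoothness of $X_p(\rhobar)$ to the condition on $w_x$. Your first paragraph (deducing $\dim T_{X_p(\rhobar),x} > \dim X_p(\rhobar)$ from the product decomposition of Remark \ref{conjvariant}(i) and Corollary \ref{bonnedim}) is correct and is logically equivalent to the paper's last step, just run in the other direction.

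The genuine gap is in your middle paragraph. To transfer smoothness between $Y(U^p,\rhobar)$ and $X_p(\rhobar)$ you need the inequality $\dim_{k(x)} T_{Y(U^p,\rhobar),x}\geq\dim_{k(x)} T_{X_p(\rhobar),x}-t$, which is equivalent to identifying $T_{Y(U^p,\rhobar),x}$ with the kernel of $T_{X_p(\rhobar),x}\to T_{(\Spf S_\infty)^{\rig},\omega_\infty(x)}\otimes k(x)$. Since $Y(U^p,\rhobar)$ is defined as the \emph{reduced} subspace underlying $V(\mathfrak{a})$, this identification is exactly the assertion that $V(\mathfrak{a})$ is reduced near $x$, as you observe. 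But the justification you give does not work: Lemma \ref{CM} says the coherent sheaf $\mathcal{M}_\infty$ is Cohen--Macaulay over $X_p(\rhobar)$, which is a statement about a module and says nothing about $\mathcal{O}_{X_p(\rhobar)}$ or the quotient $\mathcal{O}_{X_p(\rhobar)}/\mathfrak{a}$ being reduced; and the ``generic multiplicity-one property of $\mathcal{M}_\infty$'' you invoke is not stated or proved anywhere in the paper, nor is it clear that such a statement would yield reducedness of $V(\mathfrak{a})$ even if true. The paper avoids this entirely by appealing directly to the base-change isomorphism (\ref{basechange}), $Y(U^p,\rhobar)\cong X_p(\rhobar)\times_{(\Spf S_\infty)^{\rig}}\Spm L$, which comes from the proof of \cite[Th.4.2]{BHS}; this isomorphism (interpreted as an isomorphism of rigid spaces, not merely of reductions) immediately gives the fibre-product description of $T_{Y(U^p,\rhobar),x}$ by the same tensor-product-of-local-rings computation used in Proposition \ref{tgtx'}, and hence the required inequality (\ref{tangentxp}). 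You should replace your detour through $V(\mathfrak{a})$ and the unjustified multiplicity-one claim with a direct appeal to (\ref{basechange}).

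A minor secondary point: in the final paragraph, ``smoothness would force the corresponding $t$ differentials to be linearly independent'' is not the right way to phrase the conclusion; the correct chain is that the kernel identification gives $\dim T_{Y,x}\geq\dim T_{X_p,x}-t$, so smoothness of $Y$ gives $\dim T_{X_p,x}\leq\dim Y+t=\dim X_p$, hence smoothness of $X_p$. Linear independence of the $t$ differentials is then a \emph{consequence}, not an intermediate step.
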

\begin{proof}
For $v\in S_p$ denote by $x_v$ the image of $x$ in $X_{\rm tri}^\square(\rhobar_{\tilde v})$ via (\ref{eigenvartotrianguline}). Since $Y(U^p,\rhobar)\hookrightarrow X_p(\rhobar)$, we have $x_v\in \widetilde X_{\rm tri}^\square(\rhobar_{\tilde v})$. It follows from Corollary \ref{bonnedim} that it is enough to prove the following: if $Y(U^p,\rhobar)$ is smooth at $x$ then $\widetilde X_{\rm tri}^\square(\rhobar_{\tilde v})$ is smooth at $x_v$ for all $v\in S_p$, or equivalently $\widetilde X_{\rm tri}^\square(\rhobar_p)\cong \prod_{v\in S_p}\widetilde X_{\rm tri}^\square(\rhobar_{\tilde v})$ is smooth at $(x_v)_{v\in S_p}$. Recall from \S\ref{firstclassical} that we have:
\begin{equation}\label{basechange}
Y(U^p,\rhobar)\cong X_p(\rhobar)\times_{(\Spf\, S_\infty)^{\rig}}\Spm\, L
\end{equation}
where the map $S_\infty\twoheadrightarrow L$ is $S_\infty\twoheadrightarrow (S_\infty/{\mathfrak a})[1/p]$ and where $X_p(\rhobar)\rightarrow {\mathfrak X}_\infty\rightarrow (\Spf\, S_\infty)^{\rig}$ is induced by the morphism $S_\infty\rightarrow R_\infty$. Let $\omega_{\infty}(x)$ be the image of $x$ in $(\Spf\, S_\infty)^{\rig}$, by an argument similar to the one in the proof of Proposition \ref{tgtx'} we deduce from (\ref{basechange}):
\begin{equation*}
T_{Y(U^p,\rhobar),x}\cong \left\{\vec{v}\in T_{X_p(\rhobar),x}\ {\rm mapping\ to}\ 0\ {\rm in}\ T_{(\Spf\, S_\infty)^{\rig},\omega_\infty(x)}\otimes_{k(\omega_\infty(x))}k(x)\right\}.
\end{equation*}
This obviously implies:
\begin{equation}\label{tangentxp}
\dim_{k(x)}T_{Y(U^p,\rhobar),x}\geq \dim_{k(x)}T_{X_p(\rhobar),x} - \dim_{k(\omega_\infty(x))}T_{(\Spf\, S_\infty)^{\rig},\omega_\infty(x)}.
\end{equation}
But $\dim_{k(\omega_\infty(x))}T_{(\Spf\, S_\infty)^{\rig},\omega_\infty(x)}=g+[F^+:\Q]\tfrac{n(n-1)}{2}+|S|n^2$ (see beginning of \S\ref{firstclassical}) and $\dim_{k(x)}T_{Y(U^p,\rhobar),x}=\dim Y(U^p,\rhobar)=n[F^+:\Q]$ since $x$ is assumed to be smooth on $Y(U^p,\rhobar)$, hence we deduce from (\ref{tangentxp}):
$$\dim_{k(x)}T_{X_p(\rhobar),x}\leq g+[F^+:\Q]\tfrac{n(n+1)}{2}+|S|n^2=\dim X_p(\rhobar)$$
where the last equality follows from \cite[Cor.3.11]{BHS}. We thus have $\dim_{k(x)}T_{X_p(\rhobar),x}=\dim X_p(\rhobar)$ which implies that $x$ is smooth on $X_p(\rhobar)$, and thus by (i) of Remark \ref{conjvariant} that $(x_v)_{v\in S_p}$ is smooth on $\widetilde X_{\rm tri}^\square(\rhobar_p)$.
\end{proof}

\begin{rema}
{\rm Singular crystalline strictly dominant points on eigenvarieties are already known to exist by \cite[\S6]{Bel}. However, the singular points of {\it loc. cit.} are different from the points $x$ of Corollary \ref{singhecke} since they have reducible associated global Galois representations.}
\end{rema}

\def\cprime{$'$} \def\cprime{$'$} \def\cprime{$'$} \def\cprime{$'$}
\providecommand{\bysame}{\leavevmode ---\ }
\providecommand{\og}{``}
\providecommand{\fg}{''}
\providecommand{\smfandname}{\&}
\providecommand{\smfedsname}{\'eds.}
\providecommand{\smfedname}{\'ed.}
\providecommand{\smfmastersthesisname}{M\'emoire}
\providecommand{\smfphdthesisname}{Th\`ese}

\end{document}